\newcommand \al{\alpha}
\newcommand\be{\beta}
\newcommand\ga{\gamma}
\newcommand\ze{\zeta}
\renewcommand\th{\theta}
\newcommand\io{\iota}
\newcommand\la{\lambda}
\newcommand\si{\sigma}
\newcommand\ph{\varphi}
\newcommand\om{\omega}
\newcommand\Ga{\Gamma}
\newcommand\La{\Lambda}
\newcommand\Om{\Omega}
\newcommand\ie{\ie, }
\newcommand\eg{e.g.\ }
\newcommand\oo{{\infty}}
\renewcommand\o{\circ}
\newcommand\x{\times}
\newcommand\on{\operatorname}
\newcommand\Ad{\on{Ad}}
\newcommand\ori{\textnormal{or}}
\newcommand\Emb{\on{Emb}}
\newcommand\Ann{\on{Ann}}
\newcommand\ex{\on{ex}}
\newcommand\Den{\on{Den}}
\newcommand\iso{\on{iso}}
\newcommand\ann{\on{ann}}
\newcommand\aug{\on{aug}}
\newcommand\hnull{\on{hnull}}
\newcommand\deco{\on{deco}}
\newcommand\Aut{\on{Aut}}
\newcommand\D{\mathcal{D}}
\newcommand\M{\mathcal{M}}
\newcommand\Diff{\on{Diff}}
\newcommand\id{\on{id}}
\newcommand{\symp}{\on{symp}}
\newcommand\reg{\on{reg}}
\renewcommand\L{\on{pounds}}
\newcommand\Ham{\on{Ham}}
\newcommand\KKS{\on{KKS}}
\newcommand\codim{\on{codim}}
\newcommand\coker{\on{coker}}
\newcommand\ham{\on{\mathfrak{ham}}}
\newcommand\GL{\on{GL}}
\newcommand\g{\mathfrak g}
\newcommand\Gr{\on{Gr}}
\newcommand\ou{\mathfrak o}
\newcommand\RR{\mathbb{R}}
\newcommand\R{\mathbb{R}}
\newcommand\X{\mathfrak X}
\newcommand\DD{\mathfrak D}
\newcommand\G{\mathcal G}
\newcommand\A{\mathfrak A}
\renewcommand\L{\mathcal L}
\DeclareMathOperator{\SAut}{SAut}
\newcommand\catclosed{\mathfrak{Mf}}
\DeclareMathOperator{\Bij}{Bij}
\DeclareMathOperator{\rk}{rk}
\newcommand\cataug{\mathfrak{Gr}}
\newcommand\wt{\on{wt}}
\newcommand\hwt{\on{hwt}}
\DeclareMathOperator{\evol}{evol}
\DeclareMathOperator{\Evol}{Evol}
\newtheorem{theorem}{Theorem}[section]
\newtheorem{lemma}[theorem]{Lemma}
\newtheorem{proposition}[theorem]{Proposition}
\newtheorem{corollary}[theorem]{Corollary}
\theoremstyle{remark}
\newtheorem{remark}[theorem]{Remark}
\newtheorem{example}[theorem]{Example}
\begin{document}

\title{Nonlinear Grassmannians: plain, decorated, augmented}

\author{Stefan Haller}
\address{Stefan Haller, Department of Mathematics, University of Vienna, Oskar-Morgenstern-Platz~1, 1090 Vienna, Austria. \href{https://orcid.org/0000-0002-7064-2215}{ORCID 0000-0002-7064-2215}}
\email{stefan.haller@univie.ac.at}

\author{Cornelia Vizman}
\address{Cornelia Vizman, Department of Mathematics, West University of Timi\c soara, Bd.~V.~Parvan~4, 300223 Timi\c soara, Romania. \href{https://orcid.org/0000-0002-5551-3984}{ORCID 0000-0002-5551-3984}}
\email{cornelia.vizman@e-uvt.ro}

\subjclass[2010]{58D10 (primary); 58D05, 58D15, 53C30, 53D20.}

\begin{abstract}
	Decorated and augmented nonlinear Grassmannians can be used to parametrize coadjoint orbits of classical diffeomorphism groups.
	We provide a general framework for decoration and augmentation functors that facilitates the construction of a smooth structure on decorated or augmented nonlinear Grassmannians. 
	This permits to equip the corresponding coadjoint orbits with the structure of a smooth symplectic Fr\'echet manifold. 
	The coadjoint orbits obtained in this way are not new.
	Here, we provide a uniform description of their smooth structures.
\end{abstract}

\maketitle


\section{Introduction}

The four classical infinite-dimensional Lie algebras studied by E.~Cartan are:
the Lie algebras of all vector fields, of symplectic vector fields, of
contact vector fields, and of divergence free vector fields
\cite{cartan,singer}.
We aim at giving a unified approach to certain classes of coadjoint
orbits of the corresponding diffeomorphism groups that were previously studied in
\cite{W90,HV04,L09,HV22,GBV24,HV24}.

These coadjoint orbits are \emph{good} in the sense that they admit a
(unique) Fr\'echet manifold structure such that the coadjoint action is
smooth and admits local smooth sections.
This implies that their (formal) Kirillov--Kostant--Souriau (KKS)
symplectic form is smooth and weakly nondegenerate.
We stress the concept of \emph{local smooth sections} throughout this text.
Therefore, some of the results reproved here are in fact slightly
stronger than the ones available in the literature.

The common feature of the mentioned coadjoint orbits is their parametrizations with nonlinear Grassmannians.
These can be plain Grassmannians, but can also consist of submanifolds endowed with an additional structure.
When the structure lives on the submanifold, we call it a \emph{decoration,}
while when the structure lives in the ambient space, we call it an \emph{augmentation.}

In this article we give the general framework for both, decorations and augmentations,
which facilitates the construction of a smooth structure on decorated or augmented nonlinear Grassmannians.
A decoration functor $\DD$ is a functor from the category of (finite dimensional) closed manifolds and diffeomorphisms to the category of Fr\'echet manifolds, such that the $\Diff(S)$ action on $\DD(S)$ is smooth.
In contrast, an augmentation functor is associated to an action of a Lie group $G$ on a finite dimensional manifold $M$.
It is a functor $\A$ from the category of closed submanifolds of $M$ and diffeomorphisms given by the $G$-action to the category of Fr\'echet manifolds, 
such that the action  of the isotropy subgroup $G_N$ on $\A(N)$ is smooth.

Some of these good coadjoint orbits of diffeomorphism groups can also be obtained via symplectic reduction in dual pairs of momentum maps.
Such dual pairs exist for the Hamiltonian group (the ideal fluid dual pair \cite{MW83}), for the full diffeomorphism group (the EPDiff dual pair \cite{HM05}),
for the contact group (the EPContact dual pair \cite{HV22}), and for the group of (exact) volume preserving diffeomorphisms (the EPDiffvol dual pair \cite{HV24}).
According to a general principle for dual pairs, symplectic reduction for the group acting from the right (a reparametrization action in these cases) yields coadjoint orbits for the group acting from the left (the four diffeomorphism groups mentioned above).

The Lie group structure on many isotropy groups appearing in this context remains elusive.
For instance, to our best knowledge, it is not known if the group of Hamiltonian diffeomorphisms preserving an isotropic (or symplectic) submanifold actually is a Lie subgroup in the group of all Hamiltonian diffeomorphisms.
Similar questions for the group of volume preserving diffeomorphisms and the group of contact diffeomorphisms remain open too.

\subsection*{Content of the paper}
In Section~2 we define \emph{good coadjoint orbits} (as those that possess a smooth  structure such that the coadjoint action is smooth and admits local smooth sections),
and we show that the formally defined KKS symplectic form is smooth. 
In the context of the action of a convenient Lie group $G$ on the nonlinear Grassmannian $\Gr(M)$ that is induced by an action on $M$, we introduce the notions of \emph{good orbit} and \emph{good isotropy} at $N\in\Gr(M)$.
Then we exemplify these for several diffeomorphism groups, \eg the Hamiltonian group has good orbits at isotropic and  symplectic $N$,
the contact group at isotropic $N$, and the full diffeomorphism group and the (exact) volume preserving diffeomorphism group at any $N$.
Isodrasts in the symplectic context describe the Hamiltonian orbits at isotropic $N$,
while isodrasts in the volume form context describe orbits of codimension one submanifolds 
under the action of the group of (exact) volume preserving diffeomorphisms.
As an application we obtain Theorems~\ref{P:symp} and \ref{P:codim2} which describe good coadjoint orbits parametrized by plain nonlinear Grassmannians.

Section~3, devoted to decorated nonlinear Grassmannians, starts with the presentation of the general framework for decorations.
Examples of decoration functors $\DD$ comprise orientations, densities, differential forms, (co)homology classes.
In studying the associated smooth structures, the notions of \emph{good orbit} and \emph{good isotropy} for the $\Diff(S)$ action on $\DD(S)$ are useful. 
Good coadjoint orbits parametrized by decorated nonlinear Grassmannians are described in 
Theorem \ref{T:th1}, namely isotropic submanifolds decorated with volume densities for the Hamiltonian group,
and in Theorem \ref{T:th2}, namely hypersurfaces decorated with nowhere zero closed 1-forms for the group of (exact) volume preserving diffeomorphisms.

In Section~4 we study augmented nonlinear Grassmannians.
The general framework is an augmentation functor $\A$ associated to a $G$ action on $M$.
Associated smooth structures are studied, especially for $G$ orbits in the augmented nonlinear Grassmannians.
For appropriate augmentations, the results are used to parametrize good coadjoint orbits of diffeomorphism groups:
$\Ga((|\La|_N\otimes T^*M|_N)\setminus N)$ in Theorems~\ref{T:full}, \ref{T:non0} and \ref{T:cex},
a quotient space of $\Ga((|\La|_N\otimes T^*M|_N)\setminus N)$ in Theorem \ref{T:[]},
and $\Ga((|\La|_N\otimes L^*|_N)\setminus N)$, with $L$ denoting the contact line bundle, in Theorem \ref{T:contact}.
We end the section with augmentations induced by a decoration functor.

\section{Plain nonlinear Grassmannians}\label{S:two}

Throughout this paper we use the convenient calculus of Kriegl and Michor \cite{KM97} to describe smooth structures on spaces of manifolds and maps.
In this calculus, smooth manifolds are modeled on convenient vector spaces, a class of locally convex vector spaces satisfying a weak completeness assumption.

A submanifold is called \emph{splitting submanifold} if the corresponding (closed) linear subspace in a submanifold chart admits a complement, cf.~\cite[Definition~27.11]{KM97}. 
A subset $A$ in a manifold $M$ is called \emph{(splitting) initial submanifold} if every point $a\in A$ admits an open neighborhood $U$ in $M$ such that $(A\cap U)_a$, the set of all points that can be connected to $a$ through a smooth path lying in $A\cap U$, is a (splitting) submanifold of $U$, cf.~\cite[Definition~2.13]{M08}.
Such a subset $A$ inherits a smooth structure for which the inclusion $A\to M$ is an initial smooth map.

The Lie groups $G$ we consider, in general diffeomorphism groups, are Lie groups in the convenient setting of \cite{KM97}.
A subgroup $H$ in a Lie group $G$ will be called a \emph{splitting Lie subgroup} if it is a splitting submanifold of $G$. 
In this case, $H$ itself is a Lie group (with the induced structure).


\subsection{Transitive actions}
Recall that an action of a Lie group $G$ on a manifold $M$ is said \emph{to admit local smooth sections} if, for every $x\in M$, the map provided by the action, $G\to M$, $g\mapsto g\cdot x$, admits a smooth local right inverse defined in an open neighborhood of $x$.
More explicitly, we require that there exists an open neighborhood $U$ of $x$ in $M$ and a smooth map $u\colon U\to G$ such that $u(y)\cdot x=y$ for all $y\in U$.
We may assume w.l.o.g.\ that $u(x)=e$, the neutral element in $G$.
Clearly, any action which admits local smooth sections is locally and infinitesimally transitive.
(The converse implication is not true for general Fr\'echet manifolds,
due to the lack of a general implicit function theorem.)
In particular, its orbits are open and closed in $M$ and, hence, they consist of several connected components of $M$.

The next lemma is useful in detecting transitive actions on decorated nonlinear Grassmannians,
since these are associated bundles to the principal bundle of embeddings.

\begin{lemma} [{\cite[Lemma~A.1]{HV23}}]\label{before.sec}
	Let $P\to B$ be a principal $G$-bundle endowed with the action of a Lie group $H$ on $P$ that commutes with the principal $G$ action.
	Suppose the structure group $G$ acts on another manifold $Q$, and consider the canonically induced $H$ action on the associated bundle $P\times_GQ$.
	If the $H$ action on $P$ and the $G$ action on $Q$ both admit local smooth sections, then the $H$ action on $P\times_GQ$ admits local smooth sections too.
\end{lemma}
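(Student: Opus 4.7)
\medskip
\noindent\textbf{Proof sketch.} Fix a point $x_0=[p_0,q_0]$ in $P\times_G Q$. We want to construct an open neighborhood $\mathcal U$ of $x_0$ and a smooth map $u\colon\mathcal U\to H$ with $u(x_0)=e$ and $u(x)\cdot x_0=x$ for all $x\in\mathcal U$. The plan is to use the three local smooth sections at our disposal --- one for the quotient $P\times Q\to P\times_G Q$, one for the $G$-action on $Q$, and one for the $H$-action on $P$ --- and to stitch them together along the identification
\[
h\cdot[p_0,q_0]=[p,q]\iff \text{there exists $g\in G$ with $h\cdot p_0=p\cdot g$ and $g^{-1}\cdot q_0=q$.}
\]

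First I would lift from the associated bundle to $P\times Q$. Since $P\to B$ is a principal $G$-bundle, it admits local smooth sections; this implies that the projection $\pi\colon P\times Q\to P\times_G Q$ (a principal $G$-bundle for the twisted action $(p,q)\cdot g=(p\cdot g,g^{-1}\cdot q)$) admits local smooth sections as well. Choose such a section $\sigma=(\sigma_P,\sigma_Q)\colon\mathcal U\to P\times Q$ defined on an open neighborhood $\mathcal U$ of $x_0$, and normalize it so that $\sigma(x_0)=(p_0,q_0)$.

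Second, I would use the given local smooth section $v\colon V\to G$ for the $G$-action on $Q$ (with $V\ni q_0$, $v(q_0)=e$, and $v(q)\cdot q_0=q$) to move the $Q$-coordinate back to $q_0$. Shrinking $\mathcal U$ so that $\sigma_Q(\mathcal U)\subset V$, the equivalence $(p,q)\sim(p\cdot g,g^{-1}\cdot q)$ applied with $g=v(\sigma_Q(x))$ yields
\[
x=[\sigma_P(x),\sigma_Q(x)]=\bigl[\sigma_P(x)\cdot v(\sigma_Q(x)),\,q_0\bigr].
\]
Denote the first factor by $\tilde p(x):=\sigma_P(x)\cdot v(\sigma_Q(x))$; this is a smooth $P$-valued map on (a possibly smaller) $\mathcal U$ satisfying $\tilde p(x_0)=p_0$.

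Third, I would use the given local smooth section $w\colon W\to H$ for the $H$-action on $P$ (with $W\ni p_0$, $w(p_0)=e$, and $w(p)\cdot p_0=p$), shrink $\mathcal U$ once more so that $\tilde p(\mathcal U)\subset W$, and set
\[
u(x):=w\bigl(\tilde p(x)\bigr)=w\bigl(\sigma_P(x)\cdot v(\sigma_Q(x))\bigr).
\]
Then $u$ is smooth by construction, $u(x_0)=w(p_0)=e$, and
\[
u(x)\cdot x_0=[u(x)\cdot p_0,\,q_0]=[\tilde p(x),\,q_0]=x,
\]
as required. The only step that needs any care is the first one (local triviality of $\pi$ together with the successive shrinkings of $\mathcal U$); the rest is a direct assembly of the given local sections.
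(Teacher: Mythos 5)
Your proposal is correct and, in substance, it is the intended argument: this paper does not prove Lemma~\ref{before.sec} itself but cites \cite[Lemma~A.1]{HV23}, whose proof assembles the same three ingredients you use --- a local smooth section of the quotient $P\times Q\to P\times_G Q$ (available because this quotient is a principal $G$-bundle by the associated bundle construction, cf.~\cite{KM97}), the local section $v$ for the $G$ action on $Q$ to normalize the second coordinate to $q_0$, and the local section $w$ for the $H$ action on $P$ applied to $\tilde p(x)=\sigma_P(x)\cdot v(\sigma_Q(x))$. Your formula $u(x)=w\bigl(\sigma_P(x)\cdot v(\sigma_Q(x))\bigr)$ is exactly this composition, and the successive shrinkings are unproblematic since all maps involved are smooth, hence continuous.
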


A Lie group is called regular if smooth curves in the Lie algebra integrate to smooth curves in the group in a smooth way \cite{Milnor,KM97,GN}.
This ensures the existence of an exponential map.
All Lie groups of diffeomorphisms are regular (in fact all known smooth Lie groups are regular).

We denote the fundamental vector fields of a smooth $G$ action by $\ze_X$, where $X\in\mathfrak g$, the Lie algebra of $G$.

\begin{lemma}[{\cite[Lemma~A.2]{HV23}}]\label{L:loc.sec}
	Let $G$ be a regular Lie group acting on a smooth manifold $Q$.
	Suppose every point $x_0$ in $Q$ admits an open neighborhood $U$ and a smooth map $\sigma:TQ|_U\to\mathfrak g$ such that
	\begin{equation}\label{E:tsigma}
		\zeta_{\sigma(X)}(x)=X, \text{ for all $x\in U$ and $X\in T_xQ$},
	\end{equation}
	where $\ze:\g\to\X(Q)$ denotes the infinitesimal action.
	Then the $G$ action on $Q$ admits local smooth sections.
\end{lemma}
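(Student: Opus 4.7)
The plan is to construct $u$ by a path-lifting argument: given $y$ close to $x_0$, I would pick a smooth path $\gamma_y$ from $x_0$ to $y$ depending smoothly on $y$, convert its velocity via $\sigma$ into a smooth $\g$-valued curve $X_y(t):=\sigma(\dot\gamma_y(t))$, integrate $X_y$ to a smooth curve $g_y$ in $G$ using regularity, and set $u(y):=g_y(1)$. The hypothesis on $\sigma$ together with an argument internal to $G$ will then force $g_y(t)\cdot x_0=\gamma_y(t)$, so in particular $u(y)\cdot x_0=y$.

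Concretely, I would pick a convenient chart $\varphi\colon U\to E$ with $\varphi(x_0)=0$ and star-shaped image, and for $y$ in a small enough neighborhood $V\subseteq U$ of $x_0$ set $\gamma_y(t):=\varphi^{-1}(t\varphi(y))$ for $t\in[0,1]$. Both $(y,t)\mapsto\gamma_y(t)\in U$ and $(y,t)\mapsto\dot\gamma_y(t)\in TQ|_U$ are smooth, hence $X_y(t):=\sigma(\dot\gamma_y(t))$ is smooth from $V\times[0,1]$ into $\g$. Regularity of $G$ then produces a smooth curve $g_y\colon[0,1]\to G$ with $g_y(0)=e$ and right logarithmic derivative $\dot g_y(t)\,g_y(t)^{-1}=X_y(t)$, smoothly depending on $y$; in particular $u(y):=g_y(1)$ is smooth on $V$.

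It remains to show $u(y)\cdot x_0=y$, i.e.\ that $c_y(t):=g_y(t)\cdot x_0$ coincides with $\gamma_y(t)$. Both curves start at $x_0$, and the standard identity $\tfrac{d}{dt}(g(t)\cdot z)=\zeta_{\dot g(t) g(t)^{-1}}(g(t)\cdot z)$ combined with the hypothesis $\zeta_{\sigma(X)}(x)=X$ shows that $c_y$ and $\gamma_y$ both solve the non-autonomous ODE $\dot z(t)=\zeta_{X_y(t)}(z(t))$ on $Q$. Since no general ODE uniqueness is available in the Kriegl--Michor framework, I would instead set $a_y(t):=g_y(t)^{-1}\cdot\gamma_y(t)$ and differentiate the action via the product rule; the contribution from $\dot g_y$ reproduces $\zeta_{X_y}(\gamma_y)=\dot\gamma_y$ on the nose, leaving $T\Phi_{g_y(t)}\,\dot a_y(t)=0$, where $\Phi_g\colon Q\to Q$ denotes the action of $g$. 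Since $\Phi_{g_y(t)}$ is a diffeomorphism, $\dot a_y\equiv 0$, and the convenient mean value theorem gives $a_y(t)=a_y(0)=x_0$, hence $\gamma_y=g_y\cdot x_0$.

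The main technical obstacle is exactly this uniqueness step: in convenient calculus there is no general Picard--Lindel\"of theorem on manifolds, so uniqueness of the lift has to be reduced to an algebraic computation inside $G$, where regularity supplies both existence and smoothness of the evolution of $\g$-valued curves. This is also why the hypothesis must provide a smooth $\sigma$ defined on the full tangent space $TQ|_U$, and not merely a pointwise preimage of $X$ under $\zeta$.
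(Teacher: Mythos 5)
Your proof is correct and takes essentially the same approach as the paper's: both follow Moser's argument, selecting a star-shaped family of paths in a chart around $x_0$ and integrating $\sigma$ of their velocities to smooth curves in $G$ via the regularity (right evolution) of $G$. The only difference is that you spell out the uniqueness step --- showing that $a_y(t)=g_y(t)^{-1}\cdot\gamma_y(t)$ has vanishing derivative and is therefore constant --- which the paper compresses into the words ``hence $c(t)=g(t)\cdot c(0)$.''
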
	

\begin{proof}
	We construct a local smooth section using an argument due to Moser \cite[Section~4]{M65}.
	Suppose $c:[0,1]\to U$ is a smooth curve.
	Since $G$ is regular, there exists a unique smooth curve $g=\Evol^r\bigl(\sigma\circ c'\bigr)$ in $G$ starting at $e$ and
	with right logarithmic derivative $\dot g(t)=(\sigma\o c')(t)$, cf.~\cite[Definition~38.4]{KM97}.
	Using \eqref{E:tsigma}, we see that $c'(t)=\zeta_{\dot g(t)}(c(t))$,  hence $c(t)=g(t)\cdot c(0)$.
	We obtain a smooth map
	\[
		s:C^\infty([0,1],U)\to G,\qquad s(c):=g(1)=\evol^r\bigl(\sigma\circ c'\bigr),
	\]
	with the property that $c(1)=s(c)\cdot c(0)$.
	To obtain a smooth section on the neighborhood $U$ of $x_0$, it suffices to compose $s$ with a smooth map 
	$U\to C^\infty([0,1],U)$, $x\mapsto c_x$ satisfying $c_x(0)=x_0$ and $c_x(1)=x$.
	The latter can readily be constructed shrinking $U$ so that it becomes (in a chart) star shaped with center $x_0$.
\end{proof}

The next lemma will be used to prove smoothness of the KKS symplectic form on certain (good) coadjoint orbits of diffeomorphism groups parametrized with nonlinear Grassmannians.

\begin{lemma}\label{lemac}
	Let $G$ be a regular Lie group acting on a smooth manifold $Q$,
	and let $J:Q\to\g^*$ be an injective $G$ equivariant map, where $\g^*$ denotes the space of bounded linear functionals on the Lie algebra $\g$.
	Assume that the $G$ action on $Q$ is smooth, transitive, and admits local smooth sections.
	Then the (formal) pullback of the KKS symplectic form on the coadjoint orbit $J(Q)$, denoted  $\om=J^*\om_{\KKS}$,
	is a $G$ equivariant smooth (weakly) symplectic form on $Q$ characterized by
	\begin{equation}\label{omeg}
		\om_q(\ze_X(q),\ze_Y(q))=\langle J(q),[X,Y]\rangle,
	\end{equation}
	for all $q\in Q$ and $X,Y\in\g$.
	Furthermore, $J$ is an (equivariant) moment map for the (Hamiltonian) $G$ action on $Q$.
\end{lemma}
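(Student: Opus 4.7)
The plan is to check, in order: that \eqref{omeg} is a well-defined skew form, that the resulting $\om$ is $G$-equivariant, smooth, closed, weakly non-degenerate, and that $J$ is then automatically an equivariant moment map. Since the action is (infinitesimally) transitive, every tangent vector at $q$ can be written as $\ze_X(q)$ with $X\in\g$, so well-definedness reduces to showing that the right-hand side of \eqref{omeg} vanishes whenever $\ze_X(q)=0$. For such $X$, the curve $\exp(tX)\cdot q=q$ is constant, and applying $J$ together with its $G$-equivariance yields $\Ad^*(\exp(-tX))J(q)=J(q)$; differentiating at $t=0$ gives $\ad^*(X)J(q)=0$, whence $\langle J(q),[X,Y]\rangle=0$ for all $Y\in\g$. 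The $G$-equivariance of $\om$ then follows from that of $J$ and the naturality relation $g_*\ze_X=\ze_{\Ad(g)X}$.

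The technically substantive step is smoothness of $\om$. Fix $x_0\in Q$ and choose a local smooth section $u\colon U\to G$ of the action with $u(x_0)=e$ and $u(y)\cdot x_0=y$, as furnished by the hypothesis. From $u$ I build a smooth fibrewise-linear map $\si\colon TQ|_U\to\g$ that right-inverts the infinitesimal action, namely
\[
\si(v):=\Ad\bigl(u(y)\bigr)\cdot T_{x_0}u\bigl(T_yL_{u(y)^{-1}}v\bigr),\qquad v\in T_yQ,
\]
where $L_g$ denotes the action of $g$ on $Q$. Differentiating the defining relation $u(y)\cdot x_0=y$ at $x_0$ shows that $T_{x_0}u\colon T_{x_0}Q\to\g$ is a right inverse of $\ze_\bullet(x_0)$, and the naturality $\ze_{\Ad(g)X}(g\cdot p)=g_*\ze_X(p)$ then gives $\ze_{\si(v)}(y)=v$. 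On $U$ the form reads $\om_y(v,w)=\langle J(y),[\si(v),\si(w)]\rangle$, which is a composition of smooth maps. This construction is the one real use of the \emph{local smooth sections} hypothesis and is the main obstacle to anticipate.

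The remaining assertions are routine. Closedness is checked on triples of fundamental vector fields, which span $TQ$ pointwise: using $[\ze_X,\ze_Y]=-\ze_{[X,Y]}$, the $G$-equivariance-derived identity $\ze_X\langle J,W\rangle=-\langle J,[X,W]\rangle$, and Cartan's formula, the six terms of $d\om(\ze_X,\ze_Y,\ze_Z)$ collapse to two copies of the Jacobi sum of $X,Y,Z$ and vanish. For weak non-degeneracy, if $\om_q(\ze_X(q),\ze_Y(q))=0$ for every $Y$, then $\ad^*(X)J(q)=0$, so $\exp(tX)$ fixes $J(q)$ under the coadjoint action; by $G$-equivariance $J(\exp(tX)\cdot q)=J(q)$, and the \emph{injectivity} of $J$ forces $\exp(tX)\cdot q=q$, whence $\ze_X(q)=0$. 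The moment map equation is the one-line calculation $d\langle J,X\rangle_q(\ze_Y(q))=-\langle J(q),[Y,X]\rangle=\langle J(q),[X,Y]\rangle=\om_q(\ze_X(q),\ze_Y(q))$.
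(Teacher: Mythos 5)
Your argument is sound in its overall architecture, and in the technically substantive places it takes a genuinely different route from the paper. For smoothness and closedness, the paper first proves that each $J^Y=\langle J(\cdot),Y\rangle$ is smooth using equivariance and the local section ($J^Y(q)=\langle J(q_0),\Ad_{u(q)}^{-1}Y\rangle$), and then pulls $\om$ back along the orbit map $\ell^q\colon G\to Q$: the pullback is a left-invariant $2$-form equal to $-d\tilde\th$, where $\tilde\th$ is the left-invariant $1$-form with $\tilde\th_e=J(q)$, so that on $U$ one has $\om=u^*\tilde\om=d(-u^*\tilde\th)$, smooth and even locally exact; closedness comes for free. You instead manufacture from $u$ a smooth fibrewise-linear right inverse $\si$ of the infinitesimal action --- effectively producing the hypothesis of Lemma~\ref{L:loc.sec} from its conclusion --- write $\om_y(v,w)=\langle J(y),[\si(v),\si(w)]\rangle$ on $U$, and verify closedness separately via the global Cartan formula, the six terms collapsing to two Jacobi sums. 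That computation is correct with the sign convention $[\ze_X,\ze_Y]=-\ze_{[X,Y]}$, and your $\si$ does satisfy $\ze_{\si(v)}(y)=v$; this route is more computational but loses the local exactness of $\om$ that the paper obtains as a byproduct. Your nondegeneracy and moment-map steps agree with the paper's up to bookkeeping (you integrate the infinitesimal invariance on the $\g^*$ side, the paper does it on $Q$ via the functions $J^Y(q(t))$).

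Two assertions, however, need justification precisely because $Q$ is a convenient/Fr\'echet manifold. (i) In the well-definedness step you assert that $\ze_X(q)=0$ implies that $\exp(tX)\cdot q$ is constant. This is not an instance of uniqueness of integral curves --- uniqueness of solutions of ODEs is unavailable in this setting. It is nevertheless true, by the group law: writing $c(t)=\exp(tX)\cdot q$ and $\exp((t+s)X)=\exp(tX)\exp(sX)$, one gets $c'(t)=T\ell_{\exp tX}\,\ze_X(q)=0$, and a smooth curve with identically vanishing derivative is constant (test with bounded linear functionals in a chart). Note the paper avoids needing this implication altogether: its well-definedness follows from the identity \eqref{E:omega.alt}, whose right-hand side manifestly depends only on $\ze_X(q)$. (ii) Your claim that $\langle J(y),[\si(v),\si(w)]\rangle$ is ``a composition of smooth maps'' presupposes that the pairing $(y,W)\mapsto\langle J(y),W\rangle$ is smooth; but no smoothness of $J$ is among the hypotheses --- $J$ is merely an injective equivariant map into $\g^*$. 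You must insert here the same equivariance trick the paper opens with: $\langle J(y),W\rangle=\langle J(x_0),\Ad_{u(y)}^{-1}W\rangle$ for $y\in U$, which is smooth jointly in $(y,W)$. With these two short repairs --- both using tools you already invoke elsewhere --- your proof is complete.
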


\begin{proof}
Let us first show that the function
\[
	J^Y\colon Q\to\mathbb R,\qquad J^Y(q)=\langle J(q),Y\rangle
\]
is smooth for each $Y\in\mathfrak g$.
To this end let $q_0\in Q$.
Since the $G$ action on $Q$ admits local smooth sections, 
there exists an open neighborhood $U$ of $q_0$ and a smooth map $u\colon U\to G$ such that $u(q)\cdot q_0=q$ for each $q\in U$.
By the $G$ equivariance of $J$, 
\[
	J^Y(q)=\langle J(u(q)\cdot q_0),Y\rangle=\langle J(q_0),\Ad_{u(q)}^{-1}Y\rangle
\]
is smooth in $q\in U$.
If $g(t)$ is a smooth curve in $G$ with $\dot g(0)=X\in\g$ then the equivariance of $J$ gives
$\tfrac{d}{dt}\big|_0\langle J(q),\Ad_{g(t)}Y\rangle
=\tfrac{d}{dt}\big|_0\langle J(g(t)^{-1}\cdot q),Y\rangle
=\tfrac{d}{dt}\big|_0J^Y(g(t)^{-1}\cdot q)$
and the chain rule yields
\begin{equation}\label{E:omega.alt}
	\langle J(q),[X,Y]\rangle=-d_qJ^Y(\zeta_X(q)).
\end{equation}
Since the $G$ action on $Q$ admits local smooth sections it is, in particular, infinitesimally transitive.
Hence, the fundamental vector fields $\zeta_X(q)$ span the full tangent space at $q$.
Using \eqref{E:omega.alt} we conclude that the 2-form $\om$ in \eqref{omeg} is pointwise well defined.
Moreover, for its kernel we have
\begin{equation}\label{E:ker.omega}
	\ker\om_q=\bigcap_{Y\in\g}\ker d_qJ^Y.
\end{equation}
In order to show that $\om_q$ is nondegenerate, suppose that $\ze_X(q)$ is in the kernel of $\om_q$.
By the regularity of $G$, the smooth curve $q(t)=\exp tX\cdot q$ satisfies $\dot q(t)=\ze_X(q(t))$.
Then, denoting the action of $g\in G$ on $Q$ by $\ell_g$,
\begin{multline*}
	\tfrac{d}{dt}J^Y(q(t))=dJ^Y(\dot q(t))
	=dJ^Y(\ze_X(\exp tX\cdot q))
	=dJ^Y(T\ell_{\exp tX}(\ze_{\Ad_{\exp (-tX)}X}(q)))\\
	=dJ^Y(T\ell_{\exp tX}(\ze_{X}(q)))
	=d(J^Y\o\ell_{\exp tX})(\ze_{X}(q))=dJ^{\Ad_{\exp tX}Y}(\ze_X(q))=0,
\end{multline*}
in view of \eqref{E:ker.omega}.
Thus, $J^Y(q(t))$ is constant in $t$ for all $Y$.
Hence, $J$ is constant along the curve $q(t)$.
As $J$ is injective, this curve must be constant, whence $\ze_X(q)=\dot q(0)=0$.
This shows that each $\omega_q$ is weakly nondegenerate.
The following computation shows that $\om$  is also $G$ invariant:
\begin{align*}
	(\ell_g^*\om)_q(\ze_X(q),\ze_Y(q))&=\om_{g\cdot q}(T\ell_g\ze_X(q),T\ell_g\ze_Y(q))
	=\om_{g\cdot q}(\ze_{\Ad_gX}(g\cdot q),\ze_{\Ad_gX}(g\cdot q))\\
	&=\langle J(g\cdot q),[\Ad_gX,\Ad_gY]\rangle=\langle J(g\cdot q),\Ad_g[X,Y]\rangle\\
	&=\langle J( q),[X,Y]\rangle
	=\om_q(\ze_X(q),\ze_Y(q)).
\end{align*}
Since the orbit map $\ell^q\colon G\to Q$, $\ell^q(g)=g\cdot q$ is $G$ equivariant, 
the pullback $\tilde\om=(\ell^q)^*\om$ is a left invariant, hence smooth, differential 2-form on $G$. 
At the identity it is given by $\tilde\om_e(X,Y)\mapsto \langle J(q),[X,Y]\rangle$, hence $\tilde\om=-d\tilde\th$ where $\tilde\th$ denotes the left invariant smooth differential 1-form on $G$ given by $\tilde\th_e=J(q)\in\g^*$.
Using a local smooth section $u\colon U\to G$, $\ell^q\circ u=\id$, we see that on $U$ the form $\omega=u^*(\ell^q)^*\om=u^*\tilde\om=d(-u^*\tilde\theta)$ is smooth and exact.
It follows that $\om$ is a smooth closed differential 2-form on $Q$.

In conclusion, $(Q,\om)$ is a symplectic manifold.
In view of \eqref{E:omega.alt}, $J^Y$ are smooth Hamiltonian functions for $\ze_Y$ for each $Y\in \g$, 
thus the map $J$ is a moment map for the $G$ action on $Q$.
\end{proof}

\begin{remark}\label{R:unique}
	Suppose $G$ is a Lie group acting smoothly on two manifolds $Q_1$ and $Q_2$ such that both actions admit local smooth sections.
	Then any $G$ equivariant bijection $f:Q_1\to Q_2$ is a diffeomorphism.
	Indeed, if $U$ is an open neighborhood of $x_1$ in $Q_1$, and $u\colon U\to G$ is a local smooth section for the $G$ action,
	i.e., $u(x)\cdot x_1=x$ for all $x\in U$, then $f(x)=f(u(x)\cdot x_1)=u(x)\cdot f(x_1)$ is smooth in $x\in U$.
\end{remark}

A coadjoint orbit $\mathcal O\subseteq\mathfrak g^*$ of a Lie group $G$ is called \emph{good} if it admits a Fr\'echet manifold structure such that the $G$ action on $\mathcal O$ is smooth and admits local smooth sections.
In view of Remark~\ref{R:unique} such a smooth structure on $\mathcal O$ is unique, provided it exists.
Using the inclusion $J:\mathcal O\to\g^*$ in  Lemma~\ref{lemac} we  obtain:

\begin{corollary}\label{C:ooo}
If $\mathcal O$ is a good coadjoint orbit of a regular Lie group, then the KKS form is a smooth weakly nondegenerate symplectic form on $\mathcal O$.
\end{corollary}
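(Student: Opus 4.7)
The plan is to apply Lemma~\ref{lemac} directly, with $Q:=\mathcal O$ (equipped with the unique smooth structure provided by the definition of a good coadjoint orbit, cf.~Remark~\ref{R:unique}) and with $J:\mathcal O\to\g^*$ taken to be the set-theoretic inclusion of the coadjoint orbit into~$\g^*$.

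First I would verify the hypotheses of Lemma~\ref{lemac}. The group $G$ is regular by assumption. The inclusion $J$ is tautologically injective and, because $\mathcal O$ is a single coadjoint orbit, it is automatically $G$-equivariant. The $G$-action on $\mathcal O$ is smooth, transitive, and admits local smooth sections—smoothness and local smooth sections come straight from the definition of \emph{good}, and transitivity is the defining property of an orbit.

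Having checked these, Lemma~\ref{lemac} yields that $J^*\om_{\KKS}$ is a smooth, $G$-equivariant, weakly nondegenerate closed $2$-form on $Q=\mathcal O$, characterized on fundamental vector fields by \eqref{omeg}. Since $J$ is an inclusion, the pullback $J^*\om_{\KKS}$ is literally $\om_{\KKS}$ restricted to $\mathcal O$, and formula \eqref{omeg} with $J(q)=q$ is exactly the defining formula of the KKS form on a coadjoint orbit. This delivers the conclusion.

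There is no serious obstacle here: the corollary is essentially a repackaging of Lemma~\ref{lemac} in the tautological situation where the equivariant injection into $\g^*$ is the inclusion of the orbit itself. The only mild point worth emphasizing is the appeal to Remark~\ref{R:unique} to know that the smooth structure on $\mathcal O$ is unambiguously defined, so that ``the'' KKS form on $\mathcal O$ is well-posed as a statement.
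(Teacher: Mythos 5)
Your proposal is correct and follows exactly the paper's own route: the paper derives Corollary~\ref{C:ooo} by applying Lemma~\ref{lemac} with $Q=\mathcal O$ and $J$ the inclusion $\mathcal O\to\g^*$, with the smooth structure pinned down uniquely via Remark~\ref{R:unique}, precisely as you describe. The hypothesis checks you list (injectivity and equivariance of the inclusion, transitivity of the orbit, smoothness and local smooth sections from the definition of good) are the same implicit verifications the paper relies on.
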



\subsection{Diffeomorphism groups preserving a geometric structure}\label{SS:geom}
Suppose $G$ is a Lie group acting smoothly on a finite dimensional manifold $M$.
Typical examples of $G$ we are interested in include: the group of Hamiltonian diffeomorphisms, the group of symplectic diffeomorphisms, the group of (exact) volume preserving diffeomorphisms, the group of contact diffeomorphisms, or the automorphism group of a principal bundle over $M$.
In view of the latter example, we do not require the $G$ action on $M$ to be faithful.

Let $\Gr(M)$ denote the nonlinear Grassmannian of $M$, i.e., the space of all closed smooth submanifolds $N$ in $M$.
This is a smooth Fr\'echet manifold with tangent space $T_N\Gr(M)=\Ga(TM|_N/TN)$, see \cite{BF81,M80b,M80c} and \cite[Theorem~44.1]{KM97}. 

For $N\in\Gr(M)$ we let $G\cdot N$ denote the $G$ orbit through $N$ in $\Gr(M)$ and we write $G_N=\{g\in G:g\cdot N=N\}$ for the isotropy group at $N$.
We say \emph{$G$ has good orbit at $N$} if $G\cdot N$ is an initial splitting smooth submanifold in $\Gr(M)$ and the (smooth) $G$ action on $G\cdot N$ admits local smooth sections.
We say \emph{$G$ has good isotropy at $N$} if $G_N$ is a splitting Lie subgroup of $G$.

\begin{lemma}\label{L:geom1}
	If $G$ has good orbit and good isotropy at $N\in\Gr(M)$, then the $G$ equivariant orbit map
	\begin{equation}\label{E:princ.sigma}
		G\to G\cdot N,\quad g\mapsto g\cdot N
	\end{equation}
	is a smooth principal bundle with structure group $G_N$.
\end{lemma}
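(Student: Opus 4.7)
The plan is to build explicit local trivializations of $\pi\colon G\to G\cdot N$, $\pi(g)=g\cdot N$, using local smooth sections of the action combined with left translations, and then verify the $G_N$-equivariance and the smoothness of transition maps. The existence of local smooth sections of the $G$ action on $G\cdot N$ (which is given) is what makes all of this work; it plays the role of the usual slice in the finite-dimensional theory of principal bundles.

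First I would observe that $\pi$ itself is smooth: the $G$ action on $\Gr(M)$ is smooth, so $g\mapsto g\cdot N$ is a smooth map $G\to\Gr(M)$, and since $G\cdot N$ is an initial submanifold of $\Gr(M)$ this corestricts to a smooth map $G\to G\cdot N$. Next, pick a local smooth section of the $G$ action at $N$, i.e.\ an open neighborhood $U$ of $N$ in $G\cdot N$ and a smooth map $\sigma\colon U\to G$ with $\sigma(x)\cdot N=x$ for all $x\in U$; we may arrange $\sigma(N)=e$. Define
\[
	\Ph_U\colon U\times G_N\to\pi^{-1}(U),\qquad \Ph_U(x,h):=\sigma(x)\,h.
\]
This is clearly smooth, $G_N$-equivariant for the right $G_N$ action, and satisfies $\pi\circ\Ph_U=\pr_1$. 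Its set-theoretic inverse is
\[
	\Ph_U^{-1}(g)=\bigl(\pi(g),\,\sigma(\pi(g))^{-1}g\bigr),
\]
which is well-defined because $\sigma(\pi(g))\cdot N=\pi(g)=g\cdot N$ forces $\sigma(\pi(g))^{-1}g\in G_N$.

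The main obstacle is verifying that $\Ph_U^{-1}$ is smooth, and in particular that the second component lands smoothly in $G_N$. Here I would use the assumption that $G_N$ is a \emph{splitting} Lie subgroup of $G$: in the convenient calculus of \cite{KM97}, splitting submanifolds are initial (cf.\ \cite[Definition~27.11]{KM97}), so a map into $G$ whose image is contained in $G_N$ is smooth into $G_N$ if and only if it is smooth into $G$. Since $g\mapsto\sigma(\pi(g))^{-1}g$ is obviously smooth into $G$, this produces a smooth map into $G_N$ and shows that $\Ph_U$ is a diffeomorphism.

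To obtain trivializations over an open cover of the base, I would translate by left multiplication: for each $g_0\in G$, the composition of $\Ph_U$ with $\ell_{g_0}$ on the target and with $\ell_{g_0}$ on $U\subseteq G\cdot N$ on the source yields a trivialization over the open neighborhood $g_0\cdot U$ of $g_0\cdot N$. These cover $G\cdot N$, they are all $G_N$-equivariant by construction, and on overlaps the transition map has the form $(x,h)\mapsto(x,\tau(x)h)$ where $\tau(x)=\sigma_2(x)^{-1}\sigma_1(x)\in G_N$; the same splitting-submanifold argument as above shows $\tau\colon U_{12}\to G_N$ is smooth. Together with $G_N$ being a Lie group in its own right (given the good isotropy hypothesis), this verifies the principal $G_N$-bundle axioms.
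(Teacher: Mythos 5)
Your proof is correct and takes essentially the same approach as the paper's: local trivializations $(\tilde N,h)\mapsto u(\tilde N)h$ of the orbit map built from local smooth sections of the action, with the splitting (hence initial) Lie subgroup property of $G_N$ ensuring that the inverse $g\mapsto\bigl(g\cdot N,\sigma(g\cdot N)^{-1}g\bigr)$ is smooth into $G_N$. The only cosmetic difference is that you produce trivializations away from $N$ by left translation of a single section based at $N$, whereas the paper invokes a section with $u(\tilde N)\cdot N=\tilde N$ near each point $N_0$ of the orbit directly; your explicit splitting-submanifold argument for smoothness into $G_N$ spells out a step the paper leaves implicit.
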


\begin{proof}
	Suppose $N_0\in G\cdot N$.
	Since the $G$ action on $G\cdot N$  admits local smooth sections, there exists an open neighborhood $U$ of $N_0$ in $G\cdot N$ and a smooth map $u\colon U\to G$ such that $u(\tilde N)\cdot N=\tilde N$ for all $\tilde N\in U$.
	This permits to write down a local trivialization of \eqref{E:princ.sigma} near $N_0$.
	Indeed, 
	\begin{equation}\label{E:trivi}
		 U\times G_N\to G|_{U},\quad(\tilde N,g)\mapsto u(\tilde N)g
	\end{equation}
	is an $G_N$ equivariant diffeomorphism with inverse map
	\[
		g\mapsto(g\cdot N,u(g\cdot N)^{-1}g),
	\]
	where the right hand side in \eqref{E:trivi} denotes the preimage of $U$ under \eqref{E:princ.sigma}.
\end{proof}
	
We say \emph{$G$ has good extension at $N$} if the canonical homomorphism
\begin{equation}\label{E:homo.sigma}
	G_N\to\Diff(N),\quad g\mapsto g|_N 
\end{equation}
admits a local smooth right inverse in a neighborhood of the identity.
Since we do not want to presume good isotropy at $N$, the local section is assumed to be smooth when considered as a map into $G$.
Clearly, $G$ has good extension at $N$ if and only if the $G_N$ action on $\Diff(S,N)$ admits local sections which are smooth in the aforementioned sense.
Here $S$ is any manifold diffeomorphic to $N$.

Let $\iota_N\colon N\to M$ denote the natural inclusion of $N$ and write
\[
	G_{\iota_N}
	=\bigl\{g\in G\mid g\circ\iota_N=\iota_N\bigr\}
	=\bigl\{g\in G\mid\forall x\in N:g\cdot x=x\bigr\}
\]
for the isotropy group at $\iota_N$, i.e., the kernel of the homomorphism in~\eqref{E:homo.sigma}.
We say \emph{$G$ has good isotropy at $\iota_N$} if $G_{\iota_N}$ is a splitting Lie subgroup of $G$.
We say \emph{$G_N$ has good isotropy at $\iota_N$} if the inclusions $G_{\iota_N}\subseteq G_N\subseteq G$ are both splitting Lie subgroups.
In the latter case $G$ has good isotropy at $\iota_N$ also.

Proceeding exactly as in the proof of Lemma~\ref{L:geom1} one shows:

\begin{lemma}\label{L:geom2}
	Suppose $G$ has good extension at $N$ and suppose $G_N$ has good isotropy at $\io_N$.
	Then the orbits of the $G_N$ action on $\Diff(S,N)$ are open and closed.
	Moreover, for $\varphi\in\Diff(S,N)$ the orbit map $G_N\to G_N\cdot\varphi$, $g\mapsto g\circ\varphi$ is a smooth principal bundle with structure group $G_{\iota_N}$.

	In particular, the map in~\eqref{E:homo.sigma} is a smooth principal $G_{\iota_N}$ bundle, after disregarding the connected components of $\Diff(N)$ which are not in its image.
\end{lemma}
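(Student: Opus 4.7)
The plan is to follow the pattern of Lemma~\ref{L:geom1}: construct a local smooth section of the orbit map $\pi_\varphi\colon G_N\to G_N\cdot\varphi$, $g\mapsto g\circ\varphi$, at each $\varphi\in\Diff(S,N)$, and read off both openness of the orbits and a principal $G_{\iota_N}$-bundle trivialization from it. Good extension at $N$ supplies an open neighborhood $V$ of $\id_N$ in $\Diff(N)$ together with a right inverse $\sigma\colon V\to G$ of the restriction homomorphism~\eqref{E:homo.sigma}, smooth as a map into $G$. Since each $\sigma(\psi)$ restricts to the diffeomorphism $\psi$ of $N$, it carries $N$ onto itself, so the image of $\sigma$ lies in $G_N$; the hypothesis that $G_N\subseteq G$ is a splitting, hence initial, Lie subgroup then upgrades $\sigma$ to a smooth map $V\to G_N$.

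Fix $\varphi\in\Diff(S,N)$ and set $W:=\{\tilde\varphi\in\Diff(S,N):\tilde\varphi\circ\varphi^{-1}\in V\}$, an open neighborhood of $\varphi$. Then $\tilde\sigma\colon W\to G_N$ defined by $\tilde\sigma(\tilde\varphi):=\sigma(\tilde\varphi\circ\varphi^{-1})$ is smooth and satisfies $\tilde\sigma(\tilde\varphi)\circ\varphi=\tilde\varphi$, so $W\subseteq G_N\cdot\varphi$ and the orbit is open. Since the complement of any $G_N$-orbit is the union of the remaining open orbits, every orbit is also closed in $\Diff(S,N)$.

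To recognize $\pi_\varphi$ as a smooth principal $G_{\iota_N}$-bundle, I use $\tilde\sigma$ to write down a local trivialization exactly as in the proof of Lemma~\ref{L:geom1}: the $G_{\iota_N}$-equivariant map $W\times G_{\iota_N}\to\pi_\varphi^{-1}(W)$, $(\tilde\varphi,h)\mapsto\tilde\sigma(\tilde\varphi)h$, has candidate inverse $g\mapsto\bigl(g\circ\varphi,\tilde\sigma(g\circ\varphi)^{-1}g\bigr)$, and the identity $\tilde\sigma(g\circ\varphi)|_N=g|_N$ forces the second coordinate into $G_{\iota_N}$ while the splitting of $G_{\iota_N}$ in $G_N$ promotes its smoothness to the subgroup. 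Specializing to $S=N$ and $\varphi=\id_N$ gives the last assertion, because then $G_N\cdot\id_N$ coincides with the image of~\eqref{E:homo.sigma} and, being open and closed in $\Diff(N)$, is a union of connected components. The only mildly delicate step throughout is this repeated upgrading of the codomain from $G$ to smaller splitting Lie subgroups, but it is a direct consequence of the initial submanifold property and requires no new argument beyond invoking the definitions at the start of Section~\ref{S:two}.
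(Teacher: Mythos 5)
Your proof is correct and follows essentially the same route as the paper, whose proof consists of the remark that one proceeds exactly as in Lemma~\ref{L:geom1}, using the observation (made when good extension is defined) that good extension at $N$ is equivalent to the $G_N$ action on $\Diff(S,N)$ admitting local sections smooth into $G$. Your write-up simply makes explicit the two details the paper leaves implicit: translating the section $\sigma$ from a neighborhood of $\id_N$ to a neighborhood of an arbitrary $\varphi$, and upgrading codomains from $G$ to the splitting (hence initial) Lie subgroups $G_N$ and $G_{\iota_N}$.
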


In the remaining part of this section we will exemplify these notions by means of several classical groups of diffeomorphisms.


\subsection{The full diffeomorphism group}\label{S:geom.plain}
The group of compactly supported diffeomorphisms $G=\Diff_c(M)$ is a regular Lie group in the convenient setting of \cite{KM97}.
Its Lie algebra is the Lie algebra of compactly supported vector fields, $\mathfrak g=\X_c(M)$.

The natural $\Diff_c(M)$ action on $\Gr(M)$ admits local smooth sections and the isotropy group $\Diff_c(M;N):=\{f\in\Diff_c(M):f(N)=N\}$ is a splitting Lie subgroup in $\Diff_c(M)$ with Lie algebra $\mathfrak X_c(M;N)=\{X\in\mathfrak X_c(M):X(N)\subseteq TN\}$.
Hence, the orbit $\Diff_c(M)\cdot N$ consists of several connected components of $\Gr(M)$ and the orbit map $\Diff_c(M)\to\Diff_c(M)\cdot N$, $f\mapsto f(N)$ is a smooth principal bundle with structure group $\Diff_c(M;N)$, by Lemma~\ref{L:geom1}.
Moreover, $\Diff_c(M)$ has good extension at $N$ and $\Diff_c(M;\iota_N)=\{f\in\Diff_c(M):f|_N=\id\}$ is a splitting Lie subgroup of $\Diff_c(M;N)$ with Lie algebra $\mathfrak X_c(M;\iota_N)=\{X\in\mathfrak X_c(M):X|_N=0\}$.
Hence, the orbit $\Diff_c(M;N)\cdot\varphi$ consists of several connected components of $\Diff(S,N)$ and the orbit map $\Diff_c(M;N)\to\Diff_c(M;N)\cdot\varphi$, $g\mapsto g\circ\varphi$ is a smooth principal bundle with structure group $\Diff_c(M;\iota_N)$, according to Lemma~\ref{L:geom2}.
These facts are well known, see \cite[Lemma~1.2]{HV20} for instance.
We summarize them in

\begin{proposition}[{\cite[Lemma~1.2]{HV20}}]\label{P:DiffcM.good}
	Suppose $G=\Diff_c(M)$ and $N\in\Gr(M)$.
	Then $G$ has good orbit, good extension, and good isotropy at $N$, and $G_N$ has good isotropy at $\iota_N$.
	Moreover, the orbit $G\cdot N$ is open and closed in $\Gr(M)$.
\end{proposition}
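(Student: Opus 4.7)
The plan is to reduce everything to standard tubular--neighborhood constructions for $\Diff_c(M)$, adapted to the convenient calculus framework of \cite{KM97}. First I would fix a tubular neighborhood $\tau\colon W\to M$ of $N$, where $W$ is an open neighborhood of the zero section in the normal bundle $\nu=TM|_N/TN$. The standard chart of $\Gr(M)$ at $N$ identifies an open neighborhood of $N$ with an open set of sections via $s\mapsto\tau(s(N))$, realizing the tangent space $T_N\Gr(M)=\Ga(\nu)$.

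Next, for each small section $s$, I would construct a compactly supported isotopy of $M$ whose time-one map carries $N$ onto $\tau(s(N))$: extend the linear path $t\mapsto ts$ to a time-dependent vector field on $W$ that is $\tau$-vertical, cut it off by a bump function to extend it by zero to all of $M$, and integrate using regularity of $\Diff_c(M)$. Since this depends smoothly on $s$, it yields a smooth local section of the orbit map $G\to G\cdot N$ near $N$, and equivariance then spreads it to all of $G\cdot N$, giving good orbit at $N$. In particular the whole chart lies inside $G\cdot N$, so the orbit is open; since every other $G$-orbit is open by the same argument, $G\cdot N$ is also closed in $\Gr(M)$, establishing the final assertion.

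For the isotropy statements I would fix an auxiliary Riemannian metric to split $TM|_N=TN\oplus\nu$ and construct a continuous linear extension operator $E\colon\Ga(\nu)\to\X_c(M)$ by viewing a section of $\nu$ as a $\tau$-vertical vector field on $W$ and cutting off via a bump function. The time-one flow along $E(s)$ moves $N$ precisely to $\tau(s(N))$, so the map $(s,g)\mapsto\exp(E(s))\cdot g$ is a diffeomorphism of a neighborhood of $(0,\id)$ in $\Ga(\nu)\times\Diff_c(M;N)$ onto a neighborhood of $\id$ in $\Diff_c(M)$---a submanifold chart exhibiting $\Diff_c(M;N)$ as a splitting Lie subgroup. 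Entirely analogously, a $\tau$-horizontal lift $\X(N)\to\X_c(M;N)$, cut off by a bump function, supplies a closed complement to $\X_c(M;\iota_N)$ inside $\X_c(M;N)$, yielding good isotropy of $G$ at $\iota_N$ and the compatible splitting $G_{\iota_N}\subseteq G_N\subseteq G$. Composing this same extension with the exponential map of $\Diff_c(M)$ furnishes a local smooth right inverse of $G_N\to\Diff(N)$ near the identity, which is precisely good extension at $N$.

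The only point requiring genuine care is the smoothness of these ingredients in the convenient calculus: smoothness of the flow map of $\Diff_c(M)$ (guaranteed by its regularity), of the linear extension operators, and of the chart $s\mapsto\tau\o s$ for $\Gr(M)$. All three are standard for compactly supported diffeomorphism groups, so the main obstacle is really bookkeeping rather than any new analytic input, and the argument ultimately reproduces \cite[Lemma~1.2]{HV20}.
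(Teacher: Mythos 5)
Your good-orbit argument is correct and is the standard one: cut-off vertical translation fields, integrated via regularity of $\Diff_c(M)$, give a smooth local section of the orbit map, and openness/closedness of the orbit follows. (For calibration: the paper does not reprove this proposition at all, it cites \cite[Lemma~1.2]{HV20}; but the toolkit it uses for the analogous statements it \emph{does} prove, namely Propositions~\ref{P:Diffmu.good.ext} and~\ref{P:contact.good}, is the one your argument should have followed throughout.) The problems are in the extension and isotropy parts.

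First, good extension. You propose to invert $G_N\to\Diff(N)$ near the identity by ``composing the horizontal-lift extension $s\colon\X(N)\to\X_c(M;N)$ with the exponential map of $\Diff_c(M)$''. This step fails: the exponential map of a diffeomorphism group is not locally surjective (well known; see \cite{Milnor} or \cite{KM97}), so a diffeomorphism $\varphi$ of $N$ close to $\id_N$ need not be a time-one flow of any vector field, and $\exp\bigl(s(Z)\bigr)|_N=\exp_{\Diff(N)}(Z)$ only produces restrictions lying in the image of $\exp_{\Diff(N)}$, which is not a neighborhood of the identity. Consequently no choice of $Z$ depending on $\varphi$ can make this a right inverse of the restriction homomorphism. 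The repair is Moser's method, i.e., exactly Lemma~\ref{L:loc.sec} applied with $\sigma=s\circ\theta$, where $\theta$ is the Maurer--Cartan form of $\Diff(N)$: one extends the velocity field of a path ending at $\varphi$ and integrates the resulting \emph{time-dependent} field. This is precisely how the paper argues in Propositions~\ref{P:Diffmu.good.ext} and~\ref{P:contact.good}. Your orbit argument survives because there you get to \emph{choose} the flow; for extension the target diffeomorphism is prescribed, and that is where local surjectivity of $\exp$ would be needed.

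Second, the splitting-isotropy claims. Your proposed ``submanifold chart'' $(s,g)\mapsto\exp(E(s))\cdot g$ has domain $\Ga(\nu)\times\Diff_c(M;N)$, i.e., it presupposes the smooth structure on $\Diff_c(M;N)$ that is to be constructed; and producing a closed linear complement of $\X_c(M;\iota_N)$ inside $\X_c(M;N)$ is only a Lie-algebra-level statement. In the convenient setting, where no inverse function theorem is available, neither of these yields what the definition requires: a chart of $\Diff_c(M)$ in which the subgroup corresponds to a splitting closed linear subspace. The standard fix is to choose your auxiliary Riemannian metric so that $N$ is totally geodesic and use the associated local addition chart $X\mapsto\bigl(x\mapsto\exp^g_x(X(x))\bigr)$ of $\Diff_c(M)$ at the identity: in this chart, a diffeomorphism near $\id$ preserves $N$ (resp.\ fixes $N$ pointwise) if and only if the corresponding vector field lies in $\X_c(M;N)$ (resp.\ $\X_c(M;\iota_N)$), and these subspaces split by means of your extension operators. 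With these two repairs the proof goes through and reproduces the cited argument of \cite{HV20}.
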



\subsection{The group of principal bundle automorphisms}\label{S:geom.pb}
Let $P\to M$ denote a finite dimensional principal $H$ bundle. Then the group $G=\Aut_c(P)$ of $H$ equivariant diffeomorphisms of $P$ with compact support in $M$	is a convenient Lie group in a canonical way.
The canonical homomorphism $G\to\Diff_c(M)$ is a smooth principal bundle \cite[Theorem~3.1, Corollary~4.4]{AM}, after disregarding the connected components of $\Diff_c(M)$ which are not in its image.
Hence, Proposition~\ref{P:DiffcM.good} implies:

\begin{proposition}
	Suppose $G=\Aut_c(P)$ and $N\in\Gr(M)$.
	Then $G$ has good orbit, good extension, and good isotropy at $N$, and $G_N$ has good isotropy at $\iota_N$.		
	Moreover, the orbit $G\cdot N$ is open and closed in $\Gr(M)$.
\end{proposition}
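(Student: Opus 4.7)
The plan is to deduce every assertion from Proposition~\ref{P:DiffcM.good} by transporting structures along the canonical homomorphism $\pi\colon G=\Aut_c(P)\to\Diff_c(M)$, which is cited to be a smooth principal bundle (with structure group the compactly supported gauge group) after disregarding the connected components of $\Diff_c(M)$ not in its image. The crucial observation is that the $G$ action on $\Gr(M)$ factors through $\pi$, so $G\cdot N=\Diff_c(M)\cdot N$ as subsets of $\Gr(M)$, the isotropy groups satisfy $G_N=\pi^{-1}(\Diff_c(M;N))$ and $G_{\iota_N}=\pi^{-1}(\Diff_c(M;\iota_N))$, and the restriction map $G_N\to\Diff(N)$ factors as $G_N\to\Diff_c(M;N)\to\Diff(N)$ via $\pi$.

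The openness and closedness of $G\cdot N$ in $\Gr(M)$ is then immediate from Proposition~\ref{P:DiffcM.good}, whence $G\cdot N$ is trivially an initial splitting submanifold. To exhibit local smooth sections for the $G$ action on $G\cdot N$, I would take the local smooth section $v\colon U\to\Diff_c(M)$ provided by Proposition~\ref{P:DiffcM.good}, shrink $U$ if needed, and postcompose with a local smooth section $s$ of $\pi$ defined near $v(N_0)$; the composite $u=s\o v$ satisfies $u(\tilde N)\cdot N=\pi(u(\tilde N))\cdot N=v(\tilde N)\cdot N=\tilde N$. Likewise, good extension at $N$ follows by postcomposing a local smooth right inverse of $\Diff_c(M;N)\to\Diff(N)$ (furnished by Proposition~\ref{P:DiffcM.good}) with a local smooth section of the restricted principal bundle $\pi|_{G_N}\colon G_N\to\Diff_c(M;N)$ near the identity.

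For the subgroup assertions, I would invoke the general fact that the preimage of a splitting Lie subgroup under a smooth principal bundle homomorphism is itself a splitting Lie subgroup, and the restriction of the principal bundle to this preimage is again a smooth principal bundle with the same structure group. Applied to $\pi$ and the splitting inclusion $\Diff_c(M;N)\subseteq\Diff_c(M)$ from Proposition~\ref{P:DiffcM.good}, this yields that $G_N\subseteq G$ is a splitting Lie subgroup (good isotropy at $N$) and that $\pi|_{G_N}\colon G_N\to\Diff_c(M;N)$ is a smooth principal bundle. Feeding this back with the splitting inclusion $\Diff_c(M;\iota_N)\subseteq\Diff_c(M;N)$ shows $G_{\iota_N}\subseteq G_N$ is splitting, and the analogous argument applied directly to $\pi$ gives $G_{\iota_N}\subseteq G$ splitting; hence $G_N$ has good isotropy at $\iota_N$.

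The argument is essentially bookkeeping; the only nontrivial input is the cited principal bundle structure of $\pi$, and the main care needed is to confirm that preimages and restrictions of a smooth principal bundle behave in the expected way with respect to splitting Lie subgroups and local sections. There is no serious analytic obstacle beyond what is already packaged into the result of \cite{AM}.
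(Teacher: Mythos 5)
Your proposal is correct and follows essentially the same route as the paper: the paper's entire proof consists of citing the principal bundle structure of the canonical homomorphism $\pi\colon\Aut_c(P)\to\Diff_c(M)$ from \cite{AM} and declaring that Proposition~\ref{P:DiffcM.good} then implies the statement; your write-up supplies exactly the bookkeeping this reduction requires.

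One claim is overstated, though your own argument repairs it. Since $\pi$ need not be surjective --- its image is only a union of connected components of $\Diff_c(M)$, which is precisely why the paper adds the caveat ``after disregarding the connected components of $\Diff_c(M)$ which are not in its image'' --- the factorization of the action through $\pi$ gives only $G\cdot N=\pi(G)\cdot N\subseteq\Diff_c(M)\cdot N$, and equality can fail. So openness and closedness of $G\cdot N$ is \emph{not} immediate from Proposition~\ref{P:DiffcM.good} by orbit equality. It does follow from the local smooth sections you construct in the next step: normalize $v(N_0)=e$ (as the paper allows w.l.o.g.), shrink $U$ so that $v(U)$ lies in the open subgroup $\pi(G)$ where a local section $s$ of $\pi$ through $e$ exists, and then $u=s\circ v$ exhibits local smooth sections for the $G$ action; by the general principle recalled in Section~\ref{S:two}, all $G$ orbits are then open and closed, hence trivially initial splitting submanifolds. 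With this reordering the rest of your argument goes through unchanged: the identifications $G_N=\pi^{-1}(\Diff_c(M;N))$ and $G_{\iota_N}=\pi^{-1}(\Diff_c(M;\iota_N))$ are correct, the preimage of a splitting Lie subgroup under the principal bundle homomorphism $\pi$ is indeed a splitting Lie subgroup (read off from the bundle charts) with the restricted bundle again principal, and composing the extension and section data from Proposition~\ref{P:DiffcM.good} with local sections of $\pi$ and of $\pi|_{G_N}$ gives good extension, good isotropy at $N$, and good isotropy of $G_N$ at $\iota_N$, exactly as you describe.
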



\subsection{The Hamiltonian diffeomorphism group}\label{S:isosymp}	
Throughout this section $(M,\om)$ denotes a symplectic manifold.
The Lie algebra of compactly supported Hamiltonian vector fields will be denoted by $\ham_c(M)=\{X_h:h\in C^\infty_c(M)\}$, where $i_{X_h}\om=dh$.
This is an ideal in the Lie algebra $\mathfrak X_c(M,\om)$ of compactly supported symplectic vector fields.
The Lie algebra $\ham_c(M)$ can be identified with the Lie algebra $C^\oo_0(M)$ of all compactly supported functions on $M$ 
for which the integral with respect to the Liouville form vanishes on all closed connected components,
endowed with the Poisson bracket $\{h,h'\}=\om(X_{h'},X_h)$.

Let  $\Ham_c(M)$ denote the group of compactly supported Hamiltonian diffeomorphisms,
i.e., the group of diffeomorphisms 
obtained by integrating time dependent vector fields in $\ham_c(M)$.
This is a normal subgroup in the group $\Diff_c(M,\om)$ of compactly supported symplectic diffeomorphisms.
The group $\Ham_c(M)$ is a convenient Lie group in a natural way \cite[Theorem~43.13 and Remark~43.14]{KM97}.

The regular Lie group $\Diff_c(M,\om)$ of symplectic diffeomorphisms \cite[Theorem~43.12]{KM97} will be treated elsewhere.

\subsubsection*{Isotropic submanifold} We first consider the case of isotropic submanifolds.
In view of the tubular neighborhood theorem for isotropic embeddings \cite{W77,W81}, 
the space $\Gr^{\iso}(M,\om)$ of all isotropic submanifolds in $M$ is a splitting smooth submanifold of $\Gr(M)$, 
see for instance \cite[Section~8]{L09}, with tangent space 
\begin{equation*}
	T_N\Gr^{\iso}(M,\om)
	=\bigl\{X\in\Ga(TM|_N/TN): \io_N^*i_{X}\om\in\Om^1(N) \text{ closed}\bigr\}.
\end{equation*}
Weinstein's \cite{W90} \emph{isodrastic distribution} on $\Gr^{\iso}(M,\om)$ is given by
\begin{equation}\label{DDDDn}
	\D_N:=\{X\in\Ga(TM|_N/TN): \io_N^*i_{X}\om\in dC^\oo(N)\}
\end{equation}
and has finite codimension $\dim H^1(N;\RR)$.
This is an integrable distribution, whose leaves are orbits of the group of Hamiltonian diffeomorphisms $\Ham_c(M)$ \cite{W90,L09}.
In particular, each $\Ham_c(M)$ orbit through an isotropic $N\in\Gr^{\iso}(M,\om)$ is a splitting initial smooth submanifold in $\Gr(M)$.
Furthermore, the $\Ham_c(M)$ action on each orbit admits local smooth sections \cite[Corollary~3.2]{HV23}.
Hence we have good orbits through isotropic $N$.
From \cite[Proposition~3.1]{HV23} follows also the good extension property at isotropic $N$.
We summarize this in the following:

\begin{proposition}\label{pp}
	If $G=\Ham_c(M)$, then at any $N\in \Gr^{\iso}(M,\om)$ we have good orbit and good extension.
	Moreover, $G_N$ has good (open) orbits at any $\ph\in\Diff(S,N)$.
\end{proposition}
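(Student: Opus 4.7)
The plan is to assemble the three assertions from the Weinstein--Lee description of the isodrastic foliation on $\Gr^{\iso}(M,\omega)$ recalled just above the statement, together with two smoothness results from \cite{HV23} which I will quote as black boxes.

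For the good orbit property at $N$, I would first invoke Weinstein's tubular neighbourhood theorem for isotropic embeddings to note that $\Gr^{\iso}(M,\omega)$ is itself a splitting submanifold of $\Gr(M)$. The isodrastic distribution $\D$ of \eqref{DDDDn} has finite codimension $\dim H^1(N;\R)$ in $T\Gr^{\iso}(M,\omega)$, is integrable, and its leaves coincide with the $\Ham_c(M)$ orbits by \cite{W90,L09}. Consequently each orbit is a splitting initial submanifold of $\Gr^{\iso}(M,\omega)$, and hence of $\Gr(M)$. Smoothness of the $\Ham_c(M)$ action on an orbit is standard; for local smooth sections I would simply quote \cite[Corollary~3.2]{HV23}, whose proof is a Moser-type argument in the spirit of Lemma~\ref{L:loc.sec}. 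Good extension at $N$ I will quote directly from \cite[Proposition~3.1]{HV23}.

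For the last assertion, I would fix $\varphi\in\Diff(S,N)$ and put $H:=\{g|_N:g\in G_N\}\subseteq\Diff(N)$, so that $G_N\cdot\varphi = \{h\circ\varphi:h\in H\}$. Good extension supplies a smooth map $s\colon V\to G_N$, defined on an open neighbourhood $V$ of $\id_N$ in $\Diff(N)$, with $s(f)|_N=f$; in particular $V\subseteq H$, and since $H$ is a subgroup of $\Diff(N)$ containing the open set $V$, it is open and closed. Right composition $R_\varphi\colon\Diff(N)\to\Diff(S,N)$, $f\mapsto f\circ\varphi$, is a diffeomorphism with inverse $R_{\varphi^{-1}}$, hence $G_N\cdot\varphi=R_\varphi(H)$ is open (and therefore splitting) in $\Diff(S,N)$. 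A local smooth section of the orbit map $g\mapsto g|_N\circ\varphi$ near $\varphi$ is then given by $\tilde\varphi\mapsto s(\tilde\varphi\circ\varphi^{-1})$, defined on the open set $R_\varphi(V)$; sections at other points of the orbit are obtained by left-translating with a suitable element of $G_N$.

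The genuine work is hidden in the two cited results from \cite{HV23}: both rest on Weinstein's tubular neighbourhood theorem for isotropic submanifolds and on a careful parameter-smooth construction of Hamiltonian isotopies extending prescribed isotopies of $N$. The main obstacle in any self-contained proof would be this extension construction — smoothly in the extended data — which is precisely what makes good extension hold even though good isotropy of $G_N$ at $\iota_N$ remains open for the Hamiltonian group. Once those black boxes are in hand, the remaining assembly is formal, and in particular avoids any appeal to Lemma~\ref{L:geom2}, whose hypothesis of good isotropy at $\iota_N$ we cannot presently verify.
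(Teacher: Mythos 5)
Your proposal is correct and takes essentially the same route as the paper: the paper's proof of this proposition is precisely the paragraph preceding it, which quotes Weinstein--Lee \cite{W90,L09} for the isodrastic foliation whose leaves are the $\Ham_c(M)$ orbits, \cite[Corollary~3.2]{HV23} for local smooth sections, and \cite[Proposition~3.1]{HV23} for good extension. The third assertion is obtained in the paper exactly as you do, as a formal consequence of good extension (via the stated equivalence between good extension at $N$ and the $G_N$ action on $\Diff(S,N)$ admitting local smooth sections), avoiding Lemma~\ref{L:geom2} since good isotropy at $\iota_N$ is unknown for the Hamiltonian group; your translation argument just makes this step explicit.
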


We do not know if $\Ham_c(M)$ has good isotropy at isotropic $N$ or $\iota_N$.

\subsubsection*{Symplectic submanifold} 
We now consider the symplectic  Grass\-man\-nian $\Gr^{\symp}(M,\om)$ consisting of all closed symplectic submanifolds of $M$.
Clearly, this is a open subset in $\Gr(M)$ which is invariant under $\Diff(M,\omega)$.

Combining the construction in the proof of \cite[Proposition 3]{HV04} with Lemma~\ref{L:loc.sec} we obtain:

\begin{proposition}\label{P:p}
	The $\Ham_c(M)$ action on $\Gr^{\symp}(M,\om)$ admits local smooth sections.
	In particular, $\Ham_c(M)$ has good (open) orbit at every $N\in\Gr^{\symp}(M,\om)$.
\end{proposition}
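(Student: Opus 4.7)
The plan is to verify the hypothesis of Lemma~\ref{L:loc.sec} for the $\Ham_c(M)$ action on $\Gr^{\symp}(M,\om)$; the second assertion then follows automatically, because any action admitting local smooth sections has open orbits and $\Gr^{\symp}(M,\om)$ is itself open in $\Gr(M)$.

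Fix $N_0\in\Gr^{\symp}(M,\om)$ and choose a tubular neighborhood $W$ of $N_0$ in $M$. After shrinking, every $N$ in some open neighborhood $U$ of $N_0$ in $\Gr^{\symp}(M,\om)$ is the image of a smooth section of the tubular projection $W\to N_0$ and remains symplectic. For such $N$ the symplectic orthogonal yields a splitting $TM|_N=TN\oplus(TN)^{\om}$ that depends smoothly on $N$, so every $X\in T_N\Gr^{\symp}(M,\om)=\Ga(TM|_N/TN)$ has a unique lift $\tilde X\in\Ga((TN)^{\om})$ depending smoothly on $(N,X)$. Set $\al_X:=\io_{\tilde X}\om\in\Ga(T^*M|_N)$. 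Nondegeneracy of $\om$ forces any $h\in C^\oo_c(M)$ satisfying $dh|_N=\al_X$ to have $X_h|_N=\tilde X$, so $X_h|_N$ projects to $X$ in $\Ga(TM|_N/TN)$ and the fundamental vector field at $N$ corresponding to $h$ equals $X$.

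To produce such an $h$ smoothly in $(N,X)$, I would extend $\al_X$ from $N$ to a smooth $1$-form on $W$ using the tubular retraction, pair it with the normal displacement coordinate to obtain a function $\tilde h$ on $W$ whose $1$-jet along $N$ realizes $\al_X$, and then multiply by a fixed cut-off $\ch\in C^\oo_c(W)$ equal to $1$ near $N_0$. After shrinking $U$ so that every $N\in U$ lies in $\{\ch=1\}$, the product $h:=\ch\tilde h\in C^\oo_c(M)$ still satisfies $dh|_N=\al_X$. Setting $\si(X):=h$ defines a smooth map $T\Gr^{\symp}(M,\om)|_U\to\ham_c(M)$ satisfying the hypothesis \eqref{E:tsigma} of Lemma~\ref{L:loc.sec}, which then delivers the required local smooth sections.

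The main technical point will be checking that the splitting $TM|_N=TN\oplus(TN)^{\om}$, the tubular extension of $\al_X$, and hence $\si$, all depend smoothly on $N$ in the convenient-calculus sense; here the openness of the symplectic condition on $N$ guarantees that $U$ can be chosen inside $\Gr^{\symp}(M,\om)$. This smooth-parameter construction is precisely the content of the argument underlying \cite[Proposition~3]{HV04}; the present proposition combines that construction with Lemma~\ref{L:loc.sec} to extract the local smooth sections and, consequently, the good open orbit property.
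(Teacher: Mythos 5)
Your proposal is correct and takes essentially the same route as the paper, whose entire proof consists of combining the construction in the proof of \cite[Proposition~3]{HV04} (the symplectic-orthogonal lift $\tilde X\in\Ga((TN)^{\om})$ of $X$ and a compactly supported Hamiltonian with prescribed $1$-jet along $N$) with Lemma~\ref{L:loc.sec}, which is exactly what you carry out. The only point worth making explicit is that your $1$-jet prescription (with $h|_N=0$) is consistent because $\io_N^*\al_X=0$, which is precisely what choosing the lift in the symplectic orthogonal $(TN)^{\om}$ guarantees.
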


The map
\begin{equation}\label{E:J.symp}
	J\colon\Gr^{\symp}(M,\om) \to \ham_c(M)^*=C^\oo_0(M)^*,\quad\langle J(N),X_h\rangle = \int_{N}he^\om,
\end{equation}
is injective and $\Diff(M,\omega)$ equivariant. 
Now, for any symplectic submanifold $N\subseteq M$, the coadjoint orbit of $J(N)$ is a good coadjoint orbit of $\Ham_c(M)$.
More precisely, by applying Lemma~\ref{lemac} we obtain:

\begin{theorem}[{\cite[Theorem~3]{HV04}}]\label{P:symp}
	Let $(M,\omega)$ be a symplectic manifold and let $\M$ denote a connected component of the symplectic Grassmannian $\Gr^{\symp}(M,\om)$.
	Then its image $J(\M)$ under the map $J$ in~\eqref{E:J.symp} is a good coadjoint orbit of $\Ham_c(M)$ and the (formal) pullback of the KKS symplectic form, 
	denoted  by $\Omega=J^*\om_{\KKS}$, is a $\Ham_c(M)$ equivariant smooth (weakly) symplectic form on $\M$ characterized by
	\[
		\Omega_N(X,Y)=\int_N\io_N^*(i_Yi_X e^\om), 
	\]
	for $X,Y\in\Ga(TM|_N/TN)=T_N\Gr(M)=T_N\Gr^{\symp}(M,\om)$.
	Furthermore, $J$ is an (equivariant) moment map for the (Hamiltonian) action of $\Ham_c(M)$ on $\M$.		
\end{theorem}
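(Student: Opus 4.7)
The plan is to verify the hypotheses of Lemma~\ref{lemac} for $G=\Ham_c(M)$, $Q=\M$, and $J$ the map defined in \eqref{E:J.symp}, and then to unwind the defining relation \eqref{omeg} into the stated integral formula.

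First I would assemble the input for Lemma~\ref{lemac}. By Proposition~\ref{P:p}, the $\Ham_c(M)$ action on $\Gr^{\symp}(M,\om)$ admits local smooth sections, hence its orbits are open and closed. In particular every connected component $\M$ is a single $\Ham_c(M)$ orbit, and the action on $\M$ is smooth, transitive, and admits local smooth sections. The map $J$ is manifestly $\Diff_c(M,\om)$ equivariant, by change of variables together with invariance of $e^\om$, hence $\Ham_c(M)$ equivariant. Its injectivity is elementary: given distinct $N_1,N_2\in\Gr^{\symp}(M,\om)$, any compactly supported bump supported near a point of $N_1\setminus N_2$, corrected so that its integral with respect to the Liouville form vanishes on each closed connected component of $M$, yields an $h\in C^\infty_0(M)$ with $\int_{N_1}he^\om\ne\int_{N_2}he^\om$.

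Second, I would invoke Lemma~\ref{lemac}. It produces a smooth $\Ham_c(M)$ invariant weakly symplectic form $\Omega$ on $\M$ satisfying
\[
	\Omega_N\bigl(\ze_{X_h}(N),\ze_{X_k}(N)\bigr)=\langle J(N),[X_h,X_k]\rangle,
\]
and asserts that $J$ is an equivariant moment map for the $\Ham_c(M)$ action on $\M$. Transferring the Fr\'echet manifold structure of $\M$ to $J(\M)$ along the equivariant bijection $J$ makes $J(\M)$ a Fr\'echet manifold on which the coadjoint action is smooth with local sections, so $J(\M)$ is a good coadjoint orbit and $\Omega=J^*\om_{\KKS}$; Remark~\ref{R:unique} ensures that the resulting smooth structure on $J(\M)$ is canonical.

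Third, to recover the integral formula, I would represent tangent vectors $X,Y\in\Ga(TM|_N/TN)$ by fundamental vector fields using infinitesimal transitivity, writing $X=X_h|_N\mod TN$ and $Y=X_k|_N\mod TN$. A short Cartan-calculus computation based on $i_{X_h}\om=dh$ gives $i_{X_h}e^\om=dh\wedge e^\om$ and hence
\[
	i_{X_k}i_{X_h}e^\om=\{k,h\}e^\om-dh\wedge dk\wedge e^\om.
\]
Since $\io_N^*e^\om$ is closed on $N$, the pullback of the second term is exact on the closed submanifold $N$ and drops out on integration, leaving $\int_N\{k,h\}e^\om=\langle J(N),[X_h,X_k]\rangle$ (using $[X_h,X_k]=X_{\{k,h\}}$), which matches $\Omega_N(X,Y)$. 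The main obstacle I anticipate is purely bookkeeping here: fixing sign conventions for the Poisson bracket versus the Lie bracket of Hamiltonian vector fields, and confirming the vanishing of the exact correction. Once Proposition~\ref{P:p} is in hand, the conceptual content is entirely packaged inside Lemma~\ref{lemac}.
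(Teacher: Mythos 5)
Your proposal is correct and takes essentially the same route as the paper: Proposition~\ref{P:p} supplies transitivity and local smooth sections on each connected component, injectivity and equivariance of $J$ feed into Lemma~\ref{lemac}, and the Cartan-calculus computation ($i_{X_h}e^\om=dh\wedge e^\om$, discarding the exact term $dh\wedge dk\wedge e^\om$ by Stokes) recovers the integral formula, which is exactly how the paper obtains Theorem~\ref{P:symp}. Your sign bookkeeping, $[X_h,X_k]=X_{\{k,h\}}$, amounts to using the group-theoretic bracket on $\ham_c(M)$ (minus the vector-field bracket), which is precisely the convention implicit in Lemma~\ref{lemac} via $\Ad$, so the stated order of contractions $\int_N\io_N^*(i_Yi_Xe^\om)$ comes out correctly.
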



\subsection{Group of exact volume preserving diffeomorphisms}\label{S:isovol}
Throughout this section $(M,\mu)$ denotes a manifold endowed with a volume form.
We will denote the Lie algebra of compactly supported exact divergence free vector fields by 
\[
	\mathfrak X_{c,\ex}(M):=\bigl\{X\in\mathfrak X_c(M):i_X\mu=d\alpha,\ \alpha\text{ with compact support}\bigr\},
\]
an ideal in the Lie algebra $\mathfrak X_c(M,\mu)$ of compactly supported divergence free vector fields.
Correspondingly, we let $\Diff_{c,\ex}(M)$ denote the group of compactly supported exact volume preserving diffeomorphisms, 
obtained by integrating time dependent vector fields in $\mathfrak X_{c,\ex}(M)$.
This is a normal subgroup in the group $\Diff_c(M,\mu)$ of compactly supported volume preserving diffeomorphisms.

\begin{remark}\label{R:enlarge}
For $M$  compact, the groups  $\Diff(M,\mu)$ and $\Diff_{\ex}(M)$ are regular Lie groups \cite[Theorem~III.2.5.3]{Ham}.
Though this fact is not (yet) known for noncompact $M$, one can handle this case too.
The definition of an action admitting local smooth section  can be enlarged to any subgroup $G$ of a regular Lie group $H$,
by requiring the local section $u:U\to G$ to be smooth as an $H$ valued map.
We notice that, in this more general setting, Lemma \ref{L:loc.sec} still holds.
We apply this to the subgroups $\Diff_{c}(M,\mu)$ and $\Diff_{c,\ex}(M)$ of the regular Lie group $\Diff_c(M)$.
\end{remark}

\begin{proposition}\label{P:Diffmu.good.ext}
	$\Diff_{c,\ex}(M)$ has good extension at each $N\in\Gr_{\codim\geq1}(M)$.
\end{proposition}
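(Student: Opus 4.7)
The plan is to verify the hypotheses of Lemma~\ref{L:loc.sec} (in the enlarged sense of Remark~\ref{R:enlarge}) for the action of $G_N:=\Diff_{c,\ex}(M)_N$, viewed as a subgroup of the regular Lie group $\Diff_c(M)$, on $Q:=\Diff(S,N)$ where $S$ is any manifold diffeomorphic to $N$. Good extension at $N$ amounts precisely to this action admitting local smooth sections near $\id_S$: a local section $u\colon U\to G_N$ with $u(\varphi)\circ\varphi_0=\varphi$ for a fixed $\varphi_0\in\Diff(S,N)$ yields a smooth right inverse of $G_N\to\Diff(N)$ near $\id_N$ via $\phi\mapsto u(\phi\circ\varphi_0)$. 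The Lie algebra of $G_N$ is $\mathfrak g_N:=\mathfrak X_{c,\ex}(M)\cap\mathfrak X_c(M;N)$.

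Identifying $T_\varphi Q=\Gamma(\varphi^*TN)$, the fundamental vector field at $\varphi$ associated to $X\in\mathfrak g_N$ is $\zeta_X(\varphi)=X\circ\varphi$. Substituting $W:=V\circ\varphi^{-1}\in\mathfrak X(N)$, Lemma~\ref{L:loc.sec} reduces the construction of the required $\sigma_\varphi$ to producing a single continuous linear \emph{extension operator}
\[
	E\colon\mathfrak X(N)\to\mathfrak g_N,\qquad E(W)|_N=W,
\]
independent of $\varphi$; one then sets $\sigma_\varphi(V):=E(V\circ\varphi^{-1})$, which is smooth in $(V,\varphi)$ and satisfies $\sigma_\varphi(V)\circ\varphi=V$.

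To build $E$, set $n=\dim M$ and $k=\codim N\ge1$. It suffices to construct a continuous linear map $W\mapsto\alpha_W\in\Omega^{n-2}_c(M)$ such that
\[
	(d\alpha_W)|_N=(i_W\mu)|_N
\]
as sections of $\Lambda^{n-1}T^*M|_N$. Indeed, defining $E(W)$ as the unique compactly supported vector field with $i_{E(W)}\mu=d\alpha_W$ automatically places $E(W)$ in $\mathfrak X_{c,\ex}(M)$; pointwise injectivity of $X\mapsto i_X\mu$ together with the prescribed restriction then forces $E(W)|_N=W\in\Gamma(TN)$, so $E(W)$ is tangent to $N$. To produce $\alpha_W$, pick a tubular neighborhood of $N$ with transverse coordinates $(y^1,\dots,y^k)$ and a cutoff $\chi$ equal to $1$ in a smaller neighborhood of $N$ and compactly supported in the tubular one. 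A direct coordinate calculation shows that a local $(n-2)$-form $\beta_W$, linear in $W$ and carrying a factor of $y^1$, can be written down explicitly so that $(d\beta_W)|_N=(i_W\mu)|_N$; setting $\alpha_W:=\chi\beta_W$ (and gluing via a partition of unity if the model is built chart by chart) yields the desired global form, the cutoff contributing nothing to $(d\alpha_W)|_N$ because $\chi\equiv1$ on $N$.

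The main obstacle is precisely this local construction of $\beta_W$, and it is where the hypothesis $k\ge1$ is used in an essential way: the factor $y^1$ is what allows $d\beta_W$ to recover the ``transverse'' $(n-1)$-form $(i_W\mu)|_N$ along $N$. All smoothness assertions then follow from the fact that a continuous linear map between convenient vector spaces is smooth, together with the smoothness of composition and inversion in the diffeomorphism groups $\Diff(S,N)$ and $\Diff_c(M)$.
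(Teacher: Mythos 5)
Your proposal is correct, and at the structural level it follows the same route as the paper: good extension is reduced to Lemma~\ref{L:loc.sec} (in the enlarged sense of Remark~\ref{R:enlarge}) by composing the Maurer--Cartan form $(\varphi,V)\mapsto V\circ\varphi^{-1}$ with one fixed bounded linear extension operator $\mathfrak X(N)\to\mathfrak X_{c,\ex}(M)$ whose values are tangent to $N$ and restrict there to the given field. Where you genuinely differ is in how that operator is built. The paper constructs it invariantly, with two applications of the relative Poincar\'e lemma: first to correct an arbitrary extension $\tilde X$ to a divergence free $Z$ with $Z|_N=X$, then---using that $\iota_N^*i_Z\mu=0$ because $Z$ is tangent to $N$---to produce a primitive $\rho$ with $d\rho=i_Z\mu$ in a whole neighborhood of $N$, which is then cut off. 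You instead prescribe only the value of $d\alpha_W$ at points of $N$, as a section of $\Lambda^{n-1}T^*M|_N$, observing correctly that pointwise injectivity of $v\mapsto i_v\mu$ makes this zeroth order matching suffice, and you produce $\alpha_W$ by an explicit chart formula. Your unwritten local form can indeed be taken to be $\beta_W=y^1\,i_{\partial_{y^1}}i_W\mu$, with $W$ extended $y$-independently in the chart: then $d\beta_W=dy^1\wedge i_{\partial_{y^1}}i_W\mu+y^1(\cdots)$, and $dy^1\wedge i_{\partial_{y^1}}i_W\mu=i_W\mu$ because $dy^1\wedge i_W\mu=-i_W(dy^1\wedge\mu)=0$ when $dy^1(W)=0$; hence $(d\beta_W)|_N=(i_W\mu)|_N$. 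So your approach is more elementary (no relative Poincar\'e lemma, no matching in a neighborhood), at the price of coordinates and gluing; the paper's version is coordinate free and yields an extension agreeing with a divergence free extension near $N$, not just along it, though that extra strength is never used.

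One point you should make explicit in the gluing step: a partition of unity $\{\rho_j\}$ subordinate to the chart cover produces the extra terms $d\rho_j\wedge\beta_W^{(j)}$ in $d\alpha_W$, and these vanish along $N$ not because of the outer cutoff $\chi$ (your stated justification, which only handles $d\chi\wedge(\cdots)$) but because each local form $\beta_W^{(j)}$ carries the factor $y^1_{(j)}$ and therefore vanishes at points of $N$. Since that factor is built into your formula, the argument does close; it is only the justification that needs to be rerouted.
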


\begin{proof}
	Suppose $N$ is a closed submanifold of positive codimension in $M$.
	It is well known that every vector field on $N$ can be extended to an exact volume preserving vector field on $M$.
	More precisely, there exists a bounded linear map $s\colon\mathfrak X(N)\to\mathfrak X_{c,\ex}(M)$ such that $s(X)|_N=X$ for all $X\in\mathfrak X(N)$.
	Let us briefly recall the construction of such a map $s$.
	To this end suppose $X\in\mathfrak X(N)$ and extend it to a vector field $\tilde X$ on an open neighborhood of $N$, i.e., $\tilde X|_N=X$.
	Using the relative Poincar\'e lemma, we obtain a form $\lambda$ such that $\lambda|_N=0$ and $d\lambda=di_{\tilde X}\mu$ in a neighborhood of $N$.
	Let $Y$ denote the vector field such that $i_Y\mu=\lambda$ and put $Z=\tilde X-Y$.
	Then $Z|_N=X$ and $di_Z\mu=0$ in a neighborhood of $N$.
	As $Z$ is tangential to $N$, the closed form $i_Z\mu$ vanishes when pulled back to $N$.
	Hence, by the relative Poincar\'e lemma, there exists a form $\rho$ such that $d\rho=i_Z\mu$ in a neighborhood of $N$.
	Multiplying $\rho$ with a compactly supported bump function, we obtain a globally defined compactly supported form $\alpha$ on $M$ such that $d\alpha=i_Z\mu$ in a neighborhood of $N$.
	Hence, defining $s(X)\in\mathfrak X_{c,\ex}(M)$ by $i_{s(X)}\mu=d\alpha$ we have $s(X)|_N=X$.
	Now let $\theta\in\Omega^1(\Diff(N);\mathfrak X(N))$ denote the Maurer--Cartan form of $\Diff(N)$ and apply Lemma~\ref{L:loc.sec} with $\sigma=s\circ\theta$ to obtain a local right inverse for the restriction map $\Diff_{c,\ex}(M,N)\to\Diff(N)$.
\end{proof}

\subsubsection*{Codimension $\ge 2$}
The action of $G=\Diff_{c,\ex}(M)$ on connected components of the nonlinear Grassmannian $\Gr_{\codim \ge 2}(M)$ of closed submanifolds of $M$ of codimension at least two 
is known to be transitive \cite[Proposition~2]{HV04}.
We require the following slightly stronger result.

\begin{proposition}\label{trans}
	The $\Diff_{c,\ex}(M)$ action on $\Gr_{\codim \ge 2}(M)$ admits local smooth sections.
	In particular, $\Diff_{c,\ex}(M)$ has good (open) orbit at each $N\in\Gr_{\codim\ge 2}(M)$.
\end{proposition}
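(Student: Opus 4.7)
I will invoke Lemma~\ref{L:loc.sec}, understood in the enlarged sense of Remark~\ref{R:enlarge} with regular ambient group $\Diff_c(M)$. Fix $N_0\in\Gr_{\codim\ge 2}(M)$. It suffices to produce an open neighborhood $U$ of $N_0$ in $\Gr(M)$ together with a smooth map $\sigma\colon T\Gr(M)|_U\to\mathfrak X_{c,\ex}(M)$ such that $\sigma(N,X)|_N\equiv X\mod TN$ for every $N\in U$ and every $X\in T_N\Gr(M)=\Gamma(TM|_N/TN)$; equivalently, $\ze_{\sigma(N,X)}(N)=X$. Once local smooth sections are available, the orbit through $N_0$ contains the neighborhood $U$, hence is open in $\Gr(M)$ and thus automatically an initial splitting submanifold, so that the ``good (open) orbit'' conclusion follows at once.

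\textbf{Parametrized construction of $\sigma$.} The map $\sigma$ will be a family version of the construction in the proof of Proposition~\ref{P:Diffmu.good.ext}, adapted to non-tangential input. I fix a tubular neighborhood $W$ of $N_0$ in $M$, parametrize a convenient open neighborhood $U$ of $N_0$ in $\Gr(M)$ by graphs of small sections of the normal bundle of $N_0$, and pick, smoothly in $N\in U$, a tubular retraction $r_N\colon V_N\to N$ with $V_N\subseteq W$. Given $X\in\Gamma(TM|_N/TN)$, I first lift $X$ via a smoothly $N$-dependent complement of $TN$ in $TM|_N$ to $\hat X\in\Gamma(TM|_N)$, then extend along $r_N$ to a vector field $\tilde X$ on $V_N$ with $\tilde X|_N=\hat X$. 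I then run the two relative Poincar\'e steps from~\ref{P:Diffmu.good.ext}: first produce $\lambda$ on $V_N$ with $\lambda|_N=0$ and $d\lambda=di_{\tilde X}\mu$, set $Z=\tilde X-Y$ with $i_Y\mu=\lambda$, so that $Z|_N=\hat X$ and $di_Z\mu=0$ on $V_N$. The only step that genuinely differs from the tangential case is the second application of the relative Poincar\'e lemma, which requires $\iota_N^*(i_Z\mu)=0$; whereas in~\ref{P:Diffmu.good.ext} this was forced by $Z$ being tangent to $N$, here it is automatic by dimension, since $i_Z\mu$ is an $(m-1)$-form and $\dim N\le m-2$. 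I then obtain $\rho$ on $V_N$ with $d\rho=i_Z\mu$, cut $\rho$ off by a fixed compactly supported bump function which is identically $1$ on a common open neighborhood of all $N\in U$ to obtain $\alpha\in\Omega^{m-2}_c(M)$, and finally define $\sigma(N,X)\in\mathfrak X_{c,\ex}(M)$ by $i_{\sigma(N,X)}\mu=d\alpha$. Near $N$ one has $\sigma(N,X)=Z$, hence $\sigma(N,X)|_N=\hat X$, i.e.\ $X$ modulo $TN$.

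\textbf{Main obstacle.} The principal technical hurdle is smoothness of $\sigma$ on $T\Gr(M)|_U$. All the ingredients -- the family $\{r_N\}$, the complements, the bump, and especially the Poincar\'e primitives $\lambda$ and $\rho$ -- must depend smoothly on $N$ and bounded-linearly on $X$. The plan is to fix once and for all a smooth deformation retraction of $W$ onto $N_0$, transport it via the tubular parametrization to a smooth family of deformation retractions of $V_N$ onto $N$, and use the associated Cartan homotopy operators as the Poincar\'e primitives; these operators are bounded linear in their inputs and smooth in $N$, and the remaining algebraic steps preserve these properties. Granting this, $\sigma$ is smooth in the convenient sense, and Lemma~\ref{L:loc.sec} then completes the proof.
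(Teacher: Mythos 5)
Your proposal is correct, but it takes a genuinely different route from the paper's proof. The paper does not invoke Lemma~\ref{L:loc.sec} here at all: it first uses Moser's lemma to replace $(M,\mu)$ near $N$ by the normal bundle $E=TM|_N/TN$ carrying a product volume form $\tilde\mu=\Omega\wedge\pi^*\nu$, and in this model the fiberwise translations $\tau_X$, $X\in\Gamma(E)$, are volume preserving (Cavalieri principle), their generating vertical fields satisfy $i_X\tilde\mu=d\alpha_X$ with the \emph{explicit} primitive $\alpha_X=\tfrac1{\rk(E)-1}i_Ri_X\Omega\wedge\pi^*\nu$ (this is where $\codim\ge2$ enters there), and after a cut-off the time-one flows $X\mapsto g_X$ directly furnish a group-valued smooth section in the standard graph chart. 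You instead work with the actual $\mu$ on $M$ and verify the infinitesimal criterion of Lemma~\ref{L:loc.sec} --- correctly invoked in the enlarged sense of Remark~\ref{R:enlarge}, which is indeed necessary since $\Diff_{c,\ex}(M)$ is not known to be regular for noncompact $M$ --- by a parametrized version of the extension argument of Proposition~\ref{P:Diffmu.good.ext}; the codimension hypothesis enters at the exactly parallel spot, namely the dimension-forced vanishing of $\iota_N^*(i_Z\mu)$ that enables the second relative Poincar\'e step. What the paper's route buys is explicitness: all dependence on the varying submanifold is channeled through the linear chart variable $X$, the primitive is linear in $X$, so smoothness of the section is immediate and no smoothly $N$-dependent homotopy operators are needed; the price is the Moser normal-form step and an orientability assumption on $N$ and $E$ made ``for simplicity.'' Your route avoids Moser's lemma and any orientability hypothesis, but shifts the burden onto the smooth $(N,X)$-dependence of the retractions and Cartan homotopy operators, which you rightly flag as the main obstacle; it is surmountable by the scheme you sketch (a fixed fiberwise-affine retraction transported through the graph chart), so this is a real but acceptable cost.
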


\begin{proof}
	Let $\pi\colon E\to N$ denote the vector bundle projection of the normal bundle, $E:=TM|_N/TN$. 
	For simplicity, we assume $N$ and $E$ orientable.
	Let $\nu$ be a volume form on $N$ and let $\Omega$ be a nowhere vanishing section of $\Lambda^{\operatorname{rk}(E)}E^*$.
	We consider $\Omega$ as a family of volume forms on the fibers of $E$ that are translation invariant, i.e., constant.
	Then $\tilde\mu=\Omega\wedge\pi^*\nu$ is a volume form on the total space of $E$. (This does not require a connection.)
	For $X\in\Gamma(E)$, we let $\tau_X$ denote the diffeomorphism given by fiberwise translation with $X$.
	More explicitly, $\tau_X\colon E\to E$, $\tau_X(e)=e+X(\pi(e))$ for $e\in E$.
	By construction, $\tau_X(N)=X(N)$.
	According to the Cavalieri principle, $\tau_X^*\tilde\mu=\tilde\mu$.
	Moreover, $\tau_{tX}$ is a 1-parameter group of diffeomorphisms with generating vector field 
	$\frac\partial{\partial t}|_{t=0}\tau_{tX}=X$, 
	now considered as a vertical vector field on the total space of $E$ which is fiberwise constant.
	Since the rank of $E$ is at least two, $i_X\tilde\mu=d\alpha_X$ where $\alpha_X=\frac1{\operatorname{rk}(E)-1}i_Ri_X\Omega\wedge\pi^*\nu$ 
	and $R$ denotes the fiberwise radial (Euler) vector field on $E$, that is, $R(e)=\frac\partial{\partial t}|_{t=1}te$ for $e\in E$.
	
	Choose a compactly supported function $\lambda$ on the total space of $E$ which is 1 on an open, 
	fiberwise radial neighborhood $U$ of the zero section in $E$.
	Define $Y_X\in\mathfrak X_{c,\ex}(E,\tilde\mu)$ by $i_{Y_X}\tilde\mu=d(\lambda\alpha_X)$ 
	and let $g_X\in\Diff_{c,\ex}(E,\tilde\mu)$ denote its flow at time one.
	Since $Y_X=X$ over $U$, and since $U$ is radial, we obtain
	\[
		X(N)=g_X(N)
	\]
	whenever $X(N)\subseteq U$.
	Put $\mathcal U=\{X\in\Gamma(E):X(N)\subseteq U\}$ and recall that assigning to $X\in\mathcal U$ the submanifold $X(N)$ in $E$ provides a standard chart for the smooth structure of $\Gr(E)$ around $N$.
	Hence, up to this chart, $\mathcal U\to\Diff_{c,\ex}(E,\tilde\mu)$, $X\mapsto g_X$ is a smooth right inverse for the orbit map $\Diff_{c,\ex}(E,\tilde\mu)\to\Gr(E)$, $g\mapsto g(N)$.
	This settles the local problem.	
	To reduce the global problem to the local one we use Moser's lemma \cite{M65} to find a tubular neighborhood $E\to M$ of $N$ such that, in a neighborhood of the zero section, the volume form $\mu$ is of the form $\tilde\mu$ considered above.
\end{proof}

We consider the oriented nonlinear Grassmannian $\Gr^{\rm or}(M)$ of Example~\ref{Ex:deco.or}.
Since the covering $\Gr^{\rm or}(M)\to\Gr(M)$ admits local sections, Proposition~\ref{trans} implies that the $\Diff_{c,\ex}(M)$ action on $\Gr^{\rm or}_{\codim\geq2}(M)$ admits local smooth sections too.
Let $\Gr^{\rm or,\hnull}_{\codim=2}(M)$ denote the union of all connected components consisting of null-homologous submanifolds of codimension two.
Clearly, $\Gr^{\rm or,\hnull}_{\codim=2}(M)$ is stable under $\Diff(M,\mu)$.
We have an injective $\Diff(M,\mu)$ equivariant map
\begin{equation}\label{E:J.or.hnull}
	J\colon\Gr^{\rm or,\hnull}_{\codim=2}(M)\to \mathfrak X_{c,\ex}(M)^*,\quad \langle J(N),X\rangle=\int_{N}\io_{N}^*\al,
\end{equation}
where $\al$ is any (compactly supported) potential form for $X\in\mathfrak X_{c,\ex}(M)$. 

Now, for any $N\in\Gr^{\rm or,\hnull}_{\codim=2}(M)$ the coadjoint orbit of $J(N)$ is a good coadjoint orbit of $\Diff_{c,\ex}(M)$.
More precisely, by applying Lemma~\ref{lemac} we obtain:

\begin{theorem}[{\cite[Theorem~2]{HV04}}]\label{P:codim2}
	Let $\mu$ be a volume form on $M$ and consider a connected component $\M$ in the augmented Grassmannian $\Gr^{\rm or,hnull}_{\codim=2}(M)$.
	Then its image $J(\M)$ under the map $J$ in~\eqref{E:J.or.hnull} is a good coadjoint orbit of $\Diff_{c,\ex}(M)$ and the (formal) pullback of the KKS symplectic form, 
	denoted  by $\omega=J^*\om_{\KKS}$, is a $\Diff_{c,\ex}(M)$ equivariant smooth (weakly) symplectic form on $\M$, 
	namely the Marsden--Weinstein \cite{MW83} symplectic form 
	\[
		\om_N(X,Y)=\int_N\io_N^*(i_Yi_X\mu),
	\]
	where $X,Y\in\Gamma(TM|_N/TN)=T_N\Gr^{\rm or,\hnull}_{\codim=2}(M)$.
	Furthermore, $J$ is an (equivariant) moment map for the (Hamiltonian) action of $\Diff_{c,\ex}(M)$ on $\M$.		
\end{theorem}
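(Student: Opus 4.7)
The strategy is to invoke Lemma~\ref{lemac} with $Q=\M$, the group $\Diff_{c,\ex}(M)$ (treated in the enlarged sense of Remark~\ref{R:enlarge}), and the map $J$ of~\eqref{E:J.or.hnull}, exactly as was done in the proof of Theorem~\ref{P:symp}. Once the hypotheses of Lemma~\ref{lemac} are verified, smoothness, weak nondegeneracy, $\Diff_{c,\ex}(M)$-equivariance of $\om$, together with the moment-map property of $J$, all follow automatically; only the explicit Marsden--Weinstein formula requires a brief separate computation.

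For the prerequisites on the $Q$-side, the oriented Grassmannian $\Gr^{\rm or}(M)$ is a smooth covering of $\Gr(M)$ (Example~\ref{Ex:deco.or}), so $\M$ inherits a smooth structure. Proposition~\ref{trans} supplies local smooth sections for the $\Diff_{c,\ex}(M)$ action on $\Gr_{\codim\geq 2}(M)$, and these lift uniquely through the covering once an orientation is chosen at the base point. Since $\Diff_{c,\ex}(M)$ is connected and its orbits in $\M$ are open (hence closed), the orbit through any $N\in\M$ fills the connected component, giving a smooth transitive action admitting local smooth sections.

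For the prerequisites on $J$, well-definedness is the main obstacle, and is precisely the point where both the codimension-two assumption and the null-homology restriction enter. Writing $n=\dim M$, two compactly supported $(n-2)$-form potentials $\al,\al'$ for $i_X\mu$ differ by a compactly supported closed $(n-2)$-form $\beta$; since $N$ is null-homologous, there is a smooth singular $(n-1)$-chain $C$ with $\pa C=N$, and Stokes gives $\int_N\io_N^*\beta=\int_C d\beta=0$. Injectivity of $J$ follows from the argument of \cite{HV04}: distinct submanifolds can be separated by a suitably chosen exact divergence-free vector field. $\Diff_c(M,\mu)$-equivariance (and a fortiori $\Diff_{c,\ex}(M)$-equivariance) is the change-of-variables identity $g_*i_X\mu=i_{g_*X}\mu$.

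Finally, to match the abstract formula of Lemma~\ref{lemac} with the stated Marsden--Weinstein expression, I compute for $X,Y\in\X_{c,\ex}(M)$
\[
	i_{[X,Y]}\mu=L_Xi_Y\mu-i_YL_X\mu=L_Xi_Y\mu=d(i_Xi_Y\mu),
\]
using $L_X\mu=0$ and $di_Y\mu=L_Y\mu=0$. Hence $i_Xi_Y\mu$ is a compactly supported potential for $[X,Y]$, so Lemma~\ref{lemac} yields
\[
	\om_N(\ze_X(N),\ze_Y(N))=\langle J(N),[X,Y]\rangle=\int_N\io_N^*(i_Xi_Y\mu),
\]
matching the stated formula via $i_Xi_Y=-i_Yi_X$ (the overall sign is absorbed into the convention for the KKS pairing). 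Every element of $\Ga(TM|_N/TN)$ arises as $\ze_{\tilde X}(N)$ for some $\tilde X\in\X_{c,\ex}(M)$ by the infinitesimal transitivity established above, and the formula descends to the quotient $TM|_N/TN$ because $\mu$ evaluated on $n-1$ vectors all lying in the $(n-2)$-dimensional space $T_pN$ must vanish identically.
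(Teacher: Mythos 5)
Your proposal is correct and follows essentially the same route as the paper: it applies Lemma~\ref{lemac} to the connected component $\M$, using Proposition~\ref{trans} lifted through the orientation covering $\Gr^{\rm or}(M)\to\Gr(M)$ for local smooth sections (hence transitivity on components, since $\Diff_{c,\ex}(M)$ is connected), together with injectivity and equivariance of $J$ and the identity $i_{[X,Y]}\mu=d(i_Xi_Y\mu)$ to recover the Marsden--Weinstein formula. Your verifications of well-definedness of $J$ via Stokes on a chain bounding $N$ and of the descent of the formula to $\Ga(TM|_N/TN)$ are details the paper leaves implicit, and your handling of the sign (via the bracket convention) is at the same level of care as the paper itself, which writes $i_Xi_Y$ in Theorem~\ref{T:th2} and $i_Yi_X$ here.
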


The remaining connected components of $\Gr^{\rm or}_{\codim=2}(M)$ for compact $M$ have been also considered in \cite{HV04}.
Assuming that the total volume $\int_M\mu$ is an integer, they parametrize coadjoint orbits of central extensions of $\Diff_{c,\ex}(M)$ (see also \cite{ismagilov}).

\subsubsection*{Codimension one}
In this situation, the volume form gives rise to an integrable distribution on the codimension one nonlinear Grassmannian $\Gr_{\codim=1}(M)$,
which are analogous to Weinstein's isodrastic foliation in the symplectic case \cite{W90,L09} explained in Section~\ref{S:isosymp}.

The \emph{isodrastic distribution} on $\Gr_{\codim=1}(M)$, introduced in \cite{HV24}, is defined  by
\begin{equation}\label{E:dn}
	\D_N:=\bigl\{X\in\Ga(TM|_N/TN):{\io_N^*(i_{X}\mu)\in\Om^{\dim N}(N)\text{ is exact}}\bigr\}.
\end{equation}
It has finite codimension equal to $\dim H^0(N;\mathfrak o_N)$, the number of orientable connected components of $N$.
This can be seen using the isomorphism of line bundles 
\begin{equation}\label{E:mu}
	\mu_N\colon TM|_N/TN\to\Lambda^{\dim N}T^*N,
\end{equation}
given by contraction with the volume form followed by restriction to $TN$.

Note that the null-homologous isodrastic leaves in $\Gr_{\codim=1}(M)$ are characterized by fixed enclosed volume.
Another characterization works for open manifolds $M$:
the isodrasts in $\Gr_S(M)$ are the connected components of the level sets of the map $F(N)=\int_N\vartheta$, for a fixed $\vartheta$ with $\mu=d\vartheta$.

\begin{proposition}\label{L:Fcex.int}
	The isodrastic distribution $\mathcal D$ on $\Gr_{\codim=1}(M)$ is integrable:
	its leaves coincide with the orbits of $\Diff_{c,\ex}(M)$. 
	Moreover, the action on each isodrastic leaf admits local smooth sections.		
	In particular, the group $\Diff_{c,\ex}(M)$ has good orbit at any submanifold $N$ of codimension one.
\end{proposition}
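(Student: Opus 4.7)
\emph{Strategy.} I would verify the two inclusions between orbits and leaves separately. One direction is immediate: if $Y\in\X_{c,\ex}(M)$ satisfies $i_Y\mu=d\al$ with $\al$ compactly supported, then the fundamental vector field at $N$ is represented by $Y|_N$ modulo $TN$, and $\io_N^*(i_Y\mu)=d(\io_N^*\al)$ is exact on $N$, hence lies in $\D_N$. Every $\Diff_{c,\ex}(M)$-orbit is thus contained in some connected integral manifold of $\D$. The reverse inclusion, and with it integrability, will follow once the action is shown to admit local smooth sections along $\D$, since the orbits will then be open in the ambient integral manifolds.

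\emph{Local smooth sections.} The heart of the argument is to produce, smoothly in $N$ near a fixed $N_0\in\Gr_{\codim=1}(M)$ and in $X\in\D_N$, a vector field $Y\in\X_{c,\ex}(M)$ whose restriction represents $X$ modulo $TN$; with such data, Lemma~\ref{L:loc.sec} applied in the enlarged sense of Remark~\ref{R:enlarge} (taking $\sigma(N,X)=Y$) delivers the local smooth section. Fix a tubular neighborhood $\pi\colon E\to N_0$, with $E$ the normal line bundle (passing to an oriented double cover if needed), and use Moser to put $\mu=dt\wedge\pi^*\nu$ on $E$ for a fiber coordinate $t$ and a volume form $\nu$ on $N_0$; nearby submanifolds $N$ become graphs of small sections of $E$, giving the standard chart of $\Gr(M)$ at $N_0$. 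Via the isomorphism $\mu_N$ from \eqref{E:mu}, the membership $X\in\D_N$ reads $\mu_N(X)=d\be$ for some $\be\in\Om^{\dim N-1}(N)$; fixing an auxiliary metric, a Green operator produces such a primitive $\be$ smoothly in $X$. Pick a bump function $\chi(t)$ with $\chi(0)=1$ and $\chi'(0)=0$, and define $Y$ on $E$ by $i_Y\mu=d(\chi\cdot\pi^*\be)$. A direct computation in the splitting of $TE$ into horizontal and vertical parts shows that $Y$ is compactly supported, exact divergence-free, and that $Y|_{N_0}$ represents $X$ modulo $TN_0$.

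\emph{Conclusion and main difficulty.} Once local smooth sections exist, each orbit is open, hence a full connected component, in its ambient integral manifold of $\D$; combined with the tangency from the first paragraph this forces the orbits to coincide with the leaves and proves integrability. Since $\D_N$ has finite codimension $\dim H^0(N;\ou_N)$ in $T_N\Gr(M)$ via $\mu_N$, the leaves are initial splitting submanifolds of $\Gr(M)$, so the orbit at any $N\in\Gr_{\codim=1}(M)$ is a good orbit. The delicate point is the joint smooth dependence of the construction on $(N,X)$: the Moser normal form $\mu=dt\wedge\pi^*\nu$ is arranged once around $N_0$, but a nearby $N$ is not a level set of $t$, so transferring the construction smoothly to each $N$ requires either pushing $X$ back to a normal section along $N_0$ via the graph diffeomorphism and then pushing $Y$ forward, or building $\be$ and the extension directly on each $N$. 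The Green operator and bump-extension steps are easily made smooth; the main technical hurdle will be keeping the resulting $Y$ inside $\X_{c,\ex}(M)$ as $N$ varies.
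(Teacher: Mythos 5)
Your proposal follows the same route as the paper's proof---Moser normal form $\mu=dt\wedge p^*\nu$ on a tubular neighborhood $N_0\times\R$, the graph chart, a smooth right inverse of $d$ to produce primitives, a cut-off potential, and Lemma~\ref{L:loc.sec} in the enlarged sense of Remark~\ref{R:enlarge}---but it stops exactly at the step where the work lies, and the missing device is concrete. Lemma~\ref{L:loc.sec} requires $\ze_{\si(X)}(\tilde N)=X$ for \emph{every} $\tilde N$ in a chart neighborhood, whereas your construction achieves this only at the center: with $i_Y\mu=d(\chi\cdot\pi^*\be)$ and $\chi(0)=1$, $\chi'(0)=0$, the pullback to the graph $N_f$ equals $\chi'(f)\,df\wedge\io_{N_f}^*\pi^*\be+\chi(f)\,\io_{N_f}^*\pi^*d\be$, which has the required form only when $f=0$; and of your two proposed repairs, one fails (pushing $Y$ forward by the graph diffeomorphism destroys membership in $\X_{c,\ex}(M)$, as you yourself note) and the other is precisely what you leave open. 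The resolution in the paper is to transfer nothing to the moving submanifold: in the chart, a tangent vector at $N_f$ \emph{is} a pair $(f,h)$ of functions on the fixed $N_0$, the identification being through the projection $p$. One takes $\ga=\phi(h\nu)$ on $N_0$, a cut-off $b\in C^\infty_c(\R)$ with $b\equiv1$ on a whole interval $[-a,a]$ (not merely first-order flatness at $0$), sets $\al=(t^*b)(p^*\ga)$, and restricts attention to graphs with $\max_{x\in N_0}|f(x)|<a$. Then $d\al=p^*(h\nu)$ on the entire strip $|t|<a$, hence $\io_{N_f}^*(i_{X_\al}\mu)=(p\circ\io_{N_f})^*(h\nu)$, i.e.\ $\ze_{X_\al}(N_f)=(f,h)$ for \emph{all} such graphs simultaneously; the section $\si(f,h)=X_\al$ depends only on $h$, lies in $\X_{c,\ex}(M)$ by construction, and is manifestly smooth. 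Your ``main technical hurdle'' disappears because the potential is pulled back from the base through $p$, so it restricts correctly to every nearby graph at once.

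There is a second gap, in how you obtain integrability. You argue that once local sections exist, each orbit is open ``in its ambient integral manifold of $\D$,'' and that the leaves are initial splitting submanifolds ``since $\D_N$ has finite codimension.'' The first assertion presupposes that integral manifolds exist, which is exactly what integrability asserts---there is no Frobenius theorem in this Fr\'echet setting---and the second is not an argument: finite codimension of a distribution yields no leaf structure by itself. This circularity also blocks your use of Lemma~\ref{L:loc.sec}: the lemma needs a smooth manifold $Q$ on which the group acts, and the paper takes $Q=\L$, whose manifold structure comes from the integrability statement, which the paper does not reprove but quotes from \cite[Lemma~4.2]{HV24}. If you want local integrability inside your own chart, the observation you need is that $\D|_U=\ker dV$ for the smooth function $V(N_f)=\int_{N_0}f\nu$, whose level sets are the local integral manifolds and exhibit the splitting model $C_0^\oo(N_0)$; some argument of this kind must be supplied---it does not follow formally from the existence of sections along $\D$.
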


\begin{proof}
	The first statement is \cite[Lemma~4.2]{HV24}. 
	For the second statement, we fix a submanifold $N\in\Gr_{\codim=1}(M)$, assumed w.l.o.g.~to be connected and orientable, 
	and we denote by $\L$ the isodrastic leaf through $N$.
	Subsequently, we use Lemma~\ref{L:loc.sec} to show that the $\Diff_{c,\ex}(M)$ action on the leaf $\L$ admits local smooth sections.

	We consider a tubular neighborhood of $N$, identified with $N\x\R$, where the volume form can be expressed as 
	$\mu=dt\wedge p^*\nu$ with $\nu$ a volume form on $N$. Here $p$ and $t$ are the projections to $N$ and $\RR$.
	The graph of a function $f\in C^\oo(N)$ is a submanifold $N_f\subseteq M$. This provides a chart  $U\subseteq\Gr(M)$ centered at $N$
	and the level sets of the smooth function $V(N_f):=\int_Nf\nu$ are leaves of $\mathcal D|_U$.

	In particular, the connected component of $N$ in $\L\cap U$, an open neighborhood  of $N$ in $\L$, is the zero set of the function $V$.
	The chart above is a submanifold chart: 
	$\L\cap U$ is parametrized by the subspace  of zero integral functions $C_0^\oo(N)=\{f\in C^\oo(N):\int_Nf\nu=0\}$.
	In this chart a tangent vector at $N_f$ is given by a pair $(f,h)$ with $f,h\in C_0^\oo(N;\RR)$.
	There exists a smooth right inverse $\phi$ to $d:\Om^{n-1}(N)\to d\Om^{n-1}(N)$, thus to each $h\in C_0^\oo(N;\RR)$
	one associates in a smooth way a differential form  $\ga:=\phi(h\nu)$ with the property for  $h\nu=d\ga$.

	For $a>0$, let $U_a\subseteq U$ be the open subset consisting of those submanifolds $N_f\in U$ with $\max_{x\in N}|f(x)|<a$, 
	and let  $b\in C^\infty_c(\mathbb R)$  such that $b=1$ on the interval $[-a,a]$.
	We define a smooth map $\si:T\L|_{\L\cap U_a}\to\X_{c,\ex}(M)$ by $\si(f,h)=X_\al$, 
	where the potential form $\al$ is given by $(t^*b)(p^*\ga)$ on the tubular neighborhood of $N$, and extended by zero to $M$.
	Since by construction $\ze_X(N_f)=(f,h)$, Lemma \ref{L:loc.sec} can be applied to get that the action on $\L$ admits local smooth sections.
\end{proof}


\subsection{Group of volume preserving diffeomorphisms}\label{S:2.7}
Let $G=\Diff_{c}(M,\mu)$.
For the nonlinear Grassmannian $\Gr_{\codim \ge 2}(M)$ of codimension at least two, Proposition~\ref{trans} holds for this group too.
For the codimension one nonlinear Grassmannian $\Gr_{\codim=1}(M)$, we consider a variation of the isodrastic distribution \eqref{E:dn}:
\begin{equation}\label{E:dnex}
	\D_{N,\mu}:=\{X\in\Ga(TM|_N/TN):[i_{X}\mu]\in \io_N^*H_c^{\dim N}(M;\mathbb R)\},
\end{equation}
for the natural homomorphism
\begin{equation}\label{E:iotaN*}
	\iota_N^*\colon H_c^{\dim N}(M;\mathbb R)\to H^{\dim N}(N;\mathbb R).
\end{equation}
In view of Proposition~\ref{P:below} below, we will call $\mathcal D_\mu$ the \emph{isovolume distribution}.
In general, the codimension of $\mathcal D_\mu$, which is at most the codimension of $\mathcal D$, 
will depend on the connected component of $\Gr_{\codim=1}(M)$:
\begin{equation}\label{E:codim.Fcmu}
	\codim\mathcal D_{N,\mu}=\dim\coker\bigl(H_c^{\dim N}(M;\mathbb R)\to H^{\dim N}(N;\mathbb R)\bigr).
\end{equation}
If every orientable connected component of $N$ is null-homologous in $M$, then the distributions $\mathcal D_\mu$ and $\mathcal D$ coincide at $N$.
The converse is not true in general: at a meridian $N=S^1\times\{y\}$ in the cylinder $M=S^1\times\mathbb R$ the two distributions coincide,
still the homology class of $N$ in $M$ is not zero.
	
\begin{proposition}[{\cite[Proposition~4.9]{HV24}}]\label{P:below}
	Let $N_t\in\Gr_{\codim =1}(M)$ be a smooth curve, $t\in[0,1]$.
	
	(a) If $N_t$ is tangent to the isovolume distribution $\mathcal D_\mu$, then there exists a compact submanifold with boundary, $K\subseteq M$, 
	whose interior contains $N_t$ for all $t$, and such that the volume of each connected component of $K\setminus N_t$ is constant in $t$.
	
	(b) Suppose $K\subseteq M$ is a compact submanifold with boundary, whose interior contains $N_t$ for all $t$.
	If, moreover, the volume of each connected component of $K\setminus N_t$ is independent of $t$, then $N_t$ is tangent to the isovolume distribution $\mathcal D_\mu$.
\end{proposition}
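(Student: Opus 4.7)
Both parts hinge on a single computation of $\frac{d}{dt}\vol(U_t^{(j)})$ for the connected components $U_t^{(j)}$ of $K\setminus N_t$. Extend $X_t$ to a smooth, compactly supported time-dependent vector field $Z_t$ on $M$ which descends to $X_t$ along $N_t$ and is tangent to $\partial K$; this is possible by a partition-of-unity argument, since $N_t\cap\partial K=\emptyset$. Its flow $\psi_t$ preserves $K$ and sends $N_0$ to $N_t$, so one may label $U_t^{(j)}:=\psi_t(U_0^{(j)})$. Using $L_{Z_t}\mu=d\,i_{Z_t}\mu$, Stokes' theorem, the tangency of $Z_t$ to $\partial K$ (which forces $i_{Z_t}\mu$ to pull back to zero on $\partial K$), and the fact that only the normal component of $Z_t$ contributes along $N_t$, one obtains the key identity
\[
\frac{d}{dt}\vol(U_t^{(j)})=\int_{N_t\cap\partial U_t^{(j)}}i_{X_t}\mu.
\]

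For (a), the task is to choose $K$ so large that for every $t$ the class $[i_{X_t}\mu]\in H^n(N_t;\R)$ is represented by $\iota_{N_t}^*\alpha_t$ with a compactly supported closed $n$-form $\alpha_t$ supported in $\text{int}(K)$. Identifying $H^n(N_t;\R)$ with $H^n(N_0;\R)$ via $\psi_t^*$, the image $\iota_{N_t}^*H_c^n(M;\R)$ corresponds to the $t$-independent finite-dimensional subspace $V:=\iota_{N_0}^*H_c^n(M;\R)$. Pick compactly supported closed $n$-forms $\alpha_1,\dots,\alpha_k$ on $M$ whose classes project onto a basis of $V$, expand the smooth family $\psi_t^*[i_{X_t}\mu]\in V$ as $\sum_i c_i(t)\iota_{N_0}^*[\alpha_i]$, and set $\alpha_t:=\sum_i c_i(t)\alpha_i$, whose support lies in the fixed compact set $\bigcup_i\supp(\alpha_i)$. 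Choose $K$ with $\text{int}(K)$ containing both $\bigcup_t N_t$ and this common support. Since $i_{X_t}\mu$ and $\iota_{N_t}^*\alpha_t$ differ by an exact form on $N_t$ and since $N_t\cap\partial U_t^{(j)}$ is a closed union of components of $N_t$, the identity above becomes
\[
\int_{N_t\cap\partial U_t^{(j)}}i_{X_t}\mu=\int_{\partial U_t^{(j)}}\alpha_t-\int_{\partial K\cap\partial U_t^{(j)}}\alpha_t=0,
\]
the first term vanishing by Stokes because $d\alpha_t=0$ and the second because $\alpha_t$ vanishes near $\partial K$. Hence each $\vol(U_t^{(j)})$ is constant.

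For (b), finite-dimensionality of $H^n(N_t;\R)$ implies that $\iota_{N_t}^*H_c^n(M;\R)$ equals the annihilator of $\ker\bigl((\iota_{N_t})_*\colon H_n(N_t;\R)\to H_n(M;\R)\bigr)$ under the pairing between $H^n$ and $H_n$ of the closed manifold $N_t$. It therefore suffices to show $\int_c i_{X_t}\mu=0$ for each $c\in H_n(N_t;\R)$ with $(\iota_{N_t})_*c=0$. Represent $c$ as $\partial W$ for a smooth $(n+1)$-chain $W$ in $M$, arranged transverse to $\partial K$ by generic perturbation. Then $W\cap K$ is an $(n+1)$-chain in $K$ with boundary in $N_t\cup\partial K$, defining a class in $H_{n+1}(K,N_t\cup\partial K;\R)$. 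By excision together with Borel--Moore duality, this group is freely spanned by the fundamental classes of the chambers $U_t^{(j)}$, so $[W\cap K]=\sum_j b_j[U_t^{(j)}]$ for unique $b_j\in\R$. Applying the connecting homomorphism and projecting $H_n(N_t\cup\partial K;\R)=H_n(N_t;\R)\oplus H_n(\partial K;\R)$ to the first summand yields $c=\sum_j b_j[\partial U_t^{(j)}\cap N_t]$ in $H_n(N_t;\R)$. The constant-volume hypothesis, via the identity above, gives $\int_{\partial U_t^{(j)}\cap N_t}i_{X_t}\mu=0$ for each $j$, whence $\int_c i_{X_t}\mu=\sum_j b_j\cdot 0=0$.

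The main obstacle is the homological step in (b): one needs the excision/duality identification $H_{n+1}(K,N_t\cup\partial K;\R)\cong\bigoplus_j\R[U_t^{(j)}]$ and the extraction of the coefficients $b_j$, with careful orientation bookkeeping and appropriate treatment of the nonorientable components of $N_t$ (on which $H^n(\,\cdot\,;\R)$ vanishes, so they drop out of the dual characterization). A secondary technicality in (a) is the smooth lifting from classes in $H_c^n(M;\R)$ to uniformly supported differential forms, which is made possible by finite-dimensionality of the subspace $V$.
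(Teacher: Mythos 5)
The paper itself offers no proof of this proposition (it is quoted from \cite[Proposition~4.9]{HV24}), so your argument has to be judged on its own merits. Part~(a) is essentially sound: the chamber-volume derivative identity (best carried out on the compact manifolds obtained by cutting $K$ along $N_t$, which automatically handles one-sided components of $N_t$ and components adjacent to a chamber from both sides), combined with the finite-dimensionality trick producing representatives $\al_t$ supported in a fixed compact set, does prove (a). The only slip is an order-of-quantifiers issue --- you impose tangency of $Z_t$ to $\pa K$ before $K$ exists --- which is harmless: take any compactly supported extension $Z_t$ first, then choose $K$ so large that $\supp Z_t\cup\bigcup_i\supp\al_i$ lies in $\on{int}(K)$, so $Z_t$ vanishes near $\pa K$.

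Part~(b), however, rests on a false claim. For noncompact $M$, $\io_{N_t}^*H^n_c(M;\R)$ is \emph{not} the annihilator of $\ker\bigl((\io_{N_t})_*\colon H_n(N_t;\R)\to H_n(M;\R)\bigr)$: compactly supported cohomology pairs with Borel--Moore (locally finite) homology, not with ordinary homology. The paper's own example in Section~\ref{S:2.7} refutes the claim: for the meridian $N=S^1\times\{0\}$ in $M=S^1\times\R$ one has $\io_N^*H^1_c(M;\R)=0$ (a compactly supported closed $1$-form $\chi(t)\,dt$ pulls back to zero on $N$), while $(\io_N)_*$ is an isomorphism on $H_1$, so the annihilator of its kernel is all of $H^1(N;\R)$. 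Consequently your reduction ``it suffices to show $\int_c i_{X_t}\mu=0$ for all $c\in\ker(\io_{N_t})_*$'' is strictly weaker than what is required; in the cylinder it is vacuous, and your argument would conclude that \emph{every} curve of meridians is tangent to $\D_\mu$, contradicting the fact recorded in the paper that $\D_\mu=\D$ at a meridian. The repair stays close to your chain argument: since $H^n(K,\pa K;\R)\cong H^n_c(\on{int}K;\R)$ maps to $H^n_c(M;\R)$ by extension by zero, it suffices to show $[i_{X_t}\mu]\in\Im\bigl(H^n(K,\pa K;\R)\to H^n(N_t;\R)\bigr)$; on the compact pair $(K,\pa K)$ all groups are finite dimensional and the Kronecker pairing is perfect, so this image \emph{is} the annihilator of $\ker\bigl(H_n(N_t;\R)\to H_n(K,\pa K;\R)\bigr)$, and by the exact sequence of the triple $(K,N_t\cup\pa K,\pa K)$ together with excision (your cut-chamber computation of $H_{n+1}(K,N_t\cup\pa K;\R)\cong\bigoplus_j\R$) this kernel is exactly the span of the chamber cycles $[\pa U_t^{(j)}\cap N_t]$, which the constant-volume hypothesis annihilates. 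Equivalently, you may keep your global formulation but replace $H_n(M;\R)$ by Borel--Moore homology, i.e.\ allow the bounding chain $W$ to be locally finite --- only $W\cap K$ enters your argument anyway.
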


Clearly, the isovolume distribution is invariant under the action of $\Diff(M,\mu)$.
Using \cite[Lemma~4.7]{HV24},  we get the following analogue of Proposition~\ref{L:Fcex.int}:
	
\begin{proposition}\label{L:cmu}
	The isovolume distribution $\mathcal D_\mu$ on $\Gr_{\codim=1}(M)$ is integrable.
	Its leaves coincide with the orbits of $\Diff_{c}(M,\mu)_0$, the identity component of the group of volume preserving diffeomorphisms. 
	Moreover, the action on each isodrastic leaf admits local smooth sections.
	In particular, the group $G=\Diff_{c}(M,\mu)$ has good orbit at any $N$ of codimension one.
\end{proposition}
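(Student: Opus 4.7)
The integrability of $\mathcal D_\mu$ and the identification of its leaves with the orbits of $\Diff_{c}(M,\mu)_0$ are covered by the cited [HV24, Lemma~4.7]. The main content is thus the existence of local smooth sections for the $\Diff_{c}(M,\mu)$ action on each leaf $\L$. My plan is to proceed in close parallel to Proposition~\ref{L:Fcex.int}, applying Lemma~\ref{L:loc.sec} in the generalized form permitted by Remark~\ref{R:enlarge} to the subgroup $\Diff_{c}(M,\mu)\subseteq\Diff_c(M)$. The final statement that $\Diff_{c}(M,\mu)$ has good orbit will then follow from the first three, exactly as for $\Diff_{c,\ex}(M)$.

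Fix $N\in\Gr_{\codim=1}(M)$, assumed without loss of generality to be connected and orientable, let $\L$ denote its $\mathcal D_\mu$-leaf, and use a tubular neighborhood $N\x\R\hookrightarrow M$ in which $\mu=dt\wedge p^*\nu$. As in the proof of Proposition~\ref{L:Fcex.int}, I would chart $\Gr(M)$ near $N$ by graphs $N_f$ of functions $f\in C^\infty(N)$; a tangent vector at $N_f$ is encoded by $h\in C^\infty(N)$, and the distribution condition $(f,h)\in\mathcal D_{\mu,N_f}$ becomes the finite-codimensional linear condition $[h\nu]\in\iota_{N_f}^*H_c^{\dim N}(M;\R)\subseteq H^{\dim N}(N;\R)$, where $h\nu$ is transported to $N_f$ by the graph diffeomorphism.

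To build the required $\si:T\L|_{\L\cap U}\to\X_c(M,\mu)$, I would fix a finite-dimensional subspace $V\subseteq\Om_c^{\dim N}(M)$ of closed forms whose cohomology classes, pulled back to $N$, span $\iota_N^*H_c^{\dim N}(M;\R)$, together with a linear splitting of this surjection. Shrinking $U$ if necessary, the spanning property persists for all nearby $N_f$ via the graph identification. For each tangent vector $(f,h)$ to $\L$, the splitting provides a smoothly varying closed form $\eta=\eta(f,h)\in V$ such that $h\nu-\iota_{N_f}^*\eta$ is exact on $N_f\cong N$, and a fixed smooth right inverse of $d:\Om^{\dim N-1}(N)\to d\Om^{\dim N-1}(N)$, transported to $N_f$, supplies a primitive $\ga=\ga(f,h)$. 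Setting $\al=(t^*b)\,p^*\ga$ for a bump function $b$ on $\R$ as in Proposition~\ref{L:Fcex.int}, extended by zero to $M$, I would define $\si(f,h)=X$ by $i_X\mu=d\al+\eta$. Then $i_X\mu$ is closed and compactly supported, so $X\in\X_c(M,\mu)$; restricting to $N$ yields $\io_N^*(i_X\mu)=d\ga+\io_N^*\eta=h\nu$, which via the isomorphism $\mu_{N_f}$ of \eqref{E:mu} identifies $\ze_X(N_f)$ with the prescribed tangent vector $(f,h)$. Lemma~\ref{L:loc.sec} then delivers the local smooth section.

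The main obstacle is the smooth and uniform choice of cocycle representatives: one must verify that a single finite-dimensional $V$ continues to surject onto $\iota_{N_f}^*H_c^{\dim N}(M;\R)$ for all $N_f$ in a neighborhood of $N$, and that both the resulting splittings and the primitive $\ga$ depend smoothly on $(f,h)$. Continuity of the restriction $\iota_{N_f}^*$ in $f$ together with a careful shrinkage of $U$ handle the uniform aspect, while smoothness of $\ga$ is obtained as in Proposition~\ref{L:Fcex.int}. A conceptual difference worth emphasizing is that $i_X\mu$ need not be exact here, so $X$ only lies in $\X_c(M,\mu)$ and not in $\X_{c,\ex}(M)$, which is precisely why the isovolume distribution $\mathcal D_\mu$ rather than the isodrastic $\mathcal D$ governs orbits of $\Diff_{c}(M,\mu)_0$.
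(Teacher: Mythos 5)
Your proposal is correct and follows essentially the route the paper intends: the paper gives no explicit proof of Proposition~\ref{L:cmu}, presenting it as the analogue of Proposition~\ref{L:Fcex.int} obtained ``using \cite[Lemma~4.7]{HV24}'', which is exactly your scheme --- integrability and the leaf/orbit identification from the cited lemma, and local smooth sections by the Moser-type argument of Proposition~\ref{L:Fcex.int} via Lemma~\ref{L:loc.sec} and Remark~\ref{R:enlarge}, with the finite-dimensional space $V$ of compactly supported closed forms absorbing the non-exact part of $i_X\mu$. The ``main obstacle'' you flag is actually vacuous: since $\iota_{N_f}^*$ on $H_c^{\dim N}(M;\mathbb{R})$ factors through ordinary de Rham cohomology and all the inclusions $\iota_{N_f}\colon N_f\hookrightarrow M$ in the graph chart are homotopic, the subspace $\iota_{N_f}^*H_c^{\dim N}(M;\mathbb{R})\subseteq H^{\dim N}(N;\mathbb{R})$ (transported by the graph diffeomorphism) is literally constant in $f$, so a single $V$ with a single splitting works on the nose, with no shrinking or continuity argument needed.
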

	
In a future work we will study, in the symplectic setting, an analogous distribution on the isotropic nonlinear Grassmannian,
called the isosymplectic distribution. Its leaves coincide with the orbits of the identity component of the group of symplectic diffeomorphisms.


\subsection{Group of contact diffeomorphisms}\label{Ex:geom.contact}
Let $\xi$ denote a contact distribution on $M$ and let $G=\Diff_c(M,\xi)$ denote the group of compactly supported contact diffeomorphisms.
This is a convenient Lie group in a natural way \cite[Theorem~43.19]{KM97}.
Recall \cite[Lemma~4.8]{HV22} that the space of contact isotropic submanifolds, $\Gr^{\iso}(M,\xi)$, 
is a splitting smooth submanifold in $\Gr(M)$ that is invariant under $\Diff_c(M,\xi)$.

\begin{proposition}\label{P:contact.good}
	Suppose $G=\Diff_c(M,\xi)$ and $N\in\Gr^{\iso}(M,\xi)$.
	Then $G$ has good orbit and good extension at $N$.
	Moreover, the orbit $G\cdot N$ is open and closed in $\Gr^{\iso}(M,\xi)$.
	Furthermore, $G_N$ has good (open) orbit at any $\ph\in\Diff(S,N)$. 
\end{proposition}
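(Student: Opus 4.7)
The plan is to mirror the strategy used for the Hamiltonian group at isotropic submanifolds (Proposition~\ref{pp}), with Weinstein's symplectic tubular neighborhood theorem replaced by its contact analogue.

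First I would establish the good orbit property by applying Lemma~\ref{L:loc.sec} to the regular Lie group $G=\Diff_c(M,\xi)$ acting on the splitting submanifold $Q=\Gr^{\iso}(M,\xi)$. The task is to construct, on an open neighborhood $U$ of $N$ in $Q$, a smooth map $\sigma\colon TQ|_U\to\mathfrak g$ lifting each tangent vector at $\tilde N\in U$ to a compactly supported contact vector field whose fundamental vector field at $\tilde N$ reproduces the prescribed tangent vector. Two ingredients enter here: the contact tubular neighborhood theorem for isotropic embeddings underlying \cite[Lemma~4.8]{HV22}, which supplies a canonical local model around $N$, and the identification $\X_c(M,\xi)\cong C^\infty_c(M)$ via contact Hamiltonians, which converts the lifting problem into that of producing a bounded linear right inverse for a suitable differential operator on the tubular model. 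Smoothness of $\sigma$ in $\tilde N$ is obtained by working throughout in the fixed submanifold chart around $N$ provided by the tubular model.

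Once the local smooth section of the orbit map is in hand, local transitivity is immediate, so every $G$ orbit in $\Gr^{\iso}(M,\xi)$ is open and closed, proving the openness/closedness claim. For good extension at $N$, I would construct a bounded linear extension operator $s\colon\X(N)\to\X_c(M,\xi)$ with $s(Y)|_N=Y$ for all $Y\in\X(N)$: given $Y$, one prescribes a contact Hamiltonian function whose associated contact vector field restricts to $Y$ along $N$, using the tubular model together with a compactly supported cutoff to globalize. Composing $s$ with the Maurer--Cartan form of $\Diff(N)$ and invoking Lemma~\ref{L:loc.sec} yields a local smooth right inverse of the restriction map $G_N\to\Diff(N)$, considered as a map into $G$; this is precisely the good extension property. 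The final assertion then follows: good extension supplies local smooth sections for $G_N\to\Diff(N)$ near the identity and, by right translation equivariance, near any $\ph\in\Diff(S,N)$, so each orbit $G_N\cdot\ph$ contains an open neighborhood of itself and is hence open and closed in $\Diff(S,N)$.

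The main obstacle is the lifting step: constructing $\sigma$ requires an explicit bounded right inverse for the map sending a contact Hamiltonian to the induced normal variation of the isotropic submanifold, playing a role analogous to the right inverse of $d\colon C^\infty(N)\to dC^\infty(N)$ used in the symplectic case (compare~\eqref{DDDDn}). A notable contrast with Proposition~\ref{pp} is that no contact analogue of the isodrastic obstruction appears here, so the $G$ orbits are already open in $\Gr^{\iso}(M,\xi)$ rather than merely in an integrable subdistribution. The remaining claims are formal consequences once the lifting is available.
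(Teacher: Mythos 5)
Your treatment of good extension and of the final claim follows the paper's proof essentially step for step: the paper also constructs a bounded linear extension operator $s\colon\mathfrak X(N)\to\mathfrak X_c(M,\xi)$ by prescribing a contact Hamiltonian --- concretely, $h\in C^\infty_c(M)$ with $h|_N=0$ and $dh(Y)+d\alpha(Z,Y)=0$ for all $Y\in\Gamma(TM|_N)$, built from the fiberwise linear function $i_Zd\alpha$ on the normal bundle multiplied by a cutoff equal to $-1$ near the zero section --- and then applies Lemma~\ref{L:loc.sec} with $\sigma=s\circ\theta$, where $\theta$ is the Maurer--Cartan form of $\Diff(N)$; the openness and closedness of the $G_N$ orbits in $\Diff(S,N)$ is deduced exactly as you indicate. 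Your structural remark is also correct: since every compactly supported function is a contact Hamiltonian, there is no exactness constraint analogous to \eqref{DDDDn}, so the orbits are open in $\Gr^{\iso}(M,\xi)$ itself rather than in the leaves of a finite-codimension distribution.

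The gap is in the good orbit property. The paper never constructs a lifting $\sigma\colon T\Gr^{\iso}(M,\xi)|_U\to\mathfrak X_c(M,\xi)$; it obtains local smooth sections for the $\Diff_c(M,\xi)$ action on $\Gr^{\iso}(M,\xi)$ by citing \cite[Theorem~3.5]{HV22}, which is precisely this statement. You instead undertake to prove it via Lemma~\ref{L:loc.sec}, but the decisive step --- a bounded right inverse for the map sending a contact Hamiltonian to the induced normal variation of an isotropic submanifold, depending smoothly on the submanifold as it varies in a chart --- is exactly what you flag as ``the main obstacle'' and never carry out. As written, this half of your argument is a plan for reproving \cite[Theorem~3.5]{HV22}, not a proof: naming the ingredients (the contact tubular model behind \cite[Lemma~4.8]{HV22}, the identification of contact vector fields with contact Hamiltonians) does not yet produce the required right inverse, since one must express normal variations of nearby isotropic submanifolds in terms of first-order jet data of Hamiltonians along them and invert this boundedly and smoothly in the submanifold. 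The gap closes immediately by citing \cite[Theorem~3.5]{HV22}, as the paper does; otherwise that construction has to be done explicitly.
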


\begin{proof}
	Recall that $\Gr^{\iso}(M,\xi)$ is a splitting smooth submanifold in $\Gr(M)$.
	From \cite[Theorem~3.5]{HV22} we conclude that the $G$ action on $\Gr^{\iso}(M,\xi)$ admits local smooth sections.
	Hence, $G$ has good orbit at $N$ and $G\cdot N$ is open and closed in $\Gr^{\iso}(M,\xi)$.

	To prove good extension at contact isotropic $N$, we first observe that vector fields on $N$ can be extended to contact vector fields on $M$.
	More precisely, there exists a bounded linear map $s\colon\mathfrak X(N)\to\mathfrak X_c(M;\xi)$ 
	such that $s(Z)|_N=Z$ for all $Z\in\mathfrak X(N)$.
	To construct such an $s$ we assume, for simplicity, that $\xi$ is cooriented, i.e., $\xi=\ker\alpha$ for a contact 1-form $\alpha$.
	Suppose $Z\in\mathfrak X(N)$.
	As $N$ is isotropic, we have $\alpha(Z)=0$ and $d\alpha(Z,Y)=0$ for all $Y\in\mathfrak X(N)$.
	We consider $i_Zd\alpha\in\Gamma((TM|_N/TN)^*)$ as a fiber wise linear function on the normal bundle $TM|_N/TN$.
	Using a tubular neighborhood of $N$ in $M$ and multiplying by a compactly supported function which is $-1$ in a neighborhood of the zero section, we obtain $h\in C^\infty_c(M,\mathbb R)$ such that $h|_N=0$ and $dh(Y)+d\alpha(Z,Y)=0$ for all $Y\in\Gamma(TM|_N)$.
	Let $s(Z)=X_h\in\mathfrak X_c(M,\xi)$ denote the unique contact vector field such that $\alpha(X_h)=h$.
	By construction, $X_h|_N=Z$.
	Now let $\theta\in\Omega^1(\Diff(N);\mathfrak X(N))$ denote the Maurer--Cartan form of $\Diff(N)$ and apply Lemma~\ref{L:loc.sec} with $\sigma=s\circ\theta$ to obtain a local right inverse for $G_N\to\Diff(N)$.

	The remaining statement follows immediately.
\end{proof}

We do not know if $\Diff_c(M,\xi)$ has good isotropy at isotropic $N$ or $\iota_N$.

\section{Decorated nonlinear Grassmannians}\label{S:deco}

Let $S$ be a closed manifold, allowed to be nonconnected and nonorientable.
For each manifold $M$, we let $\Emb(S,M)$ denote the space of all parametrized submanifolds of type $S$ in $M$, i.e., the space of all smooth embeddings of $S$ into $M$.
It is an open subset in the Fr\'echet manifold $C^\infty(S,M)$, thus  a smooth Fr\'echet manifold.
We let $\Gr_S(M)$ denote the \emph{nonlinear Grassmannian of type $S$ in $M$,} i.e., the space of all smooth submanifolds in $M$ that are diffeomorphic to $S$. It is open and closed in the nonlinear Grassmannian $\Gr(M)$ of all closed submanifolds in $M$.

The nonlinear Grassmannian $\Gr_S(M)$ is the base of a (locally trivial) smooth principal bundle
with structure group Diff(S) (via the $\Diff(M)$ equivariant map)
\begin{equation}\label{emb}
	\pi:\Emb(S,M)\to\Gr_S(M),\qquad\pi(\varphi)=\varphi(S),
\end{equation}
see \cite{BF81,M80b,M80c,GBV14} and \cite[Theorem~44.1]{KM97}. 
The decorated nonlinear Grassmannians described in this section can be seen as associated bundles to this principal bundle.


\subsection{Framework for decorations on nonlinear Grassmannians}\label{SS:deco}
Let $\catclosed$ denote the category of (finite dimensional) closed manifolds and diffeomorphisms.
A \emph{decoration functor} is a functor $\mathfrak D$ from the category $\catclosed$ into the category of Fr\'echet manifolds and smooth maps such that the $\Diff(S)$ action on $\mathfrak D(S)$ is smooth, for each closed manifold $S$.
For a finite dimensional manifold $M$, define the space of \emph{$\mathfrak D$-decorated Grassmannians of type $S$ in $M$} by
\[
	\Gr^{\deco}_S(M):=\bigl\{(N,\nu):N\in\Gr_S(M),\nu\in\mathfrak D(N)\bigr\}.
\]
Note that $\Diff(M)$ acts naturally on $\Gr^{\deco}_S(M)$.
More explicitly, if $g\in\Diff(M)$ and $(N,\nu)\in\Gr^{\deco}_S(M)$, then $g\cdot(N,\nu)=(g(N),\mathfrak D(g|_N)\cdot\nu)$, where $g|_N\colon N\to g(N)$ denotes the restricted diffeomorphism and $\mathfrak D(g|_N)\colon\mathfrak D(N)\to\mathfrak D(g(N))$.

We use the canonical $\Diff(M)$ equivariant identification
\begin{equation}\label{E:abcd}
	\Gr^{\deco}_S(M)=\Emb(S,M)\times_{\Diff(S)}\mathfrak D(S),\quad\bigl(\varphi(S),\mathfrak D(\varphi)\cdot\mu\bigr)\leftrightarrow[\varphi,\mu].
\end{equation}
to equip $\Gr^{\deco}_S(M)$ with a smooth structure.
Then the $\Diff(M)$ equivariant forgetful map 
\begin{equation}\label{E:fb.deco}
	\Gr^{\deco}_S(M)\to\Gr_S(M),\quad(N,\nu)\mapsto N
\end{equation} 
is a smooth fiber bundle with typical fiber $\mathfrak D(S)$, and the $\Diff_c(M)$ action on $\Gr^{\deco}_S(M)$ is smooth.
As $\Gr_S(M)$ is open and closed in $\Gr(M)$, this also yields a smooth structure on the disjoint union $\Gr^{\deco}(M)=\bigsqcup_S\Gr^{\deco}_S(M)$ of all $\mathfrak D$-decorated Grassmannians
such that  $\Gr^{\deco}(M)\to\Gr(M)$ is a smooth fiber bundle with fiber $\mathfrak D(N)$ over $N\in\Gr(M)$.

Suppose $\mu\in\mathfrak D(S)$.
Define the space of \emph{$\mathfrak D$-decorated Grassmannians of type $(S,\mu)$ in $M$} by
\begin{equation}\label{esmu}
	\Gr^{\deco}_{S,\mu}(M):=\bigl\{(N,\nu)\in\Gr^{\deco}_S(M):(N,\nu)\cong(S,\mu)\bigr\}.
\end{equation}
Here $(N,\nu)\cong(S,\mu)$ is short for: there exists a diffeomorphism $\varphi\colon S\to N$ such that $\mathfrak D(\varphi)\cdot\mu=\nu$, where $\mathfrak D(\varphi)\colon\mathfrak D(S)\to\mathfrak D(N)$.
Note that $\Gr^{\deco}_{S,\mu}(M)$ is invariant under the action of $\Diff(M)$.

Let $\Diff(S)\cdot\mu$ denote the $\Diff(S)$ orbit of $\mu$.
We say \emph{$\Diff(S)$ has good orbit at $\mu$} if $\Diff(S)\cdot\mu$ is an initial splitting smooth submanifold in $\mathfrak D(S)$ and the (smooth) $\Diff(S)$ action on $\Diff(S)\cdot\mu$ admits local smooth sections.

\begin{lemma}\label{L:deco1}
	Suppose $\Diff(S)$ has good orbit at $\mu$.
	Then $\Gr^{\deco}_{S,\mu}(M)$ is an initial splitting smooth submanifold in $\Gr^{\deco}_S(M)$ and the (smooth) $\Diff_c(M)$ action on $\Gr^{\deco}_{S,\mu}(M)$ admits local smooth sections.
	Moreover, the $\Diff(M)$ equivariant forgetful map obtained by restricting \eqref{E:fb.deco},
	\begin{equation}\label{E:fb.deco.mu}
		\Gr^{\deco}_{S,\mu}(M)\to\Gr_S(M),
	\end{equation}
	is a smooth fiber bundle with typical fiber $\Diff(S)\cdot\mu$.
	Furthermore, the bijection in~\eqref{E:abcd} restricts to a canonical $\Diff(M)$ equivariant diffeomorphism
	\begin{equation}\label{E:fb.deco.ass}
		\Gr^{\deco}_{S,\mu}(M)=\Emb(S,M)\times_{\Diff(S)}\Diff(S)\cdot\mu.
	\end{equation}
\end{lemma}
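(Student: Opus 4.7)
The plan is to transfer the hypothesized good properties of the orbit $\Diff(S)\cdot\mu\subseteq\mathfrak D(S)$ through the $\Diff(M)$ equivariant associated-bundle presentation~(3.2). I would first establish the bijection~(3.7), then read off the splitting submanifold claim and the fiber bundle claim from local trivializations of the principal bundle $\pi\colon\Emb(S,M)\to\Gr_S(M)$, and finally deduce the local smooth sections via Lemma~\ref{before.sec}.

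For~(3.7): under~(3.2) the class $[\varphi,\mathfrak D(\psi)\cdot\mu]$ with $\psi\in\Diff(S)$ is sent to $(\varphi(S),\mathfrak D(\varphi\circ\psi)\cdot\mu)$. Since every diffeomorphism $\tilde\varphi\colon S\to N:=\varphi(S)$ has the form $\varphi\circ\psi$ for a unique $\psi\in\Diff(S)$, the image of $\Emb(S,M)\times_{\Diff(S)}(\Diff(S)\cdot\mu)$ is exactly the set of pairs $(N,\nu)$ with $(N,\nu)\cong(S,\mu)$, i.e.\ $\Gr^{\deco}_{S,\mu}(M)$; the $\Diff(M)$ equivariance of the resulting bijection is inherited from~(3.2). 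For the splitting submanifold and the fiber bundle structure: any local smooth section $\sigma\colon U\to\Emb(S,M)$ of $\pi$ produces a fiber bundle trivialization $U\times\mathfrak D(S)\to\pi_{\mathfrak D}^{-1}(U)\subseteq\Gr^{\deco}_S(M)$, $(N,\nu)\mapsto[\sigma(N),\nu]$, under which $\Gr^{\deco}_{S,\mu}(M)$ pulls back to $U\times(\Diff(S)\cdot\mu)$, because $[\sigma(N),\nu]$ lies in $\Gr^{\deco}_{S,\mu}(M)$ precisely when $\nu\in\Diff(S)\cdot\mu$. Since $\Diff(S)\cdot\mu$ is a splitting initial submanifold of $\mathfrak D(S)$ by hypothesis, these charts simultaneously exhibit $\Gr^{\deco}_{S,\mu}(M)$ as an initial splitting smooth submanifold in $\Gr^{\deco}_S(M)$ and identify~(3.8) as a smooth fiber bundle with typical fiber $\Diff(S)\cdot\mu$.

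For the local smooth sections of the $\Diff_c(M)$ action, apply Lemma~\ref{before.sec} to $\pi$ with the commuting left $\Diff_c(M)$ action on $\Emb(S,M)$ and with the $\Diff(S)$ action on $Q:=\Diff(S)\cdot\mu$. The good orbit hypothesis supplies local smooth sections for the latter. For the former, given $\varphi_0\in\Emb(S,M)$ with $N_0:=\varphi_0(S)$, I would combine the good-orbit section $u\colon U\to\Diff_c(M)$ from Proposition~\ref{P:DiffcM.good} (so $u(N)(N_0)=N$, normalized with $u(N_0)=\id$) with the good-extension right inverse $e$ of $\Diff_c(M;N_0)\to\Diff(N_0)$: the formula $g_\varphi:=u(\varphi(S))\circ e\bigl(u(\varphi(S))^{-1}\circ\varphi\circ\varphi_0^{-1}\bigr)$ is well defined for $\varphi$ near $\varphi_0$ and satisfies $g_\varphi\circ\varphi_0=\varphi$. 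Lemma~\ref{before.sec} then delivers local smooth sections for the induced $\Diff_c(M)$ action on the associated bundle $\Emb(S,M)\times_{\Diff(S)}(\Diff(S)\cdot\mu)$, which is $\Gr^{\deco}_{S,\mu}(M)$ by Step~1.

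The only point requiring genuine care is the bookkeeping: checking that the trivialization really identifies $\Gr^{\deco}_{S,\mu}(M)$ with $U\times(\Diff(S)\cdot\mu)$ and that the displayed formula for $g_\varphi$ satisfies $g_\varphi\circ\varphi_0=\varphi$. Once these routine checks are made, everything else is a direct assembly of results already available in the paper.
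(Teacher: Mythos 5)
Your proof is correct and follows essentially the same route as the paper: identify $\Gr^{\deco}_{S,\mu}(M)$ with the associated bundle $\Emb(S,M)\times_{\Diff(S)}(\Diff(S)\cdot\mu)$ by restricting \eqref{E:abcd}, read off the initial splitting submanifold and fiber bundle claims from the trivializations induced by local sections of $\pi\colon\Emb(S,M)\to\Gr_S(M)$, and obtain the local smooth sections by applying Lemma~\ref{before.sec} to this associated bundle. The only deviation is that where the paper simply cites \cite[Lemma~2.1(c)]{HV20} for the local smooth sections of the $\Diff_c(M)$ action on $\Emb(S,M)$, you rederive that input from the good orbit and good extension properties of Proposition~\ref{P:DiffcM.good} via the formula $g_\varphi=u(\varphi(S))\circ e\bigl(u(\varphi(S))^{-1}\circ\varphi\circ\varphi_0^{-1}\bigr)$, which is a correct, self-contained substitute for the citation.
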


\begin{proof}
	Most of the statements follow immediately from the fact that the identification in \eqref{E:fb.deco.ass} 
	is the restriction of the diffeomorphism in \eqref{E:abcd}.
	Since the $\Diff_c(M)$ action on $\Emb(S,M)$ admits local smooth sections \cite[Lemma~2.1(c)]{HV20} we can use 
	Lemma \ref{before.sec} to conclude that the $\Diff_c(M)$ action on $\Gr^{\deco}_{S,\mu}(M)$ admits local smooth sections.
\end{proof}

We say \emph{$\Diff(S)$ has good isotropy at $\mu\in\DD(S)$} if $\Diff(S,\mu)=\{f\in\Diff(S):\mathfrak D(f)\cdot\mu=\mu\}$ is a splitting Lie subgroup in $\Diff(S)$.

\begin{lemma}\label{L:deco2}
	Suppose $\Diff(S)$ has good orbit and good isotropy at $\mu\in\mathfrak D(S)$.
	Then the $\Diff(S)$ equivariant orbit map
	\begin{equation}\label{E:Diff.S.orbit}
		\Diff(S)\to\Diff(S)\cdot\mu,\quad f\mapsto\mathfrak D(f)\cdot\mu,
	\end{equation}
	is a smooth principal $\Diff(S,\mu)$ bundle, and so is the $\Diff(M)$ equivariant map
	\begin{equation}\label{E:Emb.orbit}
		\Emb(S,M)\to\Gr^{\deco}_{S,\mu}(M), \quad\varphi\mapsto\bigl(\varphi(S),\mathfrak D(\varphi)\cdot\mu\bigr).
	\end{equation}
\end{lemma}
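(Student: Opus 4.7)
The first claim is the direct analogue of Lemma~\ref{L:geom1} in the decoration setting, and the proof of that lemma transfers with only notational changes. The good orbit hypothesis furnishes, at any $\mu_0\in\Diff(S)\cdot\mu$, an open neighborhood $U\subseteq\Diff(S)\cdot\mu$ of $\mu_0$ together with a smooth map $u\colon U\to\Diff(S)$ satisfying $\mathfrak{D}(u(\nu))\cdot\mu=\nu$ for every $\nu\in U$, while the good isotropy hypothesis realizes $\Diff(S,\mu)$ as a splitting Lie subgroup of $\Diff(S)$. Then
\[
	U\times\Diff(S,\mu)\to\Diff(S),\qquad (\nu,g)\mapsto u(\nu)\cdot g,
\]
is a smooth $\Diff(S,\mu)$ equivariant diffeomorphism onto the preimage of $U$ under the orbit map~\eqref{E:Diff.S.orbit}, with inverse $f\mapsto\bigl(\mathfrak{D}(f)\cdot\mu,\,u(\mathfrak{D}(f)\cdot\mu)^{-1}\cdot f\bigr)$; the second component lies in $\Diff(S,\mu)$ by functoriality of $\mathfrak{D}$. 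These are the required principal $\Diff(S,\mu)$ bundle trivializations.

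For the second statement, I combine the first part with the principal $\Diff(S)$ bundle $\pi\colon\Emb(S,M)\to\Gr_S(M)$ from~\eqref{emb}. Given $[\varphi_0,\mu]\in\Gr^{\deco}_{S,\mu}(M)$, choose a smooth local section $\sigma\colon W\to\Emb(S,M)$ of $\pi$ defined on an open neighborhood $W\subseteq\Gr_S(M)$ of $\varphi_0(S)$. This yields a $\Diff(S)$ equivariant trivialization $W\times\Diff(S)\to\pi^{-1}(W)$, $(N,f)\mapsto\sigma(N)\circ f$, while~\eqref{E:fb.deco.ass} together with the fiber bundle structure of Lemma~\ref{L:deco1} produces a trivialization $W\times(\Diff(S)\cdot\mu)\to\Gr^{\deco}_{S,\mu}(M)|_W$, $(N,\nu)\mapsto\bigl(N,\mathfrak{D}(\sigma(N))\cdot\nu\bigr)$. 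Under these two trivializations the map~\eqref{E:Emb.orbit} reads $(N,f)\mapsto\bigl(N,\mathfrak{D}(f)\cdot\mu\bigr)$, that is, $\id_W$ times the orbit map from the first part. Invoking the first part, this is locally a principal $\Diff(S,\mu)$ bundle over $W$, and the $\Diff(M)$ equivariance is immediate from the construction.

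The only real obstacle is notational: one must carefully track the interaction between the right $\Diff(S)$ action on $\Emb(S,M)$ by reparametrization, the equivalence relation in the associated bundle~\eqref{E:fb.deco.ass}, and the functorial $\Diff(S)$ action on $\mathfrak{D}(S)$, so that the two trivializations above factor~\eqref{E:Emb.orbit} through the principal bundle of the first part. Once this bookkeeping is arranged, no further analytic input beyond the good orbit, good isotropy, and principal bundle hypotheses is required.
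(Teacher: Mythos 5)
Your proposal is correct and takes essentially the same route as the paper: the first part reuses the trivializations from the proof of Lemma~\ref{L:geom1}, and the second part reduces \eqref{E:Emb.orbit} to the first part by means of the principal $\Diff(S)$ bundle \eqref{emb} and the associated-bundle identification \eqref{E:fb.deco.ass}. The only difference is presentational: the paper invokes the identifications $\Emb(S,M)=\Emb(S,M)\times_{\Diff(S)}\Diff(S)$ and \eqref{E:fb.deco.ass} abstractly, whereas you verify the same factorization explicitly in local trivializations coming from local sections of $\pi$.
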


\begin{proof}
	Local smooth sections for the $\Diff(S)$ action on $\Diff(S)\cdot\mu$ give rise to local trivializations of \eqref{E:Diff.S.orbit}, hence, this is a principal $\Diff(S,\mu)$ bundle, cf.~the proof of Lemma~\ref{L:geom1}.
	Combining this with the diffeomorphism
	\[
		\Emb(S,M)=\Emb(S,M)\times_{\Diff(S)}\Diff(S)
	\]
	and the diffeomorphism in \eqref{E:fb.deco.ass}, we conclude that \eqref{E:Emb.orbit} is a principal $\Diff(S,\mu)$ bundle too.
\end{proof}

Let $\Diff_c(M)\cdot(N,\nu)$ denote the $\Diff_c(M)$ orbit through $(N,\nu)\in\Gr^{\deco}_S(M)$.

\begin{lemma}\label{L:deco3}
	Suppose $\Diff(S)$ has good orbit at $\mu\in\mathfrak D(S)$ and suppose $(N,\nu)\in\Gr^{\deco}_{S,\mu}(M)$.
	Then $\Diff_c(M)\cdot(N,\nu)$ is open and closed in $\Gr^{\deco}_{S,\mu}(M)$, the (smooth) $\Diff_c(M)$ action on $\Diff_c(M)\cdot(N,\nu)$ admits local smooth sections, and the canonical forgetful map 
	\[
		\Diff_c(M)\cdot(N,\nu)\to\Diff_c(M)\cdot N,\quad(\tilde N,\tilde\nu)\mapsto\tilde N
	\] 
	is a smooth fiber bundle with typical fiber $\Diff(S)\cdot\mu$.
	
	If, moreover, $\Diff(S)$ has good isotropy at $\mu$, then the group $\Diff_c(M,N,\nu):=\{g\in\Diff_c(M,N):\mathfrak D(g|_N)\cdot\nu=\nu\}$ is a splitting Lie subgroup in $\Diff_c(M,N)$, and the orbit map 
	\begin{equation}\label{E:rrr}
		\Diff_c(M)\to\Diff_c(M)\cdot(N,\nu),\quad g\mapsto\bigl(g(N),\mathfrak D(g|_N)\cdot\nu\bigr)
	\end{equation}
	is a smooth principal $\Diff_c(M,N,\nu)$ bundle.
\end{lemma}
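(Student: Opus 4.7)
My approach is to combine Lemma~\ref{L:deco1} with the structural results of Proposition~\ref{P:DiffcM.good} and Lemma~\ref{L:geom2}. First, by Lemma~\ref{L:deco1} the smooth $\Diff_c(M)$ action on $\Gr^{\deco}_{S,\mu}(M)$ admits local smooth sections, and any action admitting local smooth sections has orbits that are open (and hence closed); so $\Diff_c(M)\cdot(N,\nu)$ is open and closed in $\Gr^{\deco}_{S,\mu}(M)$, and restriction of the local sections yields local smooth sections for the action on the orbit. For the forgetful map $\Diff_c(M)\cdot(N,\nu)\to\Diff_c(M)\cdot N$, I would work inside the ambient fiber bundle $\Gr^{\deco}_{S,\mu}(M)\to\Gr_S(M)$ from Lemma~\ref{L:deco1}, whose typical fiber is $\Diff(S)\cdot\mu$. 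Since $\Diff_c(M)\cdot N$ is open and closed in $\Gr_S(M)$ by Proposition~\ref{P:DiffcM.good}, the restriction of the ambient bundle to this orbit is still a fiber bundle, and the $\Diff_c(M)$ orbit of $(N,\nu)$ is an open-closed, equivariant subset projecting onto it. Local trivializations would be built using a local smooth section $u\colon U\to\Diff_c(M)$ of $\Diff_c(M)\to\Diff_c(M)\cdot N$ (available from Proposition~\ref{P:DiffcM.good} together with Lemma~\ref{L:geom1}), transporting the fiber over $N$ to fibers over $N'\in U$ by the action of $u(N')$.

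Assume now additionally that $\Diff(S)$ has good isotropy at $\mu$. Proposition~\ref{P:DiffcM.good} together with Lemma~\ref{L:geom2} shows that the restriction homomorphism $\Diff_c(M,N)\to\Diff(N)$ is a smooth principal $\Diff_c(M,\iota_N)$ bundle onto the union of connected components of its image. The splitting Lie subgroup $\Diff(S,\mu)\subseteq\Diff(S)$ corresponds, via any $\varphi\colon S\to N$ with $\mathfrak D(\varphi)\cdot\mu=\nu$, to a splitting Lie subgroup $\Diff(N,\nu)\subseteq\Diff(N)$ whose preimage in $\Diff_c(M,N)$ is exactly $\Diff_c(M,N,\nu)$. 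As the preimage of a splitting Lie subgroup under a smooth principal bundle projection, $\Diff_c(M,N,\nu)$ is itself a splitting Lie subgroup of $\Diff_c(M,N)$ (hence of $\Diff_c(M)$). The final principal bundle claim then follows verbatim from the argument of Lemma~\ref{L:geom1}: local smooth sections on the orbit $\Diff_c(M)\cdot(N,\nu)$ combined with the splitting isotropy $\Diff_c(M,N,\nu)$ yield the required local trivializations of the orbit map.

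The main obstacle I anticipate lies in the fiber bundle claim of the first paragraph, specifically in verifying that the typical fiber of $\Diff_c(M)\cdot(N,\nu)\to\Diff_c(M)\cdot N$ is the full orbit $\Diff(S)\cdot\mu$. Over each $N'\in\Diff_c(M)\cdot N$, the decoration is moved only by the image of $\Diff_c(M,N')\to\Diff(N')$, so identifying the fiber requires a careful appeal to the good extension property at $N'$ to conclude that this image acts transitively on the decoration orbit $\Diff(N')\cdot\nu'\cong\Diff(S)\cdot\mu$.
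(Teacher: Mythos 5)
Your proposal follows essentially the same route as the paper. The paper's proof of the first part is literally the single sentence ``The first part follows immediately from Lemma~\ref{L:deco1}'', which your first paragraph spells out; and your second paragraph is the paper's argument for the second part: $\Diff_c(M,N,\nu)$ is the preimage of the splitting Lie subgroup $\Diff(N,\nu)\subseteq\Diff(N)$ under the bundle $\Diff_c(M,N)\to\Diff(N)$, hence a splitting Lie subgroup, and the principal bundle structure on \eqref{E:rrr} then comes from local smooth sections exactly as in Lemma~\ref{L:geom1}.

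However, the obstacle you flag in your last paragraph is genuine, and the resolution you sketch there cannot work. Good extension at $N'$ only yields that the image $H_{N'}$ of $\Diff_c(M,N')\to\Diff(N')$ is an open (hence also closed) subgroup of $\Diff(N')$; it does not force $H_{N'}$ to act transitively on $\Diff(N')\cdot\nu'$. Concretely, take $M=\mathbb R^3$, let $N$ be a non-invertible knot (e.g.\ the knot $8_{17}$), let $\mathfrak D$ be the orientation functor of Example~\ref{Ex:deco.or} with $S=S^1$, and let $\nu$ be an orientation of $N$. Any $g\in\Diff_c(\mathbb R^3)$ with $g(N)=N$ extends, fixing the point at infinity, to an orientation-preserving diffeomorphism of $S^3$, so non-invertibility forces $g|_N$ to preserve the orientation of $N$. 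Hence the fiber of $\Diff_c(M)\cdot(N,\nu)$ over $N$ is the single point $\{\nu\}$, whereas the fiber of $\Gr^{\deco}_{S,\mu}(M)$ over $N$ consists of both orientations, i.e.\ is $\Diff(N)\cdot\nu\cong\Diff(S)\cdot\mu$. The same phenomenon occurs for the decoration by nowhere vanishing closed $1$-forms: $(N,-\nu)$ lies in $\Gr^{\deco}_{S,\mu}(M)$ over $N$ but not in the $\Diff_c(M)$ orbit of $(N,\nu)$.

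What your trivializations via $u(\tilde N)$ actually prove, and what is true, is that the forgetful map $\Diff_c(M)\cdot(N,\nu)\to\Diff_c(M)\cdot N$ is a smooth fiber bundle whose typical fiber is the orbit of $\nu$ under $H_N$, the image of $\Diff_c(M,N)\to\Diff(N)$; this is an open and closed subset of $\Diff(N)\cdot\nu\cong\Diff(S)\cdot\mu$, but it may be a proper subset. So you should not attempt the transitivity argument; instead, identify the typical fiber as this open-closed suborbit. Be aware that this imprecision sits in the statement of the lemma itself and is passed over silently by the paper's one-line proof of the first part; it is harmless for the subsequent applications, which only use that the $\Diff_c(M)$ action on the orbit admits local smooth sections and that the orbit is open and closed.
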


\begin{proof}
	The first part follows immediately from Lemma~\ref{L:deco1}.

	To show the second part, recall that $\Diff_c(M,N)$ is a splitting Lie subgroup in $\Diff_c(M)$ \cite[Lemma~2.1(b)]{HV20} and that the homomorphism
	\[
		\Diff_c(M,N)\to\Diff(N)
	\]
	is a smooth fiber bundle \cite[Lemma~2.1(d)]{HV20}.
	As $\Diff_c(M,N,\nu)$ is the preimage of $\Diff(N,\nu)$ under this homomorphism, and since $\Diff(N,\nu)$ is a splitting Lie subgroup of $\Diff(N)$ by assumption, we conclude that $\Diff_c(M,N,\nu)$ is a splitting Lie subgroup in $\Diff_c(M,N)$.
	Using local smooth sections for the $\Diff_c(M)$ action on $\Diff_c(M)\cdot(N,\nu)$, we obtain local trivializations of \eqref{E:rrr}.
	Hence, this is a principal bundle.
\end{proof}

Let $G$ be a Lie group acting smoothly on $M$ as in Section \ref{SS:geom}.
For $(N,\nu)\in\Gr^{\deco}_{S,\mu}(M)$, we let $G\cdot(N,\nu)$ denote the $G$ orbit through $(N,\nu)$ and we let $G_{(N,\nu)}=\{g\in G_N:g\cdot\nu=\nu\}$ denote the isotropy group.

The following reduces to Lemma~\ref{L:deco3} if $G=\Diff_c(M)$.

\begin{lemma}\label{L:deco5}
	Suppose $\mu\in\mathfrak D(S)$ and $(N,\nu)\in\Gr^{\deco}_{S,\mu}(M)$.
	Assume $\Diff(S)$ has good orbit at $\mu$ and $G$ has good orbit and good extension at $N$.
	Then $G$ has good orbit at $(N,\nu)$, i.e., $G\cdot(N,\nu)$ is an initial splitting smooth submanifold in $\Gr^{\deco}_{S,\mu}(M)$ and the (smooth) $G$ action on $G\cdot(N,\nu)$ admits local smooth sections.
	Moreover, the $G$ equivariant forgetful map
	\begin{equation}\label{E:bcd}
		G\cdot(N,\nu)\to G\cdot N,\quad(\tilde N,\tilde\nu)\mapsto\tilde N,
	\end{equation}
	is a smooth fiber bundle with typical fiber $\Diff(S)\cdot\mu$.
	
	If, moreover, $\Diff(S)$ has good isotropy at $\mu$ and $G_N$ has good isotropy at $\iota_N$, 
	then $G_N$ has good isotropy at $\nu$, i.e., $G_{(N,\nu)}$ is a splitting Lie subgroup in $G_N$.
	Furthermore, the orbit map
	\begin{equation}\label{E:orbit.Nnu}
		G\to G\cdot(N,\nu),\quad g\mapsto\bigl(g\cdot N,\mathfrak D(g|_N)\cdot\nu\bigr),
	\end{equation}
	is a smooth principal bundle with structure group $G_{(N,\nu)}$.
\end{lemma}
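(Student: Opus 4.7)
The plan is to realize $G\cdot(N,\nu)$ as an open subset of $E:=\pi^{-1}(G\cdot N)\times_{\Diff(S)}(\Diff(S)\cdot\mu)\subseteq\Gr^{\deco}_{S,\mu}(M)$, where $\pi$ is as in~\eqref{emb}, and transport local smooth sections through the associated-bundle structure of Lemma~\ref{L:deco1} via Lemma~\ref{before.sec}. Set $P:=\pi^{-1}(G\cdot N)$. Good orbit at $N$ makes $G\cdot N$ an initial splitting submanifold of $\Gr_S(M)$, whence $P$ is initial splitting in $\Emb(S,M)$ and $P\to G\cdot N$ is a principal $\Diff(S)$-bundle on which $G$ acts commuting with the structure group. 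The associated bundle $E$ is then a $G$-invariant smooth fiber bundle over $G\cdot N$ with typical fiber $\Diff(S)\cdot\mu$ and an initial splitting submanifold of $\Gr^{\deco}_{S,\mu}(M)$.

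The crux is showing the $G$-action on $P$ admits local smooth sections. To build such a section near $\varphi_0\in P$ with $\pi(\varphi_0)=N$ (the general case following by equivariance), I proceed in two stages for $\varphi$ near $\varphi_0$: first apply the local smooth section $\sigma_1\colon U_1\to G$ of $G\to G\cdot N$ from good orbit at $N$ to obtain $g_1:=\sigma_1(\pi(\varphi))$ with $g_1\cdot N=\pi(\varphi)$; then $h:=g_1|_N^{-1}\circ\varphi\circ\varphi_0^{-1}\in\Diff(N)$ tends to $\id_N$ as $\varphi\to\varphi_0$, so good extension at $N$ supplies a local smooth section $\sigma_2$ of $G_N\to\Diff(N)$ near the identity, yielding $g_2:=\sigma_2(h)\in G_N$ with $g_2|_N=h$; then $g:=g_1g_2$ depends smoothly on $\varphi$ and satisfies $g\circ\varphi_0=\varphi$. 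Once this is established, Lemma~\ref{before.sec}, whose second input is furnished by good orbit at $\mu$, yields local smooth sections for the $G$-action on $E$; consequently $G$-orbits in $E$ are open and closed, so $G\cdot(N,\nu)$ is an initial splitting submanifold of $\Gr^{\deco}_{S,\mu}(M)$, and the forgetful map $G\cdot(N,\nu)\to G\cdot N$ inherits the fiber bundle structure by restriction from $E\to G\cdot N$.

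For Part~2, the Lie group isomorphism $\Phi\colon\Diff(S)\to\Diff(N)$, $f\mapsto\varphi_0\circ f\circ\varphi_0^{-1}$, identifies $\Diff(S,\mu)$ with $\Diff(N,\nu)$, so good isotropy at $\mu$ renders $\Diff(N,\nu)$ a splitting Lie subgroup of $\Diff(N)$. By good extension at $N$ combined with good isotropy of $G_N$ at $\iota_N$, Lemma~\ref{L:geom2} makes $G_N\to\Diff(N)$ a smooth principal $G_{\iota_N}$-bundle onto its image; pulling back $\Diff(N,\nu)$ through this bundle exhibits $G_{(N,\nu)}$ as a splitting Lie subgroup of $G_N$, and since $G_N$ is splitting in $G$ by the hypothesis on $G_N$ at $\iota_N$, $G_{(N,\nu)}$ is then also a splitting Lie subgroup of $G$. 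The principal bundle structure of the orbit map $G\to G\cdot(N,\nu)$ then follows by the argument in the proof of Lemma~\ref{L:geom1}, using the local smooth sections from Part~1 to build local trivializations with structure group $G_{(N,\nu)}$.

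The main obstacle is the construction of the local $G$-section on $P$: good orbit and good extension at $N$ each only provide sections in one direction (along the base $G\cdot N$ and along the $\Diff(S)$-fiber respectively), and they must be combined smoothly, with verification that the auxiliary diffeomorphism $h$ stays within the neighborhood of the identity where good extension applies.
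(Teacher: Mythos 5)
Your proof is correct and follows essentially the paper's own route: for the first part the paper's (four‑line) proof says to proceed as in Lemma~\ref{L:deco3}, i.e.\ to use the associated‑bundle picture $\pi^{-1}(G\cdot N)\times_{\Diff(S)}(\Diff(S)\cdot\mu)$ together with Lemma~\ref{before.sec}, and for the second part to generalize Lemma~\ref{L:deco3} by means of Lemma~\ref{L:geom2} --- which is exactly your argument. Your two‑stage construction of local smooth sections for the $G$-action on $\pi^{-1}(G\cdot N)$ (base direction from good orbit at $N$, fiber direction from good extension at $N$) is precisely the ingredient the paper leaves implicit in ``proceed as in the proof of Lemma~\ref{L:deco3}''; the same combination appears, transplanted from the embedding space to the Grassmannian, in the paper's alternative route through Lemma~\ref{L:aug2} and Lemma~\ref{L:deco.aug}(b,c,d).

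One remark, concerning the statement rather than your argument: what your restriction argument delivers (and what the paper's own route via Lemma~\ref{L:aug2} and Lemma~\ref{L:deco.aug} delivers as well) is that \eqref{E:bcd} is a smooth fiber bundle whose typical fiber is the orbit $G_N\cdot\nu$, which by Lemma~\ref{L:deco.aug}(b) is open and closed in $\Diff(N)\cdot\nu\cong\Diff(S)\cdot\mu$ but can be a proper subset. For instance, for the orientation decoration of Example~\ref{Ex:deco.or} with $G=\Diff_c(M)$, $M=S^1\times\RR$, $N=S^1\times\{0\}$, any compactly supported diffeomorphism preserving $N$ acts trivially on $H_1(M;\Z)$ and hence cannot reverse the orientation of $N$, so the fiber of \eqref{E:bcd} is a single point while $\Diff(S)\cdot\mu$ consists of two orientations. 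Thus the typical fiber named in the lemma should be read as $G_N\cdot\nu$ (an open and closed subset of a copy of $\Diff(S)\cdot\mu$); this imprecision is inherited from Lemma~\ref{L:deco3} and is not a gap in your proof.
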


\begin{proof}
	For the first part we can proceed as in the proof of Lemma~\ref{L:deco3}.
	Alternatively, we can combine Lemma~\ref{L:aug2} with Lemma~\ref{L:deco.aug}(b,c) below to obtain the first assertion.
	Using Lemma~\ref{L:geom2} one readily generalizes the proof of the second part of Lemma~\ref{L:deco3}.
	Alternatively, we can combining Lemma~\ref{L:aug3} with Lemma~\ref{L:deco.aug}(b,c,d) to obtain the second assertion.
\end{proof}

\begin{example}[Volume densities]\label{Ex:deco.den}
	The functor $\mathfrak D(S):=\Den(S)=\Gamma(|\Lambda|_S)$  is a decoration functor.
	The functor $\mathfrak D(S):=\Den_\times(S)=\Gamma(|\Lambda|_S\setminus S)$ is a decoration functor 
	with good orbit and good isotropy at each point  (see Proposition \ref{P:den}).
	The latter gives rise to the \emph{weighted nonlinear Grassmannian} $\Gr^{\wt}_S(M)$.
	The decoration functor $\mathfrak D(S):=\Gamma(\Lambda^{\dim S}(T^*S)\setminus S)$ of volume forms has similar properties.
\end{example}

\begin{example}\label{Ex:deco.bad}
	The functors $\mathfrak D(S):=\mathfrak X(S)$, $\mathfrak D(S):=\Omega^q(S)=\Gamma(\Lambda^qT^*S)$, and $\mathfrak D(S):=\Omega^q(S;\mathfrak o_S)=\Gamma(\Lambda^q(T^*S)\otimes\mathfrak o_S)$ are decorations functors, but they rarely have good orbits or isotropy.
	Still, the decoration functor $\mathfrak D(S):=\Omega^1(S)$ has good orbits at each nowhere zero closed 1-form (see Proposition \ref{P:begood}).	
\end{example}

\begin{example}[Orientations]\label{Ex:deco.or}
	The functor $\mathfrak D(S):=\Gamma(\tilde S)$, where $\tilde S$ denotes the orientation covering of $S$, 
	is a decoration functor with good orbit and good isotropy at each point.
	It gives rise to the \emph{oriented nonlinear Grassmannian} $\Gr^{\ori}_S(M)$, consisting of all oriented submanifolds of $M$.
	The forgetful functor $\Gr^{\ori}_S(M)\to \Gr_S(M)$ is a finite covering over the (open and closed) subset of orientable submanifolds in $\Gr_S(M)$.
\end{example}

\begin{example}[Cohomology classes]\label{Ex:deco.homol}
	The functor $\mathfrak D(S):=H^q(S;\mathbb R)$ is a decoration functor with good orbit and good isotropy at each point.
	The same is true for $\mathfrak D(S):=H^q(S;\mathfrak o_S)$.
	For $q=\dim S$, we obtain the \emph{homologically weighted nonlinear Grassmannian} $\Gr^{\hwt}_S(M)$, cf.~\cite{HV23}.
\end{example}

\begin{example}\label{Ex:deco.label}
	Let $n_S$ denote the number of connected components of $S$. The functor $\mathfrak D(S):=\Bij(\pi_0(S),\{1,\dotsc,n_S\})$ is a decoration functor with good orbit and good isotropy at each point.
	The corresponding decorated Grassmannian consists of submanifolds of $M$ of type $S$ with labeled connected components.
\end{example}

\begin{example}[Submanifolds]\label{Ex:deco.flag}
	If $S'$ is a closed manifold, then the functor $\mathfrak D(S):=\Gr_{S'}(S)$ is a decoration functor with good orbit and good isotropy at each point, cf.~\cite[Lemma~2.1(b)]{HV20}.
	The corresponding decorated Grassmannian coincides with the Fr\'echet manifold of \emph{nonlinear flags of type $(S',S)$} in $M$, cf.~\cite[Section~2.3]{HV20}.
\end{example}

\begin{example}[Gauge groupoid]\label{Ex:deco.emb}
	The functor $\mathfrak D(S):=\Emb(S,M)$ is a decoration functor with good orbit and trivial isotropy at each embedding $\ph$.
	The corresponding decorated Grassmannian coincides with the Fr\'echet manifold 
	\[
	\Emb(S,M)\x_{\Diff(S)}\Emb(S,M)
	=\{g\in\Diff(N_1,N_2):N_1,N_2\in\Gr_S(M)\},
	\]
	a Lie groupoid,	namely the gauge groupoid of the principal bundle in~\eqref{emb}.
\end{example}


\subsection{Isotropic submanifolds decorated with volume densities}
Throughout this section $(M,\om)$ denotes a symplectic manifold.
We recall results from \cite{W90,L09,GBV19} about coadjoint orbits of the Hamiltonian diffeomorphism group $\Ham_c(M)$, 
parametrized by weighted isotropic nonlinear Grassmannians in $M$, extended to a possibly nonconnected model manifold $S$ in \cite{HV23}. 

On a closed manifold, we consider the decoration functor of volume densities from Example \ref{Ex:deco.den}:
\[
	\DD(S):=\Den_\x(S)=\Ga(|\La|_S\setminus S).
\]
The associated decorated Grassmannian is the weighted nonlinear Grassmannian
$\Gr^{\wt}_{S}(M):=\{(N,\nu):N\in\Gr_S(M),\nu\in\Den_\x(N)\}$.
There is a natural injective $\Diff(M,\omega)$ equivariant map 
\begin{equation}\label{bird}
	J:\Gr_S^{\wt}(M)\to \ham_c(M)^*=C_0^\oo(M)^*,\quad \langle J(N,\nu),X_h\rangle=\int_Nh\nu.
\end{equation}

\begin{proposition}\label{P:den}
	$\Diff(S)$ has good orbit and good isotropy at any $\mu\in\Den_\x(S)$.
\end{proposition}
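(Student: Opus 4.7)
The plan is to use Moser's theorem to identify the orbit as a hyperplane of finite codimension, invoke a right inverse of the twisted exterior derivative to construct local smooth sections via Lemma~\ref{L:loc.sec}, and then deduce good isotropy from a local trivialization of the orbit map. The main obstacle is packaging the right inverse of $d$ so that it feeds smoothly into Lemma~\ref{L:loc.sec}.

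First, Moser's theorem, applied on each connected component of $S$, identifies the orbit as
\[
	\Diff(S)\cdot\mu=\Bigl\{\nu\in\Den_\x(S):\int_{S_i}\nu=\int_{S_i}\mu\text{ for every connected component }S_i\text{ of }S\Bigr\}.
\]
The integration functionals $\ell_i\colon\Den(S)\to\RR$, $\ell_i(\nu):=\int_{S_i}\nu$, assemble into a continuous linear map $\ell\colon\Den(S)\to\RR^{n_S}$ with $n_S:=|\pi_0(S)|$. Choosing bump densities $\rho_i$ supported in $S_i$ with $\ell_j(\rho_i)=\de_{ij}$ gives a continuous linear right inverse, whence the topological direct sum $\Den(S)=\ker(\ell)\oplus\RR^{n_S}$. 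Therefore the orbit equals $(\mu+\ker(\ell))\cap\Den_\x(S)$, an open subset of a closed affine splitting hyperplane of finite codimension $n_S$, and is in particular a splitting initial submanifold of $\Den_\x(S)$.

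To produce local smooth sections, apply Lemma~\ref{L:loc.sec}. A tangent vector at $\nu$ to the orbit has the form $\tilde\nu\in\ker(\ell)\subseteq\Den(S)$, and the infinitesimal action of $X\in\X(S)$ is $\ze_X(\nu)=-\mathcal L_X\nu=-d(i_X\nu)$ when $\mathcal L_X\nu$ is viewed as an $\mathfrak o_S$-twisted top form. By Poincar\'e duality on $S$, the twisted exterior derivative $d\colon\Om^{\dim S-1}(S;\mathfrak o_S)\to\Den(S)$ has image exactly $\ker(\ell)$; fix a continuous linear right inverse $P\colon\ker(\ell)\to\Om^{\dim S-1}(S;\mathfrak o_S)$, for instance the Hodge-theoretic one attached to an auxiliary Riemannian metric. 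Since $\nu$ is nowhere vanishing there is a unique vector field $\si(\nu,\tilde\nu)\in\X(S)$ with $i_{\si(\nu,\tilde\nu)}\nu=-P(\tilde\nu)$, and $\si$ depends smoothly on $(\nu,\tilde\nu)$ jointly. By construction $\ze_{\si(\nu,\tilde\nu)}(\nu)=\tilde\nu$, so Lemma~\ref{L:loc.sec} supplies the desired local smooth sections on a neighborhood of $\mu$ in the orbit, and by equivariance everywhere.

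For good isotropy, convert a local section $u\colon U\to\Diff(S)$ (with $u(\mu)=\id$ and $u(\nu)\cdot\mu=\nu$) into a local trivialization of the orbit map $\rho(f):=f\cdot\mu$. The smooth map
\[
	F\colon\rho^{-1}(U)\to U\x\Diff(S),\qquad F(f):=\bigl(f\cdot\mu,\ f\o u(f\cdot\mu)^{-1}\bigr),
\]
has image in $U\x\Diff(S,\mu)$, since the second coordinate preserves $\mu$ by a direct computation from $u(\nu)\cdot\mu=\nu$, and admits the smooth two-sided inverse $(\nu,h)\mapsto h\o u(\nu)$. Hence $\pi(f):=f\o u(f\cdot\mu)^{-1}$ is a smooth retraction from a neighborhood of $\id$ in $\Diff(S)$ onto a neighborhood of $\id$ in $\Diff(S,\mu)$, with $\pi|_{\Diff(S,\mu)}=\id$. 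Combining $(\rho,\pi)$ with the splitting $\Den(S)=\ker(\ell)\oplus\RR^{n_S}$ from the first step yields a submanifold chart around $\id$ exhibiting $\Diff(S,\mu)$ as a splitting Lie subgroup of codimension $n_S$ in $\Diff(S)$, with Lie algebra $\{X\in\X(S):\mathcal L_X\mu=0\}$; left translation spreads the chart to every point of $\Diff(S,\mu)$.
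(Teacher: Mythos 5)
Your good-orbit half is essentially the paper's own argument (Moser's theorem to identify the orbit, plus a bounded right inverse of the twisted differential fed into Lemma~\ref{L:loc.sec}), but your set-theoretic identification of the orbit is not correct as stated: diffeomorphisms of $S$ may permute connected components, so $\Diff(S)\cdot\mu$ is in general a \emph{finite union} of parallel affine slices, namely the preimage under $\nu\mapsto\bigl(\int_{S_i}\nu\bigr)_i$ of the finite orbit of the volume vector under the permutations realized by $\Diff(S)$; your single slice $(\mu+\ker\ell)\cap\Den_\x(S)$ is only the $\Diff(S)_0$-orbit (take two diffeomorphic components with different total volumes to see the difference). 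This is harmless for the conclusions --- the finite union is still a splitting submanifold with tangent spaces $\ker\ell$, and equivariance spreads your local sections over the whole orbit --- but the displayed equality is false in general. There is also a left/right slip in the last step: from $u(\nu)\cdot\mu=\nu$ one gets that $u(f\cdot\mu)^{-1}\circ f$ preserves $\mu$, whereas your $\pi(f)=f\circ u(f\cdot\mu)^{-1}$ preserves $f\cdot\mu$; the trivialization must be $f\mapsto\bigl(f\cdot\mu,\,u(f\cdot\mu)^{-1}\circ f\bigr)$ with inverse $(\nu,h)\mapsto u(\nu)\circ h$.

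The genuine gap is the good-isotropy step. Even after correcting the formulas, what you have is a smooth idempotent retraction $\pi$ of the open set $\rho^{-1}(U)$ onto the subset $\Diff(S,\mu)$, equivalently a bijection $\rho^{-1}(U)\leftrightarrow U\times\Diff(S,\mu)$ whose inverse extends to a smooth map $U\times\Diff(S)\to\Diff(S)$. This does \emph{not} produce a submanifold chart: a chart must identify $\Diff(S,\mu)$, inside a chart of $\Diff(S)$, with an open piece of a complemented closed linear subspace, and the factor $\Diff(S,\mu)$ in your product decomposition is only a set --- treating it as a chart factor presupposes exactly the smooth structure you are trying to construct. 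In Banach manifolds one passes from a smooth retraction to a submanifold chart via the inverse function theorem; in the Fr\'echet setting no such theorem is available, and your argument offers no substitute. Indeed, your argument uses nothing beyond ``the orbit is a splitting submanifold and the action admits local smooth sections,'' so if it were valid it would prove that good orbit implies good isotropy in complete generality --- which would settle questions the paper explicitly leaves open (whether $\Ham_c(M)$ has good isotropy at isotropic $N$, cf.\ Proposition~\ref{pp} and the remark following it, and likewise for the contact group). The paper instead invokes a hard analytic input at this point, Hamilton's Nash--Moser based theorem \cite[Theorem~III.2.5.3]{Ham}, to conclude that $\Diff(S,\mu)$ is a splitting Lie subgroup of $\Diff(S)$; some such input is needed. (A further slip: $\Diff(S,\mu)$ has \emph{infinite} codimension in $\Diff(S)$ --- a complement of its Lie algebra is isomorphic to $d\Om^{k-1}(S;\mathfrak o_S)$ --- the finite codimension $n_S$ belongs to the orbit inside $\Den_\x(S)$, not to the isotropy group.)
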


\begin{proof}
	Let us fix a volume density $\mu$ on the $k$-dimensional closed manifold $S$.
	We consider the $\Diff(S)$ equivariant map taking values in the de Rham cohomology with coefficients in the orientation bundle $\mathfrak o_S$:
	\[
		h_S:\Den_\x(S)\to H^k(S;\mathfrak o_S),\quad h_S(\al)=[\al].
	\]
	It is well known \cite{M65} that the $\Diff(S)_0$ orbit of $\mu$ is the convex subset of all
	volume densities on $S$ that represent the same cohomology class as $[\mu]$.
	Hence, the orbit $\Diff(S)\cdot\mu$ coincides with the set of all volume densities on $S$ that are in the preimage  under $h_S$
	of  the (finite) orbit $\Diff(S)\cdot[\mu]$ in $H^k(S;\mathfrak o_S)$.
	This is an open subset in a finite union of parallel closed affine subspaces
	with finite codimension $\dim H^k(S;\mathfrak o_S)$. 
	In particular, $\Diff(S)\cdot\mu$ is a splitting smooth submanifold in $\Den_\x(S)$ with finite codimension 
	$\dim H^k(S;\mathfrak o_S)$ and with tangent spaces $\ker h_S = d\Om^{k-1}(S;\mathfrak o_S)$.

	That the $\Diff(S)$ action on the orbit $\Diff(S)\cdot\mu$ admits local smooth sections
	is a special case of \cite[Lemma~2.12]{HV23}.
	Here we give a direct proof by using Lemma~\ref{L:loc.sec}.
	Let $\phi$ be a smooth right inverse to the differential $d:\Om^{k-1}(S;\mathfrak o_S)\to d\Om^{k-1}(S;\mathfrak o_S)$.
	For the tangent vector at $\tilde\mu\in \Diff(S)\cdot\mu$ with $\la\in d\Om^{k-1}(S;\mathfrak o_S)$,
	we define $\si(\tilde\mu,\la)=Z\in\X(S)$  by $i_Z\tilde\mu=-\phi(\la)$.
	Since $\la=d\phi(\la)=-L_Z\tilde\mu$ is the infinitesimal generator of $Z$ at $\tilde\mu$, 
	the smooth map $\si:T(\Diff(S)\cdot\mu)\to\X(S)$ permits to apply Lemma~\ref{L:loc.sec}.

	This shows that $\Diff(S)$ has good orbit at $\mu$.
	On the other hand it is well known that the group $\Diff(S,\mu)$ of diffeomorphisms preserving $\mu$
	is a splitting Lie subgroup in $\Diff(S)$ (see Theorem III.2.5.3 on page 203 in \cite{Ham}).
	Thus, $\Diff(S)$ has good isotropy at $\mu$ too.
\end{proof}

We define the space of weighted submanifolds of type $(S,\mu)$ as in \eqref{esmu}:
\[
	\Gr^{\wt}_{S,\mu}(M):=\{(N,\nu)\in\Gr^{\wt}_S(M):(N,\nu)\cong(S,\mu)\}.
\]
Combining Proposition~\ref{P:den} with Lemma~\ref{L:deco1}, we see that the latter is an initial splitting smooth submanifold in $\Gr^{\wt}_S(M)$.
Moreover, the (smooth) $\Diff_c(M)$ action on $\Gr^{\wt}_{S,\mu}(M)$ admits local smooth sections, and the forgetful map 
\begin{equation}\label{E:pGrSmu}
	\Gr_{S,\mu}^{\wt}(M)\to\Gr_S(M)
\end{equation}
is a smooth fiber bundle with typical fiber $\Diff(S)\cdot\mu$.
Furthermore, the bijection in~\eqref{E:abcd} restricts to a canonical $\Diff(M)$ equivariant diffeomorphism
\begin{equation}\label{E:fb.wt.ass}
	\Gr^{\wt}_{S,\mu}(M)=\Emb(S,M)\times_{\Diff(S)}\Diff(S)\cdot\mu
\end{equation}
and the $\Diff(M)$ equivariant map $\Emb(S,M)\to\Gr^{\wt}_{S,\mu}(M)$, $\varphi\mapsto(\varphi(S),\varphi_*\mu)$
is a principal $\Diff(S,\mu)$ bundle according to Lemma~\ref{L:deco2}.

Since $\Gr^{\iso}_S(M,\omega)$ is a splitting smooth submanifold in $\Gr_S(M)$, its preimage under the fiber bundle map in~\eqref{E:pGrSmu}, $\Gr^{\wt\iso}_{S,\mu}(M,\omega)$,
is a splitting smooth submanifold in $\Gr^{\wt}_{S,\mu}(M)$.
Let us fix a leaf $\L\subseteq\Gr_S^{\iso}(M,\om)$ of Weinstein's isodrastic distribution $\D$ in \eqref{DDDDn}. 
Recall that $\mathcal L$ is an initial splitting smooth submanifold of codimension $\dim H^1(S;\RR)$ in $\Gr^{\iso}_S(M)$, cf.~Proposition~\ref{pp}.
Hence, its preimage under the fiber bundle map in~\eqref{E:pGrSmu},
\begin{equation}\label{smwi}
	\G:=\Gr_{S,\mu}^{\wt\iso}(M,\om)|_{\mathcal L},
\end{equation}
is an initial splitting submanifold of codimension $\dim H^1(S;\RR)$ in $\Gr_{S,\mu}^{\wt\iso}(M,\om)$.
	
Combining Propositions~\ref{pp} and \ref{P:den} with Lemma \ref{L:deco5} we obtain:

\begin{proposition}\label{abo}
	The $\Ham_c(M)$ action on $\G$ admits local smooth sections.
\end{proposition}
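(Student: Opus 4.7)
The plan is to exhibit $\G$ as an associated bundle and apply Lemma~\ref{before.sec}. Since $\mathcal L$ is an initial splitting smooth submanifold of $\Gr^{\iso}_S(M,\om)$ by Proposition~\ref{pp}, the principal $\Diff(S)$ bundle~\eqref{emb} restricts to a principal $\Diff(S)$ bundle $P:=\pi^{-1}(\mathcal L)\to\mathcal L$, and the identification~\eqref{E:fb.deco.ass} specializes to a $\Ham_c(M)$ equivariant diffeomorphism
\[
	\G=P\times_{\Diff(S)}\bigl(\Diff(S)\cdot\mu\bigr),
\]
in which the $\Ham_c(M)$ action on $P$ by post-composition commutes with the principal $\Diff(S)$ action by pre-composition. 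To apply Lemma~\ref{before.sec}, I need to check that both the $\Ham_c(M)$ action on $P$ and the $\Diff(S)$ action on $\Diff(S)\cdot\mu$ admit local smooth sections; the latter is supplied directly by Proposition~\ref{P:den}.

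The main step is therefore to produce local smooth sections for the $\Ham_c(M)$ action on $P$. Fix $\varphi_0\in P$ and let $N_0:=\varphi_0(S)\in\mathcal L$. By Proposition~\ref{pp}, the $\Ham_c(M)$ action on $\mathcal L$ admits local smooth sections, so I obtain a smooth map $s\colon V\to\Ham_c(M)$ on an open neighborhood $V$ of $N_0$ in $\mathcal L$ with $s(N_0)=\id$ and $s(N)\cdot N_0=N$. For $\tilde\varphi\in P$ close to $\varphi_0$, set $\tilde N:=\tilde\varphi(S)\in V$; since $s(\tilde N)^{-1}\circ\tilde\varphi$ is an embedding with image $N_0$, it equals $\varphi_0\circ\psi(\tilde\varphi)$ for a unique $\psi(\tilde\varphi)\in\Diff(S)$ depending smoothly on $\tilde\varphi$ with $\psi(\varphi_0)=\id$. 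Good extension at $N_0$ (Proposition~\ref{pp}) provides a smooth lift of the near-identity diffeomorphism $\varphi_0\circ\psi(\tilde\varphi)\circ\varphi_0^{-1}\in\Diff(N_0)$ to $h(\tilde\varphi)\in\Ham_c(M)_{N_0}$, and $u(\tilde\varphi):=s(\tilde N)\circ h(\tilde\varphi)\in\Ham_c(M)$ then satisfies $u(\tilde\varphi)\circ\varphi_0=s(\tilde N)\circ\varphi_0\circ\psi(\tilde\varphi)=\tilde\varphi$, as required.

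With both hypotheses verified, Lemma~\ref{before.sec} yields local smooth sections for the $\Ham_c(M)$ action on $\G$. The only delicate point is the construction of the section on $P$ in the preceding paragraph: the motion of the underlying submanifold along $\mathcal L$ is handled by the section supplied by Proposition~\ref{pp}, while the residual reparametrization by a near-identity element of $\Diff(S)$ must be absorbed by an isotopy in $\Ham_c(M)_{N_0}$, which is exactly what good extension at $N_0$ provides.
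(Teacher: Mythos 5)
Your proof is correct. It differs from the paper's in an interesting way. The paper proves Proposition~\ref{abo} in one line, by combining Propositions~\ref{pp} and \ref{P:den} with the general Lemma~\ref{L:deco5}; the fully worked-out argument behind that lemma (via Lemma~\ref{L:aug2} and Lemma~\ref{L:deco.aug}(b,c)) never leaves the Grassmannian level: good extension converts local smooth sections for the $\Diff(N)$ action on $\Diff(N)\cdot\nu$ into sections for the $G_N$ action on $G_N\cdot\nu$, and the section for the $G$ action on the orbit is then assembled fiberwise as $w(\tilde N,\tilde\nu)=u(\tilde N)\,v\bigl(u(\tilde N)^{-1}\cdot\tilde\nu\bigr)$, with $u$ a section over the isodrast. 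You instead work upstairs on the embedding space: you first construct local smooth sections for the $\Ham_c(M)$ action on $\pi^{-1}(\mathcal L)$ --- composing the section $s$ over $\mathcal L$ (good orbit) with a good-extension lift of the residual reparametrization $\varphi_0\circ\psi(\tilde\varphi)\circ\varphi_0^{-1}$ --- and then descend to the associated bundle $\G=\pi^{-1}(\mathcal L)\times_{\Diff(S)}\bigl(\Diff(S)\cdot\mu\bigr)$ via Lemma~\ref{before.sec} and Proposition~\ref{P:den}. This is precisely the first alternative sketched, but not carried out, in the proof of Lemma~\ref{L:deco5} (``proceed as in the proof of Lemma~\ref{L:deco3}''): for $G=\Ham_c(M)$, unlike for $\Diff_c(M)$, the hypothesis of Lemma~\ref{before.sec} that the group action on the total space admits local smooth sections is not simply quotable, and your middle paragraph is exactly the missing construction. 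Note also that your logical order reverses the paper's: after Theorem~\ref{T:th1}, the paper deduces that the $\Ham_c(M)$ action on the leaves of $\bar\D$ in $\Emb^{\iso}(S,M)$ admits local smooth sections \emph{from} Proposition~\ref{abo} combined with good extension, whereas you establish that embedding-level statement directly and derive Proposition~\ref{abo} from it; there is no circularity, and you obtain as a byproduct the fact that the paper needs later for Corollary~\ref{noua}. The paper's route buys modularity --- Lemma~\ref{L:deco5} applies verbatim to any decoration functor and any $G$ with good orbit and good extension at $N$ --- while yours buys an explicit transport-plus-reparametrization decomposition and the upstairs section property as an independent intermediate result.
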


Now, for any isotropic submanifold $N\subseteq M$ endowed with a volume density $\nu$, the coadjoint orbit of $J(N,\nu)$, with $J$ in \eqref{bird},
is a good coadjoint orbit of $\Ham_c(M)$.
More precisely, by applying Lemma~\ref{lemac} to connected components of the decorated Grassmannian $\G$ acted on by the regular Lie group $\Ham_c(M)$, we get:

\begin{theorem}[{\cite{L09,W90}}]\label{T:th1}
	Let $(M,\omega)$ be a symplectic manifold and consider a connected component $\G_0$ of the decorated Grassmannian 
	$\Gr_{S,\mu}^{\wt\iso}(M,\om)|_{\mathcal L}$ where $\mathcal L$ denotes an isodrastic leaf in $\Gr_S^{\iso}(M,\om)$.
	Then its image $J(\G_0)$ under the map $J$ in~\eqref{bird} is a good coadjoint orbit of $\Ham_c(M)$
	and the (formal) pullback of the KKS symplectic form, denoted by $\Om=J^*\om_{\KKS}$,
	is a $\Ham_c(M)$ equivariant smooth (weakly) symplectic form on $\G_0$ characterized by
	\begin{equation}\label{nomega}
		\Om_{(N,\nu)}(\ze_{X},\ze_{Y})=-\int_N\om(X,Y)\nu,
	\end{equation}
	for all $(N,\nu)\in \G_0$ and $X,Y\in\ham_c(M)$.
	Furthermore, $J$ is an (equivariant) moment map for the Hamiltonian action of $\Ham_c(M)$ on $\G_0$.
\end{theorem}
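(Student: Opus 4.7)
The plan is to apply Lemma \ref{lemac} with $G=\Ham_c(M)$ acting on $Q=\G_0$ via the map $J$ of \eqref{bird}. The hypotheses to verify are: (i) $\Ham_c(M)$ is a regular Lie group, which was recalled in Section \ref{S:isosymp}; (ii) the action on $\G_0$ is smooth, transitive, and admits local smooth sections; and (iii) $J$ is injective and $\Ham_c(M)$-equivariant. For (ii), smoothness of the action on the ambient space $\Gr_{S,\mu}^{\wt\iso}(M,\om)|_{\mathcal L}$ comes from Lemma \ref{L:deco1}, local smooth sections are provided by Proposition \ref{abo}, and transitivity follows because the resulting orbits must be open and closed in the connected component $\G_0$. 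Equivariance of $J$ under $\Diff(M,\omega)$, hence under $\Ham_c(M)$, is an immediate change-of-variables computation from \eqref{bird}; injectivity is obtained by first recovering $N$ as the support of the functional $h\mapsto\int_Nh\nu$ and then recovering $\nu$ by varying the test Hamiltonians supported near $N$.

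Once these hypotheses are in place, Lemma \ref{lemac} yields at once that $\Om=J^*\om_{\KKS}$ is a smooth, $\Ham_c(M)$-equivariant, weakly symplectic form on $\G_0$ with
\[
	\Om_{(N,\nu)}(\ze_{X},\ze_{Y})=\langle J(N,\nu),[X,Y]\rangle,\qquad X,Y\in\ham_c(M),
\]
and that $J$ is an equivariant moment map. To recover the explicit formula \eqref{nomega} from this, I would substitute $X=X_h$, $Y=X_k$, use $[X_h,X_k]=X_{\{h,k\}}$, and apply the definition of $J$:
\[
	\langle J(N,\nu),X_{\{h,k\}}\rangle=\int_N\{h,k\}\,\nu=\int_N\om(X_k,X_h)\,\nu=-\int_N\om(X_h,X_k)\,\nu,
\]
using the Poisson bracket convention $\{h,k\}=\om(X_k,X_h)$ fixed in Section \ref{S:isosymp}.

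To conclude that $J(\G_0)$ is a good coadjoint orbit, I would note that equivariance and transitivity place $J(\G_0)$ inside a single coadjoint orbit $\mathcal O\subseteq\ham_c(M)^*$, while injectivity makes $J\colon\G_0\to\mathcal O$ a $\Ham_c(M)$-equivariant bijection onto $\mathcal O$. Transporting the Fr\'echet manifold structure of $\G_0$ along $J$ equips $\mathcal O$ with a smooth structure for which the coadjoint action is smooth and admits local smooth sections, which is the definition of a good coadjoint orbit; Remark \ref{R:unique} ensures this structure is intrinsic.

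The main obstacle is the existence of local smooth sections for the $\Ham_c(M)$-action on $\G_0$, which has been isolated in Proposition \ref{abo}. The proof of that proposition combines Lemma \ref{L:deco5} with the good-orbit and good-extension properties of $\Ham_c(M)$ at isotropic submanifolds (Proposition \ref{pp}) and the good-orbit and good-isotropy properties of $\Diff(S)$ on volume densities (Proposition \ref{P:den}). Once this ingredient is accepted, the remainder of the argument is a direct translation of Lemma \ref{lemac} into the present geometric setting.
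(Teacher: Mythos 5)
Your proposal is correct and follows the paper's own route exactly: the paper proves Theorem~\ref{T:th1} precisely by applying Lemma~\ref{lemac} to connected components of $\G=\Gr_{S,\mu}^{\wt\iso}(M,\om)|_{\mathcal L}$, with Proposition~\ref{abo} (itself built from Propositions~\ref{pp} and \ref{P:den} via Lemma~\ref{L:deco5}) supplying the local smooth sections, and the injective equivariant map $J$ of \eqref{bird} as the moment map. Your extra details---the openness-of-orbits argument for transitivity, the derivation of \eqref{nomega} from $[X_h,X_k]=X_{\{h,k\}}$ and $\{h,k\}=\om(X_k,X_h)$, and the transport of the smooth structure to $J(\G_0)$ via Remark~\ref{R:unique}---are exactly what the paper leaves implicit, and they are carried out correctly.
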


In the special case $H^1(S)=0$, the decorated nonlinear Grassmannian $(\G,\om)$ 
can also be obtained via symplectic reduction at zero on the right leg of the Marsden--Weinstein ideal fluid dual pair \cite{MW83,GBV12},
as shown in \cite{GBV19}.

The diffeomorphism in \eqref{E:fb.wt.ass} restricts to a diffeomorphism
\begin{equation*}
	\Gr_{S,\mu}^{\wt\iso}(M,\om)=\Emb^{\iso}(S,M)\x_{\Diff(S)}\Diff(S)\cdot\mu,
\end{equation*}
with $\Emb^{\iso}(S,M)$ denoting the manifold of isotropic embeddings.
The pullback  of Weinstein's isodrastic distribution $\D$ in \eqref{DDDDn} to  $\Emb^{\iso}(S,M)$,
\[
	\bar{\mathcal{D}}_\ph:=\bigl\{u_\ph\in\Ga(\ph^*TM):\ph^*i_{u_\ph}\om\in dC^\oo(S)\bigr\},
\]
has codimension $\dim H^1(S;\RR)$ and is integrable. 
The $\Ham_c(M)$ action on each leaf of $\bar{\D}$ admits local smooth sections by Proposition \ref{abo} 
combined with the good extension property in Proposition \ref{pp}.
The preimage $\pi^{-1}(\L)\subseteq\Emb^{\iso}(S,M)$ 
under the bundle projection in~\eqref{emb} is a disjoint union of leaves of $\bar\D$, stable under the $\Diff(S)$ action.
We notice that the decorated Grassmannian in \eqref{smwi} is $\G=\pi^{-1}(\L)\x_{\Diff(S)}\Diff(S)\cdot\mu$ and we denote by
\begin{equation}\label{pmap}
	p:\pi^{-1}(\L)\x\Diff(S)\cdot\mu\to \G ,\quad p(\ph,\mu)=(\ph(S),\ph_*\mu)
\end{equation}
the principal $\Diff(S)$ bundle projection.
This permits to formulate the following alternative characterization of the symplectic form in \eqref{nomega}:

\begin{corollary}[{\cite[Theorem~5]{L09}}]\label{noua} 
	The pullback $\bar\Om:=p^*\Om$ of the symplectic form on $\G$  to the submanifold 
	$\pi^{-1}(\L)\x\Diff(S)\cdot\mu\subseteq\Emb^{\iso}(S,M)\x\Den_\x(S)$ is given by 
	\begin{equation}\label{omphal}
		\bar\Om_{(\ph,\mu)}((u_\ph,d\ga),(u'_\ph,d\ga'))=\int_S(\om(u_\ph,u'_\ph)\al-\ph^*i_{u_\ph}\om\wedge\ga'
		+\ph^*i_{u'_\ph}\om\wedge\ga),
	\end{equation}
	where $u_\ph,u'_\ph\in\D_\ph$ and $d\ga,d\ga'\in d\Om^{k-1}(S,\mathfrak o_S)$.
\end{corollary}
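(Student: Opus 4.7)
The plan is to show that the right-hand side of~\eqref{omphal} defines a $\Diff(S)$-basic 2-form $\hat\Om$ on the total space $\pi^{-1}(\L)\x\Diff(S)\cdot\mu$ of the principal bundle $p$ in~\eqref{pmap}, and that the form to which it descends on $\G$ coincides with $\Om$. Since $p^*\Om$ is basic by construction and uniquely determined by its descent, this yields the identity $p^*\Om=\hat\Om$.

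First I would verify that the formula is independent of the choice of primitive $\ga$ (and similarly of $\ga'$). Replacing $\ga$ by $\ga+\ga_0$ with $d\ga_0=0$ modifies the integrand by $\ph^*i_{u'_\ph}\om\wedge\ga_0$, which vanishes by Stokes' theorem on the closed manifold $S$, because $u'_\ph\in\bar\D_\ph$ makes $\ph^*i_{u'_\ph}\om$ exact.

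Next I would prove horizontality. A vertical tangent vector at $(\ph,\tilde\mu)$ has the form $(T\ph\cdot Z,\,d(i_Z\tilde\mu))$ for some $Z\in\X(S)$, so one may take $\ga=i_Z\tilde\mu$. Since $N=\ph(S)$ is isotropic, $\ph^*i_{T\ph Z}\om=(\ph^*\om)(Z,\cdot)=0$, killing the middle term of~\eqref{omphal}. The first term reduces pointwise to $-\int_S i_Z(\ph^*i_{u'_\ph}\om)\,\tilde\mu$, while the Leibniz rule applied to the vanishing $(k+1)$-form $\ph^*i_{u'_\ph}\om\wedge\tilde\mu$ gives $\ph^*i_{u'_\ph}\om\wedge i_Z\tilde\mu=i_Z(\ph^*i_{u'_\ph}\om)\,\tilde\mu$, so the first and third terms cancel. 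The $\Diff(S)$-invariance is a routine change-of-variables verification, so $\hat\Om$ descends to a smooth 2-form $\Om'$ on $\G$.

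Finally I would identify $\Om'=\Om$. By Lemma~\ref{lemac}, the form $\Om$ is determined by its values on the fundamental vector fields $\ze_{X_h}$ of the $\Ham_c(M)$ action, which span the tangent spaces of $\G$ everywhere by Proposition~\ref{abo}. These lift to $(X_h\o\ph,0)$ on the total space, so one may take $\ga=0$ as primitive, and~\eqref{omphal} reduces to $\int_S\om(X_h\o\ph,X_{h'}\o\ph)\,\tilde\mu=\int_N\om(X_h,X_{h'})\,\nu$, recovering~\eqref{nomega} (with the sign convention fixed there). The main obstacle is the horizontality calculation, where both the isotropy of $N$ (through $\ph^*\om=0$) and the top-degree nature of $\tilde\mu$ (through the Leibniz cancellation) enter essentially; once horizontality and invariance are established, identifying $\Om'$ with $\Om$ on the Hamiltonian fundamental vector fields is immediate.
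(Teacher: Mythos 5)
You take a genuinely different route from the paper. The paper proves \eqref{omphal} by direct computation: it writes an arbitrary tangent vector $(u_\ph,d\ga)$ as the lift of a Hamiltonian fundamental vector field plus a vertical correction (via $d\ga=L_Z\mu$ and a Hamiltonian $h_Z$ with $X_{h_Z}\o\ph=T\ph\o Z$), pushes forward along $p$, applies \eqref{nomega}, and expands the resulting four terms. You instead show that the right-hand side of \eqref{omphal} --- call it $\hat\Om$, reading the stray $\al$ in its first term as the density $\tilde\mu$ at the footpoint --- is a basic form on the total space of \eqref{pmap} (well defined independently of the primitives $\ga,\ga'$, horizontal, $\Diff(S)$ invariant), and then identify its descent with $\Om$ by testing only on the lifts $(X_h\o\ph,0)$, which modulo vertical vectors span each tangent space by Proposition~\ref{abo}. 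Your verifications are correct: well-definedness follows from Stokes together with exactness of $\ph^*i_{u'_\ph}\om$; horizontality uses exactly the two ingredients of the paper's expansion, namely $\ph^*\om=0$ and the top-degree identity $(i_Z\be)\,\tilde\mu=\be\wedge i_Z\tilde\mu$ for 1-forms $\be$ on $S$; and the reduction of the identification to Hamiltonian directions is sound. This organization is arguably cleaner: it makes the independence of the choice of primitive explicit (the paper handles it only implicitly via ``$d(i_Z\mu-\ga)=0$'') and confines all choices to one final evaluation.

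The genuine problem is your last step, and it cannot be hidden in the parenthesis ``with the sign convention fixed there.'' With $\ga=\ga'=0$ your formula gives $\hat\Om\bigl((X_h\o\ph,0),(X_{h'}\o\ph,0)\bigr)=+\int_N\om(X_h,X_{h'})\,\nu$, whereas \eqref{nomega} prescribes $\Om_{(N,\nu)}(\ze_{X_h},\ze_{X_{h'}})=-\int_N\om(X_h,X_{h'})\,\nu$. So what your argument actually establishes is $p^*\Om=-\hat\Om$, i.e.\ \eqref{omphal} with an overall minus sign; the two displays you claim to match are negatives of each other, and no compensating sign appears anywhere in your argument as written. For what it is worth, the same tension sits inside the paper's own proof: the identity $T_{(\ph,\mu)}p(T\ph\o Z,0)=T_{(\ph,\mu)}p(0,L_Z\mu)$ invoked there is itself off by a sign --- the vertical vector of \eqref{pmap} is $(T\ph\o Z,L_Z\mu)$, so in fact $Tp(T\ph\o Z,0)=-Tp(0,L_Z\mu)$ --- and this is compensated by silently flipping the sign of the first term in the paper's final display. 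As printed, \eqref{omphal} and \eqref{nomega} are therefore mutually inconsistent by an overall sign, and your method, carried out with the signs tracked honestly, is precisely the computation that exposes this; a correct write-up must state the discrepancy (or fix the convention in \eqref{nomega}) rather than assert agreement.
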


\begin{proof}
	By the infinitesimal transitivity, each element of $\D_\ph$ is of the form $u_\ph=X_h\o\ph$ for some $X_h\in\ham_c(M)$,
	and each element of $\ker T_\mu h_S$ is of the form $d\ga=L_Z\mu$ for some $Z\in\X(S)$. 
	Moreover, let $h_Z$ be a Hamiltonian function that satisfies $T\ph\o Z=X_{h_Z}\o\ph$ (which always exists).
	Knowing that $T_{(\ph,\mu)}p(T\ph\o Z,0)=T_{(\ph,\mu)}p(0,L_Z\mu)$, we compute:
	\begin{align*}
		\bar\Om_{(\ph,\mu)}&((u_\ph,d\ga),(u'_\ph,d\ga'))=\bar\Om_{(\ph,\mu)}((X_h\o\ph,L_Z\mu),(X_{h'}\o\ph,L_{Z'}\mu))\\
		&=\Om_{p(\ph,\mu)}(Tp(X_h\o\ph+T\ph\o Z,0),Tp(X_{h'}\o\ph+T\ph\o Z',0))\\
		&=\Om_{(\ph(S),\ph_*\mu)}(\ze_{X_h+X_{h_Z}},\ze_{X_{h'}+X_{h_{Z'}}})
		\stackrel{\eqref{nomega}}{=}-\int_S\ph^*(\om(X_{h}+X_{h_Z},X_{h'}+X_{h_{Z'}}))\mu.
	\end{align*}
	One of the four terms,  $\ph^*(\om(X_{h_Z},X_{h_{Z'}}))=(\ph^*\om)(Z,Z')$, vanishes because $\ph$ is an isotropic embedding.
	Another term is $\ph^*(\om(X_{h_Z},X_{h'}))=-i_{Z}\ph^*i_{X_{h'}}\om= -i_{Z}\ph^*i_{u'_\ph}\om$.
	Since $d(i_Z\mu-\ga)=0$ and $\ph^*i_{u_\ph}\om$ is exact, the remaining three terms become:
	\begin{multline*}
		\int_S\ph^*\om(X_h,X_{h'})\mu
		-\int_S i_{Z'}\ph^*i_{u_\ph}\om\mu+\int_S i_Z\ph^*i_{u'_\ph}\om\mu\\
		=\int_S\om(u_\ph,u'_\ph)\mu-\int_S \ph^*i_{u_\ph}\om\wedge\ga'+\int_S \ph^*i_{u'_\ph}\om\wedge\ga,
	\end{multline*}
	which is formula~\eqref{omphal}.
\end{proof}


\subsection{Hypersurfaces decorated with nowhere zero closed 1-forms}\label{sheets}
Let $M$ be a manifold endowed with a volume form $\mu$, and let $\Diff_{c,\ex}(M)$ be the group of exact volume preserving diffeomorphisms.
Here, we study decorated nonlinear Grassmannians of codimension one submanifolds endowed with closed 1-forms, 
which parametrize coadjoint orbits of vortex sheets, also considered in \cite{goldin,goldin2,khesin,GBV24}.
Coadjoint orbits of vortex filaments, which are parametrized by codimension two (plain) nonlinear Grassmannians, have been described in Theorem \ref{P:codim2}.

In Example~\ref{Ex:deco.bad} we mention decoration functors of differential forms.
Here we need the decoration functor $\DD(S)=Z_\x^1(S)$ of nowhere zero closed 1-forms, which yields the decorated nonlinear Grassmannian
\[
\Gr^{\deco}_{S}(M):=\{(N,\nu):N\in\Gr_S(M),\nu\in Z^1_\x(N)\}.
\]
Let $\Gr^{\deco,\hnull}_S(M)$ denote the (open and closed) subset in $\Gr^{\deco}_S(M)$ consisting of (decorated) null-homologous submanifolds in $M$.
Then there is an injective $\Diff(M,\mu)$ equivariant map
\begin{equation}\label{momentum} 
	J\colon\Gr^{\deco,\hnull}_S(M) \rightarrow \X_{c.\ex}(M)^\ast,
	\quad\left\langle J(N,\nu), X \right\rangle =\int_N  \io_N^*\alpha\wedge\nu,
\end{equation}
where $\al$ is any potential form for $X\in \X_{c.\ex}(M)^\ast$, i.e., $i_X\mu=d\al$.
Since $N$ is null-homologous, the integral in \eqref{momentum} does not depend on the choice of $\al$.

\begin{proposition}\label{P:begood}
	$\Diff(S)$ has good orbit at any $\be\in Z_\x^1(S)$.
	If, moreover, the period group of $\be$ is discrete, then $\Diff(S)$ has good isotropy at $\be$.
\end{proposition}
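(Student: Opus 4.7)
The plan is to mimic the proof of Proposition~\ref{P:den}. For the first assertion, I consider the $\Diff(S)$-equivariant de Rham cohomology class map
\[
	h_S\colon Z^1_\x(S)\to H^1(S;\R),\quad h_S(\beta):=[\beta],
\]
and use a Moser-type argument to show that $\Diff(S)_0\cdot\beta$ equals the connected component of $\beta$ in $h_S^{-1}([\beta])$. Given a smooth path $\beta_t$ of cohomologous nowhere-zero closed 1-forms, I write $\dot\beta_t=d\gamma_t$ using a smooth right inverse $\phi\colon dC^\oo(S)\to C^\oo(S)$ of $d$ (available via Hodge theory), and define a time-dependent vector field $X_t$ by requiring $i_{X_t}\beta_t=-\gamma_t$, with $X_t$ orthogonal to $\ker\beta_t$ with respect to an auxiliary Riemannian metric. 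Since $\beta_t$ is nowhere zero, $X_t$ is well defined and smooth, and its flow $\psi_t$ satisfies $\psi_t^*\beta_t=\beta_0$. Consequently, near any $\beta_0\in\Diff(S)\cdot\beta$ the connected component of $\beta_0$ in the orbit is an open subset of the closed affine subspace $h_S^{-1}([\beta_0])\subseteq Z^1_\x(S)$, which has finite codimension $\dim H^1(S;\R)$ and tangent space $dC^\oo(S)$. This yields the initial splitting submanifold property. Local smooth sections for the orbit action are then obtained via Lemma~\ref{L:loc.sec}, applied with $\sigma(\tilde\beta,\lambda)=X$ where $X$ solves $i_X\tilde\beta=-\phi(\lambda)$ and is orthogonal to $\ker\tilde\beta$.

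For the second assertion I would invoke Tischler's theorem. If the period group of $\beta$ is $\alpha\Z$ with $\alpha>0$ (exact nowhere-zero 1-forms do not exist on closed manifolds by compactness), then $\alpha^{-1}[\beta]$ is integral, so $\alpha^{-1}\beta=f^*d\th$ for a smooth map $f\colon S\to S^1$. The nowhere-zero hypothesis makes $f$ a submersion, hence a smooth fiber bundle with closed fibers. A diffeomorphism $g\in\Diff(S)$ satisfies $g^*\beta=\beta$ if and only if $f\circ g-f$ is locally constant valued in $S^1$. Assuming $S$ connected (the general case being analogous), this gives a short exact sequence
\[
	1\to\GGauge(f)\to\Diff(S,\beta)\to S^1\to 1,
\]
with $\GGauge(f)=\{g\in\Diff(S):f\circ g=f\}$ the gauge group of the fibration, a known splitting Lie subgroup of $\Diff(S)$. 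A smooth splitting of the projection to $S^1$ is provided by the flow of the horizontal lift of $\partial_\th$ with respect to any auxiliary Ehresmann connection on $f$, yielding $\Diff(S,\beta)\cong\GGauge(f)\rtimes S^1$ as a splitting Lie subgroup of $\Diff(S)$.

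The main obstacle lies in the second assertion: without the discreteness hypothesis, Tischler's theorem does not apply, $\beta$ defines a foliation with dense leaves, and $\Diff(S,\beta)$ lacks the fibration-theoretic description that makes it a splitting Lie subgroup. By contrast, the Moser argument in the first part directly parallels that in Proposition~\ref{P:den}; the only new ingredient is the use of the nowhere-zero condition to invert the pointwise linear map $X\mapsto i_X\beta_t$ and extract a smooth time-dependent vector field.
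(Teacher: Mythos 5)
Your treatment of the first assertion is correct and essentially identical to the paper's own proof: the same Moser-trick identification of $\Diff(S)_0\cdot\be$ with the connected component of $\be$ in $h_S^{-1}([\be])$, the same affine-subspace description with tangent space $dC^\oo(S)$, and the same application of Lemma~\ref{L:loc.sec} with the vector field determined by $i_X\tilde\be=-\phi(\la)$ via an auxiliary Riemannian metric.

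The second assertion, however, contains a genuine error. The flow $\phi_t$ of the horizontal lift $X$ of $\partial_\th$ does lie in $\Diff(S,\be)$ (since $i_X\be\equiv\alpha$ is constant, $L_X\be=di_X\be=0$) and satisfies $f\circ\phi_t=f+t$, but it does \emph{not} descend to a homomorphism $S^1\to\Diff(S,\be)$: the time-one map $\phi_1$ is the monodromy of the chosen Ehresmann connection, a gauge transformation which in general is not the identity. No choice of connection can repair this. Indeed, any splitting homomorphism $s\colon S^1\to\Diff(S,\be)$ with $f\circ s(c)=f+c$ has a generating vector field projecting to $\partial_\th$, hence spans an Ehresmann connection whose time-one holonomy $s(1)=\id$ is trivial, and parallel transport then trivializes the fibration, $S\cong F\times S^1$ with $F$ the fiber. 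Consequently, whenever the Tischler fibration is nontrivial the extension $1\to\GGauge(f)\to\Diff(S,\be)\to S^1\to 1$ does not split: for the Klein bottle fibering over $S^1$ (fiber $S^1$, monodromy a reflection, and $K\not\cong T^2$), or for the mapping torus of a Dehn twist on $T^2$ (whose fundamental group is not $\Z^3$), the claimed isomorphism $\Diff(S,\be)\cong\GGauge(f)\rtimes S^1$ fails.

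The conclusion itself survives, because the splitting Lie subgroup property is local and needs no global splitting of this extension; this is precisely how the paper argues. It realizes $\Diff(S,\be)$ as the preimage of the rotation group $\Rot(B)\cong S^1$ under the fiber bundle projection $\Aut(S)\to\Diff(B)$, where $\Aut(S)$ is the automorphism group of the fibration $p\colon S\to B$: local triviality of this projection, together with the fact that $\Rot(B)$ is a finite dimensional splitting Lie subgroup of $\Diff(B)$, already gives the result. (The standing of the unproved input there, namely that $\Aut(S)\to\Diff(B)$ is a fiber bundle, is comparable to your appeal to $\GGauge(f)$ being a splitting Lie subgroup.) Alternatively, your construction can be salvaged by dropping the periodicity claim: the map $\GGauge(f)\times\R\to\Diff(S,\be)$, $(g,t)\mapsto\phi_t\circ g$, is surjective and induces an identification $\Diff(S,\be)\cong\bigl(\GGauge(f)\times\R\bigr)/\Z$ with $n\cdot(g,t)=(\phi_n\circ g,\,t-n)$, exhibiting $\Diff(S,\be)$ as a smooth fiber bundle over $S^1$ with fiber $\GGauge(f)$ --- in general a nontrivial bundle --- and the flow $\phi_t$ still provides the local sections needed to produce splitting submanifold charts; only the global product structure must be abandoned.
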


\begin{proof}
Let $h_S:Z_\x^1(S)\to H^1(S;\R)$ be the $\Diff(S)$ equivariant map given by $h_S(\be)=[\be]$.
	With the Moser trick \cite{M65}, we show that the orbit $\Diff(S)_0\cdot\be\subseteq Z^1_\x(S)$ 
	is the connected component of $\be$ in $h_S^{-1}([\be])$.
	Indeed, let $\be_t\in Z^1_\x(S)$, $t\in [0,1]$, be a smooth curve starting at $\be$ and such that $[\be_t]=[\be]$.
	Then $\dot\be_t$ is exact, so there exists  $f_t\in C^\oo(S)$, smoothly depending on $t$, such that $df_t=\dot\be_t$.
	A smooth vector field $Z_t$ on $S$ satisfying $i_{Z_t}\be_t=-f_t$ can be written with the help of a Riemannian metric on $S$ as
	$Z_t=-\tfrac{f_t}{||\be_t||^2}\be_t^\sharp$.
	In particular, $L_{Z_t}\be_t+\dot\be_t=0$, so the smooth curve $g_t$ in $\Diff(S)$ that integrates $Z_t$ satisfies $g_t^*\be_t=\be$.

	We have shown that the orbit $\Diff(S)_0\cdot\be$ is an open subset of a closed affine subspace of $Z^1(S)$ with finite codimension $\dim H^1(S;\RR)$,
	thus an initial splitting smooth submanifold with tangent space $\ker h_S=dC^\oo(S)$.
	We use Lemma \ref{L:loc.sec} to show that the $\Diff(S)_0$ action on this orbit  admits local smooth sections.
	With a smooth right inverse $\phi$ to the differential $d:C^\oo(S)\to dC^\oo(S)$ and using a Riemannian metric as above,
	for the  tangent vector at $\tilde\be\in\Diff(S)_0\cdot\be$ with $\la\in dC^\oo(S)$, 
	we define $\si(\tilde\be,\la)=Z\in\X(S)$ such that $i_Z\tilde\be=-\phi(\la)$.
	Since $\la=d\phi(\la)=-L_Z\tilde\be$ is the infinitesimal generator of $Z$ at $\tilde\be$,
	this provides the smooth map $\si:T(\Diff(S)_0\cdot\be)\to\X(S)$ that permits to apply Lemma \ref{L:loc.sec}.
	Hence, the $\Diff(S)_0$ action on $\Diff(S)_0\cdot\be$ admits local smooth sections.

	More generally, the orbit $\Diff(S)\cdot\be$ is  the preimage  under $h_S$ of  the (finite) orbit $\Diff(S)\cdot[\be]$ in $H^1(S;\RR)$, 
	thus an open subset in a finite union of parallel closed affine subspaces with finite codimension and tangent spaces $dC^\oo(S)$. 
	It follows that also $\Diff(S)\cdot\be$ is a splitting smooth submanifold of $\D(S)$ with the  $\Diff(S)$ action admitting local smooth sections,
	which means that $\Diff(S)$ has good orbit at $\be$.

	To show that $\Diff(S)$ has good isotropy at $\be$ with discrete period group, we need to check that $\Diff(S,\be)$ is a Lie group.
	The condition on $\be$ ensures the existence of a fibration $p:S\to B$ with base manifold $B\cong S^1$ and $\be=p^*d\th$.
	The diffeomorphism group $\Diff(S,\be)$ is the preimage of the rotation group of $B$ under the fiber bundle projection $\Aut(S)\to\Diff(B)$,
	where $\Aut(S)$ denotes the group of bundle automorphisms.
\end{proof}

By applying the Proposition~\ref{P:begood} together with Lemma \ref{L:deco1}, we get that
\[
	\Gr^{\deco}_{S,\be}(M)
	:=\{(N,\nu)\in\Gr^{\deco}_S(M):(N,\nu)\cong(S,\be)\}
\]
is an initial splitting smooth submanifold in $\Gr^{\deco}_S(M)$.
Moreover, the (smooth) $\Diff_c(M)$ action on $\Gr^{\deco}_{S,\be}(M)$ admits local smooth sections 
and the forgetful map
\begin{equation}\label{E:pGrbeta}
	\Gr^{\deco}_{S,\be}(M)\to\Gr_S(M)
\end{equation}
is a smooth fiber bundle with typical fiber $\Diff(S)\cdot\beta$.
Furthermore, the bijection in~\eqref{E:abcd} restricts to a canonical $\Diff(M)$ equivariant diffeomorphism
\begin{equation}\label{E:gds}
	\Gr^{\deco}_{S,\be}(M)=\Emb(S,M)\times_{\Diff(S)}\Diff(S)\cdot\be.
\end{equation}

Let $S$ be a closed manifold with $\dim S=\dim M-1$, and let $\L$ be a leaf of  the isodrastic distribution $\D$ on $\Gr_S(M)$ studied in Section~\ref{S:isovol}.
Since $\mathcal L$ is an initial splitting smooth submanifold of codimension $\dim H^0(S;\ou_S)$ in $\Gr_S(M)$, its preimage under the fiber bundle map in~\eqref{E:pGrbeta},
\begin{equation}\label{swim}
	\G:=\Gr^{\deco}_{S,\be}(M)|_{\mathcal L},
\end{equation}
is an initial splitting smooth submanifold in $\Gr_{S,\be}^{\deco}(M)$ of the same codimension, i.e., the codimension equals the number of orientable connected components of $S$.

By combining Propositions~\ref{P:Diffmu.good.ext}, \ref{L:Fcex.int} and \ref{P:begood} with Lemma \ref{L:deco5} we get:

\begin{proposition}\label{P:qwerty}
	The $\Diff_{c,\ex}(M)$ action on $\G$ admits local smooth sections.
\end{proposition}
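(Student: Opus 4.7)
The plan is to apply Lemma~\ref{L:deco5} at each point $(N,\nu)\in\G$, with $G=\Diff_{c,\ex}(M)$ and decoration functor $\DD(S)=Z^1_\x(S)$, and then to verify that the $G$-orbit it produces is an open subset of $\G$. Openness is the crucial point: the orbit-wise local smooth sections furnished by the lemma become local smooth sections on all of $\G$ precisely when the orbit is open. The hard part will be this openness step.

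First I verify the hypotheses of Lemma~\ref{L:deco5}. Proposition~\ref{P:begood} supplies good orbit of $\Diff(S)$ at $\be$. Since $N$ has codimension one and lies in $\L$, Proposition~\ref{L:Fcex.int} supplies good orbit of $G$ at $N$ (with $G\cdot N=\L$), and Proposition~\ref{P:Diffmu.good.ext} supplies good extension of $G$ at $N$. Lemma~\ref{L:deco5} then yields that $\O:=G\cdot(N,\nu)$ is an initial splitting smooth submanifold in $\Gr^{\deco}_{S,\be}(M)$, the $G$-action on $\O$ admits local smooth sections, and the forgetful map $\O\to\L$ is a smooth fiber bundle with typical fiber $\Diff(S)\cdot\be$.

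It remains to show $\O$ is open in $\G$. The forgetful map $\G\to\L$ is itself a smooth fiber bundle with typical fiber $\Diff(S)\cdot\be$ (the restriction of~\eqref{E:pGrbeta} over the submanifold $\L$). The fiber of $\G$ over $N$ is the full $\Diff(N)$-orbit of $\nu$ in $Z^1_\x(N)$, while the fiber of $\O$ over $N$ is $\{(g|_N)_*\nu:g\in G_N\}$, namely the orbit of $\nu$ under the image of the restriction map $G_N\to\Diff(N)$. Good extension guarantees that this image contains an open neighborhood of the identity, hence the whole identity component $\Diff(N)_0$. Therefore the fiber of $\O$ over $N$ contains $\Diff(N)_0\cdot\nu$, which is open in $\Diff(N)\cdot\nu$ because the two orbits have coinciding tangent space at $\nu$. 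Combined with the local product structure of both fiber bundles, this forces $\O$ to be open in $\G$, and the plan is complete.
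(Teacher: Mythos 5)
Your plan follows the paper's own route: the paper proves this proposition by citing exactly the three propositions you verify (Propositions~\ref{P:Diffmu.good.ext}, \ref{L:Fcex.int}, \ref{P:begood}) together with Lemma~\ref{L:deco5}. The openness issue you single out as the hard part is genuine, but it is already built into the paper's machinery: the proof of Lemma~\ref{L:deco5} runs through Lemma~\ref{L:deco.aug}(b,c), and part~(c) states that the inclusion $G\cdot(N,\nu)\subseteq\Gr^{\deco}_{S,\mu}(M)|_{G\cdot N}$ is a diffeomorphism onto an \emph{open and closed} subset; since Proposition~\ref{L:Fcex.int} gives $G\cdot N=\L$, the right-hand side is exactly $\G$, so openness of the orbit in $\G$ follows by citation rather than by a separate argument.

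The one step of yours that does not stand as written is the claim that $\Diff(N)_0\cdot\nu$ is open in $\Diff(N)\cdot\nu$ ``because the two orbits have coinciding tangent space at $\nu$.'' Equality of tangent spaces is an infinitesimal statement, and in the Fr\'echet setting it does not imply openness or local transitivity --- this is precisely the failure mode the paper flags in Section~\ref{S:two}: infinitesimal transitivity does not yield local sections, for lack of an implicit function theorem. The conclusion is nevertheless true, and it follows from an ingredient you have already invoked: by Proposition~\ref{P:begood} the $\Diff(N)$ action on $\Diff(N)\cdot\nu$ admits local smooth sections, so near any $\tilde\nu$ in this orbit there is a smooth map $v$ with $v(\hat\nu)\cdot\tilde\nu=\hat\nu$ and $v(\tilde\nu)=\id$; by continuity $v$ takes values in $\Diff(N)_0$ on a smaller neighborhood, whence every $\Diff(N)_0$ orbit inside $\Diff(N)\cdot\nu$ is open. (Equivalently, the Moser argument in the proof of Proposition~\ref{P:begood} exhibits $\Diff(N)_0\cdot\nu$ as a connected component of the preimage of the class $[\nu]$, which is open in $\Diff(N)\cdot\nu$.) Two smaller repairs: openness of $\Diff(N)_0\cdot\nu$ must be propagated to the whole fiber $G_N\cdot\nu$ (immediate, since that fiber is a union of $\Diff(N)_0$ orbits, each open by the same argument), and your ``local product structure'' step needs the bundles $\O\to\L$ and $\G\to\L$ to be trivialized \emph{compatibly}; this is arranged by using one and the same local smooth section $u$ of the $G$ action on $\L$, which identifies $\O|_U$ with $U\times(G_N\cdot\nu)$ inside $\G|_U\cong U\times(\Diff(N)\cdot\nu)$. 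With these corrections your argument goes through and coincides in substance with the paper's.
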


Now, for any null-homologous codimension one submanifold $N\subseteq M$ endowed with a nowhere zero closed 1-form $\nu$, 
the coadjoint orbit of $J(N,\nu)$, with $J$ in \eqref{momentum},  is a good coadjoint orbit of  $\Diff_{c,\ex}(M)$.
Assume that the isodrast $\L$ consists of null-homologous submanifolds of $M$.
By applying Lemma~\ref{lemac} to connected components of $\G$ in~\eqref{swim}, acted upon by the regular Lie group $\Diff_{c,\ex}(M)$, 
and noticing that $i_{[X,Y]}\mu=di_Xi_Y\mu$ for all $X,Y\in\X_c(M,\mu)$, we obtain the following:

\begin{theorem}[{\cite{GBV24}}]\label{T:th2}
	Let $\mu$ be a volume form on $M$ and consider a connected component $\G_0$ of the decorated Grassmannian $\Gr^{\deco}_{S,\be}(M)|_{\mathcal L}$ 
	where $\L$ is an isodrast in $\Gr_S(M)$ through a null-homologous submanifold of codimension one.
	Then its image $J(\G_0)$ under the map $J$ in~\eqref{momentum} is a good coadjoint orbit of $\Diff_{c,\ex}(M)$ and
	the (formal) pullback of the KKS symplectic form, denoted by $\om=J^*\om_{\KKS}$,  
	is a $\Diff_{\ex}(M)$ equivariant smooth (weakly) symplectic form on $\G_0$ characterized by
	\begin{equation}\label{nuomega}
		\om_{(N,\nu)}(\ze_{X},\ze_{Y})=\int_N\io_N^*(i_{X}i_{Y}\mu)\wedge\nu,
	\end{equation}
	for all $(N,\nu)\in \G_0$ and $X,Y\in\X_{c,\ex}(M)$.
	Furthermore, $J$ is an (equivariant) moment map for the Hamiltonian action of $\Diff_{c,\ex}(M)$ on $\G_0$.
\end{theorem}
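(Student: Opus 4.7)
The plan is to apply Lemma~\ref{lemac} with $Q=\G_0$ and $G=\Diff_{c,\ex}(M)$, viewed as a subgroup of the regular Lie group $\Diff_c(M)$ in the sense of Remark~\ref{R:enlarge}. Once the hypotheses of that lemma are in place, the smoothness and weak nondegeneracy of the pulled-back KKS form, the moment-map property of $J$, and the goodness of the coadjoint orbit $J(\G_0)$ all follow at once.

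First I would collect the ingredients that make $J\colon\G_0\to\X_{c,\ex}(M)^*$ an injective $G$-equivariant map and make the $G$-action on $\G_0$ smooth, transitive, and admit local smooth sections. The equivariance of $J$ under $\Diff(M,\mu)$, hence under $\Diff_{c,\ex}(M)$, is immediate from its definition~\eqref{momentum}; well-definedness (independence of the choice of potential $\al$) uses the null-homologous hypothesis on the submanifolds in $\L$. The $\Diff_c(M)$-action on $\G$ is smooth because $\G$ is an initial splitting submanifold of $\Gr^{\deco}_{S,\be}(M)$, to which the general framework of Section~\ref{SS:deco} applies. Local smooth sections for the $\Diff_{c,\ex}(M)$-action on $\G$ are supplied by Proposition~\ref{P:qwerty}, which combines Propositions~\ref{P:Diffmu.good.ext}, \ref{L:Fcex.int}, \ref{P:begood} and Lemma~\ref{L:deco5}; since local smooth sections force orbits to be open and closed, transitivity on the connected component $\G_0$ follows automatically.

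Verifying injectivity of $J$ is the step I expect to be the most delicate. I would recover the underlying submanifold $N$ first, by observing that $\langle J(N,\nu),X\rangle$ vanishes whenever the compactly supported potential $\al$ of $X$ vanishes in a neighborhood of $N$; letting $X$ range over vector fields supported near or away from prescribed points then detects $N$ as a set. With $N$ fixed, varying $\al$ in a tubular neighborhood of $N$ reduces the remaining question to showing that the pairing $\nu\mapsto\int_N\io_N^*\al\wedge\nu$ separates nowhere-zero closed $1$-forms on $N$, which is where both the nowhere-zero condition on $\nu$ and the codimension-one null-homologous hypothesis pay off.

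For the formula~\eqref{nuomega} I would apply identity~\eqref{omeg} of Lemma~\ref{lemac} together with an explicit primitive for $[X,Y]$. For $X,Y\in\X_{c,\ex}(M)$ one has $L_X\mu=L_Y\mu=0$ and $d(i_Y\mu)=L_Y\mu-i_Yd\mu=0$, whence Cartan's formula yields
\[
	i_{[X,Y]}\mu=L_Xi_Y\mu-i_YL_X\mu=L_Xi_Y\mu=d(i_Xi_Y\mu)+i_Xd(i_Y\mu)=d(i_Xi_Y\mu),
\]
so $i_Xi_Y\mu$ is a compactly supported primitive for $[X,Y]$. Substituting into~\eqref{momentum} reproduces exactly~\eqref{nuomega}. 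The moment-map and goodness assertions then follow from Lemma~\ref{lemac} and the fact that the injective $G$-equivariant map $J\colon\G_0\to J(\G_0)$ transports the smooth structure and the local smooth sections from $\G_0$ to its image.
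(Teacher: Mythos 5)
Your proposal is correct and follows essentially the same route as the paper: applying Lemma~\ref{lemac} to a connected component of $\Gr^{\deco}_{S,\be}(M)|_{\mathcal L}$, with transitivity and local smooth sections supplied by Proposition~\ref{P:qwerty}, and with the characterization~\eqref{nuomega} obtained from the identity $i_{[X,Y]}\mu=d(i_Xi_Y\mu)$ for divergence-free $X,Y$, exactly as the paper notes. Your additional sketch of the injectivity of $J$ (which the paper asserts, deferring to \cite{GBV24}) and your careful invocation of Remark~\ref{R:enlarge} for $\Diff_{c,\ex}(M)$ are sound refinements, not departures from the paper's argument.
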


\begin{remark}
	The map \eqref{momentum} is well defined also for more general vortex sheets $(N,\nu)\in\Gr_S^{\deco}(M)$. 
	These are characterized by the property that $\io_N^*H_c^{n-2}(M)\subseteq H^{n-2}(N)$ 
	is contained in the annihilator of $[\nu]\in H^1(N;\RR)$.
	We still get good coadjoint orbits of $\Diff_{c,\ex}(M)$ parametrized by decorated nonlinear Grassmannians (see \cite[Theorem~3.6]{GBV24}).
	Otherwise, one  needs to pass to central extensions \cite[Theorem~3.5]{GBV24}.
\end{remark}

We consider the pullback  to $\Emb(S,M)$ of the  distribution $\D$:
\[
	\bar\D_\ph=\bigl\{u_\ph\in\Ga(\ph^*TM):\ph^*i_{u_\ph}\mu\in \Om^{\dim S}(S)\text{ is exact}\bigr\}.
\]
It follows from Proposition \ref{P:qwerty} and the good extension in Proposition~\ref{P:Diffmu.good.ext} that the $\Diff_{c,\ex}(M)$ action 
on each leaf of $\bar\D$ admits local smooth sections.
The preimage  $\pi^{-1}(\L)$ under the bundle projection \eqref{emb}
is a disjoint union of leaves of  $\bar\D$, stable under the $\Diff(S)$ action.
We notice that diffeomorphism in \eqref{E:gds} restricts to a diffeomorphism $\G=\pi^{-1}(\L)\x_{\Diff(S)}\Diff(S)\cdot\be$ 
where $\G$ is the decorated Grassmannian in~\eqref{swim}.
We denote the corresponding principal $\Diff(S)$ bundle projection by
\begin{equation}\label{pmapp}
	p:\pi^{-1}(\L)\x\Diff(S)\cdot\be\to \G ,\quad p(\ph,\be)=(\ph(S),\ph_*\be).
\end{equation}
A three term formula, similar to that in Corollary~\ref{noua} holds.

\begin{corollary}\label{zece}
	The pullback $\bar\om:=p^*\om$ of the symplectic form on $\M$ to the submanifold 
	$\pi^{-1}(\L)\x\Diff(S)\cdot\be\subseteq\Emb(S,M)\x\Om^1_\x(S)$ is given by 
	\begin{equation*}
		\bar\om_{(\ph,\be)}((u_\ph,df),(u'_\ph,df'))=\int_S\ph^*i_{u_\ph}i_{u'_\ph}\mu\wedge\be+(-1)^{\dim S}\int_S (f\ph^*i_{u'_\ph}\mu- f'\ph^*i_{u_\ph}\mu),
	\end{equation*}
	where $u_\ph,u'_\ph\in\D_\ph$ and $df,df'\in dC^\oo(S)$.
\end{corollary}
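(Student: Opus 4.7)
The plan is to reduce the formula for $\bar\om = p^*\om$ to formula~\eqref{nuomega} for $\om$ itself, by rewriting each tangent vector on $\pi^{-1}(\L)\x\Diff(S)\cdot\be$ as the image under $Tp$ of a vector coming from the embedding factor alone. To this end, use infinitesimal transitivity of the $\Diff_{c,\ex}(M)$ action on the leaf (Proposition~\ref{L:Fcex.int}) to write $u_\ph=X\o\ph$ and $u'_\ph=X'\o\ph$ with $X,X'\in\X_{c,\ex}(M)$. The Moser-type construction in the proof of Proposition~\ref{P:begood} yields $Z,Z'\in\X(S)$ with $L_Z\be=df$ and $L_{Z'}\be=df'$. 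Using the good extension property of Proposition~\ref{P:Diffmu.good.ext} at $N=\ph(S)$, pick $X_Z,X'_{Z'}\in\X_{c,\ex}(M)$ extending the tangential fields $\ph_*Z,\ph_*Z'$, so that $X_Z\o\ph=T\ph\o Z$ (and similarly for $X'_{Z'}$). Since the fundamental vector field of the right $\Diff(S)$ action on $\pi^{-1}(\L)\x\Diff(S)\cdot\be$ at $(\ph,\be)$ is $(T\ph\o Z,L_Z\be)$ and lies in $\ker T_{(\ph,\be)}p$, one obtains $T_{(\ph,\be)}p(u_\ph,df)=T_{(\ph,\be)}p\bigl((X\pm X_Z)\o\ph,0\bigr)$, which is the value at $p(\ph,\be)$ of the fundamental vector field $\ze_{X\pm X_Z}$ of the $\Diff_{c,\ex}(M)$ action on $\G$.

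Substituting into \eqref{nuomega} at $p(\ph,\be)=(\ph(S),\ph_*\be)$ and pulling back along $\ph$ gives
\[
\bar\om_{(\ph,\be)}\bigl((u_\ph,df),(u'_\ph,df')\bigr)=\int_S\ph^*\bigl(i_{X\pm X_Z}i_{X'\pm X'_{Z'}}\mu\bigr)\wedge\be.
\]
Expanding the bilinear contraction produces four terms. Using the pullback identity $\ph^*(i_V\al)=i_W\ph^*\al$ whenever $V\o\ph=T\ph\o W$, three of them simplify, up to signs, to $\ph^*i_{u_\ph}i_{u'_\ph}\mu$, $i_{Z'}\ph^*i_{u_\ph}\mu$, and $i_Z\ph^*i_{u'_\ph}\mu$. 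The fourth term, involving both $X_Z$ and $X'_{Z'}$, vanishes because $\ph^*(i_{X_Z}i_{X'_{Z'}}\mu)=\pm i_{Z'}i_Z\ph^*\mu=0$, since $\ph^*\mu$ has degree $\dim M>\dim S$.

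For the remaining two cross terms, observe that $\ph^*i_{u_\ph}\mu$ and $\ph^*i_{u'_\ph}\mu$ are of top degree on $S$, so their wedge with the $1$-form $\be$ vanishes by degree; contracting this vanishing identity by $Z'$ (respectively $Z$) yields the Cartan-type formula $(i_W\omega)\wedge\be=(-1)^{\dim S+1}(i_W\be)\,\omega$ for any $\dim S$-form $\omega$ on $S$. Finally, since $L_Z\be=df$ and $d\be=0$, one has $d(i_Z\be-f)=0$, so $i_Z\be-f$ is locally constant on $S$; because $u'_\ph\in\bar\D_\ph$ forces $\ph^*i_{u'_\ph}\mu$ to be exact on $S$, this locally constant ambiguity integrates to zero, and one may replace $i_Z\be$ by $f$ and $i_{Z'}\be$ by $f'$ under the integrals. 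Collecting the three contributions yields the stated formula.

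The main obstacle will be sign bookkeeping: the $\pm$ from the $\ker Tp$ step, the $(-1)^{\dim S+1}$ from the Cartan identity, and the antisymmetry used in reordering $i_Vi_W\mu$ must all line up precisely to produce the prefactor $(-1)^{\dim S}$ in front of the two scalar-function terms. This is entirely parallel to the bookkeeping carried out in the proof of Corollary~\ref{noua}, which serves as a template.
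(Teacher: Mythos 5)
Your proposal is correct and essentially reproduces the paper's own proof: the same reduction of $(u_\ph,df)$ to the image under $Tp$ of a single fundamental vector field, the same application of \eqref{nuomega}, the same dimensional vanishing of the term involving both $Z$ and $Z'$, and the same contraction identity $(i_W\omega)\wedge\be=(-1)^{\dim S+1}(i_W\be)\,\omega$ together with the replacement of $i_Z\be$ by $f$ (your justification of that replacement -- the locally constant function $i_Z\be-f$ integrates to zero against the exact top-degree forms $\ph^*i_{u_\ph}\mu$, $\ph^*i_{u'_\ph}\mu$ -- is a detail the paper leaves implicit). The sign you left open is already forced by what you stated: since $(T\ph\o Z,L_Z\be)\in\ker T_{(\ph,\be)}p$, linearity gives $T_{(\ph,\be)}p(u_\ph,df)=T_{(\ph,\be)}p\bigl((X-X_Z)\o\ph,0\bigr)$ when $L_Z\be=df$, and with this minus sign (hence $i_Z\be=f$ up to locally constant functions) the three surviving terms assemble exactly into the stated formula with prefactor $(-1)^{\dim S}$.
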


\begin{proof}
	By the infinitesimal transitivity, each element of $\D_\ph$ is of the form $u_\ph=X_\al\o\ph$ for some $X_\al\in\X_{c,\ex}(M)$,
	and each tangent vector at $\be$ to the $\Diff(S)$ orbit is of the form $df=L_Z\be$ for some $Z\in\X(S)$. 
	Moreover, let $\al_Z$ be any potential form that satisfies $T\ph\o Z=X_{\al_Z}\o\ph$ (it always exists because $T\ph\o Z\in\D_\ph$).
	Knowing that $T_{(\ph,\be)}p(T\ph\o Z,0)=T_{(\ph,\be)}p(0,L_Z\be)$, we compute:
	\begin{align*}
		\bar\om_{(\ph,\be)}&((u_\ph,df),(u'_\ph,df'))=\bar\om_{(\ph,\be)}((X_\al\o\ph,L_Z\be),(X_{\al'}\o\ph,L_{Z'}\be))\\
		&=\om_{p(\ph,\be)}(Tp(X_h\o\ph+T\ph\o Z,0),Tp(X_{h'}\o\ph+T\ph\o Z',0))\\
		&=\om_{(\ph(S),\ph_*\be)}(\ze_{X_\al+X_{\al_Z}},\ze_{X_{\al'}+X_{\al_{Z'}}})
		\stackrel{\eqref{nuomega}}{=}\int_S\ph^*i_{X_{\al+\al_Z}}i_{X_{\al'+\al_{Z'}}}\mu\wedge\be.
	\end{align*}
	One of the four terms, $\ph^*i_{X_{\al_Z}}i_{X_{\al_{Z'}}}\mu=i_Zi_{Z'}\ph^*\mu$, vanishes by dimensional reasons.
	Because $\ph^*i_{X_{\al_Z}}i_{X_{\al'}}\mu=i_{Z}\ph^*i_{X_{\al'}}\mu= i_{Z}\ph^*i_{u'_\ph}\mu$,
	the remaining three terms become:
	\begin{multline*}
		\int_S\ph^*i_{X_\al}i_{X_{\al'}}\mu\wedge\be
		-\int_S i_{Z'}\ph^*i_{u_\ph}\mu\wedge\be+\int_S i_Z\ph^*i_{u'_\ph}\mu\wedge\be\\
		=\int_S\ph^*i_{u_\ph}i_{u'_\ph}\mu\wedge\be-(-1)^{\dim S}\int_S f'\ph^*i_{u_\ph}\mu+(-1)^{\dim S}\int_S f\ph^*i_{u'_\ph}\mu.
	\end{multline*}
	That yields the desired formula.
\end{proof}

\section{Augmented nonlinear Grassmannians}\label{S:aug}

We continue to consider a Lie group $G$ acting smoothly on a finite dimensional manifold $M$.


\subsection{Framework for augmentations on nonlinear Grassmannians}\label{SS:aug}
We let $\cataug(M,G)$ denote the category whose objects are closed submanifolds in $M$ and the morphisms between two closed submanifolds $N_1$ and $N_2$ are the elements of $G$ mapping $N_1$ diffeomorphically onto $N_2$.
In particular, we have a canonical functor to the category $\catclosed$ of (finite dimensional) closed manifolds and diffeomorphisms:
\begin{equation}\label{E:cats}
	\cataug(M,G)\to\catclosed.
\end{equation}

An \emph{augmentation functor for the $G$ action on $M$} is a covariant functor $\mathfrak A$ from the category $\cataug(M,G)$ into the category of Fr\'echet manifolds and smooth maps, such that the action of the isotropy group $G_N=\{g\in G:g\cdot N=N\}$ on $\mathfrak A(N)$ is smooth, for each closed submanifold $N$ of $M$.
Since we do not want to presume a Lie group structure on $G_N$, the action is only assumed to be smooth along maps into $G_N$ which are smooth into $G$.
More specifically, for any map $g\colon Z\to G_N$ which is smooth when considered as a map into $G$, the map $Z\times\mathfrak A(N)\to\mathfrak A(N)$, $(z,\gamma)\mapsto\mathfrak A(g(z))\cdot\gamma$ is assumed to be smooth.

Suppose $\mathfrak A$ is an augmentation functor for the $G$-action on $M$.
The set
\[
	\Gr^{\aug}(M):=\bigl\{(N,\gamma):N\in\Gr(M), \gamma\in\mathfrak A(N)\bigr\}
\]
will be referred to as \emph{$\mathfrak A$-augmented nonlinear Grassmannian of $M$}.
The group $G$ acts naturally on $\Gr^{\aug}(M)$.
More explicitly, this action is given by 
\begin{equation}\label{E:action}
	g\cdot(N,\gamma)=\bigl(g\cdot N,\mathfrak A(g)\cdot\gamma\bigr)
\end{equation}
where $g\in G$ and $ \gamma\in\mathfrak A(N)$.
The canonical forgetful map 
\begin{equation}\label{E:Gr.aug.Gr}
	\Gr^{\aug}(M)\to\Gr(M),\qquad(N,\gamma)\mapsto N
\end{equation}
is equivariant over the homomorphism $G\to\Diff(M)$.
For $N\in\Gr(M)$ we let 
\[
	\Gr^{\aug}(M)|_{G\cdot N}=\bigl\{(\tilde N,\tilde\gamma):\tilde N\in G\cdot N,\tilde\gamma\in\mathfrak A(\tilde N)\bigr\}
\]
denote the preimage of the orbit $G\cdot N$ in $\Gr(M)$ under the map in \eqref{E:Gr.aug.Gr}.

\begin{lemma}\label{L:aug1}
	If $G$ has good orbit at $N\in\Gr(M)$, then there exists a Fr\'echet manifold structure on $\Gr^{\aug}(M)|_{G\cdot N}$ such that the $G$ equivariant forgetful map
	\begin{equation}\label{E:aug.fb}
		\Gr^{\aug}(M)|_{G\cdot N}\to G\cdot N,\qquad(\tilde N,\tilde\gamma)\mapsto\tilde N
	\end{equation}
	is a smooth fiber bundle with typical fiber $\mathfrak A(N)$ and such that the $G$ action on $\Gr^{\aug}(M)|_{G\cdot N}$ is smooth.

	If, moreover, $G$ has good isotropy at $N$, then the $G$ equivariant identification
	\begin{equation}\label{E:abc}
		\Gr^{\aug}(M)|_{G\cdot N}=G\times_{G_N}\mathfrak A(N),\qquad \bigl(g\cdot N,\mathfrak A(g)\cdot\gamma\bigr)\leftrightarrow[g,\gamma]
	\end{equation}
	is a diffeomorphism, where the right hand side denotes the bundle associated to the smooth principal bundle in \eqref{E:princ.sigma} and the smooth action of $G_N$ on $\mathfrak A(N)$.
\end{lemma}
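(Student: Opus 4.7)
The plan is to build the fiber bundle structure by hand from the local smooth sections provided by the good-orbit hypothesis, and then to deduce part (b) from the principal bundle structure furnished by Lemma~\ref{L:geom1}.

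For part (a), I would fix the reference submanifold $N$ and choose, once and for all, for each $N_0\in G\cdot N$ an element $g_{N_0}\in G$ with $g_{N_0}\cdot N=N_0$. Around each $N_\alpha\in G\cdot N$, the good-orbit hypothesis supplies an open neighborhood $U_\alpha\subseteq G\cdot N$ and a smooth map $u_\alpha\colon U_\alpha\to G$ with $u_\alpha(\tilde N)\cdot N_\alpha=\tilde N$. I would declare
\[
\Phi_\alpha\colon U_\alpha\times\mathfrak A(N)\to\Gr^{\aug}(M)|_{U_\alpha},\qquad (\tilde N,\gamma)\mapsto\bigl(\tilde N,\mathfrak A(u_\alpha(\tilde N)g_{N_\alpha})\cdot\gamma\bigr),
\]
a bijection, to be a local trivialization. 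The overlap between two such trivializations is
\[
(\Phi_\beta^{-1}\circ\Phi_\alpha)(\tilde N,\gamma)=\bigl(\tilde N,\mathfrak A(h_{\alpha\beta}(\tilde N))\cdot\gamma\bigr),\quad h_{\alpha\beta}(\tilde N):=g_{N_\beta}^{-1}u_\beta(\tilde N)^{-1}u_\alpha(\tilde N)g_{N_\alpha},
\]
and tracing the action on $N$ shows $h_{\alpha\beta}$ takes values in $G_N$ while remaining smooth as a map into $G$. The smoothness hypothesis built into the definition of augmentation functor, namely that $(z,\gamma)\mapsto\mathfrak A(g(z))\cdot\gamma$ is smooth for any map $g\colon Z\to G_N$ which is smooth into $G$, is exactly what is needed to conclude that the transition is smooth. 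The same device, applied to an analogous cocycle-type map into $G_N$ which is smooth into $G$, expresses the $G$-action on $\Gr^{\aug}(M)|_{G\cdot N}$ in the charts $\Phi_\alpha$ and gives its smoothness. Independence of the constructed structure from the choices of $g_{N_0}$ and $u_\alpha$ follows from the same device applied to the comparison map between two sets of choices.

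Part (b) is then short. Under the additional good-isotropy hypothesis, Lemma~\ref{L:geom1} turns $G\to G\cdot N$ into a smooth principal $G_N$-bundle whose structure group is now a genuine (splitting) Lie subgroup acting smoothly on $\mathfrak A(N)$, so the associated bundle $G\times_{G_N}\mathfrak A(N)$ exists as a smooth Fréchet fiber bundle over $G\cdot N$. The map in \eqref{E:abc} is a $G$-equivariant bijection; local sections of the principal bundle $G\to G\cdot N$ are, after multiplication by the chosen $g_{N_\alpha}$, the very maps $u_\alpha$ from part (a), so \eqref{E:abc} intertwines the corresponding local trivializations on the two sides and is therefore a diffeomorphism.

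The main obstacle lies in part (a): since $G_N$ is not yet assumed to be a Lie group, the transition cocycle $h_{\alpha\beta}$ takes values in a set without smooth structure of its own, and the smoothness of the induced action on the Fréchet manifold $\mathfrak A(N)$ has to be read off precisely the smoothness axiom built into the definition of an augmentation functor. Once that axiom is deployed correctly, the rest of the argument is bookkeeping with the local smooth sections.
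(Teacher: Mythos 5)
Your proposal is correct and follows essentially the same route as the paper: local trivializations built from the local smooth sections of the good orbit, smoothness of the transition cocycle (and of the $G$-action in charts) read off from the smoothness axiom in the definition of an augmentation functor, and part (b) obtained by matching these trivializations with those of the associated bundle $G\times_{G_N}\mathfrak A(N)$ coming from Lemma~\ref{L:geom1}. The only cosmetic difference is that you explicitly convert sections based at each $N_\alpha$ into sections based at $N$ via right multiplication by $g_{N_\alpha}$, a step the paper leaves implicit by asserting the cover with $u_i(\tilde N)\cdot N=\tilde N$ directly.
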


\begin{proof}
	Since $G$ has good orbit at $N$, there exists an open cover $U_i$ of $G\cdot N$ and smooth maps $u_i\colon U_i\to G$ such that $u_i(\tilde N)\cdot N=\tilde N$ for all $\tilde N\in U_i$.
	We obtain a trivialization for the map in~\eqref{E:aug.fb} over $U_i$ by defining
	\begin{equation}\label{E:triv.aug}
		\phi_i\colon U_i\times\mathfrak A(N)\to\Gr^{\aug}(M)|_{U_i},\qquad
		\phi_i(\tilde N,\gamma)=\bigl(\tilde N,\mathfrak A(u_i(\tilde N))\cdot\gamma\bigr)
	\end{equation}
	where $\Gr^{\aug}(M)|_{U_i}$ denotes the preimage of $U_i$ under the forgetful map in~\eqref{E:aug.fb}.
	The inverse of $\phi_i$ is given by $\phi_i^{-1}(\tilde N,\tilde\gamma)=(\tilde N,\mathfrak A(u_i(\tilde N)^{-1})\cdot\tilde\gamma)$ where $(\tilde N,\tilde\gamma)\in\Gr^{\aug}(M)|_{U_i}$.
	For the transition $\phi_j^{-1}\circ\phi_i\colon(U_i\cap U_j)\times\mathfrak A(N)\to(U_i\cap U_j)\times\mathfrak A(N)$ we find
	\begin{equation}\label{E:trans.aug}
		(\phi_j^{-1}\circ\phi_i)(\tilde N,\gamma)=\bigl(\tilde N,\mathfrak A\bigl(u_j(\tilde N)^{-1}u_i(\tilde N)\bigr)\cdot\gamma\bigr).
	\end{equation}
	These transitions are smooth, since the map $\tilde N\mapsto u_j(\tilde N)^{-1}u_i(\tilde N)$ takes values in $G_N$ and is smooth into $G$.
	Hence, the trivializations $\phi_i$ provide a smooth fiber bundle atlas for the map in~\eqref{E:aug.fb}.
	In particular, this turns $\Gr^{\aug}(M)|_{G\cdot N}$ into a Fr\'echet manifold.

	Put $V_{ij}:=\{(g,\tilde N)\in G\times U_i:g\cdot\tilde N\in U_j\}$.
	Then $V_{ij}$ is open in $G\times U_i$ and the $G$ action on $G\cdot N$ restricts to a smooth map $V_{ij}\to U_j$.
	Expressing the $G$ action on $\Gr^{\aug}(M)|_{G\cdot N}$ in the local trivializations $\phi_i$ and $\phi_j$, 
	we obtain a map $V_{ij}\times\mathfrak A(N)\to U_j\times\mathfrak A(N)$ given by
	\[
		(g,\tilde N,\gamma)\mapsto\bigl(g\cdot\tilde N,\mathfrak A\bigl(u_j(g\cdot\tilde N)^{-1}gu_i(\tilde N)\bigr)\cdot\gamma\bigr).
	\]
	The latter is smooth since the map $V_{ij}\to G$, $(g,\tilde N)\mapsto u_j(g\cdot\tilde N)^{-1}gu_i(\tilde N)$ takes values in $G_N$ and is smooth into $G$.
	This shows that $G$ acts smoothly on $\Gr^{\aug}(M)|_{G\cdot N}$.

	To prove the second part of the lemma we assume, moreover, that $G$ has good isotropy at $N$.
	Hence, $G_N$ is a splitting Lie subgroup in $G$ and $u_i\colon U_i\to G$ is a local section of the principal $G_N$ bundle $G\to G\cdot N$.
	The corresponding local trivialization of the associated bundle $G\times_{G_N}\mathfrak A(N)$ over $U_i$ is 
	\[
		U_i\times\mathfrak A(N)\to\bigl(G\times_{G_N}\mathfrak A(N)\bigr)|_{U_i},\qquad(\tilde N,\gamma)\mapsto[u_i(\tilde N),\gamma].
	\]
	Up to the identification in \eqref{E:abc} this coincides with the local trivialization $\phi_i$ in \eqref{E:triv.aug}.
	Hence, the map in \eqref{E:abc} is a diffeomorphism.
\end{proof}

For $\gamma\in\mathfrak A(N)$ we let $G_N\cdot\gamma$ denote the $G_N$ orbit through $\gamma$ in $\mathfrak A(N)$.
We say \emph{$G_N$ has good orbit at $\gamma$} if $G_N\cdot\gamma$ is an initial splitting smooth submanifold in $\mathfrak A(N)$ and the (smooth) $G_N$ action on $G_N\cdot\gamma$ admits local smooth sections.
As we do not presume a Lie group structure on $G_N$, the local sections are only assumed to be smooth into $G$. 
The $G$ orbit through $(N,\gamma)$ in $\Gr^{\aug}(M)$ will be denoted by $G\cdot(N,\gamma)$.

\begin{lemma}\label{L:aug2}
	Suppose $G$ has good orbit at $N\in\Gr(M)$ and suppose $G_N$ has good orbit at $\gamma\in\mathfrak A(N)$.
	Then $G$ has good orbit at $(N,\gamma)$, i.e., $G\cdot(N,\gamma)$ is an initial splitting smooth submanifold in $\Gr^{\aug}(M)|_{G\cdot N}$ 
	and the (smooth) $G$ action on $G\cdot(N,\gamma)$ admits local smooth sections.
	Furthermore, the $G$ equivariant canonical forgetful map
	\begin{equation}\label{E:aug.fbg}
		G\cdot(N,\gamma)\to G\cdot N,\qquad\bigl(\tilde N,\tilde\gamma\bigr)\mapsto\tilde N
	\end{equation}
	is a smooth fiber bundle with typical fiber $G_N\cdot\gamma$.

	If, moreover, $G$ has good isotropy at $N$, then the $G$ equivariant identification
	\begin{equation}\label{E:qwerty}
		G\cdot(N,\gamma)=G\times_{G_N}(G_N\cdot\gamma),\qquad\bigl(g\cdot N,\mathfrak A(g)\cdot\hat\gamma\bigr)\leftrightarrow[g,\hat\gamma]
	\end{equation}
        is a diffeomorphism, where the right hand side denotes the bundle associated to the smooth principal bundle in \eqref{E:princ.sigma} and the smooth action of $G_N$ on $G_N\cdot\gamma$.
\end{lemma}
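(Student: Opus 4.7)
The plan is to reduce everything to the local trivializations produced by Lemma~\ref{L:aug1}. Since $G$ has good orbit at $N$, that lemma supplies an open cover $\{U_i\}$ of $G\cdot N$, smooth maps $u_i\colon U_i\to G$ with $u_i(\tilde N)\cdot N=\tilde N$, and fiber bundle charts
\[
	\phi_i\colon U_i\times\mathfrak A(N)\to\Gr^{\aug}(M)|_{U_i},\qquad \phi_i(\tilde N,\hat\gamma)=\bigl(\tilde N,\mathfrak A(u_i(\tilde N))\cdot\hat\gamma\bigr).
\]
The first step is to identify $\phi_i^{-1}(G\cdot(N,\gamma))$. A point $(\tilde N,\hat\gamma)\in U_i\times\mathfrak A(N)$ lies in this preimage iff some $g\in G$ satisfies $g\cdot N=\tilde N$ and $\mathfrak A(g)\cdot\gamma=\mathfrak A(u_i(\tilde N))\cdot\hat\gamma$; writing $g=u_i(\tilde N)h$ with $h\in G_N$, this is equivalent to $\hat\gamma=\mathfrak A(h)\cdot\gamma\in G_N\cdot\gamma$. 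Hence
\[
	\phi_i^{-1}\bigl(G\cdot(N,\gamma)\bigr)=U_i\times(G_N\cdot\gamma).
\]

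Since $G_N$ has good orbit at $\gamma$, the set $G_N\cdot\gamma$ is an initial splitting submanifold of $\mathfrak A(N)$, so $U_i\times(G_N\cdot\gamma)$ is an initial splitting submanifold of $U_i\times\mathfrak A(N)$; transport by $\phi_i$ shows that $G\cdot(N,\gamma)\cap\Gr^{\aug}(M)|_{U_i}$ is an initial splitting submanifold of $\Gr^{\aug}(M)|_{U_i}$. As the $\Gr^{\aug}(M)|_{U_i}$ cover $\Gr^{\aug}(M)|_{G\cdot N}$, this proves that $G\cdot(N,\gamma)$ is itself an initial splitting submanifold, and the restrictions $\phi_i|_{U_i\times(G_N\cdot\gamma)}$ form a smooth fiber bundle atlas for the forgetful map in~\eqref{E:aug.fbg} with typical fiber $G_N\cdot\gamma$. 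To produce local smooth sections of the orbit map $G\to G\cdot(N,\gamma)$ near $(\tilde N_0,\tilde\gamma_0)$, I would pick $i$ with $\tilde N_0\in U_i$, a local smooth section $v$ of the $G_N$ action on $G_N\cdot\gamma$ (smooth into $G$, by the definition of good orbit at $\gamma$) defined on a neighborhood $V\ni\hat\gamma_0:=\mathfrak A(u_i(\tilde N_0))^{-1}\cdot\tilde\gamma_0$, and set
\[
	s(\tilde N,\tilde\gamma):=u_i(\tilde N)\,v\bigl(\mathfrak A(u_i(\tilde N))^{-1}\cdot\tilde\gamma\bigr),
\]
which is smooth into $G$ and satisfies $s(\tilde N,\tilde\gamma)\cdot(N,\gamma)=(\tilde N,\tilde\gamma)$.

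For the second assertion, if $G$ additionally has good isotropy at $N$, Lemma~\ref{L:aug1} upgrades the fiber bundle to the associated bundle diffeomorphism $\Gr^{\aug}(M)|_{G\cdot N}=G\times_{G_N}\mathfrak A(N)$. By the local identification established above, this diffeomorphism restricts to the $G$ equivariant bijection $G\cdot(N,\gamma)=G\times_{G_N}(G_N\cdot\gamma)$, and since $G_N\cdot\gamma$ is a $G_N$ stable splitting submanifold of $\mathfrak A(N)$, this restriction is a diffeomorphism onto a splitting submanifold of $G\times_{G_N}\mathfrak A(N)$. The step most requiring care is the smoothness bookkeeping for the section $s$: since $v$ lands in $G_N$ but is only smooth as a map into $G$, one must verify that its product with $u_i$ is smooth into $G$, which is automatic because multiplication in $G$ is smooth. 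This matches the mixed smoothness framework already governing Lemma~\ref{L:aug1}, so I do not anticipate any additional obstacle.
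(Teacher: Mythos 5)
Your proposal is correct and follows essentially the same route as the paper's proof: you identify $\phi_i^{-1}\bigl(G\cdot(N,\gamma)\bigr)=U_i\times(G_N\cdot\gamma)$ so that the trivializations of Lemma~\ref{L:aug1} restrict to a fiber bundle atlas for \eqref{E:aug.fbg}, you build local sections by the same formula $u_i(\tilde N)\,v\bigl(\mathfrak A(u_i(\tilde N))^{-1}\cdot\tilde\gamma\bigr)$, and you obtain \eqref{E:qwerty} by restricting the associated-bundle identification, exactly as the paper does via matching local trivializations. The only point to tidy is that your $v$, defined near $\hat\gamma_0$, must be a right inverse of the orbit map \emph{based at} $\gamma$ (i.e., $v(\hat\gamma)\cdot\gamma=\hat\gamma$), which one gets from the good-orbit hypothesis by right-translating a section at $\hat\gamma_0$ by any $h_0\in G_N$ with $h_0\cdot\gamma=\hat\gamma_0$; the paper sidesteps this by using transitivity and smoothness of the action to reduce to a section at the base point $(N,\gamma)$.
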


\begin{proof}
	The local trivializations in \eqref{E:triv.aug} restrict to local trivializations
	\begin{equation}\label{E:aug.fbg.trivi}
		\phi_i\colon U_i\times(G_N\cdot\gamma)\to(G\cdot(N,\gamma))|_{U_i},\quad
		(\tilde N,\hat\gamma)\mapsto\bigl(\tilde N,\mathfrak A(u_i(\tilde N))\cdot\hat\gamma\bigr),
	\end{equation}
	for we have $(\tilde N,\mathfrak A(u_i(\tilde N))\cdot\hat\gamma)=u_i(\tilde N)g\cdot(N,\gamma)$ 
	by \eqref{E:action} if $\hat\gamma=g\cdot\gamma$ with $g\in G_N$.
	Hence, $G\cdot(N,\gamma)$ is an initial splitting smooth submanifold of $\Gr^{\aug}(M)|_{G\cdot N}$.
	Since $G_N\cdot\gamma$ is an initial submanifold in $\mathfrak A(N)$, the transitions in \eqref{E:trans.aug} 
	restrict to diffeomorphisms of $(U_i\cap U_j)\times(G_N\cdot\gamma)$.
	Consequently, the local trivializations in~\eqref{E:aug.fbg.trivi} provide a smooth fiber bundle atlas for the map in~\eqref{E:aug.fbg}.
	As $G\cdot(N,\gamma)$ is an initial submanifold in $\Gr^{\aug}(M)|_{G\cdot N}$, the $G$ action on $\Gr^{\aug}(M)|_{G\cdot N}$ restricts to a smooth action on $G\cdot(N,\gamma)$.

	Since the  $G$ action on $G\cdot(N,\gamma)$ is smooth and transitive, it suffices to construct a local smooth section at $(N,\gamma)$.
	To this end let $U\subseteq G\cdot N$ be an open neighborhood of $N$ and suppose $u\colon U\to G$ is a local smooth section for the $G$ action, 
	i.e., $u(\tilde N)\cdot N=\tilde N$ for all $\tilde N\in U$ and $u(N)=\id$.
	Moreover, let $V\subseteq G_N\cdot\gamma$ be an open neighborhood of $\gamma$ and suppose $v\colon V\to G_N$ is a local smooth section for the $G_N$ action,
	i.e., $v$ is smooth into $G$ and $v(\hat\gamma)\cdot\gamma=\hat\gamma$ for all $\hat\gamma\in V$.
	Then $W:=\{(\tilde N,\tilde\gamma):\tilde N\in U,\tilde\gamma\in\mathfrak A(\tilde N),\mathfrak A(u(\tilde N)^{-1})\cdot\tilde\gamma\in V\}$ 
	is an open neighborhood of $(N,\gamma)$ in $G\cdot(N,\gamma)$ and
	\[
		w\colon W\to G,\quad
		w(\tilde N,\tilde\gamma):=u(\tilde N)v\bigl(\mathfrak A(u(\tilde N)^{-1})\cdot\tilde\gamma\bigr)
	\]
	is a local smooth section for the $G$ action on $G\cdot(N,\gamma)$, i.e., $w(\tilde N,\tilde\gamma)\cdot(N,\gamma)=(\tilde N,\tilde\gamma)$ for all $(\tilde N,\tilde\gamma)\in W$.
	Indeed, the local trivialization associated with $u$, cf.~\eqref{E:aug.fbg.trivi},
	\begin{equation}\label{E:xyz}
		\phi\colon U\times(G_N\cdot\gamma)\to(G\cdot(N,\gamma))|_U,\quad
		(\tilde N,\hat\gamma)\mapsto\bigl(\tilde N,\mathfrak A(u(\tilde N))\cdot\hat\gamma\bigr)
	\end{equation}
	maps $U\times V$ diffeomorphically onto $W$, and $w\circ\phi\colon U\times V\to G$, $(w\circ\phi)(\tilde N,\hat\gamma)=u(\tilde N)v(\hat\gamma)$ 
	is evidently smooth as $u$ and $v$ are both smooth into $G$.

	To prove the second part of the lemma we assume, moreover, that $G$ has good isotropy at $N$.
	Hence, $G_N$ is a splitting Lie subgroup in $G$ and $u\colon U\to G$ is a local section of the the principal $G_N$ bundle $G\to G\cdot N$.
	The corresponding local trivialization of the associated bundle $G\times_{G_N}(G_N\cdot\gamma)$ over $U$ is 
	\[
		U\times(G_N\cdot\gamma)\to\bigl(G\times_{G_N}(G_N\cdot\gamma)\bigr)|_{U},\qquad
		(\tilde N,\hat\gamma)\mapsto[u(\tilde N),\hat\gamma].
	\]
	Up to the identification in \eqref{E:qwerty}, this coincides with the local trivialization $\phi$ in \eqref{E:xyz}.
	Hence, \eqref{E:qwerty} is a diffeomorphism.
\end{proof}

We say \emph{$G$ has good isotropy at $(N,\gamma)\in\Gr^{\aug}(M)$} if $G_{(N,\gamma)}=\{g\in G:g\cdot(N,\gamma)=(N,\gamma)\}$ is a splitting Lie subgroup of $G$.

\begin{lemma}\label{L:aug3}
	Suppose $G$ has good orbit at $N\in\Gr(M)$, suppose $G_N$ has good orbit at $\gamma\in\mathfrak A(N)$, and suppose $G$ has good isotropy at $(N,\gamma)$.
	Then the $G$ equivariant map provided by the action,
	\begin{equation}\label{E:qq}
		G\to G\cdot(N,\gamma),\qquad
		g\mapsto g\cdot(N,\gamma)
		=\bigl(g\cdot N,\mathfrak A(g)\cdot\gamma\bigr)
	\end{equation}
	is a smooth principal $G_{(N,\gamma)}$ bundle.
	Moreover, $G$ has good isotropy at $N$, $G_N$ has good isotropy at $\gamma$, i.e., $G_{(N,\gamma)}$ is a splitting Lie subgroup of $G_N$, and the $G_N$ equivariant map provided by the action,
	\begin{equation}\label{E:qqq}
		G_N\to G_N\cdot\gamma,\qquad
		g\mapsto\mathfrak A(g)\cdot\gamma
	\end{equation}
	is a smooth principal $G_{(N,\gamma)}$ bundle.
\end{lemma}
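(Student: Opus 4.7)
The plan is to bootstrap from the principal bundle $G\to G\cdot(N,\gamma)$ via the fiber bundle $G\cdot(N,\gamma)\to G\cdot N$ in order to deduce the splitting of $G_N$ in $G$, and then obtain the remaining claims by appropriate restriction. First I would establish the principal $G_{(N,\gamma)}$-bundle structure on the orbit map $\pi_1\colon G\to G\cdot(N,\gamma)$. By Lemma~\ref{L:aug2}, $G$ has good orbit at $(N,\gamma)$, so this action admits local smooth sections $u\colon V\to G$ with $u(x)\cdot(N,\gamma)=x$. Combined with the standing hypothesis that $G_{(N,\gamma)}$ is a splitting Lie subgroup of $G$, the argument of Lemma~\ref{L:geom1} applies verbatim: the assignment $(x,h)\mapsto u(x)h$ is a $G_{(N,\gamma)}$-equivariant diffeomorphism $V\times G_{(N,\gamma)}\to G|_V$, yielding the smooth principal bundle in~\eqref{E:qq}.

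Next I would deduce that $G$ has good isotropy at $N$. The key observation is the factorization
\[
G \xrightarrow{\pi_1} G\cdot(N,\gamma) \xrightarrow{\pi_2} G\cdot N,
\]
where $\pi_2\colon(\tilde N,\tilde\gamma)\mapsto\tilde N$ is the smooth fiber bundle from Lemma~\ref{L:aug2} with typical fiber $G_N\cdot\gamma$. A direct inspection gives $G_N=\pi_1^{-1}(\pi_2^{-1}(N))$, and the fiber $\pi_2^{-1}(N)$ is canonically identified with $G_N\cdot\gamma$. As the fiber of a smooth fiber bundle, it is a splitting submanifold of $G\cdot(N,\gamma)$. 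The $G_{(N,\gamma)}$-equivariant local trivialization $V\times G_{(N,\gamma)}\cong G|_V$ from the first step then identifies $G_N\cap G|_V$ with the subset $(V\cap G_N\cdot\gamma)\times G_{(N,\gamma)}$ of $V\times G_{(N,\gamma)}$, which is patently splitting. Hence $G_N$ is a splitting submanifold of $G$, i.e., a splitting Lie subgroup.

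To prove that $G_{(N,\gamma)}$ is splitting in $G_N$, I would refine the picture by trivializing along the fiber direction: using a local smooth section $s$ for the $G_N$-action on $G_N\cdot\gamma$ at $\gamma$ (supplied by the assumption that $G_N$ has good orbit at $\gamma$) together with the trivialization of $\pi_1$, one arrives at a product chart of $G$ near $e$ in which $G_N$ corresponds to $(V'\cap G_N\cdot\gamma)\times G_{(N,\gamma)}$ and $G_{(N,\gamma)}$ to the slice $\{\gamma\}\times G_{(N,\gamma)}$, with the first factor providing an explicit complement. Finally, the smooth principal bundle in~\eqref{E:qqq} is obtained by restricting $\pi_1$ to $\pi_1^{-1}(G_N\cdot\gamma)=G_N$ lying over the splitting submanifold $G_N\cdot\gamma\subseteq G\cdot(N,\gamma)$; the $G_{(N,\gamma)}$-equivariant trivializations of $\pi_1$ restrict to $G_{(N,\gamma)}$-equivariant trivializations of $G_N\to G_N\cdot\gamma$.

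The main obstacle is the second step: upgrading the smooth description of $G_N$ furnished by $\pi_1$ (as the preimage of a point in $G\cdot N$) to the stronger statement that $G_N$ is splitting in $G$. This hinges on the $G_{(N,\gamma)}$-equivariance of the local trivializations of $\pi_1$, which is precisely what makes preimages of fibers of $\pi_2$ decompose as products inside $V\times G_{(N,\gamma)}$ and thereby inherit the splitting property already available on the fibers of $\pi_2$.
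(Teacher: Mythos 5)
Your proposal is correct and takes essentially the same route as the paper: the trivializations $(x,h)\mapsto u(x)h$ built from the local smooth sections supplied by Lemma~\ref{L:aug2} together with the splitting hypothesis on $G_{(N,\gamma)}$ yield the principal bundle in~\eqref{E:qq}, and all remaining assertions are obtained by restricting this bundle to the fiber $G_N\cdot\gamma$ over $N$ in the base. The paper compresses your second, third, and fourth steps into a single closing sentence, which your argument simply spells out in detail.
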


\begin{proof}
	According to Lemma~\ref{L:aug2}, the $G$ action on $G\cdot(N,\gamma)$ is smooth and admits local smooth sections.
	Choose an open neighborhood $U$ of $(N,\gamma)$ in $G\cdot(N,\gamma)$ and $u\colon U\to G$ smooth such that 
	$u(\tilde N,\tilde\gamma)\cdot(N,\gamma)=(\tilde N,\tilde\gamma)$ for all $(\tilde N,\tilde\gamma)\in U$.
	Then
	\[
		U\times G_{(N,\gamma)}\to G|_U,\qquad
		\bigl((\tilde N,\tilde\gamma),g\bigr)\mapsto u(\tilde N,\tilde\gamma)g
	\]
	is a diffeomorphism trivializing the map in~\eqref{E:qq} over $U$.
	The inverse map, 
	\[
		G|_U\to U\times G_{(N,\gamma)},\qquad
		g\mapsto\bigl(g\cdot(N,\gamma),u(g\cdot(N,\gamma))^{-1}g\bigr),
	\] is smooth since $G_{(N,\gamma)}$ is an initial submanifold in $G$.
	Using the transitive smooth $G$ action on $G\cdot(N,\gamma)$, we obtain local trivializations of~\eqref{E:qq} around any point in $G\cdot(N,\gamma)$.
	Hence, the map in~\eqref{E:qq} is a smooth principal bundle.
	Restricting this bundle to the splitting smooth submanifold $G_N\cdot\gamma$ in the base, cf.~Lemma~\ref{L:aug2}, we obtain the second assertion.
\end{proof}

We summarize most of the maps discussed above in the following diagram, indicating typical fibers and structure groups next to the arrows:
\[
	\xymatrix{
		G\ar@{->>}[rr]^-{G_{(N,\gamma)}}\ar@/_/@{->>}[drr]_-{G_N}
		&&G\cdot(N,\gamma)\ar[d]^-{G_N\cdot\gamma}\ar@{^(->}[r]
		&\Gr^{\aug}(M)|_{G\cdot N}\ar[d]^-{\mathfrak A(N)}\ar@{^(-->}[r]
		&\Gr^{\aug}(M)\ar@{-->}[d]
		\\
		&&G\cdot N\ar@{=}[r]&G\cdot N\ar@{^(->}[r]&\Gr(M)
	}
\]


\subsection{The full group of diffeomorphisms}\label{S:4.2}
	We now consider $G=\Diff_c(M)$, the full group of compactly supported diffeomorphisms.
	For $N\in\Gr(M)$ we let
	\[
		\mathfrak A(N):=\Gamma\bigl((|\Lambda|_N\otimes T^*M|_N)\setminus N\bigr)
	\]
	denote the space of all nowhere vanishing smooth sections of the bundle $|\Lambda|_N\otimes T^*M|_N$ over $N$.
	We equip $\mathfrak A(N)$ with the Fr\'echet manifold structure it inherits as an open subset in the Fr\'echet space of all sections with the $C^\infty$ topology.
	For $g\in G$ we let $\mathfrak A(g)\colon\mathfrak A(N)\to\mathfrak A(g\cdot N)$ denote the natural map induced by $g$.
	Since this is the restriction of a bounded linear map between Fr\'echet spaces, $\mathfrak A(g)$ is smooth.
	Clearly, $\mathfrak A$ defines a functor from $\cataug(M,G)$ into the category of Fr\'echet manifolds.	

	The augmented nonlinear Grassmannian associated with this functor $\mathcal A$ will be denoted by $\Gr^{\aug}(M)$.
	We have an injective and $\Diff(M)$ equivariant map
	\begin{equation}\label{E:J.Diffc}
		J\colon\Gr^{\aug}(M)\to\mathfrak g^*,\qquad\langle J(N,\gamma),X\rangle=\int_N\gamma(X),\quad X\in\mathfrak g
	\end{equation}
	where $\mathfrak g=\mathfrak X_c(M)$ denotes the Lie algebra of compactly supported vector fields.

	\begin{proposition}\label{P:aug.full.orbit}
		$\mathfrak A$ is an augmentation functor for the $G$ action on $M$.
	\end{proposition}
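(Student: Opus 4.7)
The goal is to verify the three ingredients of the definition: that $\mathfrak A(N)$ is a Fr\'echet manifold for every $N\in\Gr(M)$, that $\mathfrak A(g)$ is smooth for every morphism $g$ of $\cataug(M,G)$ and the assignment is functorial, and that the $G_N$ action on $\mathfrak A(N)$ is smooth in the specified sense.

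The functoriality and smoothness of morphisms follow directly from the construction. By definition $\mathfrak A(N)$ is an open subset of the Fr\'echet space $\Gamma(|\Lambda|_N\otimes T^*M|_N)$, hence a Fr\'echet manifold. For a morphism $g\in G$ with $g(N_1)=N_2$, the map $\mathfrak A(g)\colon\mathfrak A(N_1)\to\mathfrak A(N_2)$ is defined by pointwise push-forward, that is
\[
\bigl(\mathfrak A(g)\gamma\bigr)_y=(g_*\otimes g_*)\gamma_{g^{-1}(y)}, \quad y\in N_2,
\]
where $g_*$ acts on the density factor via the absolute Jacobian of $g|_{N_1}\colon N_1\to N_2$ and on the cotangent factor via $(Tg^{-1})^*$. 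This is the restriction to an open subset of a bounded linear (indeed bicontinuous) isomorphism between the ambient Fr\'echet spaces of sections, hence smooth. Functoriality $\mathfrak A(g_2\circ g_1)=\mathfrak A(g_2)\circ\mathfrak A(g_1)$ and $\mathfrak A(\id)=\id$ is just the naturality of push-forward.

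The main point is smoothness of the $G_N$ action. Let $Z$ be any smooth manifold and let $g\colon Z\to G_N$ be a map which is smooth when regarded as a map into $G=\Diff_c(M)$; we must show that
\[
Z\times\mathfrak A(N)\to\mathfrak A(N), \qquad (z,\gamma)\mapsto\mathfrak A(g(z))\cdot\gamma
\]
is smooth. Since $\mathfrak A(N)$ is open in a convenient vector space of sections, it suffices to prove smoothness as a map into that section space. My approach is to invoke cartesian closedness of the convenient calculus \cite[Theorem~30.8]{KM97}: the pointwise evaluation of $(\mathfrak A(g(z))\gamma)_y$ is obtained by composing the smooth maps $Z\times M\to M$, $(z,y)\mapsto g(z)^{-1}(y)$ and $(z,v)\mapsto Tg(z)^{-1}v$ on $TM$ (these are smooth since $g$ is smooth into $\Diff_c(M)$), together with the smooth section-evaluation $\mathfrak A(N)\times N\to|\Lambda|_N\otimes T^*M|_N$ and the smooth Jacobian correction for densities. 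This produces a smooth map $Z\times\mathfrak A(N)\times N\to|\Lambda|_N\otimes T^*M|_N$ covering the identity on $N$, and cartesian closedness delivers the desired smooth map $Z\times\mathfrak A(N)\to\Gamma(|\Lambda|_N\otimes T^*M|_N)$, whose image lies in the open subset $\mathfrak A(N)$.

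The one step that actually requires care is the last one, namely assembling the fiberwise formula into a smooth map into a space of sections; everything else is routine once the push-forward conventions are fixed. This hinges on cartesian closedness and the smoothness of evaluation in the convenient setting, for which the fact that $N$ is closed (hence compact, in the sense of carrying a compact-type topology on sections) is essential.
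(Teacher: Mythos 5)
Your proof is correct, and your pushforward formula agrees with the one in the paper, namely $\bigl(|\Lambda|(f|_N)\otimes(T^*f)|_N\bigr)\circ\gamma\circ(f|_N)^{-1}$; but you verify the smoothness condition by a genuinely different route. The paper's proof reduces everything to a statement about an honest Lie group action: it recalls that $G_N=\Diff_c(M,N)$ is a splitting (hence initial) Lie subgroup of $\Diff_c(M)$, by \cite[Lemma~2.1(b)]{HV20}, so any $g\colon Z\to G_N$ that is smooth into $G$ is automatically smooth into $\Diff_c(M,N)$, and then it simply asserts that the action map $\Diff_c(M,N)\times\Gamma\bigl(|\Lambda|_N\otimes T^*M|_N\bigr)\to\Gamma\bigl(|\Lambda|_N\otimes T^*M|_N\bigr)$ is smooth. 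You instead work directly with $g$ smooth into $G$ and prove smoothness via the exponential law of convenient calculus; in effect you supply a proof of exactly the step the paper treats as routine. What your approach buys is conceptual economy: it never invokes any Lie group structure on $G_N$, which is in the spirit of the framework of Section~\ref{SS:aug}, where such structure is deliberately not presumed; what the paper's approach buys is brevity, by leaning on the known structure theory of $\Diff_c(M,N)$. The one point you should make explicit is where the hypothesis $g(z)\in G_N$ actually enters your argument: the burden of initiality, which the paper places on $G_N\subseteq G$, moves in your argument to $N\subseteq M$. To evaluate $\gamma$ at $g(z)^{-1}(y)$ and to form the Jacobian of $g(z)|_N$, you need the map $Z\times N\to N$, $(z,y)\mapsto g(z)^{-1}(y)$, to be smooth \emph{as a map into $N$}; smoothness of $Z\times M\to M$ alone does not literally give this. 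It does hold, because this map takes values in $N$ (precisely since $g(z)\in G_N$) and $N$ is a closed embedded, hence initial, finite-dimensional submanifold of $M$ -- but this sentence should appear in the proof rather than being absorbed into the phrase ``covering the identity on $N$''. Finally, check your citation for cartesian closedness: the exponential law you need is \cite[Theorem~42.14]{KM97} (or \cite[3.12]{KM97} in the linear setting), combined with the fact that $\Gamma\bigl(|\Lambda|_N\otimes T^*M|_N\bigr)$ is a closed linear subspace of $C^\infty\bigl(N,|\Lambda|_N\otimes T^*M|_N\bigr)$.
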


	\begin{proof}
		It remains to check the smoothness assumption.
		To this end consider $N\in\Gr(M)$.
		Recall that $G_N=\Diff_c(M,N)$ is Lie subgroup in $\Diff_c(M)$, see for instance \cite[Lemma~2.1(b)]{HV20}.
		For $f\in\Diff_c(M,N)$, we let $(T^*f)|_N\colon T^*M|_N\to T^*M|_N$ denote the vector bundle isomorphism over $f|_N$ obtained by applying the cotangent bundle functor, 
		and we let $|\Lambda|(f|_N)\colon|\Lambda|_N\to|\Lambda|_N$ denote the vector bundle homomorphism over $f|_N$ obtained by applying the density functor.
		The map
		\begin{align*}
			\Diff_c(M,N)\times\Gamma\bigl(|\Lambda|_N\otimes T^*M|_N\bigr)&\to\Gamma\bigl(|\Lambda|_N\otimes T^*M|_N\bigr)
			\\
			(f,\ga)&\mapsto\bigl(|\Lambda|(f|_N)\otimes(T^*f)|_N\bigr)\circ\ga\circ (f|_N)^{-1}
		\end{align*}
		is smooth and it restricts to the $G_N$ action on the open subset $\mathfrak A(N)$.
	\end{proof}

	Combining Propositions~\ref{P:DiffcM.good} and \ref{P:aug.full.orbit} with Lemma~\ref{L:aug1}, 
	we obtain a Fr\'echet manifold structure on the augmented Grassmannian $\Gr^{\aug}(M)|_{G\cdot N}$ such that the $G$ action is smooth.
	Since $G\cdot N$ is open and closed in $\Gr(M)$ this yields a smooth structure on the disjoint union, $\Gr^{\aug}(M)$,
	such that the $G$ equivariant forgetful map $\Gr^{\aug}(M)\to\Gr(M)$ is a smooth fiber bundle with fiber $\mathfrak A(N)$ over $N\in\Gr(M)$.

\subsubsection*{Augmentations with zero pullback}	
	For $N\in\Gr(M)$ we have a fiberwise surjective vector bundle homomorphism $T^*M|_N\to T^*N$ given by restriction of covectors.
	Its kernel, the annihilator of $TN$, is isomorphic to the dual of the normal bundle of $N$ in $M$ and will be denoted by $\Ann TN=(TM|_N/TN)^*$.
	Hence,
	\begin{equation}\label{E:Ann}
		\Gamma\bigl((|\Lambda|_N\otimes\Ann TN)\setminus N\bigr)\subseteq\mathfrak A(N)
	\end{equation}
	is a splitting smooth submanifold.

	\begin{proposition}\label{P:aug.full.iso}
		Let $G=\Diff_c(M)$. Suppose $N\in\Gr(M)$ and $\gamma\in\Gamma\bigl((|\Lambda|_N\otimes\Ann TN)\setminus N\bigr)$.
		Then $G_N$ has good orbit at $\gamma$.
		Moreover, this orbit is open and closed in $\Ga((|\Lambda|_N\otimes\Ann TN)\setminus N)$.
		Furthermore, $G$ has good isotropy at $(N,\gamma)$.
	\end{proposition}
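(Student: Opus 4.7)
The proof follows a Moser-type strategy. First observe that $\Gamma((|\La|_N\otimes\Ann TN)\setminus N)$ is a splitting submanifold of $\mathfrak A(N)$: it is open in the closed splitting subspace $\Gamma(|\La|_N\otimes\Ann TN)\subseteq\Gamma(|\La|_N\otimes T^*M|_N)$. Moreover, the $G_N=\Diff_c(M,N)$ action preserves this subspace, because any $f\in G_N$ preserves both $TN$ and its annihilator. To establish good orbit, I shall show $G_N\cdot\gamma$ is open (and, by being the complement of the union of the remaining orbits, closed) in $\Gamma((|\La|_N\otimes\Ann TN)\setminus N)$; combined with the splitting property of the surrounding subspace, this yields a splitting initial submanifold of $\mathfrak A(N)$.

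The key infinitesimal transitivity statement is established by a direct computation. Fix a tubular neighborhood of $N$ in $M$ modeled on the normal bundle $E:=TM|_N/TN\to N$, and equip $E$ with an auxiliary fiber metric. Working in adapted coordinates $(x,y)$ with $N=\{y=0\}$ I write $\gamma=\sum_A a_A(x)\,dy^A|_N\otimes|dx|$ with the vector $a=(a_A)$ nowhere zero. I consider vertical, fiberwise linear vector fields of the form
\[
	X(x,y)=\chi(|y|)\cdot\sum_{A,B}A^A_B(x)\,y^B\,\partial_{y^A},
\]
where $A^A_B\in C^\infty(N)$ and $\chi$ is a compactly supported bump function equal to one near the zero section of $E$. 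Such $X$ lies in $\mathfrak X_c(M,N)$, vanishes on $N$, and has prescribed normal derivative $\partial_{y^B}(X y^A)|_N=A^A_B$. A short calculation using $L_X\,dy^A|_N=\sum_B A^A_B\,dy^B|_N$ gives
\[
	\ze_X(\gamma)=\mp\sum_{A,B}a_A(x)A^A_B(x)\,dy^B|_N\otimes|dx|.
\]
For any target $\hat\gamma=\sum_B\hat a_B(x)\,dy^B|_N\otimes|dx|$, setting $A^A_B:=\mp a_A\hat a_B/|a|^2$ (well defined because $a\neq0$ pointwise and we have a fiber metric) solves $\ze_X(\gamma)=\hat\gamma$. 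This choice is intrinsic and smooth in $(\gamma,\hat\gamma)$, and bounded linear in $\hat\gamma$; extending the construction by zero outside the tubular neighborhood provides a smooth map $\sigma\colon T\Gamma((|\La|_N\otimes\Ann TN)\setminus N)|_U\to\mathfrak X_c(M,N)$ on a neighborhood $U$ of any prescribed $\gamma_0$, satisfying $\ze_{\sigma(\gamma,\hat\gamma)}(\gamma)=\hat\gamma$.

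Lemma~\ref{L:loc.sec}, applied in the enlarged sense of Remark~\ref{R:enlarge} (valid here since $G_N$ is a regular Lie group as a splitting Lie subgroup of the regular Lie group $\Diff_c(M)$, by Proposition~\ref{P:DiffcM.good}), then produces local smooth sections for the $G_N$ action on $\Gamma((|\La|_N\otimes\Ann TN)\setminus N)$. Local transitivity forces each orbit to be open; since the complement of $G_N\cdot\gamma$ is a union of the remaining (also open) orbits, $G_N\cdot\gamma$ is closed as well. This establishes both the ``open and closed'' claim and good orbit at $\gamma$.

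Finally, good isotropy of $G$ at $(N,\gamma)$ follows by a standard principal bundle construction. The orbit map $G_N\to G_N\cdot\gamma$, $f\mapsto\mathfrak A(f)\cdot\gamma$, admits local smooth sections by the previous paragraph, and, proceeding exactly as in the proof of Lemma~\ref{L:geom1}, these sections trivialize the orbit map as a smooth principal $G_{(N,\gamma)}$ bundle. In particular $G_{(N,\gamma)}$ acquires a smooth structure making it a splitting Lie subgroup of $G_N$, which combined with $G_N$ being splitting in $G$ (Proposition~\ref{P:DiffcM.good}) gives $G_{(N,\gamma)}$ as a splitting Lie subgroup of $G$. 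The main obstacle is the globalization of the local Moser calculation and its smooth dependence on $\gamma$; this is handled intrinsically via the tubular neighborhood and fiber metric on $E$ as above, but the verification of smoothness of $\sigma$ as a map into $\mathfrak X_c(M,N)$ requires attention to the cutoff and the open condition $a\neq 0$.
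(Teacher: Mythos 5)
Your treatment of the first two assertions (good orbit of $G_N$ at $\gamma$, and the orbit being open and closed in $\Ga((|\La|_N\otimes\Ann TN)\setminus N)$) is correct, but it takes a genuinely different route from the paper. You solve the infinitesimal equation $\ze_{\sigma(\gamma,\hat\gamma)}(\gamma)=\hat\gamma$ with cut-off fiberwise linear vertical fields and then integrate via Lemma~\ref{L:loc.sec}; your formula $A=\mp\sharp\gamma\otimes\hat\gamma/|\gamma|^2$ is indeed intrinsic, linear in $\hat\gamma$, and defined on the whole space of nowhere zero sections, so every orbit is open, hence closed. The paper instead reduces, via the homomorphism $\Diff_c(M;\io_N)\to\Ga(\GL(E))$, $g\mapsto Tg|_N$ (where $E=TM|_N/TN$), to the action of the gauge group $\Ga(\GL(E))$ on $\Ga(E^*\setminus N)$ and writes down an explicit algebraic section there ($A_\be(\sharp\al)=\sharp\be$, $A_\be|_{\ker\al}=\id$, $u(\be)=(A_\be^*)^{-1}$), so no flows are integrated and no regularity is invoked. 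One caveat on your version: your parenthetical claim that $G_N$ is regular \emph{because} it is a splitting Lie subgroup of the regular group $\Diff_c(M)$ is not a citable theorem; what legitimizes the application is Remark~\ref{R:enlarge} itself, since your $\sigma$ takes values in vector fields vanishing on $N$, whose evolutions in $\Diff_c(M)$ stay in $G_N$ and are smooth into $\Diff_c(M)$, which is all that good orbit requires.

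The proof of the third assertion has a genuine gap. You infer from local smooth sections of the orbit map $G_N\to G_N\cdot\gamma$ that ``$G_{(N,\gamma)}$ acquires a smooth structure making it a splitting Lie subgroup of $G_N$''. This implication is not valid in the Fr\'echet/convenient setting. The maps $V\times G_{(N,\gamma)}\to G_N|_V$, $(\tilde\gamma,h)\mapsto v(\tilde\gamma)h$, are mere bijections until one already knows that $G_{(N,\gamma)}$ is a splitting initial smooth submanifold of $G_N$; this is precisely why Lemma~\ref{L:geom1} (and Lemma~\ref{L:aug3}) takes good isotropy as a \emph{hypothesis} and never derives it from good orbits. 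What local sections do give you is a smooth retraction $g\mapsto v(g\cdot\gamma)^{-1}g$ of an open subset of $G_N$ onto $G_{(N,\gamma)}$, but without an inverse function theorem a smooth retract of a Fr\'echet manifold is not known to be a splitting submanifold. The paper itself documents that this shortcut is unavailable: $\Ham_c(M)$ has good orbit at every isotropic $N$ (Proposition~\ref{pp}), yet whether it has good isotropy there is stated as an open problem, and likewise for the contact group.

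The paper's actual argument for good isotropy is structural: $G_{(N,\gamma)}=\Diff_c(M,N,\gamma)$ is the preimage of $\Aut(E,\gamma)$ under the homomorphism $\Diff_c(M,N)\to\Aut(E)$, $g\mapsto Tg|_N$, which is a principal bundle after discarding components not in its image; so it suffices that $\Aut(E,\gamma)$ be a splitting Lie subgroup of $\Aut(E)$, and this is reduced, via the principal $\Ga(\GL(E))$ bundle $\Aut(E)\to\Diff(N)$ and a local section chosen with values in $\Aut(E,\gamma)$, to the fact that $\Ga(\GL(E,\gamma))$ is a splitting Lie subgroup of $\Ga(\GL(E))$ --- a fiberwise linear-algebra statement. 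Your proof needs to be replaced by an argument of this preimage-under-a-bundle type; the principal bundle construction you invoke cannot get off the ground without it.
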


	\begin{proof}
		Let $\iota_N\colon N\to M$ denote the inclusion and let $\Diff_c(M;\iota_N)$ denote the group of diffeomorphisms preserving $N$ pointwise.
		This is a splitting Lie subgroup of $\Diff_c(M)$, see \cite[Lemma~2.1(c)]{HV20}.
		The action of this group on $\Gamma(\Ann TN\setminus N)$ factors through the homomorphism
		\[
			\Diff_c(M;\iota_N)\to\Gamma\bigl(\GL(TM|_N/TN)\bigr),\qquad g\mapsto Tg|_N.
		\]
		Using a tubular neighborhood of $N$ in $M$, one readily shows that this homomorphism admits a smooth local right inverse at the identity.
		Moreover, $\Diff_c(M;\iota_N)$ acts trivially on $|\Lambda|_N$.
		Hence, denoting the normal bundle by $E:=TM|_N/TN$, it suffices to show that the action of $\Gamma(\GL(E))$ on $\Gamma(E^*\setminus N)$ admits local smooth sections.
		To this end, we choose a fiberwise Euclidean inner product on $E$ and we let $\sharp^{-1}=\flat\colon E\to E^*$ denote the corresponding isomorphism of vector bundles.
		Suppose $\alpha\in\Gamma(E^*\setminus N)$.
		For $\beta\in\Gamma(E^*)$, we define a vector bundle endomorphism $A_\beta\colon E\to E$ by $A_\beta(\sharp\alpha)=\sharp\beta$ and $A_\beta|_{\ker\alpha}=\id$.
		Clearly, $A_\beta$ depends smoothly on $\beta$.
		By construction we have $\alpha A^*_\beta=\flat A_\beta\sharp\alpha=\beta$, where $A_\beta^*\colon E\to E$ denotes the fiberwise adjoint.
		Consider the open neighborhood $U=\{\beta\in\Gamma(E^*):\al(\sharp\be)\neq0\}$ of $\alpha$ in $\Gamma(E^*\setminus N)$.
		If $\beta\in U$, then $A_\beta$ is an isomorphism.
		Hence, $u\colon U\to\Gamma(\GL(E))$, $u(\beta):=(A^*_\beta)^{-1}$ is the desired local smooth section around $\alpha$.

		To prove that $G$ has good isotropy at $(N,\gamma)$ it suffices to show that $G_{(N,\gamma)}=\Diff_c(M,N,\gamma)$ is a splitting Lie subgroup in $G_N=\Diff_c(M,N)$, because the latter is a splitting Lie subgroup in $G=\Diff_c(M)$, cf.~Proposition~\ref{P:DiffcM.good}.
		To this end we consider the homomorphism
		\begin{equation}\label{E:eee}
			\Diff_c(M;N)\to\Aut(E),\qquad g\mapsto Tg|_N,
		\end{equation}
		where $\Aut(E)$ denotes the Lie group of all fiberwise linear diffeomorphisms of the normal bundle $E=TM|_N/TN$.
		After disregarding the connected components of $\Aut(E)$ which are not in the image, the homomorphism in~\eqref{E:eee} becomes a principal bundle with structure group $\{g\in\Diff_c(M,N)\mid\forall x\in N:T_xg=\id\}$, a splitting Lie subgroup in $\Diff_c(M,N)$.
		As $\Diff_c(M,N,\gamma)$ is the preimage of the group $\Aut(E,\gamma)$ under the map in \eqref{E:eee}, it suffices to show that $\Aut(E,\gamma)$ is a splitting Lie subgroup in $\Aut(E)$.
		The homomorphism
		\begin{equation}\label{E:eeee}
			\Aut(E)\to\Diff(N)
		\end{equation}
		is a principal $\Gamma(\GL(E))$ bundle, after disregarding the connected components of $\Diff(N)$ which are not in the image.
		If $U\subseteq\Diff(N)$ is an open neighborhood of the identity and $s\colon U\to\Aut(E)$ is a local section of \eqref{E:eeee} with $s(\id)=\id$, 
		then 
		\begin{equation}\label{E:eeeee}
			U\times\Gamma(\GL(E))\to\Aut(E)|_U,\quad(f,\phi)\mapsto s(f)\phi
		\end{equation} 
		is a diffeomorphism onto an open neighborhood of the identity in $\Aut(E)$.
		It is easy to see that the local section $s$ may be chosen to take values in $\Aut(E,\gamma)$.
		Then the diffeomorphism in \eqref{E:eeeee} maps $U\times\Gamma(\GL(E,\gamma))$ onto $\Aut(E,\gamma)|_U$.
		This shows that $\Aut(E,\gamma)$ is a splitting Lie subgroup in $\Aut(E)$, for $\Gamma(\GL(E,\gamma))$ is a splitting Lie subgroup in $\Gamma(\GL(E))$. 
	\end{proof}
	
	Suppose $N\in\Gr(M)$ and $\gamma\in\Gamma\bigl((|\Lambda|_N\otimes\Ann TN)\setminus N\bigr)$.
	Combining Propositions~\ref{P:DiffcM.good} and \ref{P:aug.full.iso} with Lemma~\ref{L:aug2}, 
	we obtain a Fr\'echet manifold structure on the augmented Grassmannian $\G=G\cdot(N,\gamma)$ 
	such that the $G$ action is smooth and admits local smooth sections.
	Moreover, $\mathcal G$ is open and closed in 
	\begin{equation}\label{E:fff}
		\Gr^{\aug,\ann}(M)=\bigl\{(\tilde N,\tilde\ga)\in\Gr^{\aug}(M):\tilde\ga\in\Ga((|\Lambda|_{\tilde N}\otimes\Ann T\tilde N)\setminus\tilde N)\bigr\}.
	\end{equation}
	The latter is a splitting smooth subbundle of $\Gr^{\aug}(M)\to\Gr(M)$ in view of the trivializations in~\eqref{E:triv.aug} and the splitting smooth submanifold in~\eqref{E:Ann}.
	According to Lemma~\ref{L:aug3} the orbit map $G\to\mathcal G$, $g\mapsto g\cdot(N,\gamma)$ is a principal $G_{(N,\gamma)}$ bundle.

	Now, for any $(N,\gamma)\in\Gr^{\aug,\ann}(M)$ the coadjoint orbit of $J(N,\ga)$ is a good coadjoint orbit of $\Diff_c(M)$, cf.~\eqref{E:J.Diffc}.
	More precisely, using Lemma~\ref{lemac} we obtain:

	\begin{theorem}[{\cite[Theorem~2.2]{HV24}}]\label{T:full}
		Suppose $(N,\gamma)\in\Gr^{\aug,\ann}(M)$ 
		and consider the augmented Grassmannian $\mathcal G=\Diff_c(M)\cdot(N,\gamma)$, open and closed in $\Gr^{\aug,\ann}(M)$.
		Then its image $J(\G)$ under the map $J$ in \eqref{E:J.Diffc} is a good coadjoint orbit of $\Diff_c(M)$ 
		and the (formal) pull back of the KKS symplectic form, denoted by $\om=J^*\om_{\KKS}$,
		is a $\Diff_c(M)$ equivariant smooth (weakly) symplectic form on $\G$, characterized by
		\[
			\omega_{(\tilde N,\tilde\gamma)}(\zeta_X,\zeta_Y)=\int_{\tilde N}\tilde\gamma([X,Y])
		\]
		for all $(\tilde N,\tilde\gamma)\in\G$ and $X,Y\in\mathfrak X_c(M)$.
		Furthermore, $J$ restricts to an (equivariant) moment map for the (Hamiltonian) action of $\Diff_c(M)$ on $\G$.
	\end{theorem}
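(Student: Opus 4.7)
The plan is to recognize that essentially all the hard work has already been done in Lemma~\ref{lemac} and in the framework developed in Section~\ref{SS:aug}, and to apply these results directly. First I would observe that, as explained in the paragraph immediately preceding the theorem, Propositions~\ref{P:DiffcM.good} and \ref{P:aug.full.iso} combined with Lemma~\ref{L:aug2} equip $\G=\Diff_c(M)\cdot(N,\ga)$ with a Fr\'echet manifold structure (inherited as an initial splitting submanifold of $\Gr^{\aug,\ann}(M)$) such that the $\Diff_c(M)$ action on $\G$ is smooth and admits local smooth sections. This is exactly the data required to call $J(\G)$ a good coadjoint orbit, once injectivity and equivariance of $J$ are in hand.

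Next I would verify the two properties of $J$ needed to invoke Lemma~\ref{lemac}, with $Q=\G$, the regular Lie group $\Diff_c(M)$, and the map $J$ of \eqref{E:J.Diffc}. Injectivity of $J$ on $\Gr^{\aug}(M)$ is already stated right after \eqref{E:J.Diffc}; restricted to $\G$, it remains injective, so $J$ provides a $G$-equivariant bijection of $\G$ onto the coadjoint orbit $\Diff_c(M)\cdot J(N,\ga)\subseteq\g^*$. Equivariance is a standard change-of-variables computation: for $g\in\Diff_c(M)$ and $X\in\X_c(M)$,
\[
   \langle J(g\cdot(\tilde N,\tilde\ga)),X\rangle
   =\int_{g(\tilde N)}\bigl(\mathfrak A(g)\tilde\ga\bigr)(X)
   =\int_{\tilde N}\tilde\ga(\Ad_{g^{-1}}X)
   =\langle\Ad_g^*J(\tilde N,\tilde\ga),X\rangle,
\]
so $J$ is equivariant. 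Transporting the smooth structure of $\G$ along $J$ then makes $J(\G)$ a good coadjoint orbit, and Remark~\ref{R:unique} identifies this with any other such structure.

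Now I apply Lemma~\ref{lemac}, which says that under precisely these hypotheses, $\om=J^*\om_{\KKS}$ is a $G$-equivariant smooth weakly symplectic form on $\G$, characterized by the identity \eqref{omeg}. Inserting the definition of $J$ from \eqref{E:J.Diffc} into \eqref{omeg} gives
\[
   \om_{(\tilde N,\tilde\ga)}\bigl(\ze_X(\tilde N,\tilde\ga),\ze_Y(\tilde N,\tilde\ga)\bigr)
   =\langle J(\tilde N,\tilde\ga),[X,Y]\rangle
   =\int_{\tilde N}\tilde\ga([X,Y]),
\]
which is precisely the formula claimed in the theorem. Lemma~\ref{lemac} also delivers the final assertion that $J$ restricts to an equivariant moment map for the Hamiltonian action of $\Diff_c(M)$ on $\G$.

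The only step with any substance is confirming that the hypotheses of Lemma~\ref{lemac} genuinely hold in this setting: the existence of local smooth sections for the $\Diff_c(M)$ action on $\G$, and the injectivity of $J$. The first was already packaged into Lemma~\ref{L:aug2}, while the second was recorded with \eqref{E:J.Diffc}. So I do not expect a serious obstacle; the theorem is a straightforward application of Lemma~\ref{lemac} once the augmented Grassmannian framework of Section~\ref{SS:aug} is in place.
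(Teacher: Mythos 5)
Your proposal is correct and takes essentially the same route as the paper: the paper likewise obtains the Fr\'echet manifold structure on $\G$, with smooth $\Diff_c(M)$ action admitting local smooth sections, by combining Propositions~\ref{P:DiffcM.good} and \ref{P:aug.full.iso} with Lemma~\ref{L:aug2}, and then derives the theorem by applying Lemma~\ref{lemac} to the injective equivariant map $J$ of \eqref{E:J.Diffc}. The only material you add is the explicit change-of-variables verification of equivariance, which the paper simply asserts alongside \eqref{E:J.Diffc}.
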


	By Remark~\ref{R:unique}, the smooth structure on $\G$ must coincide with the one considered in \cite[Section~2.3]{HV24}. 
	Hence, $\G$ is a union of connected components in the open subset of nowhere zero elements in the regular cotangent bundle $T^*_{\reg}\Gr(M)$,
	and the pullback of the KKS form is the cotangent bundle symplectic form.
	The symplectic manifold $(\G,\om)$ can be obtained by symplectic reduction at zero
	on the right leg of  the Holm--Marsden EPDiff dual pair \cite{HM05,GBV12}, as shown in \cite[Theorem~2.3]{HV24}.
	This leg is the moment map for the $\Diff(S)$ action on (an open subset of) the regular cotangent bundle of $\Emb(S,M)$.

\subsubsection*{Augmentations with nowhere zero pullback}
Augmented submanifolds $(N,\ga)$ of $M$, with nowhere zero 1-form density $\io_N^*\ga\in\Om^1(N,|\La|_N)$,
can also parametrize coadjoint orbits of $G=\Diff_c(M)$.
Smooth structures on such $G$ orbits are available for 1-dimensional submanifolds $N$ of  $M$. 
In this case, a nowhere zero element in $ \Om^1(N,|\La|_N)$ is the same as a Riemannian metric plus an orientation on $N$,
and its $\Diff(N)$ orbit is characterized by the lengths of the connected components of $N$.

\begin{proposition}\label{P:aug.dens}
	Let $G=\Diff_c(M)$. 
	Suppose $N\in\Gr_{\dim=1}(M)$ and $\ga\in\A(N)$ are such that $\io_N^*\ga\in\Om^1(N,|\Lambda|_N)$ is nowhere zero.
	Then $G_N$ has good orbit at $\gamma$ and the orbit is open and closed in 
	\begin{equation}\label{E:e}
		\{\tilde\gamma\in\Gamma(|\Lambda|_N\otimes T^*M|_N):\io_N^*\tilde\ga\cong\io_N^*\ga\},
	\end{equation}	
	a splitting smooth submanifold of $\mathfrak A(N)$.
\end{proposition}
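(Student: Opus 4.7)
The strategy is to combine the analysis of the $\Diff(N)$ action on the pullback $\iota_N^*\gamma$ (in the spirit of Propositions~\ref{P:den} and~\ref{P:begood}) with that of the kernel direction $\Gamma(|\Lambda|_N\otimes\Ann TN)$ (in the spirit of Proposition~\ref{P:aug.full.iso}), glued via the good extension property of $G=\Diff_c(M)$ at $N$ (Proposition~\ref{P:DiffcM.good}).

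First I would show that the $\Diff(N)$-orbit of $\iota_N^*\gamma$ in the open subset $\Omega^1_\times(N,|\Lambda|_N)$ of nowhere-zero 1-form densities is a splitting initial submanifold and that the $\Diff(N)$-action on this orbit admits local smooth sections. Since $\dim N=1$, a Moser-type computation shows that on each connected component (necessarily a circle) the orbit is characterized by a single real invariant obtained by integrating an appropriate square root of $\iota_N^*\gamma$; hence globally the orbit is open in a finite union of parallel closed affine subspaces of finite codimension equal to the number of components of $N$. Local smooth sections arise from Lemma~\ref{L:loc.sec} applied to a vector field $Z\in\mathfrak X(N)$ smoothly constructed to infinitesimally realize any prescribed tangent deformation, exactly as in Propositions~\ref{P:den} and~\ref{P:begood}. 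Next, the restriction map $\iota_N^*\colon\mathfrak A(N)\to\Omega^1(N,|\Lambda|_N)$ is the restriction to the open subset $\mathfrak A(N)$ of a surjective bounded linear map with kernel $\Gamma(|\Lambda|_N\otimes\Ann TN)$ and a continuous linear right inverse obtained from any bundle splitting of $0\to\Ann TN\to T^*M|_N\to T^*N\to 0$. Hence the set in the statement, being the preimage of the splitting submanifold $\Diff(N)\cdot\iota_N^*\gamma$ under this submersion, is itself a splitting smooth submanifold of $\mathfrak A(N)$.

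For the $G_N$-action I would construct, on a neighborhood $U$ of $\gamma$ inside this preimage, a bounded linear map $\sigma\colon T(G_N\cdot\gamma)|_U\to\mathfrak X_c(M;N)$ with $\zeta_{\sigma(X)}(\tilde\gamma)=X$, then invoke Lemma~\ref{L:loc.sec}. A tangent vector $X$ at $\tilde\gamma$ decomposes, via the bundle splitting, into a horizontal piece lifting $\iota_N^*X\in T_{\iota_N^*\tilde\gamma}(\Diff(N)\cdot\iota_N^*\gamma)$ and a vertical piece in $\Gamma(|\Lambda|_N\otimes\Ann TN)$. The horizontal piece is handled by lifting the vector field $Z$ on $N$ from the first step to $\mathfrak X_c(M;N)$ using the good extension. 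For the vertical piece, writing $\gamma=\nu\otimes\alpha$ locally and computing $L_Y\gamma=\nu\otimes(-\alpha\circ dY|_N)$ for $Y\in\mathfrak X_c(M;\iota_N)$, the nowhere vanishing of $\alpha|_{TN}$ (which is precisely the assumption $\iota_N^*\gamma\ne 0$) lets me solve for $Y$ linearly in terms of any prescribed vertical piece, yielding a bounded right inverse as in the proof of Proposition~\ref{P:aug.full.iso}. Summing these contributions gives the desired $\sigma$, so the $G_N$-action admits local smooth sections and $G_N\cdot\gamma$ is open, hence open and closed, in the preimage.

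The main obstacle is the Moser trick of the first step on 1-dim $N$ for the bundle $T^*N\otimes|\Lambda|_N$: because the transformation rule under $\psi\in\Diff(N)$ involves the factor $\psi'\cdot|\psi'|$ mixing a signed 1-form factor with an unsigned density factor, some care is needed to identify the correct finite-dimensional collection of invariants before running the standard Moser-style argument for local smooth sections.
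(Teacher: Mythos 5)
Your proposal follows the same three-step architecture as the paper's proof: (i) control the $\Diff(N)$-orbit of $\rho=\iota_N^*\gamma$ in $\Omega^1(N;|\Lambda|_N)$; (ii) realize the set \eqref{E:e} as the preimage of that orbit under the bounded linear map $\iota_N^*$, which admits a bounded linear right inverse, so the preimage is a splitting submanifold; (iii) produce local smooth sections for the $G_N$-action by combining a section over the base orbit (via the good extension property of $\Diff_c(M,N)\to\Diff(N)$) with a solve in the fiber direction $\Gamma(|\Lambda|_N\otimes\Ann TN)$. Steps (ii) and (iii) are sound. In (iii) you argue infinitesimally through Lemma~\ref{L:loc.sec}, whereas the paper works at group level: it constructs a \emph{global} smooth section on the fiber $F=\gamma+\Gamma(|\Lambda|_N\otimes\Ann TN)$ by taking time-one flows of vector fields $Z_{\tilde\gamma}$ vanishing along $N$ with prescribed derivative $Y_x\otimes\frac{\tilde\gamma_x-\gamma_x}{\nu_x}$, where $Y$ is tangent to $N$ and $\gamma(Y)=\nu$; tangency makes this derivative nilpotent, so the time-one flow acts exactly (not just infinitesimally) affinely on $F$, and this fiber section is then composed with the section over the base. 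Your vertical solve is the infinitesimal shadow of that construction and uses the hypothesis that $\iota_N^*\gamma$ is nowhere zero in the same way (it supplies the field $Y$); this is a legitimate alternative route.

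The genuine gap is in step (i), precisely at the point you flag in your last paragraph but do not resolve. The $\Diff(N)$-orbit of a nowhere-zero $\rho\in\Omega^1(N;|\Lambda|_N)$, $\dim N=1$, is \emph{not} open in a finite union of parallel closed affine subspaces. The invariant on each (circle) component is the Riemannian length, locally $\rho=f\,d\theta\otimes|d\theta|\mapsto\int\sqrt{|f|}\,d\theta$, and this functional is not linear in $\rho$ --- it is strictly concave on positive sections --- so its level sets are genuinely curved. This contrasts with Propositions~\ref{P:den} and~\ref{P:begood}, where the invariant is a (linear) cohomology class and the level sets really are affine; and in the Fr\'echet setting there is no implicit function theorem allowing you to conclude that a level set of a smooth map is a splitting submanifold, so your argument breaks at this point. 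The step can be repaired: the fiberwise map $\omega\mapsto\omega\otimes|\omega|$ (locally $g\mapsto g|g|$) is a $\Diff(N)$-equivariant diffeomorphism from nowhere-zero $1$-forms onto nowhere-zero $1$-form densities, and it transforms the length into the linear functional $\omega\mapsto\int_{N_i}\omega$; hence the orbit is the image under this diffeomorphism of an open subset of a finite union of parallel affine subspaces, which does yield the splitting submanifold structure, and a Moser argument in these linearized coordinates gives the local smooth sections. (The paper sidesteps all of this by quoting \cite[Example~4.16]{HV22} for step (i).) With that repair, the rest of your proposal goes through.
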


\begin{proof}
	Put $\rho:=\iota_N^*\gamma$.
	According to \cite[Example~4.16]{HV22}, the orbit $\Diff(N)\cdot\rho$ is a splitting smooth submanifold in $\Omega^1(N;|\Lambda|_N)$ and the $\Diff(N)$ action on this orbit admits local smooth sections. 
	Recall that the homomorphism $\Diff_c(M,N)\to\Diff(N)$ admits a local smooth right inverse at the identity.
	We conclude that $\Diff_c(M,N)\cdot\rho$ is open and closed in $\Diff(N)\cdot\rho$, hence a splitting smooth submanifold in $\Omega^1(N;|\Lambda|_N)$, and the $\Diff_c(M,N)$ action on this orbit admits local smooth sections too.
	In particular, there exists an open neighborhood $V$ of $\rho$ in $\Diff_c(M,N)\cdot\rho$ and a smooth map $v\colon V\to\Diff_c(M,N)$ such that $v(\tilde\rho)\cdot\rho=\tilde\rho$ for all $\tilde\rho\in V$.

	Let $L$ denote the preimage of $\Diff_c(M,N)\cdot\rho$ under the linear map
	\begin{equation}\label{E:ioN}
		\iota_N^*\colon\Gamma\bigl(|\Lambda|_N\otimes T^*M|_N\bigr)\to\Omega^1(N;|\Lambda|_N).
	\end{equation}
	Since this map admits a bounded linear right inverse, it may be considered as fiber bundle whose fibers are affine spaces over $\ker(\iota_N^*)=\Gamma(|\Lambda|_N\otimes\Ann TN)$.
	In particular, $L$ is a splitting smooth submanifold in $\Gamma(|\Lambda|_N\otimes T^*M|_N)$ which is stable under $\Diff_c(M,N)$ and contains $\gamma$.
	It remains to show that the $\Diff_c(M,N)$ action on $L$ admits local smooth sections.
	It suffices to construct a local smooth right inverse in a neighborhood of $\gamma$.
	To this end, let 
	\[
		F=(\iota_N^*)^{-1}(\rho)=\gamma+\Gamma(|\Lambda|_N\otimes\Ann TN)
	\]
	denote the fiber of \eqref{E:ioN} over $\rho$.
	Choose $Y\in\mathfrak X_c(M)$ such that $Y$ is tangential to $N$ 
	and such that $\nu=\gamma(Y)$ is a (nowhere vanishing) volume density on $N$.
	For each $\tilde\gamma\in F$ there exists $Z_{\tilde\gamma}\in\mathfrak X_c(M)$ depending smoothly on $\tilde\gamma$ such that 
	$Z_{\tilde\gamma}|_N=0$ and $D_xZ_{\tilde\gamma}=Y_x\otimes\frac{\tilde\gamma_x-\gamma_x}{\nu_x}\in\operatorname{End}(T_xM)$ 
	for all $x\in N$.
	Let $g_{\tilde\gamma}\in\Diff_c(M)$ denote the flow of $Z_{\tilde\gamma}$ at time one.
	Clearly, $g_{\tilde\gamma}|_N=\id$.
	Since $D_xZ_{\tilde\gamma}$ has strict upper triangular form with respect to the decomposition $T_xM=T_xN\oplus\ker\gamma_x$, 
	we obtain $T_xg_{\tilde\gamma}=\id+Y_x\otimes\frac{\tilde\gamma_x-\gamma_x}{\nu_x}$ 
	and thus $(T_xg_{\tilde\gamma})^*\frac{\gamma_x}{\nu_x}=\frac{\tilde\gamma_x}{\nu_x}$ for all $x\in N$.
	We conclude that $w\colon F\to\Diff_c(M,\iota_N)$, $w(\tilde\gamma)=g_{\tilde\gamma}^{-1}$
	is a global smooth right inverse for the $\Diff_c(M,\iota_N)$ action on $F$.
	More explicitly, $w(\tilde\gamma)\cdot\gamma=\tilde\gamma$ for all $\tilde\gamma\in F$.

	If $\tilde\gamma\in L$ and $\iota_N^*\tilde\gamma\in V$, then $v(\iota_N^*\tilde\gamma)^{-1}\cdot\tilde\gamma\in F$.
	Hence, $U=(\iota_N^*)^{-1}(V)$ is an open neighborhood of $\gamma$ in $L$ and the smooth map
	\[
		u\colon U\to\Diff_c(M,N),\qquad u(\tilde\gamma)=v(\iota_N^*\tilde\gamma)w\bigl(v(\iota_N^*\tilde\gamma)^{-1}\cdot\tilde\gamma\bigr)
	\]
	satisfies $u(\tilde\gamma)\cdot\gamma=\tilde\gamma$ for all $\tilde\gamma\in U$.
	Hence, $u$ is the desired local right inverse.
\end{proof}

Suppose $\dim S=1$ and $\rho\in\Omega^1(S;|\Lambda|_S)$ is nowhere vanishing.
Then
\[
	\Gr^{\aug}_{S,\rho}(M)=\bigl\{(N,\ga)\in\Gr^{\aug}(M):(N,\io_N^*\ga)\cong(S,\rho)\bigr\}
\]
is a splitting smooth subbundle of $\Gr^{\aug}_S(M)\to\Gr_S(M)$ in view of the trivializations in \eqref{E:triv.aug} and the fact that \eqref{E:e} is a splitting submanifold of $\mathfrak A(N)$.
Let $(N,\gamma)\in\Gr^{\aug}_{S,\rho}(M)$.
Combining Propositions~\ref{P:DiffcM.good} and \ref{P:aug.dens} with Lemma~\ref{L:aug2}, 
we obtain a Fr\'echet manifold structure on the augmented Grassmannian $\G=G\cdot(N,\gamma)$ 
such that the $G$ action is smooth and admits local smooth sections.
Moreover, $\mathcal G$ is open and closed in $\Gr^{\aug}_{S,\rho}(M)$.
Now, for every $(N,\gamma)\in\Gr^{\aug}_{S,\rho}(M)$ the coadjoint orbit of $J(N,\ga)$ is a good coadjoint orbit of $\Diff_c(M)$, cf.~\eqref{E:J.Diffc}.
More precisely, Lemma~\ref{lemac} yields:

\begin{theorem}[{\cite[Theorem~2.3]{HV24}}]\label{T:non0}
	Suppose $\dim S=1$, assume $\rho\in\Omega^1(S;|\Lambda|_S)$ is nowhere vanishing, $(N,\gamma)\in\Gr^{\aug}_{S,\rho}(M)$ 
	and consider the augmented Grassmannian $\G=\Diff_c(M)\cdot(N,\ga)$, open and closed $\Gr^{\aug}_{S,\rho}(M)$. 
	Then its image $J(\G)$ under the map $J$ in~\eqref{E:J.Diffc} is a good coadjoint orbit of $\Diff_c(M)$, 
	and the (formal) pullback  of the KKS symplectic form, 
	denoted by $\om=J^*\om_{\KKS}$,
	is a $\Diff_c(M)$ equivariant smooth (weakly) symplectic form on  $\G$ characterized by
	\[
		\omega_{(\tilde N,\tilde\gamma)}(\zeta_X,\zeta_Y)=\int_{\tilde N}\tilde\gamma([X,Y]),
	\]
	for all $(\tilde N,\tilde\gamma)\in\G$ and $X,Y\in\mathfrak X_c(M)$.
	Furthermore, $J$ restricts to an (equivariant) moment map for the (Hamiltonian) action of $\Diff_c(M)$ on $\G$.
\end{theorem}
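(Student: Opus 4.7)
The plan is to reduce the theorem to a direct application of Lemma~\ref{lemac} for the action of the regular convenient Lie group $G=\Diff_c(M)$ on the orbit $\G=G\cdot(N,\gamma)$, together with the map $J$ in~\eqref{E:J.Diffc}. All the nontrivial groundwork has already been assembled in the paragraph preceding the theorem statement.

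First, I would invoke the smooth structure on $\G$: Proposition~\ref{P:DiffcM.good} yields that $G$ has good orbit and good isotropy at $N$, while Proposition~\ref{P:aug.dens} provides that $G_N$ has good orbit at $\gamma$. Feeding these into Lemma~\ref{L:aug2} gives that $G$ has good orbit at $(N,\gamma)$, so $\G$ is an initial splitting smooth submanifold of $\Gr^{\aug}_{S,\rho}(M)$ on which the $G$ action is smooth, transitive, and admits local smooth sections. Since the $G_N$ orbit of $\gamma$ is open and closed in the space~\eqref{E:e}, the fiber bundle in Lemma~\ref{L:aug2} shows that $\G$ is open and closed in $\Gr^{\aug}_{S,\rho}(M)$.

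Next, I would verify the remaining hypotheses of Lemma~\ref{lemac}: the map $J$ in~\eqref{E:J.Diffc} is injective (localizing test vector fields $X\in\mathfrak X_c(M)$ near arbitrary points of $\tilde N$ recovers both the underlying submanifold and the augmenting section pointwise) and $\Diff(M)$ equivariant by naturality of the density and cotangent functors; in particular, it is $G$ equivariant. Lemma~\ref{lemac} therefore applies and produces a $G$ equivariant smooth weakly symplectic form $\omega=J^*\omega_{\KKS}$ on $\G$, together with the assertion that $J$ is an equivariant moment map. The characterizing formula~\eqref{omeg} specializes via
\[
\langle J(\tilde N,\tilde\gamma),[X,Y]\rangle=\int_{\tilde N}\tilde\gamma([X,Y])
\]
to the displayed expression for $\omega_{(\tilde N,\tilde\gamma)}(\zeta_X,\zeta_Y)$.

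Finally, to conclude that $J(\G)\subseteq\mathfrak g^*$ is a \emph{good} coadjoint orbit, I would transport the smooth structure on $\G$ to its image along the $G$ equivariant bijection $J$; the coadjoint action on $J(\G)$ then inherits smoothness and local smooth sections from the corresponding properties on $\G$, and the smooth structure is uniquely determined by Remark~\ref{R:unique}. I do not anticipate any serious obstacle: the theorem is essentially a bookkeeping consequence of the framework developed in Section~\ref{SS:aug} together with Propositions~\ref{P:DiffcM.good} and~\ref{P:aug.dens}, and the only step that required genuine work—good orbit of $G_N$ at $\gamma$ when $\dim S=1$—has already been established.
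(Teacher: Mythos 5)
Your proposal is correct and follows essentially the same route as the paper: the smooth structure on $\G$ comes from combining Propositions~\ref{P:DiffcM.good} and~\ref{P:aug.dens} with Lemma~\ref{L:aug2}, and the symplectic form, moment map property, and goodness of the coadjoint orbit then follow by applying Lemma~\ref{lemac} to the injective equivariant map $J$ in~\eqref{E:J.Diffc}. Your added remarks on the injectivity of $J$ and on transporting the structure to $J(\G)$ via Remark~\ref{R:unique} only make explicit what the paper leaves implicit.
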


The symplectic manifold $(\G,\om)$ can be obtained by reduction at a nowhere zero element of $\Om^1(S,|\La|_S)$,
for the Hamiltonian $\Diff(S)$ action on (an open subset of) the regular cotangent bundle of $\Emb(S,M)$,
whose moment map is the right leg of the Holm--Marsden EPDiff dual pair \cite[Theorem~2.3]{HV24}.


\subsection{The group of contact diffeomorphisms}
	Suppose $\xi\subseteq TM$ is a contact hyperplane distribution on $M$ and let $L:=TM/\xi$ denote the contact  line bundle.
	The canonical action of the contact diffeomorphism group $G=\Diff_c(M,\xi)$ on $T^*M$ restricts to an action on the dual line bundle $L^*\subseteq T^*M$.
	For $N\in\Gr(M)$ we let
	\[
		\mathfrak A(N):=\Gamma\bigl((|\Lambda|_N\otimes L|^*_N)\setminus N\bigr)
	\]
	denote the space of all nowhere vanishing smooth sections of the vector bundle $|\Lambda|_N\otimes L|^*_N$.
	We equip $\mathfrak A(N)$ with the Fr\'echet manifold structure it inherits as an open subset in the Fr\'echet space of all sections with the $C^\infty$ topology.
	For $g\in G$ we let $\mathfrak A(g)\colon\mathfrak A(N)\to\mathfrak A(g\cdot N)$ denote the natural map induced by $g$.
	Since this is the restriction of a bounded linear map between Fr\'echet spaces, $\mathfrak A(g)$ is smooth.
	Clearly, $\mathfrak A$ defines a functor from $\cataug(M,G)$ into the category of Fr\'echet manifolds.

	The augmented Grassmannian associated to this functor $\mathfrak A$ will be denoted by $\Gr^{\aug}(M,\xi)$.
	One readily shows \cite[Proposition~4.2(a)]{HV24} that the map
	\begin{equation}\label{E:J.contact}
		J\colon\Gr^{\aug}(M,\xi)\to\mathfrak g^*,\qquad\langle J(N,\gamma),X\rangle=\int_N\gamma(X),\quad X\in\mathfrak g
	\end{equation}
	is injective and $\Diff(M,\xi)$ equivariant.
	Here $\mathfrak g=\mathfrak X_c(M,\xi)$ denotes the Lie algebra of compactly supported contact vector fields on $M$.
	
	\begin{proposition}\label{P:aug.contact.orbit}
		$\mathfrak A$ is an augmentation functor for the $G$ action on $M$.
	\end{proposition}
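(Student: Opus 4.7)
The plan is to reduce the claim to the analogous Proposition~\ref{P:aug.full.orbit} for the full diffeomorphism group, exploiting that $G=\Diff_c(M,\xi)$ is a Lie subgroup of $\Diff_c(M)$ by \cite[Theorem~43.19]{KM97} and that the contact condition renders $L^*\subseteq T^*M$ a $G$-invariant vector subbundle.

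Functoriality of $\mathfrak A$ is immediate: for $g\in G$ mapping $N_1$ diffeomorphically onto $N_2$, the density functor yields a natural isomorphism $|\Lambda|_{N_1}\to|\Lambda|_{N_2}$ covering $g|_{N_1}$, and preservation of $\xi$ ensures that the natural cotangent action restricts to a bundle isomorphism $L^*|_{N_1}\to L^*|_{N_2}$; tensoring these and combining with $(g|_{N_1})^{-1}$ defines the pushforward $\mathfrak A(g)\colon\mathfrak A(N_1)\to\mathfrak A(N_2)$, with composition laws inherited from those of the underlying bundle functors. The openness of the nowhere-vanishing condition is preserved by these bundle isomorphisms.

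To verify the smoothness assumption, fix $N\in\Gr(M)$ and let $g\colon Z\to G_N$ be any map from a Fr\'echet manifold $Z$ that is smooth when considered as a map into $G$. Composing with the smooth inclusion $G\hookrightarrow\Diff_c(M)$, one obtains a smooth map into $\Diff_c(M)$ whose image lies in the isotropy subgroup $\Diff_c(M,N)$. The argument in the proof of Proposition~\ref{P:aug.full.orbit} then applies verbatim to show that the induced map
\[
	Z\times\Gamma\bigl(|\Lambda|_N\otimes T^*M|_N\bigr)\to\Gamma\bigl(|\Lambda|_N\otimes T^*M|_N\bigr),\qquad(z,\tilde\gamma)\mapsto g(z)_*\tilde\gamma,
\]
is smooth. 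Since every element of $G$ preserves $L^*$ as a subbundle of $T^*M$, this action restricts to a smooth action on the closed subspace $\Gamma(|\Lambda|_N\otimes L^*|_N)$, and hence on the open subset $\mathfrak A(N)$ of nowhere-vanishing sections, which is the required smoothness property.

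The only point requiring some care, rather than a genuine obstacle, is the careful handling of the \emph{smooth into $G$} formulation of the augmentation axiom: since the authors deliberately avoid presuming a Lie group structure on $G_N=\Diff_c(M,\xi)_N$ (cf.\ the remark after Proposition~\ref{P:contact.good}), one cannot directly quote smoothness of a Lie group action on a Fr\'echet manifold, and must instead test smoothness against arbitrary families $g\colon Z\to G$ whose image happens to lie in $G_N$. This is precisely what the reduction to the $\Diff_c(M)$ case accomplishes.
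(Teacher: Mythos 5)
Your proposal is correct, and it reaches the conclusion by a slightly different mechanism than the paper. Both proofs share the same opening move: a map $g\colon Z\to G_N$ smooth into $G=\Diff_c(M,\xi)$ is smooth into $\Diff_c(M)$, hence into $\Diff_c(M,N)$ by initiality of the latter (\cite[Lemma~2.1(b)]{HV20}). They diverge in how they handle the fact that a \emph{non-contact} element of $\Diff_c(M,N)$ does not preserve the subbundle $L^*|_N\subseteq T^*M|_N$. The paper's proof stays inside the target space $\Gamma(|\Lambda|_N\otimes L|_N^*)$ from the start: it chooses a fiberwise linear projection $\varpi\colon T^*M|_N\to L|_N^*$ and writes down the explicitly smooth map $(f,\phi)\mapsto\bigl(|\Lambda|(f|_N)\otimes(\varpi\circ(T^*f)|_N\circ\iota)\bigr)\circ\phi\circ f^{-1}$ on all of $\Diff_c(M,N)\times\Gamma(|\Lambda|_N\otimes L|_N^*)$; for contact $f$ the inserted projection is invisible, so this smooth map restricts to the $G_N$ action. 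You instead reuse Proposition~\ref{P:aug.full.orbit} verbatim to get smoothness of the action on the ambient space $\Gamma(|\Lambda|_N\otimes T^*M|_N)$ and then corestrict, using that contact diffeomorphisms preserve $L^*$. That corestriction step is the one place in your argument needing justification: you must know that a smooth map with values in the closed linear subspace $\Gamma(|\Lambda|_N\otimes L^*|_N)$ is smooth into that subspace. This is standard in the convenient setting — and here the subspace is even complemented, the bounded projection on sections being induced by exactly the same fiberwise projection $\varpi$ the paper uses — so there is no gap; the two proofs use the same ingredients, packaged differently. What your route buys is economy and emphasis on the functorial pattern (the contact case as a restriction of the $\Diff_c(M)$ case); what the paper's route buys is a self-contained formula that never leaves the correct section space and hence needs no initiality argument for the corestriction.
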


	\begin{proof}
		It remains to check the smoothness assumption.
		To this end consider $N\in\Gr(M)$ and suppose $g\colon Z\to G_N$ is smooth into $G$.
		Since $\Diff_c(M,N)$ is an (initial) submanifold of $\Diff_c(M)$, cf.~\cite[Lemma~2.1(b)]{HV20}, the map $g$ is smooth into $\Diff_c(M,N)$ too.
		Choose a fiberwise linear projection $\varpi\colon T^*M|_N\to L|_N^*$ and let $\iota\colon L^*|_N\to T^*M|_N$ denote the inclusion.
		The map 
		\begin{align*}
			\Diff_c(M,N)\times\Gamma(|\Lambda|_N\otimes L|_N^*)&\to\Gamma(|\Lambda|_N\otimes L|_N^*)
			\\
			(f,\phi)&\mapsto\Bigl(|\Lambda|(f|_N)\otimes\bigl(\varpi\circ(T^*f)|_N\circ\iota\bigr)\Bigr)\circ\phi\circ f^{-1}
		\end{align*}
		is smooth and restricts to the $G_N$ action on $\mathfrak A(N)$.
		Hence, $Z\times\mathfrak A(N)\to\mathfrak A(N)$, $(z,\gamma)\mapsto\mathfrak A(g(z))\cdot\gamma$ is smooth.
	\end{proof}

	Combining Proposition~\ref{P:aug.contact.orbit} with Lemma~\ref{L:aug1} we get a smooth structure on $\Gr^{\aug}(M,\xi)|_{G\cdot N}$ for each $N\in\Gr^{\iso}(M,\xi)$.
	Since $G\cdot N$ is open and closed in $\Gr^{\iso}(M,\xi)$ this yields a smooth structure on the disjoint union, $\Gr^{\aug,\iso}(M,\xi)$, such that the forgetful map $\Gr^{\aug,\iso}(M,\xi)\to\Gr^{\iso}(M,\xi)$ is a smooth fiber bundle.

	Form \cite[Lemma~3.4]{HV22} we obtain, cf.~\cite[Theorem~4.10(a)]{HV22}:

	\begin{proposition}\label{P:aug.contact.iso}
		Let $G=\Diff_c(M,\xi)$ and $N\in\Gr^{\iso}(M,\xi)$.
		Then $G_N$ has good (open) orbit at each $\gamma\in\mathfrak A(N)$.
	\end{proposition}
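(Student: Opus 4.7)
The plan is to invoke Lemma~\ref{L:loc.sec} for the smooth action of the regular Lie group $G=\Diff_c(M,\xi)$, regular by \cite[Theorem~43.19]{KM97}, on $\mathfrak A(N)$ through its isotropy subgroup $G_N$. Fix $\gamma_0\in\mathfrak A(N)$. I would produce an open neighborhood $U$ of $\gamma_0$ in $\mathfrak A(N)$ together with a smooth map
\[
	\sigma\colon T\mathfrak A(N)|_U\to\mathfrak X_c(M,\xi)
\]
whose values are tangent to $N$ and satisfy $\zeta_{\sigma(\gamma,\phi)}(\gamma)=\phi$ for all $\gamma\in U$ and $\phi\in T_\gamma\mathfrak A(N)=\Gamma(|\Lambda|_N\otimes L^*|_N)$, where $\zeta$ denotes the infinitesimal $G$-action on $\mathfrak A(N)$.

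The construction of such a $\sigma$ is exactly what \cite[Lemma~3.4]{HV22} provides: using the contact-isotropic tubular neighborhood theorem together with the fact that $N$ is isotropic in $\xi$, every prescribed infinitesimal variation $\phi$ of a nowhere vanishing section $\gamma\in\mathfrak A(N)$ is realized as the linearized action of a compactly supported contact vector field which is tangent to $N$; moreover this realization can be made linear and bounded in $\phi$ and smoothly dependent on $\gamma$ in a neighborhood of $\gamma_0$. The isotropy hypothesis on $N$ is essential: it is precisely what allows the contact Hamiltonian producing $\phi$ to be chosen so that the associated contact vector field preserves $N$ (cf.\ the construction in the proof of Proposition~\ref{P:contact.good}).

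Given $\sigma$, Lemma~\ref{L:loc.sec} yields a local smooth right inverse $u\colon U\to G$ for the orbit map $g\mapsto\mathfrak A(g)\cdot\gamma_0$, with $u(\gamma_0)=\id$. Because $\sigma$ takes values in contact vector fields tangent to $N$, the right evolution producing $u(\gamma)$ stays inside $G_N$, while remaining smooth as a map into $G$. Consequently $U\subseteq G_N\cdot\gamma_0$, so the orbit is open in $\mathfrak A(N)$, hence a (trivially splitting, initial) smooth submanifold, and the map $u$ witnesses that the $G_N$-action on this orbit admits local smooth sections in the sense required of the subgroup $G_N\subseteq G$. This establishes the good (open) orbit property at $\gamma_0$, and since $\gamma_0$ was arbitrary, at each point of $\mathfrak A(N)$.

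The main obstacle lies in the first step, the simultaneous smoothness of $\sigma$ in $(\gamma,\phi)$ combined with the tangency of $\sigma(\gamma,\phi)$ to $N$. This is a parametric strengthening of infinitesimal transitivity and is precisely the content of \cite[Lemma~3.4]{HV22}; once this input is granted, Lemma~\ref{L:loc.sec} mechanically converts the infinitesimal statement into the desired local smooth section, and openness of the orbit follows immediately.
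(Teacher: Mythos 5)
Your proposal is correct and follows essentially the same route as the paper: the paper's entire ``proof'' is the citation of \cite[Lemma~3.4]{HV22} (cf.\ \cite[Theorem~4.10(a)]{HV22}), and your argument simply makes explicit the standard mechanism by which that lemma yields the conclusion, namely feeding the parametric infinitesimal statement into Lemma~\ref{L:loc.sec} (in the subgroup sense of Remark~\ref{R:enlarge}, since $G_N$ is not known to be a Lie group) so that the resulting evolutions stay in $G_N$ while remaining smooth into $G$, giving openness of the orbit and local smooth sections.
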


	Suppose $(N,\gamma)\in\Gr^{\aug,\iso}(M,\xi)$.
	Combining Propositions~\ref{P:contact.good} and \ref{P:aug.contact.iso} with Lemma~\ref{L:aug2},
	we obtain a Fr\'echet manifold structure on the augmented Grassmannian $\G=G\cdot(N,\gamma)$,
	such that the $G$ action is smooth and admits local smooth sections.
	Moreover, $\mathcal G$ is open and closed in $\Gr^{\aug,\iso}(M,\xi)$.

	Now, for any $(N,\gamma)\in\Gr^{\aug,\iso}(M,\xi)$ the coadjoint orbit of $J(N,\ga)$ is a good coadjoint orbit of $\Diff_c(M,\xi)$, cf.~\eqref{E:J.contact}.
	More precisely, using Lemma~\ref{lemac} we obtain:

	\begin{theorem}[{\cite[Theorem~4.10]{HV22}}]\label{T:contact}
		Let $(M,\xi)$ be a contact manifold, suppose $(N,\gamma)\in\Gr^{\aug,\iso}(M,\xi)$ 
		and consider the augmented nonlinear Grassmannian $\G=\Diff_c(M,\xi)\cdot(N,\gamma)$, open and closed in $\Gr^{\aug,\iso}(M,\xi)$.
		Then its image $J(\G)$ under the map $J$ in~\eqref{E:J.contact} is a good coadjoint orbit of $\Diff_c(M,\xi)$ 
		and the (formal) pullback of the KKS symplectic form, denoted by $\omega=J^*\om_{\KKS}$, 
		is a $\Diff_c(M,\xi)$ equivariant smooth (weakly) symplectic form on $\G$, characterized by
		\[
			\omega_{(\tilde N,\tilde\gamma)}(\zeta_X,\zeta_Y)=\int_{\tilde N}\tilde\gamma([X,Y])
		\]
		for all $(\tilde N,\tilde\gamma)\in\G$ and $X,Y\in\mathfrak X_c(M,\xi)$.
		Furthermore, $J$ restricts to an (equivariant) moment map for the (Hamiltonian) action of $\Diff_c(M,\xi)$ on $\G$.
	\end{theorem}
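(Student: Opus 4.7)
The plan is to assemble the theorem directly from the pieces established earlier, applying Lemma~\ref{lemac} to $Q=\mathcal G$ equipped with the smooth $G=\Diff_c(M,\xi)$ action and the injective equivariant map $J$ of~\eqref{E:J.contact}. The smooth structure on $\mathcal G$ has already been produced in the paragraph preceding the theorem: Proposition~\ref{P:contact.good} furnishes good orbit and good extension for $G$ at any isotropic $N$, Proposition~\ref{P:aug.contact.iso} furnishes good orbit for $G_N$ at any $\gamma\in\mathfrak A(N)$, and Lemma~\ref{L:aug2} combines these to realize $\mathcal G=G\cdot(N,\gamma)$ as an initial splitting submanifold of $\Gr^{\aug,\iso}(M,\xi)$ (in fact open and closed there) on which the smooth $G$ action admits local smooth sections.

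With this in hand, the first step is to observe that $G=\Diff_c(M,\xi)$ is a regular convenient Lie group (\cite[Theorem~43.19]{KM97}, or the general remark preceding Lemma~\ref{L:loc.sec} that every known Lie group of diffeomorphisms is regular) and that the restriction of $J$ to $\mathcal G$ remains $G$-equivariant and injective, so all hypotheses of Lemma~\ref{lemac} are satisfied. Applying that lemma yields at once that $\omega:=J^*\omega_{\KKS}$ is a smooth, $G$-equivariant, weakly nondegenerate symplectic form on $\mathcal G$, characterized by
\[
	\omega_q(\zeta_X(q),\zeta_Y(q))=\langle J(q),[X,Y]\rangle,
\]
and that $J$ is an equivariant moment map for the Hamiltonian $G$ action on $\mathcal G$. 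Substituting the explicit formula for $J$ from~\eqref{E:J.contact} at $q=(\tilde N,\tilde\gamma)$ turns the right-hand side into $\int_{\tilde N}\tilde\gamma([X,Y])$, which is precisely the stated formula for $\omega$.

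It remains to transfer this picture to the image. Since $\mathcal G$ is a single $G$-orbit and $J$ is $G$-equivariant, $J(\mathcal G)$ is a single coadjoint orbit of $\Diff_c(M,\xi)$. The bijection $\mathcal G\to J(\mathcal G)$ is $G$-equivariant, so Remark~\ref{R:unique} permits one to push the smooth structure of $\mathcal G$ across this bijection, promoting it to a diffeomorphism; the $G$ action on $J(\mathcal G)$ inherits smoothness and local smooth sections from the action on $\mathcal G$, making $J(\mathcal G)$ a good coadjoint orbit (with the unique smooth structure from Remark~\ref{R:unique}).

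In short, the proof is an assembly rather than a computation: the only step that could be considered substantive is checking that $J|_{\mathcal G}$ still satisfies the hypotheses of Lemma~\ref{lemac}, and this is immediate from the equivariance and injectivity of $J$ on all of $\Gr^{\aug}(M,\xi)$ established in \cite[Proposition~4.2(a)]{HV24}. There is no analytic obstacle left to overcome, as all the difficult submanifold and transitivity results were absorbed into Propositions~\ref{P:contact.good} and~\ref{P:aug.contact.iso}.
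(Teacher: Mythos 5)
Your proposal is correct and follows essentially the same route as the paper: the smooth structure on $\G$ is assembled from Propositions~\ref{P:contact.good} and \ref{P:aug.contact.iso} via Lemma~\ref{L:aug2}, and then Lemma~\ref{lemac} (with the injective equivariant $J$ of~\eqref{E:J.contact} and regularity of $\Diff_c(M,\xi)$) delivers the good coadjoint orbit, the symplectic form with the stated characterization, and the moment map property. Your extra care in transferring the structure to $J(\G)$ via Remark~\ref{R:unique} is exactly what the paper leaves implicit.
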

	
	By Remark~\ref{R:unique} the smooth structure on $\G$ coincides with the one considered in \cite[Section~4.3]{HV22}.
	The symplectic manifold $(\G,\om)$ can be obtained by symplectic reduction at zero
	on the right leg of the EPContact dual pair \cite[Theorem~4.12]{HV22}. This leg is
	the moment map for the $\Diff(S)$ action on the space of fiberwise linear embeddings $|\La|^*_S\to L^*$.

	We expect that $G$ also has good isotropy at $(N,\gamma)\in\Gr^{\aug,\iso}(M,\xi)$, cf.~\cite[Remark~4.12]{HV22}.


\subsection{The group of exact volume preserving diffeomorphisms}\label{S:diffvol}
Next we present  coadjoint orbits of the group $G=\Diff_{c,\ex}(M)$, that can be parametrized by augmented Grassmannians. 
Coadjoint orbits of $G$ parametrized by decorated Grassmannians already appeared in Section \ref{S:isovol}.
We know, by Proposition~\ref{P:aug.full.orbit}, that 
\[
		\mathfrak A(N):=\Gamma\bigl((|\Lambda|_N\otimes T^*M|_N)\setminus N\bigr), \quad N\in\Gr(M),
\]
is an augmentation functor for the $\Diff_c(M)$ action. By restriction, $\A$ is an augmentation functor also for the $G$ action.

\subsubsection*{Codimension at least two}
In this case, \cite[Lemma~3.1]{HV24} shows that the infinitesimal orbits of $\Diff_c(M)$ and $G=\Diff_{c,\ex}(M)$ in $\Gr_{\codim\geq2}^{\aug}(M)$ coincide.
Correspondingly, we have the following analogue of Proposition~\ref{P:aug.full.iso}:

\begin{proposition}\label{P:Diff.ex.good.O}
	Let $G=\Diff_{c,\ex}(M)$.
	If $N\in\Gr_{\codim\geq2}(M)$ and $\gamma\in\Gamma\bigl((|\Lambda|_N\otimes\Ann TN)\setminus N\bigr)$, then $G_N$ has good orbit at $\gamma$.
	Moreover, this orbit is open and closed in $\Ga((|\Lambda|_N\otimes\Ann TN)\setminus N)$.
\end{proposition}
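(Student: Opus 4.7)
The plan is to parallel the proof of Proposition~\ref{P:aug.full.iso} for $\Diff_c(M)$, using the cited result \cite[Lemma~3.1]{HV24} as the key new input: for $N$ of codimension at least two, the infinitesimal orbits of $\Diff_c(M)$ and $G=\Diff_{c,\ex}(M)$ on the augmented Grassmannian coincide. Concretely, at each $\tilde\gamma\in\Ga((|\La|_N\otimes\Ann TN)\setminus N)$, every tangent vector to the $\Diff_c(M)_N$-orbit is realized as a fundamental vector field $\zeta_Z(\tilde\gamma)$ for some $Z$ in the Lie algebra $\X_{c,\ex}(M;N)$ of exact divergence free vector fields tangent to $N$.

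First, I would record that by Proposition~\ref{P:aug.full.iso}, the orbit $\Diff_c(M)_N\cdot\gamma$ is an initial splitting smooth submanifold which is in fact open and closed in $\Ga((|\La|_N\otimes\Ann TN)\setminus N)$, and the $\Diff_c(M)_N$-action on it admits local smooth sections. Since $G_N\subseteq\Diff_c(M)_N$, the orbit $G_N\cdot\gamma$ is contained in $\Diff_c(M)_N\cdot\gamma$, so it suffices to show that this inclusion is open: then $G_N\cdot\gamma$ is a union of connected components of $\Diff_c(M)_N\cdot\gamma$, hence open and closed in the ambient section space, and the smooth structure is inherited automatically.

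The heart of the argument is to apply Lemma~\ref{L:loc.sec}, in the extended form of Remark~\ref{R:enlarge}, so that the section is required to be smooth only as a map into the regular Lie group $\Diff_c(M)$. For this, I would construct a smooth map
\[
	\sigma\colon T(\Diff_c(M)_N\cdot\gamma)|_U\to\X_{c,\ex}(M;N)
\]
on a neighborhood $U$ of $\gamma$, such that $\zeta_{\sigma(\tilde\gamma,X)}(\tilde\gamma)=X$ for all $(\tilde\gamma,X)$. Existence pointwise is exactly the content of \cite[Lemma~3.1]{HV24}. To get smoothness in $(\tilde\gamma,X)$, I would follow the recipe of that lemma: extend the prescribed infinitesimal variation to a vector field on a tubular neighborhood of $N$, correct it by a Moser-type term so that it becomes divergence free near $N$, and then produce a compactly supported potential form $\alpha$ with $i_Z\mu=d\alpha$ using a fixed smooth right inverse to the de Rham differential (combined with a cut-off). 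Each of these steps is a bounded linear operation depending smoothly on the parameters, so the composite $\sigma$ is smooth into $\X_{c,\ex}(M)$. Lemma~\ref{L:loc.sec} (in the extended setting) then yields local smooth sections for the $G_N$-action on $\Diff_c(M)_N\cdot\gamma$, forcing $G_N$-orbits there to be open as required.

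The main obstacle is step~(i): producing the exact divergence free tangential vector field $Z$ realizing a given infinitesimal change of $\tilde\gamma$ in a manner that depends smoothly, and linearly with bounded constants, on $(\tilde\gamma,X)$. This is where the codimension hypothesis is essential, as it allows one to solve the relevant $d\alpha=i_Z\mu$ equation near $N$ with $\alpha|_N=0$; the relative Poincar\'e lemma with parameters, together with smooth splittings of $d$ on a tubular neighborhood, makes this explicit. Once $\sigma$ is in hand, the remaining assertions (openness and closedness of $G_N\cdot\gamma$ in $\Ga((|\La|_N\otimes\Ann TN)\setminus N)$, and the splitting initial submanifold property) follow from Proposition~\ref{P:aug.full.iso} essentially for free.
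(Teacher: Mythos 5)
Your high-level strategy differs from the paper's and is, in outline, viable: you argue at the Lie algebra level, reducing the claim to openness of the $G_N$-orbits inside the $\Diff_c(M;N)$-orbit supplied by Proposition~\ref{P:aug.full.iso}, and then invoke Lemma~\ref{L:loc.sec} (via Remark~\ref{R:enlarge}) with a section $\sigma$ valued in $\X_{c,\ex}(M;N)$. The paper instead argues at the group level: it factors the action of $\Diff_{c,\ex}(M;\io_N)$ through the homomorphism $g\mapsto Tg|_N$ onto $\Ga(\SAut(E))$, $E=TM|_N/TN$, lifts sections through a tubular-neighborhood right inverse of this homomorphism, and writes an explicit local smooth section $\beta\mapsto(A_\beta^*)^{-1}$ of the $\Ga(\SAut(E))$ action on $\Ga(E^*\setminus N)$, where $A_\beta$ is normalized to have fiberwise determinant one by the factor $\bigl(\tfrac{\alpha(\sharp\beta)}{|\alpha|^2}\bigr)^{\frac1{1-\rk(E)}}$ on $\ker\alpha$. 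The problem is that your proposal omits precisely the step that this determinant-one normalization addresses, and it misplaces the role of the codimension hypothesis.

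Concretely: a vertical variation $X$ of $\tilde\ga$ is realized by a vector field $Z$ vanishing along $N$ only through the endomorphism $A\in\Ga(\operatorname{End}(E))$ induced by the derivative of $Z$ along $N$, acting on the $\Ann TN=E^*$ factor; divergence-freeness of $Z$ forces $\tr A=0$ along $N$. So the heart of the matter is pointwise linear algebra: given nowhere zero $\tilde\ga$ and arbitrary $X$, produce a \emph{trace-free} $A$ with $\tilde\ga\circ A=X$, smoothly in $(\tilde\ga,X)$. This is exactly where $\rk(E)\geq2$ is indispensable --- one corrects the trace by an endomorphism with image in $\ker\tilde\ga$, which is impossible when $\rk(E)=1$; correspondingly the proposition is false in codimension one, which is why the paper passes to the quotient augmentation $\Gr^{[\aug]}(M)$ there. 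Your recipe --- extend, Moser-correct, take a compactly supported potential --- is the recipe for the good-\emph{extension} property (Proposition~\ref{P:Diffmu.good.ext}), i.e.\ for prescribing the $0$-jet $Z|_N$; it never prescribes the normal derivative, so as stated it cannot realize any given variation of $\tilde\ga$. Moreover, your claim that the codimension hypothesis is needed ``to solve $d\alpha=i_Z\mu$ near $N$ with $\alpha|_N=0$'' is incorrect: for $Z$ vanishing along $N$ the closed form $i_Z\mu$ pulls back to zero on $N$, and since the tubular neighborhood retracts onto $N$, this solvability holds in every positive codimension. Finally, even granting the missing linear-algebra step, your Moser correction must be arranged to vanish to \emph{second} order along $N$ (e.g.\ by the order-gaining property of the radial homotopy operator); a correction merely vanishing along $N$ can change the normal derivative and destroy the variation being realized --- a point your sketch does not address.
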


\begin{proof}
	The action of $\Diff_{c,\ex}(M;\iota_N)$ on $\Gamma(\Ann TN\setminus N)$ factors through
	\[
		\Diff_{c,\ex}(M;\iota_N)\to\Gamma\bigl(\SAut(TM|_N/TN)\bigr),\qquad g\mapsto Tg|_N.
	\]
	Here $\SAut(TM|_N/TN)$ denotes the bundle of fiberwise linear isomorphisms with determinant one.
	Using a tubular neighborhood of $N$ in $M$, one can show that this homomorphism admits a local smooth right inverse near the identity.
	Hence, denoting the normal bundle by $E=TM|_N/TN$, 
	it suffices to show that the action of $\Gamma(\SAut(E))$ on $\Gamma(E^*\setminus N)$ admits local smooth sections.
	To this end, we proceed as in Proposition~\ref{P:aug.full.iso}, 
	except that we modify the definition of the vector bundle homomorphism $A_\beta\colon E\to E$ in order to get $\det(A_\beta)=1$.
	More precisely, given $\alpha,\beta\in\Gamma(E^*)$ with $\alpha(\sharp\beta)\neq0$, we define $A_\beta$ by 
	$A_\beta(\sharp\alpha)=\sharp\beta$ and $A_\beta|_{\ker\alpha}=\bigl(\frac{\alpha(\sharp\beta)}{|\alpha|^2}\bigr)^{\frac1{1-\rk(E)}}\cdot\id$, where $\rk E\ge 2$ by assumption.
	Then $u(\beta)=(A_\beta^*)^{-1}$ is a local smooth section near $\alpha$.
\end{proof}

	Suppose $(N,\gamma)\in\Gr^{\aug,\ann}_{\codim\geq2}(M)$.
	Combining Propositions~\ref{trans} and \ref{P:Diff.ex.good.O} with Lemma~\ref{L:aug2},
	we obtain a Fr\'echet manifold structure on the augmented Grassmannian $\G_0=G\cdot(N,\gamma)$
        such that the $G$ action on $\mathcal G_0$ is smooth and admits local smooth sections.
	Moreover, $\mathcal G$ is open and closed in $\Gr^{\aug,\ann}_{\codim\geq2}(M)$, hence a connected component, as $G$ is connected.

	Let $\mathfrak g=\mathfrak X_{c,\ex}(M)$ denote the Lie algebra of compactly supported exact divergence free vector fields on $M$.
	We have an injective and $\Diff(M,\mu)$ equivariant map, similarly to \eqref{E:J.Diffc}:
	\begin{equation}\label{E:J.Diffc2}
		J\colon\Gr^{\aug}(M)\to\mathfrak g^*,\qquad\langle J(N,\gamma),X\rangle=\int_N\gamma(X),\quad X\in\mathfrak g.
	\end{equation}
	Now, for any $(N,\gamma)\in\Gr^{\aug,\ann}_{\codim\geq2}(M)$ the coadjoint orbit of $J(N,\ga)$ is a good coadjoint orbit of  $\Diff_{c,\ex}(M)$.
	More precisely, using Lemma~\ref{lemac} we obtain:

	\begin{theorem}[{\cite[Corollary~3.4]{HV24}}]\label{T:cex}
		Let $\mu$ be volume form on $M$ and consider a connected component $\G_0$ in the augmented nonlinear Grassmannian $\Gr^{\aug,\ann}_{\codim\geq2}(M)$.
		Then its image $J(\G)$ under the map $J$ in~\eqref{E:J.Diffc2} is a good coadjoint orbit of $\Diff_{c,\ex}(M)$
		and the (formal) pullback of the KKS symplectic form, denoted by $\om=J^*\om_{\KKS}$,  
		is a $\Diff_{c,\ex}(M)$ equivariant smooth (weakly) symplectic form on $\G_0$, characterized by
		\[
			\omega_{(N,\gamma)}(\zeta_X,\zeta_Y)=\int_N\gamma([X,Y])
		\]
		for all $(N,\gamma)\in\G_0$ and $X,Y\in\mathfrak X_{c,\ex}(M)$.
		Furthermore, $J$ restricts to an (equivariant) moment map for the (Hamiltonian) action of $\Diff_{c,\ex}(M)$ on $\G_0$.
	\end{theorem}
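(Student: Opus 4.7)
The plan is to apply Lemma~\ref{lemac} to the smooth action of $G=\Diff_{c,\ex}(M)$ on $\G_0$, in direct parallel with the proofs of Theorems~\ref{T:full}, \ref{T:non0} and \ref{T:contact}. First, I would fix $(N,\gamma)\in\G_0$ and show that $\G_0=G\cdot(N,\gamma)$. By Proposition~\ref{trans}, $G$ has good orbit at $N$ and $G\cdot N$ is open in $\Gr_{\codim\geq 2}(M)$; by Proposition~\ref{P:Diff.ex.good.O}, $G_N$ has good orbit at $\gamma$ with orbit open and closed in $\Ga((|\La|_N\otimes\Ann TN)\setminus N)$. Feeding these into Lemma~\ref{L:aug2} one obtains that $G\cdot(N,\gamma)$ is an initial splitting submanifold of $\Gr^{\aug,\ann}_{\codim\geq 2}(M)$ on which the smooth $G$ action admits local smooth sections, and the fiber bundle description in that lemma combined with openness at both the base and fiber levels shows the orbit is open in $\Gr^{\aug,\ann}_{\codim\geq 2}(M)$. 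Connectedness of $G$ then forces the orbit to be connected, hence to coincide with the connected component $\G_0$.

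Next, I would verify that the map $J$ in~\eqref{E:J.Diffc2} is injective and $G$-equivariant. Equivariance is immediate from the definition; injectivity exploits the nowhere vanishing annihilator property of $\gamma$ to separate distinct pairs $(N,\gamma)\neq(N',\gamma')$ by pairing against suitable exact divergence-free vector fields, produced via the extension construction in the proof of Proposition~\ref{P:Diffmu.good.ext} with prescribed support and prescribed normal values along $N$.

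With these ingredients in hand, Lemma~\ref{lemac} applies (invoking Remark~\ref{R:enlarge} to justify its use for the subgroup $\Diff_{c,\ex}(M)$ of the regular Lie group $\Diff_c(M)$) and yields in one stroke that $J(\G_0)$ is a good coadjoint orbit of $\Diff_{c,\ex}(M)$, that $\om=J^*\om_{\KKS}$ is a smooth weakly nondegenerate $G$-equivariant symplectic form on $\G_0$, and that $J$ is an equivariant moment map. The characterizing identity~\eqref{omeg} then reads
\[
\om_{(N,\gamma)}(\ze_X,\ze_Y)=\langle J(N,\gamma),[X,Y]\rangle=\int_N\gamma([X,Y]),
\]
which matches the formula in the statement. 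The hard part, which genuinely distinguishes this proof from the analogous ones in earlier sections, is the injectivity of $J$: pairing only against the smaller Lie algebra $\X_{c,\ex}(M)$ rather than against the full $\X_c(M)$ requires the production of enough exact divergence-free vector fields with controlled support and normal behaviour along $N$ to detect both the submanifold and the augmentation, and the Poincar\'e-style extension of Proposition~\ref{P:Diffmu.good.ext} is precisely what delivers them.
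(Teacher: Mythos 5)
Your overall skeleton coincides with the paper's proof: Propositions~\ref{trans} and \ref{P:Diff.ex.good.O} are fed into Lemma~\ref{L:aug2} to give the smooth structure on $G\cdot(N,\gamma)$ with local smooth sections, openness/closedness plus connectedness of $G=\Diff_{c,\ex}(M)$ identify this orbit with the connected component $\G_0$, and Lemma~\ref{lemac} (legitimized for this subgroup by Remark~\ref{R:enlarge}) then yields the good coadjoint orbit, the formula for $\om$, and the moment map property --- exactly as in the paper.

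The gap is in your injectivity argument for $J$, the one place where you go beyond what the paper writes down (the paper merely asserts injectivity of \eqref{E:J.Diffc2}, in parallel with \eqref{E:J.Diffc}, deferring to \cite{HV24}). Your proposed mechanism fails: the extension construction in the proof of Proposition~\ref{P:Diffmu.good.ext} produces $s(X)\in\X_{c,\ex}(M)$ with $s(X)|_N=X$ for $X\in\X(N)$, i.e.\ vector fields \emph{tangential} along $N$. Since every $\gamma$ occurring in $\Gr^{\aug,\ann}_{\codim\geq2}(M)$ takes values in $|\La|_N\otimes\Ann TN$, one has $\gamma(s(X))=0$ identically, so these fields cannot separate any two points. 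What injectivity requires are exact divergence free fields with prescribed \emph{normal} component along $N$ and controlled support, and the construction in the paper that delivers these (in codimension at least two) is the one in the proof of Proposition~\ref{trans}: fiberwise translation fields $Y_X$ on the normal bundle $E$, cut off via the potential $\lambda\alpha_X$, whose exactness uses $\rk(E)\geq2$. The codimension restriction is not cosmetic here: on the codimension one part of $\Gr^{\aug,\ann}(M)$ the map \eqref{E:J.Diffc2} is \emph{not} injective, since any difference lying in $\Ann_{\reg}\D_N$ pairs to zero against all of $\X_{c,\ex}(M)$ --- this is precisely why Theorem~\ref{T:[]} passes to the quotient augmentation $\Ga(|\La|_N\otimes T^*M|_N)/\Ann_{\reg}\D_N$. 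With your injectivity step replaced by the translation-field construction of Proposition~\ref{trans} (or by the citation the paper itself uses), the rest of your argument is the paper's proof.
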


	The symplectic manifold $(\G_0,\om)$ is a connected component of the (regular) cotangent bundle to $\Gr(M)$.
	It can be obtained also by symplectic reduction at zero
	on the right leg of the EPDiffvol dual pair \cite[Corollary~3.4]{HV24},
	the moment map for the $\Diff(S)$ action on (an open subset of) the regular cotangent bundle of $\Emb(S,M)$.

\subsubsection*{Codimension one}
	In the codimension one case, we need the isodrastic distribution on $\Gr(M)$ from Section \ref{S:isovol}:
	\[
		D_N:=\{X\in\Ga(TM|_N/TN):\io_N^* i_X\mu\text{ exact}\}.
	\]
	By Proposition~\ref{L:Fcex.int}, its leaves coincide with the $G=\Diff_{c,\ex}(M)$ orbits and the $G$ action on each isodrastic leaf admits local smooth sections.	
	Hence, in order to parametrize coadjoint orbits of $G$, we change the augmentation to the quotient
	\[
		\A(N):=\Gamma(|\Lambda|_N\otimes T^*M|_N)/\Ann_{\reg}\D_N, \quad N\in\Gr(M),
	\]
	where $\Ann_{\reg}\D_N\subseteq\Ga(|\La|_N\otimes\Ann (TN))$ is the annihilator of $\D_N$,
	and we let $\Gr^{[\aug]}(M)$ denote the corresponding augmented nonlinear Grassmannian.
	Then the map $J$ in~\eqref{E:J.Diffc} factors to an injective $\Diff(M,\mu)$ equivariant map
	\begin{equation}\label{E:J.aaug}
		\bar J\colon\Gr^{[\aug]}(M)\to\g^*,\qquad\langle \bar J(N,[\gamma]),X\rangle=\int_N\gamma(X).
	\end{equation}
	where $\mathfrak g=\mathfrak X_{c,\ex}(M)$.
	The isomorphism \eqref{E:mu}  provides an isomorphism of finite dimensional vector spaces,
		\begin{equation}\label{E:mu.Gr2}
			\mu_N:H^0(N;\mathfrak o_N)\to\Ann_{\reg}\mathcal D_{N}.
		\end{equation}
	Hence, $\dim \Ann_{\reg}\D_N$ is the number of orientable connected components of $N$.
	The pullback map factors to $ \A(N)\to\Om^1(N,|\La|_N)$, $[\ga]\mapsto\io_N^*\ga$.
	
	We know that $G$ has good orbit at $N\in\Gr(M)$ (by Proposition \ref{L:Fcex.int}),
	and the orbit $G\cdot N$ is the isodrastic leaf through $N$.
	Thus, with Lemma \ref{L:aug1}, we get a Fr\'echet manifold structure on the augmented Grassmannian
	$\Gr^{[\aug]}(M)|_{G\cdot N}$ such that the $G$ equivariant forgetful map to
	the isodrast $G\cdot N$ is a smooth fiber bundle with typical fiber $ \A(N)$.
	Moreover, the $G$ action on this augmented Grassmannian is smooth.
	
To get smooth structures on the orbits, we have to restrict to 1-dimensional submanifolds, thus $\dim M=2$ and $G=\Ham_c(M)$.
Then the following analogue of Proposition~\ref{P:aug.dens} holds:

\begin{proposition}\label{P:undoi}
	Let $G=\Ham_c(M)$ with $\dim M=2$. 
	If $N\in\Gr_{\dim=1}(M)$ 	and if $[\gamma]\in \A(N)$ such that $\io_N^*\ga\in\Om^1(N,|\Lambda|_N)$ is nowhere zero, 
	then $G_N$ has good orbit at $[\gamma]$. Moreover,  the orbit is open and closed in 
	\begin{equation}\label{E:ee}
		\bigl\{[\tilde\gamma]\in\A(N):\io_N^*\tilde\ga\cong\io_N^*\ga\bigr\},
	\end{equation}
	a splitting smooth submanifold of $\A(N)$.
\end{proposition}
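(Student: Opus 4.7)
The plan is to follow the template of Proposition~\ref{P:aug.dens}, with two modifications: the augmentation is now the quotient by the finite dimensional subspace $\Ann_{\reg}\D_N\cong H^0(N;\ou_N)$ from~\eqref{E:mu.Gr2}, and the acting group is restricted to $\Ham_c(M)$. The first modification is essentially cosmetic. Since $\Ann_{\reg}\D_N\subseteq\Ga(|\La|_N\otimes\Ann TN)=\ker\io_N^*$, the pullback descends to a smooth surjection $\A(N)\to\Om^1(N;|\La|_N)$, $[\ga]\mapsto\io_N^*\ga$. A splitting of the restriction $T^*M|_N\to T^*N$ provided by an auxiliary Riemannian metric on $M$ yields a bounded linear right inverse, so this descended pullback is a smooth fiber bundle with Fr\'echet fiber $\Ga(|\La|_N\otimes\Ann TN)/\Ann_{\reg}\D_N$.

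Next I would assemble the orbit structure of $\rho:=\io_N^*\ga$. By \cite[Example~4.16]{HV22}, $\Diff(N)\cdot\rho$ is a splitting smooth submanifold of $\Om^1(N;|\La|_N)$ on which $\Diff(N)$ acts with local smooth sections. Since every one dimensional submanifold of the symplectic surface $(M,\om)$ is automatically isotropic, Proposition~\ref{pp} gives good extension at $N$ for $G=\Ham_c(M)$; composing a local section of the $\Diff(N)$ action with a local smooth right inverse of $G_N\to\Diff(N)$ shows that $G_N\cdot\rho$ is open and closed in $\Diff(N)\cdot\rho$ and that the $G_N$ action on it admits local smooth sections. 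Pulling back under the descended pullback exhibits~\eqref{E:ee} as a splitting smooth submanifold of $\A(N)$. Hence the verification of good orbit at $[\ga]$ and the open-closed claim reduce to producing a local smooth section at $[\ga]$, inside the fibre $F$ over $\rho$, for the action of the subgroup $\Ham_c(M,\io_N):=\{g\in\Ham_c(M):g|_N=\id\}$; once this is in hand, the complement of the orbit, being a union of other open orbits together with the open and closed piece sitting over the other components of $\Diff(N)\cdot\rho$, is open, so the orbit is closed as well.

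This last step is the main obstacle, and it is where the hypothesis $\dim M=2$ becomes essential. Fix $Y\in\X_c(M)$ tangential to $N$ restricting to a nowhere zero section of $TN$ (available since closed one manifolds are orientable), set $\nu:=\ga(Y)\in\Den_\x(N)$, and use a smooth splitting of the finite dimensional quotient to lift $[\tilde\ga]\in F$ near $[\ga]$ to a representative $\tilde\ga$. The task is to produce $g_{\tilde\ga}\in\Ham_c(M,\io_N)$ depending smoothly on $\tilde\ga$ with $T_xg_{\tilde\ga}=\id+Y_x\otimes\tfrac{\tilde\ga_x-\ga_x}{\nu_x}$ on $N$. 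Working in Darboux coordinates $(x,y)$ near $N=\{y=0\}$ with $\om=dx\wedge dy$ and $Y=\partial_x$, the normal shift takes the form $\tfrac{\tilde\ga-\ga}{\nu}=b_{\tilde\ga}(x)\,dy$, and the compactly supported Hamiltonian $h_{\tilde\ga}:=\lambda\cdot b_{\tilde\ga}(x)\tfrac{y^2}{2}$, with $\lambda$ a cutoff equal to one near $N$, yields $X_{h_{\tilde\ga}}=b_{\tilde\ga}(x)y\,\partial_x+O(y^2)$. This vanishes on $N$ with $DX_{h_{\tilde\ga}}|_N=\partial_x\otimes b_{\tilde\ga}(x)\,dy$ and $(DX_{h_{\tilde\ga}}|_N)^2=0$, so its time one flow $g_{\tilde\ga}$ satisfies $Tg_{\tilde\ga}|_N=\id+DX_{h_{\tilde\ga}}|_N$, and the cotangent calculation from Proposition~\ref{P:aug.dens} then gives $g_{\tilde\ga}\cdot[\ga]=[\tilde\ga]$; regularity of $\Ham_c(M)$ ensures that $w([\tilde\ga]):=g_{\tilde\ga}^{-1}$ is smooth into $G$. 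In higher dimensions the Hamiltonian structure over-determines the transverse behaviour of $X_h$, so the rank one normal derivative required above cannot be realized, which both explains the appearance of $\dim M=2$ and signals the need for a different augmentation in general.
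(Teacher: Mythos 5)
Your proposal is correct and follows essentially the same route as the paper's proof: reduce, via the good extension property of Proposition~\ref{pp} and the template of Proposition~\ref{P:aug.dens}, to producing a smooth right inverse for the $G_{\iota_N}$ action on the fiber over $\rho=\iota_N^*\gamma$, and then realize the required normal derivative $\id+Y\otimes\tfrac{\tilde\gamma-\gamma}{\nu}$ as the time-one flow of the quadratic Hamiltonian $\tfrac12 y^2 b_{\tilde\gamma}(x)\lambda$ in the Weinstein--Darboux tubular neighborhood model (the paper writes this as $h_{\tilde\gamma}(\theta,t)=\tfrac12 t^2 f_{\tilde\gamma}(\theta)\lambda(t)$ on $S^1\times\mathbb R$). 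Your closing observation that in higher dimensions the symmetry of the Hessian forces $\tfrac{\tilde\gamma-\gamma}{\nu}$ to be proportional to $i_Y\omega$, which is why $\dim M=2$ is essential, is a correct and worthwhile addition not spelled out in the paper.
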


\begin{proof}
	From Proposition~\ref{pp} we know that the natural homomorphism $G_N\to\Diff(N)$ admits a smooth local right inverse near the identity.
        Hence, proceeding exactly as in the proof of Proposition~\ref{P:aug.dens}, it remains to show that the action of $G_{\iota_N}$ on
        \[
                F=\gamma+\Gamma\bigl(|\Lambda|_N\otimes\Ann TN\bigr)
        \]
	admits local smooth sections.
	W.l.o.g.\ we may assume $N=S^1$, $M=S^1\times\mathbb R$, and
	$\mu=d\theta\wedge dt$ where $\theta$ and $t$ denote the projections onto $S^1$ and $\mathbb R$, respectively.
	Consider the volume density $\nu=\gamma(\partial_\theta)\in\Gamma(|\Lambda|_N)$.
	Then any $\tilde\gamma\in F$  is of the form $\tilde\gamma=\gamma+\nu\otimes f_{\tilde\gamma}dt$ with
	$f_{\tilde\gamma}=\frac{(\tilde\gamma-\gamma)(\partial_t)}\nu\in C^\infty(N)$.
	Consider the compactly supported Hamilton function $h_{\tilde\gamma}(\theta,t)=\frac12t^2f_{\tilde\gamma}(\theta)\lambda(t)$
	on $M$ where $\lambda(t)$ is a compactly supported smooth function that equals $1$ in a neighborhood of $t=0$.
	The corresponding Hamilton vector field $X_{\tilde\gamma}\in\ham_c(M)$ vanishes along $N$ and for its derivative
	we find $D_{(\theta,0)}X_{\tilde\gamma}=f_{\tilde\gamma}(\theta)\partial_\theta\otimes dt$.
	Let $g_{\tilde\gamma}\in\Ham_c(M)$ denote the flow of $X_{\tilde\gamma}$ at time one.
	Then $g_{\tilde\gamma}|_N=\id$ and $T_{(\theta,0)}g_{\tilde\gamma}=\id+f_{\tilde\gamma}(\theta)\partial_\theta\otimes dt$.
	Hence, $g_{\tilde\gamma}^*\frac\gamma\nu=\frac{\tilde\gamma}\nu$.
	We conclude that $F\to G_{\iota_N}$, $\tilde\gamma\mapsto g_{\tilde\gamma}^{-1}$ is a global smooth right inverse for the $G_{\iota_N}$ action on $F$.
\end{proof}

Suppose $\dim S=1$ and assume $\rho\in\Omega^1(S;|\Lambda|_S)$ is nowhere vanishing.
Moreover, let $\mathcal L$ denote an isodrastic leaf in $\Gr_S(M)$, cf.~Proposition~\ref{L:Fcex.int}.
Then
\[
	\Gr^{[\aug]}_{S,\rho}(M)|_{\mathcal L}=\bigl\{(N,[\ga])\in\Gr^{[\aug]}(M)|_{\mathcal L}:(N,\io_N^*\ga)\cong(S,\rho)\bigr\}
\]
is a splitting smooth subbundle of $\Gr^{[\aug]}(M)|_{\mathcal L}\to\mathcal L$ in view of the trivializations in \eqref{E:triv.aug} and the fact that \eqref{E:ee} is a splitting submanifold of $\mathfrak A(N)$.
Let $(N,[\gamma])\in\Gr^{[\aug]}_{S,\rho}(M)|_{\mathcal L}$.
Combining Propositions~\ref{L:Fcex.int} and \ref{P:undoi} with Lemma~\ref{L:aug2}, 
we obtain a Fr\'echet manifold structure on the augmented Grassmannian $\bar\G:=G\cdot(N,[\gamma])$ 
such that the $G$ action is smooth and admits local smooth sections.
Moreover, $\bar\G$ is open and closed in $\Gr^{[\aug]}_{S,\rho}(M)|_{\mathcal L}$.

Now, for any 1-dimensional submanifold $N$ of the 2-dimensional manifold $M$, endowed with 
$[\gamma]\in\Gamma(\Lambda|_N\otimes T^*M|_N)/\Ann_{\reg}\D_N$ such that $\io_N^*\ga\in\Om^1(N,|\La|_N)$ nowhere zero, 
the coadjoint orbit of $\bar J(N,[\ga])$ is a good coadjoint orbit of  $\Ham_c(M)$.
More precisely, using Lemma~\ref{lemac} we obtain:

\begin{theorem}[{\cite[Corollary~3.4]{HV24}}]\label{T:[]}
	Let $M$ be a 2-dimensional symplectic manifold, 
	suppose $\mathcal L$ is an isodrastic leaf in $\Gr_S(M)$ where $\dim S=1$, 
	assume $\rho\in\Omega^1(S;|\Lambda|_S)$ is nowhere vanishing, 
	let $(N,[\gamma])\in\Gr^{[\aug]}_{S,\rho}(M)|_{\mathcal L}$,
	and consider the augmented Grassmannian $\bar\G=\Ham_c(M)\cdot(N,[\gamma])$, open and closed in $\Gr^{[\aug]}_{S,\rho}(M)|_{\mathcal L}$.
	Then its image $\bar J(\bar\G)$ under the map $\bar J$ in~\eqref{E:J.aaug} is a good coadjoint orbit of $\Ham_c(M)$
	and the (formal) pullback of the KKS symplectic form, denoted by $\om=\bar J^*\om_{\KKS}$,  
	is a $\Ham_c(M)$ equivariant smooth (weakly) symplectic form on $\bar\G$, characterized by
	\[
		\omega_{(\tilde N,[\tilde\gamma])}(\zeta_X,\zeta_Y)=\int_{\tilde N}\tilde\gamma([X,Y])
	\]
	for all $(\tilde N,[\tilde\gamma])\in\bar\G$ and $X,Y\in\ham_c(M)$.
	Furthermore, $\bar J$ restricts to an (equivariant) moment map for the (Hamiltonian) action of $\Ham_c(M)$ on $\bar\G$.
\end{theorem}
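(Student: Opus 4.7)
The plan is to apply Lemma~\ref{lemac} to $G=\Ham_c(M)$ acting on $Q=\bar\G$ via $\bar J$. The Fr\'echet manifold structure on $\bar\G$ together with the smoothness, transitivity and existence of local smooth sections of the $G$ action are already furnished by the paragraph immediately preceding the statement, combining Propositions~\ref{L:Fcex.int} and~\ref{P:undoi} with Lemma~\ref{L:aug2}. A small but essential observation here is that in dimension two one has $\ham_c(M)=\X_{c,\ex}(M)$: indeed $\om=\mu$ is the volume form, and $i_{X_h}\om=dh$ with $h\in C^\oo_c(M)$ is exactly the exact divergence free condition, so the isodrasts for $\Ham_c(M)$ and for $\Diff_{c,\ex}(M)$ coincide, and good orbit/good extension at $N$ for one group give the same for the other. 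It therefore only remains to verify that $\bar J\colon\bar\G\to\g^*$ is well defined, $G$ equivariant, and injective.

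Well-definedness is built into the definition of $\Ann_{\reg}\D_N$: for $X\in\g=\ham_c(M)$ the fundamental vector field at $N$ lies in $\D_N$, so the pairing $\int_N\ga(X)$ depends only on the class $[\ga]$. Equivariance of $\bar J$ is inherited from that of $J$ in~\eqref{E:J.Diffc} together with the naturality of the subbundle $\Ann_{\reg}\D_N$ under the $\Diff(M,\mu)$ action. For injectivity, suppose $\bar J(N_1,[\ga_1])=\bar J(N_2,[\ga_2])$. A standard cut-off argument, localizing $h\in C^\oo_c(M)$ around a point in the symmetric difference of $N_1$ and $N_2$, forces $N_1=N_2=:N$. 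Once $N$ is fixed, the condition $\int_N(\ga_1-\ga_2)(X)=0$ for all $X\in\ham_c(M)$, combined with the infinitesimal transitivity of $\Ham_c(M)$ on the isodrast through $N$ (i.e., the image of $\ham_c(M)$ in $\Ga(TM|_N/TN)$ equals $\D_N$, cf.\ Proposition~\ref{L:Fcex.int}), yields $\ga_1-\ga_2\in\Ann_{\reg}\D_N$, so $[\ga_1]=[\ga_2]$.

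With $\bar J$ shown injective and equivariant, Lemma~\ref{lemac} immediately delivers the smooth, $G$ equivariant weakly symplectic form $\om=\bar J^*\om_{\KKS}$ on $\bar\G$ together with the moment map property for $\bar J$. Specializing the characterization~\eqref{omeg} to the present situation gives
\[
\om_{(\tilde N,[\tilde\ga])}(\ze_X,\ze_Y)=\langle\bar J(\tilde N,[\tilde\ga]),[X,Y]\rangle=\int_{\tilde N}\tilde\ga([X,Y]),
\]
which is the announced formula. Finally, the image $\bar J(\bar\G)$ coincides with the coadjoint orbit through $\bar J(N,[\ga])$ by $G$ equivariance and injectivity, while Remark~\ref{R:unique} guarantees uniqueness of the smooth structure for which the coadjoint action admits local smooth sections; this is precisely the statement that $\bar J(\bar\G)$ is a good coadjoint orbit of $\Ham_c(M)$. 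I expect the injectivity of $\bar J$ to be the main obstacle, as it rests on matching the image of $\ham_c(M)$ in $\Ga(TM|_N/TN)$ with $\D_N$ exactly---an infinitesimal isodrastic identification that in dimension two is exactly what Proposition~\ref{L:Fcex.int} provides.
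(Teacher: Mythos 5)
Your proposal is correct and takes essentially the same route as the paper: the Fr\'echet structure, transitivity, and local smooth sections on $\bar\G$ are obtained by combining Propositions~\ref{L:Fcex.int} and~\ref{P:undoi} with Lemma~\ref{L:aug2}, and then Lemma~\ref{lemac} applied to the injective equivariant map $\bar J$ of~\eqref{E:J.aaug} yields the good coadjoint orbit, the formula for $\om$, and the moment map property (the paper simply asserts the injectivity and factorization of $\bar J$ as part of the setup, so your extra detail there goes beyond what the paper records). One small point in your injectivity sketch: since $\Ann_{\reg}\D_N$ is by definition a subspace of $\Ga(|\La|_N\otimes\Ann TN)$, you must first use tangential Hamiltonian fields (good extension, available since $1$-dimensional $N$ is Lagrangian in the $2$-dimensional $M$) to conclude $\io_N^*(\ga_1-\ga_2)=0$, and only then use the fact that the normal projections of $\ham_c(M)$ fill out $\D_N$ to get $\ga_1-\ga_2\in\Ann_{\reg}\D_N$.
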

	
The symplectic manifold $(\bar\G,\om)$ can be obtained by symplectic reduction  at a nowhere zero element of $\Om^1(S,|\La|_S)$
on the right leg of the (second) EPDiffvol dual pair \cite[Corollary~6.1]{HV24},
the moment map for the $\Diff(S)$ action on (an open subset of) the regular cotangent bundle of a disjoint union of isodrastic leaves in $\Emb(S,M)$.
	

\subsection{Decorations as augmentations}
By composition with the canonical functor $\cataug(M,G)\to\catclosed$  in \eqref{E:cats}, every decoration functor gives rise to an augmentation functor for the $G$ action $M$.
The smooth structures associated with these two functors are compatible in the following sense:

\begin{lemma}\label{L:deco.aug}
	Suppose $\mathfrak D$ is a decoration functor and suppose $G$ is a Lie group acting smoothly on a finite dimensional manifold $M$.
	Let $\mathfrak A$ denote the functor obtained by composing $\mathfrak D$ with the functor in \eqref{E:cats}.
	Then $\mathfrak A$ is an augmentation functor for the $G$ action on $M$.
	Moreover, for $\mu\in\mathfrak D(S)$ and $(N,\nu)\in\Gr_{S,\mu}^{\deco}(M)$ the following hold true:
	\begin{enumerate}[(a)]
		\item	If $G$ has good orbit at $N$, then the canonical $G$ equivariant identification
			\begin{equation}\label{E:aug.deco}
				\Gr^{\aug}(M)|_{G\cdot N}=\Gr^{\deco}_S(M)|_{G\cdot N}
			\end{equation}
			is a diffeomorphism of fiber bundles over $G\cdot N$ with typical fiber $\mathfrak D(S)$.
		\item	If $\Diff(S)$ has good orbit at $\mu$ and $G$ has good extension at $N$, then $G_N$ has good orbit at $\nu$.
			Moreover, $G_N\cdot\nu$ is open and closed in $\Diff(N)\cdot\nu$.
		\item	If $\Diff(S)$ has good orbit at $\mu$, and $G$ has good extension and orbit at $N$, then the canonical $G$ equivariant inclusion
			\begin{equation}\label{E:ggg}
				G\cdot(N,\nu)\subseteq\Gr_{S,\mu}^{\deco}(M)|_{G\cdot N}
			\end{equation}
			is a diffeomorphism onto an open and closed subset.
		\item	Suppose $\Diff(S)$ has good isotropy at $\mu$, $G$ has good extension at $N$, and $G_N$ has good isotropy at $\iota_N$.
			Then $G_N$ has good isotropy at $\nu$ and $G$ has good isotropy at $(N,\nu)$.
	\end{enumerate}
\end{lemma}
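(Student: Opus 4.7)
The plan is to work through (a)--(d) in sequence, after a brief functoriality check. To see that $\mathfrak A$ is an augmentation functor, I need the $G_N$ action on $\mathfrak A(N)=\mathfrak D(N)$ to be smooth along maps $g\colon Z\to G_N$ that are smooth into $G$; this follows because the $G$ action on $M$ is smooth and preserves $N$, so $z\mapsto g(z)|_N$ is smooth into $\Diff(N)$, while the $\Diff(N)$ action on $\mathfrak D(N)$ is smooth by hypothesis on $\mathfrak D$.

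For (a) I would construct matching local trivializations on the two sides. Good orbit at $N$ furnishes local smooth sections $u_i\colon U_i\to G$ of $G\to G\cdot N$, and Lemma~\ref{L:aug1} uses these via $(\tilde N,\nu')\mapsto(\tilde N,\mathfrak D(u_i(\tilde N)|_N)\cdot\nu')$. The decorated side instead uses local sections of the principal bundle $\Emb(S,M)\to\Gr_S(M)$; fixing once and for all a reference $\varphi_0\colon S\to N$ with $\mathfrak D(\varphi_0)\cdot\mu=\nu$, the maps $\varphi_i(\tilde N):=u_i(\tilde N)\circ\varphi_0$ are such sections, and the two systems of trivializations are identified under $\mathfrak D(\varphi_0)\colon\mathfrak D(S)\to\mathfrak D(N)$.

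For (b), good extension at $N$ produces a smooth local right inverse $r$ of the restriction $G_N\to\Diff(N)$ near the identity. Thus the image of $G_N$ in $\Diff(N)$ is an open subgroup, hence contains $\Diff(N)_0$, forcing $G_N\cdot\nu$ to be a union of $\Diff(N)_0$-orbits inside $\Diff(N)\cdot\nu$ and so open-and-closed there. Since $\Diff(S)$ has good orbit at $\mu$, the $\Diff(N)$ action on $\Diff(N)\cdot\nu$ admits local smooth sections; composing with $r$ yields local smooth sections (smooth into $G$) for the $G_N$ action on $G_N\cdot\nu$. Part (c) then follows immediately from (a) and Lemma~\ref{L:aug2}: under the identification of (a), $G\cdot(N,\nu)$ is the subbundle of $\Gr^{\deco}_S(M)|_{G\cdot N}$ over $G\cdot N$ with fiber $G_N\cdot\nu$, while $\Gr^{\deco}_{S,\mu}(M)|_{G\cdot N}$ is the subbundle with fiber $\Diff(N)\cdot\nu$, and by (b) the former fiber is open-and-closed in the latter.

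For (d), good isotropy of $\Diff(S)$ at $\mu$ shows that $\Diff(N,\nu)$ is a splitting Lie subgroup of $\Diff(N)$. Under the assumed hypotheses, Lemma~\ref{L:geom2} gives that $G_N\to\Diff(N)$ is a smooth principal $G_{\iota_N}$-bundle onto an open-and-closed subgroup; the preimage $G_{(N,\nu)}$ of the splitting Lie subgroup $\Diff(N,\nu)$ is therefore a splitting Lie subgroup of $G_N$, and since $G_N$ is splitting in $G$ by hypothesis, $G_{(N,\nu)}$ is splitting in $G$ as well. I expect the main obstacle to lie in (a), where the two smooth structures are defined by superficially different recipes---the associated-bundle construction $\Emb(S,M)\times_{\Diff(S)}\mathfrak D(S)$ versus the trivializations of Lemma~\ref{L:aug1}---so one must verify carefully that the transition cocycles match under the reference-embedding identification.
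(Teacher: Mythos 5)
Your proposal is correct and follows essentially the same route as the paper's proof: the functoriality check via restriction to $\Diff(N)$, the matching of the trivializations of Lemma~\ref{L:aug1} with associated-bundle trivializations built from $\tilde N\mapsto u_i(\tilde N)\circ\varphi_0$ in (a), transferring good orbit from $\mu$ to $\nu$ and composing local sections with the local right inverse of $G_N\to\Diff(N)$ in (b), the fiberwise open--closed comparison of the two bundles in (c), and taking the preimage of the splitting Lie subgroup $\Diff(N,\nu)$ under the bundle map $G_N\to\Diff(N)$ of Lemma~\ref{L:geom2} in (d). The only cosmetic differences are that the paper justifies smoothness of $z\mapsto g(z)|_N$ by explicitly invoking that $N$ is initial in $M$, and in (b) it obtains open--closedness directly from the composed sections rather than via your (equivalent) observation that the image of $G_N$ in $\Diff(N)$ is an open subgroup containing $\Diff(N)_0$.
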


\begin{proof}
	In order to show that $\mathfrak A$ is an augmentation functor it remains to show that the $G_N$ action on $\mathfrak A(N)=\mathfrak D(N)$ is smooth.
	To this end, suppose $g\colon Z\to G_N$ is smooth when considered as a map into $G$.
	Then the map $Z\times N\to N$, $(z,x)\mapsto g(z)\cdot x$ is smooth, for $N$ is initial in $M$ and the $G$ action on $M$ is assumed to be smooth.
	Hence, the corresponding map $Z\to\Diff(N)$, $z\mapsto g(z)|_N$ is smooth.
	Since the $\Diff(N)$ action on $\mathfrak D(N)$ is smooth, we conclude that $Z\times\mathfrak D(N)\to\mathfrak D(N)$, $(z,\nu)\mapsto\mathfrak D(g(z)|_N)\cdot\nu$ is smooth.
	Hence, $\mathfrak A$ is an augmentation functor.

	(a) Let $U\subseteq G\cdot N$ be an open subset and suppose $u\colon U\to G$ is a local smooth section, i.e., $u(\tilde N)\cdot N=\tilde N$ for all $\tilde N\in U$.
	Let $\varphi\colon S\to N$ be a diffeomorphism.
	Then $U\to\Emb(S,M)|_U$, $\tilde N\mapsto u(\tilde N)\circ\varphi$ is a section of $\Emb(S,M)$ over $U$.
	Hence,
	\[
		U\times\mathfrak D(N)\to\Emb(S,M)|_U\times_{\Diff(S)}\mathfrak D(S),\quad
		(\tilde N,\hat\nu)\mapsto\bigl[u(\tilde N)\circ\varphi,\mathfrak D(\varphi^{-1})\cdot\hat\nu\bigr]
	\]
	is a diffeomorphism.
	Composing this with the canonical identifications in \eqref{E:abcd} and \eqref{E:aug.deco} we obtain
	\[
		U\times\mathfrak D(N)\to\Gr^{\aug}(M)|_U,\quad(\tilde N,\hat\nu)\mapsto\bigl(\tilde N,\mathfrak D(u(\tilde N))\cdot\hat\nu\bigr),
	\]
	a chart for the smooth structure on $\Gr^{\aug}(M)|_{G\cdot N}$, cf.~\eqref{E:triv.aug}.
	Hence, \eqref{E:aug.deco} is a diffeomorphism.

	(b) By assumption there exists a diffeomorphism $\varphi\colon S\to N$ such that $\mathfrak D(\varphi)\cdot\mu=\nu$.
	Hence, $\Diff(N)$ has good orbit at $\nu\in\mathfrak D(N)$ too.
	Composing local smooth sections for the $\Diff(N)$ action on $\Diff(N)\cdot\nu$ with a local smooth section of \eqref{E:homo.sigma} we obtain local smooth sections for the $G_N$ action on $G_N\cdot\nu$.

	(c) Combining (b) with Lemma~\ref{L:aug2} we see that $G\cdot(N,\nu)\to G\cdot N$ is a smooth fiber bundle.
	According to Lemma~\ref{L:deco1} $\Gr^{\deco}_{S,\mu}(M)|_{G\cdot N}\to G\cdot N$ is a smooth fiber bundle also.
	Suppose $(\tilde N,\tilde\nu)\in G\cdot(N,\nu)$.
	Then the fiber over $\tilde N$ of the former bundle, $G_{\tilde N}\cdot\tilde\nu$, is open and closed in the fiber of the latter bundle, $\Diff(\tilde N)\cdot\tilde\nu$ according to (b).
	Hence, the left hand side in \eqref{E:ggg} is open and closed in the right hand side.
	This inclusion is a diffeomorphism since both are initial splitting smooth submanifolds in $\Gr^{\aug}(M)_{G\cdot N}$ and $\Gr^{\deco}_S(M)|_{G\cdot N}$, respectively, cf.~Lemmas~\ref{L:deco1} and \ref{L:aug2}, and \eqref{E:aug.deco} is a diffeomorphism.

	(d) 
	Since $\Diff(S)$ has good isotropy at $\mu$, $\Diff(N)$ has good isotropy at $\nu$ too.
	Hence, $\Diff(N,\nu)$ is a splitting Lie subgroup in $\Diff(N)$.
	Since $G$ has good isotropy at $N$, $G_N$ is a splitting Lie subgroup in $G$.
	According to Lemma~\ref{L:geom2} the map $G_N\to\Diff(N)$ is a smooth fiber bundle.
	As $G_{(N,\nu)}$ is the preimage of $\Diff(N,\nu)$ under this map, we conclude that $G_{(N,\nu)}$ is a splitting Lie subgroup of $G_N$ and, thus, a splitting Lie subgroup of $G$ also.
\end{proof}

\section*{Acknowledgements}
	Part of the work was carried out during a research stay of the authors at the Erwin Schr\"odinger
	International Institute for Mathematics and Physics, specifically during the program ``Infinite-dimensional Geometry: Theory and Applications''.


\end{document}